\newcommand{\pointsize}{11pt}
   \renewcommand{\headrulewidth}{0pt} 
   \renewcommand{\headrulewidth}{0.4pt}
   \renewcommand{\headrulewidth}{0.4pt}
   \renewcommand{\headrulewidth}{0.4pt}
\numberwithin{figure}{chapter} 
\numberwithin{table}{chapter}
\numberwithin{equation}{chapter}
\numberwithin{section}{chapter}
\newtheorem{thm}{Theorem}[section]
\newtheorem{cor}[thm]{Corollary}
\newtheorem{lemma}[thm]{Lemma}
\newtheorem{prop}[thm]{Proposition}
\newtheorem{mthm}{Theorem}
\colorlet{darkred}{red!75!black}
\colorlet{darkblue}{blue!60!black}
\colorlet{medgreen}{green!70!black!70!white}
\tikzset{
mybrace/.style={decorate,decoration={brace,amplitude=10pt,aspect=#1}}
}
\newcommand{\ie}{\emph{i.e.}}
\newcommand{\Cf}{\emph{Cf.}}
\newcommand{\BPP}{\mathsf{BPP}}
\newcommand{\FP}{\mathsf{FP}}
\newcommand{\NPC}{\mathsf{NPC}}
\newcommand{\NP}{\mathsf{NP}}
\newcommand{\ccC}{\mathsf{C}}
\newcommand{\ccP}{\mathsf{P}}
\newcommand{\shPC}{\mathsf{\#PC}}
\newcommand{\shP}{\mathsf{\#P}}
\newcommand{\CSAT}{\mathsf{CSAT}}
\newcommand{\RSAT}{\mathsf{RSAT}}
\newcommand{\ZSAT}{\mathsf{ZSAT}}
\newcommand{\shCSAT}{\mathsf{\#CSAT}}
\newcommand{\shRSAT}{\mathsf{\#RSAT}}
\newcommand{\shZSAT}{\mathsf{\#ZSAT}}
\newcommand{\AD}{\operatorname{AD}}
\newcommand{\Alt}{\operatorname{Alt}}
\newcommand{\Aut}{\operatorname{Aut}}
\newcommand{\Inn}{\operatorname{Inn}}
\newcommand{\MCG}{\operatorname{MCG}}
\newcommand{\Out}{\operatorname{Out}}
\newcommand{\PSp}{\operatorname{PSp}}
\newcommand{\Rub}{\operatorname{Rub}}
\newcommand{\SU}{\operatorname{SU}}
\newcommand{\Sp}{\operatorname{Sp}}
\newcommand{\Sym}{\operatorname{Sym}}
\newcommand{\Tor}{\operatorname{Tor}}
\newcommand{\ab}{{\operatorname{ab}}}
\newcommand{\bd}{\operatorname{bd}}
\newcommand{\lk}{\operatorname{lk}}
\newcommand{\per}{{\operatorname{per}}}
\newcommand{\rep}{\operatorname{rep}}
\newcommand{\sch}{\operatorname{sch}}
\newcommand{\inv}{\operatorname{inv}}
\renewcommand{\wr}{\operatorname{wr}}
\newcommand{\onto}{\twoheadrightarrow}
\newcommand{\into}{\hookrightarrow}
\renewcommand{\setminus}{\smallsetminus}
\newcommand{\normaleq}{\unlhd}
\newcommand{\normal}{\lhd}
\newcommand{\cC}{\mathcal{C}}
\newcommand{\yes}{\mathrm{yes}}
\newcommand{\no}{\mathrm{no}}
\newcommand{\ceil}[1]{\lceil #1 \rceil}
\newcommand{\Z}{\mathbb{Z}}
\newcommand{\R}{\mathbb{R}}
\newcommand{\N}{\mathbb{N}}
\newcommand{\C}{\mathbb{C}}
\newcommand{\AND}{\mathrm{AND}}
\newcommand{\SWAP}{\mathrm{SWAP}}
\newcommand{\NOT}{\mathrm{NOT}}
\newcommand{\OR}{\mathrm{OR}}
\newcommand{\COPY}{\mathrm{COPY}}
\newcommand{\CNOT}{\mathrm{CNOT}}
\newcommand{\CCNOT}{\mathrm{CCNOT}}
\newcommand{\hR}{\hat{R}}
\newcommand{\tM}{\tilde{M}}
\newcommand{\cA}{\mathcal{A}}
\newcommand{\cB}{\mathcal{B}}
\newcommand{\vsubseteq}{\rotatebox{90}{$\subseteq$}}
\newcommand{\veq}{\rotatebox{90}{$=$}}
\newcommand{\vonto}{\rotatebox{270}{$\onto$}}
\newcommand{\defeq}{\stackrel{\mathrm{def}}=}
\newcommand{\Cor}[1]{Corollary~\ref{#1}}
\newcommand{\Fig}[1]{Figure~\ref{#1}}
\newcommand{\Lem}[1]{Lemma~\ref{#1}}
\newcommand{\Prop}[1]{Proposition~\ref{#1}}
\newcommand{\Sec}[1]{Section~\ref{#1}}
\newcommand{\Thm}[1]{Theorem~\ref{#1}}
\newtheorem{theorem}{Theorem}[section]
\begin{document}
\frontmatter

\pagestyle{prelim}
   
% Redefine plain page style so that the first pages of chapters
% have desired page style.

\fancypagestyle{plain}{\fancyhf{}\cfoot{-\thepage-}}
\begin{center}
   \null\vfill
   \textbf{%
      Computational Complexity of Enumerative 3-Manifold Invariants
   }%
   \\
   \bigskip
   By \\
   \bigskip
   ERIC GRIFFIN SAMPERTON \\
   \bigskip
   B.S. (California Institute of Technology) 2012 \\
   \bigskip
   DISSERTATION \\
   \bigskip
   Submitted in partial satisfaction of the requirements for the
   degree of \\
   \bigskip
   DOCTOR OF PHILOSOPHY \\
   \bigskip
   in \\
   \bigskip
   MATHEMATICS \\
   \bigskip
   to the \\
   \bigskip
   OFFICE OF GRADUATE STUDIES \\
   \bigskip        
   of the \\
   \bigskip
   UNIVERSITY OF CALIFORNIA \\
   \bigskip
   DAVIS \\
   \bigskip
   Approved: \\
   \bigskip
   \bigskip
   \makebox[3in]{\hrulefill} \\
   Greg Kuperberg (Chair) \\
   \bigskip
   \bigskip
   \makebox[3in]{\hrulefill} \\
   Joel Hass \\
   \bigskip
   \bigskip
   \makebox[3in]{\hrulefill} \\
   Michael Kapovich \\
   \bigskip
   Committee in Charge \\
   \bigskip
   2018 \\
   \vfill
\end{center}

\newpage

% (optional) copyright page <== this page is not numbered!
% \thispagestyle{empty}
% \begin{titlepage}
% \vspace*{50em}
% \begin{center}
% \copyright \ First M.\ Last, 20XX.  All rights reserved.  
% \end{center}
% \end{titlepage}
% \newpage
% \stepcounter{page}

% (optional) dedication page
\thispagestyle{plain}
\vspace*{20em}
\begin{center}
To my parents.
\end{center}
\newpage

% Begin Double Spacing
\doublespacing

\tableofcontents
\newpage
{\singlespacing
   \begin{flushright}
      Eric Griffin Samperton \\
      June 2018 \\
      Mathematics \\
   \end{flushright}
}

\bigskip

\begin{center}
Computational Complexity of Enumerative 3-Manifold Invariants
\end{center}

\section*{Abstract}
Fix a finite group $G$.  We analyze the computational complexity of the problem of counting homomorphisms $\pi_1(X) \to G$, where $X$ is a topological space treated as computational input.  We are especially interested in requiring $G$ to be a fixed, finite, nonabelian, simple group.  We then consider two cases: when the input $X=M$ is a closed, triangulated 3-manifold, and when $X=S^3 \setminus K$ is the complement of a knot (presented as a diagram) in $S^3$.  We prove complexity theoretic hardness results in both settings.  When $M$ is closed, we show that counting homomorphisms $\pi_1(M) \to G$ (up to automorphisms of $G$) is $\shP$-complete via parsimonious Levin reduction---the strictest type of polynomial-time reduction.  This remains true even if we require $M$ to be an integer homology 3-sphere.  We prove an analogous result in the case that $X=S^3 \setminus K$ is the complement of a knot.

Both proofs proceed by studying the action of the pointed mapping class group $\MCG_*(\Sigma)$ on the set of homomorphisms $\{\pi_1(\Sigma) \to G\}$ for an appropriate surface $\Sigma$.  In the case where $X=M$ is closed, we take $\Sigma$ to be a closed surface with large genus.  When $X=S^3 \setminus K$ is a knot complement, we take $\Sigma$ to be a disk with many punctures.  Our constructions exhibit classical computational universality for a combinatorial topological quantum field theory associated to $G$.  Our ``topological classical computing" theorems are analogs of the famous results of Freedman, Larsen and Wang establishing the quantum universality of topological quantum computing with the Jones polynomial at a root of unity.  Instead of using quantum circuits, we develop a circuit model for classical reversible computing that is equivariant with respect to a symmetry of the computational alphabet.
\newpage
\section*{Acknowledgments}
Foremost, I would like to thank Greg Kuperberg for being a fanstastic Ph.D.\ advisor.  During my first year of grad school, I approached him with an interest in quantum topology and computational complexity.  He responded by introducing me to various fruitful questions and ideas, some of which have culminated in this dissertation six years later.  The present work is an amalgamation of two papers he and I co-wrote \cite{K:zombies,K:coloring}.  I've learned much about mathematical writing by studying Greg's changes to my first drafts.  Of course, I've also learned a lot of other things from Greg while I've been here at Davis.  I'm especially going to miss the sprawling political/historical/social/scientific/mathematical conversations at our weekly group lunch.

The Davis math department has been a great place to call home for the last six years, in large part because the faculty and staff here operate a well-run department.  Kudos to y'all.  Most importantly, I want to thank all of my family and friends, especially my parents Amy and Mike, my brother Kyle, my sister-in-law Ashley, their little goofball Leo, Colleen and the entire Delaney family, Danielle, George Mossessian, and Yoni Ackerman.  It is a privilege to have your love and support.

\mainmatter

\pagestyle{maintext}
   
% Redefine plain page style so that the first pages of 
% chapters have desired page style.
\fancypagestyle{plain}{\renewcommand{\headrulewidth}{0pt} \fancyhf{} \cfoot{\thepage}}
   
\chapter{Introduction}
\label{ch:introduction}
This chapter introduces our main results and their proofs.  Section \ref{s:background} begins with a brief background review of our subject, with the goal of acquainting our reader with the big picture.  Subsection \ref{ss:closedresults} presents our results for closed 3-manifolds, and Subsection \ref{ss:knotresults} presents analogous results for knot complements in $S^3$.  We give a sketch of our proofs in Section \ref{s:sketch}.  A brief outline of the dissertation is provided in Subsection \ref{ss:outline}.  The chapter concludes with Section \ref{s:related}, in which we relate our work to the existing literature.

\section{Background}
\label{s:background}
Knots have frustrated people for a long time, well before the proliferation of portable headphones led to countless man-hours of tedious untying.  One of the more ancient and dramatic stories is that of the Gordian knot.  This famous knot was so complicated, a prophecy was born: whoever could unravel it was destined to rule all of Asia (modern Turkey).  One version of the legend has it that after trying but failing to untie the Gordian knot, Alexander the Great drew his sword, and cut it clear in half, thus conquering his fate just like he would conquer the entire known world.  Unfortunately, that strategy is not an option for anyone trying to keep their headphones both untangled and functional.

Another version of the legend gives us something more to aspire to: Alexander unraveled the Gordian knot simply by being more clever than everyone else (in this case, by removing the linchpin of the ox-cart yolk to which the knot was tied).  
The subject of this dissertation is computational complexity in 3-dimensional geometric topology, the goal of which is to answer the following question: to what extent is it possible to understand all 3-dimensional spaces as well as Alexander the Great understood the Gordian knot?

The mathematical origins of this question---albeit with a more precise formulation---are found in the work of Poincar\'{e} and Dehn.  Both were interested in trying to characterize the 3-sphere $S^3$ topologically, especially using the fundamental group.  While Dehn did not prove the Poincar\'{e} conjecture, he understood that combinatorial group theory is useful for studying 3-dimensional manifolds.  For instance, Dehn used group theory to show that the left- and right-handed trefoils are distinct \cite{Dehn:trefoils}.  His applications of group theory to topology provided motivation for his interest in algorithmic questions about finitely presented groups \cite{Dehn:problems}.  While the precise mathematical definition of ``algorithm" had yet to be developed (by Church, Turing, and others), Dehn was surely well aware that if the Poincar\'{e} conjecture were true, it would not be possible to actually use it for detecting $S^3$ if one did not also have a way of solving the triviality question for finitely-presented 3-manifold groups.

The first big step in understanding 3-manifolds algorithmically came from the work of Haken.  In \cite{Haken:normal}, he used normal surface theory to show that there exists an algorithm to detect the unknot.  Normal surface techniques were extended to other situations, for example, to show there exists an algorithm for determining if two Haken 3-manifolds are homeomorphic \cite{Hemion:haken}.

An important point about the aforementioned work is that none of it explicitly considered the resources required for algorithms.  History is partially to blame for this, since the modern understanding of algorithms had not yet been fully developed.  (Of course, it is clear even without modern definitions that Alexander the Great's first algorithm runs in constant time as long as one has a sword as a resource.)  In retrospect, the results of Dehn and Haken are interpreted as \emph{computability} results, as opposed to \emph{complexity} results.

One of the first complexity theoretic results in low-dimensional topology is due to Anick, who showed that computing the (rational) homotopy groups of a simply-connected 4-dimensional CW complex is $\shP$-hard \cite{Anick:homotopy}.  One caveat is that while the space is 4-dimensional (arguably a low dimension), the dimension of the homotopy groups must be large in order to have hardness.

While Welsh was perhaps the first to pose qualitative complexity theoretic questions in a strictly low-dimensional, \emph{geometric} topology setting \cite{Welsh}, the credit for proving the first theorem along these lines goes to Hass, Lagarias and Pippenger \cite{HLP:complexity}.  They showed that the problem of deciding if a diagram of a knot represents the unknot is in $\NP$.

Let us explain what we mean by qualitative complexity.  Briefly, this is the study of complexity classes defined by qualitative resources.  Whereas a ``quantitative" complexity class---such as polynomial time $\mathsf{P}$ or exponential space $\mathsf{EXPSPACE}$---is defined via an explicit quantitative bound on the resources required to solve a problem, a qualitative complexity class allows some kind of ``qualitative" assistance in the computation.  For instance, $\NP$ consists of those decision problems whose YES instances can be verified in polynomial time with some polynomial amount of advice.  The polynomial amount of advice serves as the qualitative assistance.

An important point is that qualitative complexity classes are not always realistic, in the sense that they do not necessarily model resources that one expects to have in the real world.  Nevertheless, qualitative upper and lower bounds on the complexity of a problem lead to both a more refined understanding of the structure of the problem, as well as more insight into what improvements to algorithms may or may not be possible.

Consider the case of unknot recognition.  In order to show that this problem is in $\NP$, the authors of \cite{HLP:complexity} had to show that not only does every unknot bound a disk, but that one can find a disk that is not ``too complicated."  This is an interesting topological fact in its own right.  Moreover, once one knows the problem is upper-bounded by $\NP$, a wealth of follow-up questions can be asked.  One might start by asking if the problem is $\NP$-complete.  It is widely believed that $\mathsf{P}$ does \emph{not} equal $\NP$, and so $\NP$-completeness results are interpreted as a form conditional hardness.  Thus, it would be useful to know if unknot recognition were $\NP$-complete, since then one would know not to waste their energy looking for improved algorithms, unless they believe they can amaze the world with a proof that $\mathsf{P} = \NP$.

In fact, unknot recognition was later shown to be in $\mathsf{coNP}$ \cite{K:knottedness,Lackenby:norm}.  Brassard showed that if a problem in $\mathsf{coNP} \cap \NP$ is $\NP$-complete, then the polynomial hierarchy collapses.  It is widely believed that this is not the case.  Thus, modulo a difficult complexity theoretic conjecture that has nothing to do with knots, we conclude that unknot recognition is likely not $\NP$-complete.  In particular, it is reasonable to hope to find improvements over the current state of the art.  Perhaps we can always be as clever as Alexander the Great when it comes to untangling unknots---that is, perhaps there is a polynomial-time algorithm for unknot recognition.

Since \cite{HLP:complexity}, there have been many more results concerning the qualitative complexity of problems in 3-manifold topology.  We will not attempt to review them.  Instead, we will content ourselves with another example showing why qualitative complexity is important.

Building on ideas of Kitaev \cite{Kitaev:anyons} and Freedman \cite{Freedman:field}, the results of \cite{FLW:universal,FLW:two} and \cite{FKW:simulation} can be understood as a mathematical proof of concept for topological quantum computing, and its equivalence with the standard circuit modeling of quantum computing.  In particular, the results of \cite{FKW:simulation} imply the existence of a polynomial time quantum algorithm for approximating the Jones polynomial of a knot evaluated at a root of unity (see also \cite{AJL:approx}).  However, this approximation is so bad that it does not reveal anything useful about the topology of the knot.  Furthermore, Kuperberg used the results of \cite{FLW:two} and theorems from quantum complexity theory to show that any topologically useful approximation of the Jones polynomial is $\shP$-hard \cite{K:jones}.  In other words, the very reason that the Jones polynomial is useful for building a quantum computer undermines the usefulness of quantum computers for approximating the Jones polynomial!  The metatheorem is that if quantum computers turn out to be helpful for solving problems in topology, it will not be because of topological quantum computing.

\section{Statement of results}
\label{s:results}
\subsection{Preliminaries}
\label{ss:preliminaries}
Given a finite group $G$ and a path-connected topological space $X$, let
\[ H(X,G) = \{f:\pi_1(X) \to G\} \]
be the set of homomorphisms from the fundamental group of $X$ to $G$.
Then the number $\#H(X,G) = |H(X,G)|$ is an important topological invariant
of $X$. For example, in the case that $X$ is a knot complement and $G =
\Sym(n)$ is a symmetric group, $\#H(X,G)$ was useful for compiling a table
of knots with up to 15 crossings \cite{Lickorish:intro}.  (We use both
notations $\#S$ and $|S|$ to denote the cardinality of a finite set $S$,
the former to emphasize algorithmic counting problems.)

Although these invariants can be powerful, our main results are that
they are often computationally intractable, assuming that $\ccP \neq \NP$.
We review certain considerations:

\begin{itemize}
\item We suppose that $X$ is given by either a finite triangulation or a diagram of a link in $S^3$, as reasonable standards for computational input.

\item We are interested in the case that $\#H(X,G)$ is intractable when
$G$ is fixed and $X$ is the only computational input.  We are also more
interested in $G$ per se, not its subgroups.  Thus, we seek intractability even
when $\#H(X,J)$ is as small as possible for every proper subgroup $J < G$.

\item If $G$ is abelian, then $\#H(X,G)$ is determined by the integral
homology group $H_1(X) = H_1(X;\Z)$; both of these invariants can be computed
in polynomial time (\Thm{th:homology}).  We are thus more interested in
the case that $H_1(X)$ is as trivial as possible and $G$ is perfect, in particular when $G$
is non-abelian simple.

\item If $X$ is a simplicial complex, or even an $n$-manifold with $n \ge
4$, then $\pi_1(X)$ can be any finitely presented group.   By contrast,
3-manifold groups are highly restricted.  We are more interested in the
cases where $X = M$ is either a closed 3-manifold or $X=S^3 \setminus K$ is a knot complement in $S^3$.  If in addition $H_1(M) = 0$, then $M$ is a homology 3-sphere.
\end{itemize}

\subsection{Closed 3-manifolds}
\label{ss:closedresults}
To state our first main result, we pass to the related invariant $\#Q(X,G) =
|Q(X,G)|$, where $Q(X,G)$ is the set of normal subgroups $\Gamma \normaleq
\pi_1(X)$ such that the quotient $\pi_1(X)/\Gamma$ is isomorphic to $G$.

\begin{mthm}
Let $G$ be a fixed, finite, non-abelian simple group.
If $M$ is a triangulated homology 3-sphere regarded as computational
input, then the invariant $\#Q(M,G)$ is $\shP$-complete via parsimonious
reduction.  The reduction also guarantees that $\#Q(M,J) = 0$ for any
non-trivial, proper subgroup $J < G$.
\label{th:main1} \end{mthm}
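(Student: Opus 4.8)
The plan is to establish $\shP$-hardness by reducing from a known $\shP$-complete counting problem — the natural candidate being $\shCSAT$ or $\mathsf{\#3SAT}$, or better yet a counting version of reversible circuit satisfiability, since we want parsimony and we will be manipulating reversible computations. The abstract tells us the strategy: realize a classical reversible computation as an element of the pointed mapping class group $\MCG_*(\Sigma)$ of a closed surface $\Sigma$ of large genus, acting on the set $\{\pi_1(\Sigma)\to G\}$ of $G$-representations, and then build a Heegaard splitting of $M$ out of this data so that $Q(M,G)$ (equivalently, surjections $\pi_1(M)\onto G$ modulo $\Aut G$) corresponds parsimoniously to satisfying assignments of the circuit.

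First I would set up the algebraic model of computation. Fix a non-abelian simple $G$ and choose a suitable finite subset $T\subseteq G$ (for instance a conjugacy class, or a pair of conjugacy classes) that generates $G$ and whose tuples $(g_1,\dots,g_{2n})$ with $\prod[g_{2i-1},g_{2i}]=1$ serve as the ``data registers." The key structural input is that the pointed mapping class group of a genus-$g$ surface acts on such tuples and that this action is rich enough to simulate arbitrary reversible logic on an alphabet with a symmetry: I would prove (or invoke, if it is an earlier lemma) that the subgroup of permutations of $T^{k}$ realized by $\MCG_*(\Sigma)$ contains all ``equivariant reversible circuits," i.e.\ is as large as the symmetry of the alphabet allows — this is the ``topological classical computing" universality theorem alluded to in the abstract, and it is the analog of the Freedman–Larsen–Wang density results. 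The mapping class group elements are built from Dehn twists, which I would track explicitly via their action on $\pi_1$.

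Next I would assemble the 3-manifold. Given a reversible circuit $C$ on $n$ bits computing a predicate, I encode it as a mapping class $\varphi_C\in\MCG_*(\Sigma_g)$, and form the Heegaard splitting $M = H_1 \cup_{\varphi_C} H_2$ of two handlebodies glued along $\Sigma_g$ (suitably stabilized so the gluing can absorb $\varphi_C$). Then $\pi_1(M)$ is the pushout of $\pi_1(H_1)\leftarrow\pi_1(\Sigma_g)\rightarrow\pi_1(H_2)$, so a homomorphism $\pi_1(M)\to G$ is exactly a homomorphism $\pi_1(\Sigma_g)\to G$ killing both handlebody kernels, and the handlebody kernels correspond to ``input is in a fixed register state" and ``output register reads accept." Counting these, modulo $\Aut G$, becomes counting satisfying inputs of $C$. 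Two further adjustments are needed: (i) to force $M$ to be a homology 3-sphere I arrange that the gluing matrix on $H_1(\Sigma_g)$ is the identity (the mapping classes used lie in the Torelli group, or are corrected to do so by composing with Torelli elements that do not disturb the representation-counting); (ii) to guarantee $\#Q(M,J)=0$ for every proper nontrivial $J<G$, I choose $T$ and the construction so that any homomorphism hitting the relevant registers must already generate $G$ — e.g.\ because the fixed register values are forced to be a generating tuple for $G$ but not for any proper subgroup. Finally I check parsimony: the bijection between $Q(M,G)$ and accepting assignments must be exact, with no overcounting, which is where the $\Aut G$ quotient and the rigidity of the register constraints have to be reconciled carefully.

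**The main obstacle** I anticipate is the universality step — proving that reversible circuits on the symbol set $T$ (respecting the alphabet's symmetry, i.e.\ the simultaneous conjugation action and whatever relations $\prod[g_{2i-1},g_{2i}]=1$ impose) are \emph{exactly} realizable by elements of $\MCG_*(\Sigma_g)$, or at least a generating set of gates is. This requires both a gate-set completeness argument for equivariant reversible computing (a classical-computing analog of the Solovay–Kitaev / FLW density theorems, presumably the ``equivariant reversible circuit model" the abstract promises) and an explicit dictionary translating each gate into Dehn twists and tracking their effect on $\pi_1(\Sigma_g)\to G$. The supporting difficulties — keeping the Heegaard gluing in the Torelli group so $M$ is a homology sphere, and simultaneously killing $\#Q(M,J)$ for all proper $J$ — are real but should follow once the universality machinery and the generating-tuple bookkeeping are in place. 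Everything else (building the Heegaard splitting, computing $\pi_1(M)$ as a pushout, verifying the reduction is polynomial-time and parsimonious) is essentially diagram-chasing in combinatorial group theory.
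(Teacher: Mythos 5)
Your high-level architecture matches the paper's: reduce from (reversible) circuit satisfiability, realize gates as mapping class gadgets acting on the set of surface-group representations, assemble the manifold from a Heegaard splitting twisted by the resulting mapping class, use the Torelli group to force a homology sphere, and you correctly identify equivariant reversible universality as the crux. But two genuine obstructions are missing from the plan, and both require new ideas rather than careful bookkeeping.

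The first is the trivial homomorphism. Whatever alphabet of register states you choose, the trivial map $z:\pi_1(\Sigma_g)\to G$ is a global fixed point of the entire $\MCG_*(\Sigma_g)$-action and extends over every handlebody, so it can never be excluded by gadgets or by initialization/finalization constraints. Worse, mixed states in which some registers carry $z$ and others carry surjections also initialize and are invisible to any ``the fixed registers form a generating tuple'' condition imposed register-by-register; left untreated they produce spurious elements of $H(M,G)$ and destroy parsimony. The paper has to modify the circuit model itself (the $\ZSAT$ model with a ``zombie'' symbol) and append a postcomputation that converts partially-trivial or misaligned states into non-finalizing warning symbols; this is also why the clean statement is about $\#Q(M,G)$ while $\#H(M,G)$ is only \emph{almost} parsimonious, via $|H(M,G)|=1+|\Aut(G)|\cdot|Q(M,G)|$. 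Relatedly, you never mention the Schur invariant $\sch(f)\in H_2(G)$: the $\MCG_*(\Sigma_g)$-orbits on surjections are classified by it, only the class $\sch(f)=0$ can extend over a handlebody, and the whole construction must be confined to $R^0_g$ before any transitivity or universality statement applies. The second obstruction is quantitative parsimony against $\Aut(G)$. The universality input (Dunfield--Thurston) naturally gives the alternating group on $R^0_g/\Aut(G)$, and applying it register-by-register yields a reduction off by a factor of $|\Aut(G)|^n$, not the single factor of $|\Aut(G)|$ needed for a parsimonious reduction to $\#Q(M,G)$. Collapsing this requires (i) lifting the image of the mapping class group from $\Alt(R^0_g/\Aut(G))$ to the group of $\Aut(G)$-equivariant permutations (the paper proves a separate structure theorem for these ``Rubik groups,'' resting on Goursat-type joint-surjectivity lemmas), and (ii) an alignment mechanism in the gate definitions that penalizes states whose registers are twisted by different automorphisms. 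Neither step is diagram-chasing, and without them the reduction is at best weakly parsimonious, which is not enough to conclude, e.g., the $\NP$-completeness corollaries.
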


\Sec{s:classes} gives more precise definitions of the complexity theory
concepts in \Thm{th:main1}.  Briefly, a counting problem is in $\shP$
if there is a polynomial-time algorithm to verify the objects being
counted; it is $\shP$-hard if it is as hard as any counting problem in
$\shP$; and it is $\shP$-complete if it is both in $\shP$ and $\shP$-hard.
A \emph{parsimonious reduction} from a counting problem $g$ to a counting
problem $f$ (to show that $f$ is as hard as $g$) is a mapping $h$, computable
in polynomial time, such that $g(x) = f(h(x))$.  This standard of hardness
tells us not only that $\#Q(M,G)$ is computationally intractable, but
also that any partial information from it is intractable, for instance,
its parity.  (See \Thm{th:vv}.)  An even stricter standard is a \emph{Levin
reduction}, which asks for a bijection between the objects being counted
that is computable in polynomial time (in both directions).  In fact, our
proof of \Thm{th:main1} yields a Levin reduction from any problem in $\shP$
to the problem $\#Q(M,G)$.

Another point of precision is that \Thm{th:main1} casts $\#Q(M,G)$ as
a promise problem, requiring the promise that the simplicial complex
input describes a 3-manifold and more specifically a homology 3-sphere.
Since this promise can be checked in polynomial time (\Prop{p:mpromise}),
this is equivalent to a non-promise problem (since an algorithm to calculate
$\#Q(M,G)$ can reject input that does not satisfy the promise).

The invariants $\#H(X,G)$ and $\#Q(X,G)$ are related by the following
equation:
\begin{equation} |H(X,G)| = \sum_{J \leq G} |\Aut(J)|\cdot|Q(X,J)|.
\label{e:homsum}
\end{equation}
If $\pi_1(X)$ has no non-trivial surjections to any proper subgroup of $G$, as \Thm{th:main1} can provide, then
\begin{equation} |H(X,G)| = 1 + |\Aut(G)|\cdot|Q(X,G)|. \label{e:hq}\end{equation}
Thus we can say that $\#H(M,G)$ is \emph{almost parsimoniously}
$\shP$-complete for homology 3-spheres.  It is parsimonious except for the
trivial homomorphism and up to automorphisms of $G$, which are both minor,
unavoidable corrections.  This concept appears elsewhere in complexity
theory; for instance, the number of 3-colorings of a planar graph is almost
parsimoniously $\shP$-complete \cite{Barbanchon:unique}.

In particular, the fact that $\#Q(M,G)$ is parsimoniously $\shP$-hard
implies that existence is Karp $\NP$-hard (again see \Sec{s:classes}).
Thus \Thm{th:main1} has the following corollary.

\begin{cor}
Let $G$ be a fixed, finite, non-abelian simple group, and
let $M$ be a triangulated homology 3-sphere regarded as computational input.
Then it is Karp $\NP$-complete to decide whether there is a non-trivial
homomorphism $f:\pi_1(M) \to G$, even with the promise that every such
homomorphism is surjective.
\label{c:np} \end{cor}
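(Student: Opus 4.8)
The plan is to establish membership in $\NP$ by hand, and then read off Karp $\NP$-hardness directly from \Thm{th:main1}.

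For membership, I would first extract a finite presentation $\langle x_1,\dots,x_k \mid r_1,\dots,r_\ell\rangle$ of $\pi_1(M)$ from the triangulation in polynomial time (for instance, generators dual to the edges outside a spanning tree of the $1$-skeleton, and relators from the triangles of the $2$-skeleton, so that each relator has bounded length). A witness for a nontrivial homomorphism is a tuple $(g_1,\dots,g_k)\in G^k$; since $G$ is fixed this has size $O(k)$, and one verifies it by evaluating each relator $r_j$ in $G$, checking it equals the identity, and checking that some $g_i\neq 1$. Each group multiplication costs $O(1)$ and the relators have total length polynomial in the input, so the check runs in polynomial time. Hence the problem is in $\NP$, regardless of either promise.

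For hardness, I would apply \Thm{th:main1} to the $\shP$-complete problem of counting satisfying assignments of a Boolean formula $\phi$, whose associated existence problem is $\SAT$ and is Karp $\NP$-complete by Cook--Levin. Let $h$ be the polynomial-time parsimonious reduction provided by \Thm{th:main1}: for every $\phi$, the output $M=h(\phi)$ is a triangulated homology $3$-sphere with $\#Q(M,G)$ equal to the number of satisfying assignments of $\phi$, and with $\#Q(M,J)=0$ for every nontrivial proper $J<G$. The claim to verify is that $\phi$ is satisfiable if and only if $\pi_1(M)$ admits a nontrivial homomorphism to $G$. One direction is immediate: if $\phi\in\SAT$ then $\#Q(M,G)\ge 1$, so there is a surjection $\pi_1(M)\onto G$, which in particular is nontrivial. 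For the converse, given a nontrivial $f\colon\pi_1(M)\to G$, note $\ker f\normaleq\pi_1(M)$ with $\pi_1(M)/\ker f\cong\im f$ a nontrivial subgroup of $G$; since $\#Q(M,J)=0$ for all proper nontrivial $J$, this forces $\im f=G$, so $f$ is surjective, $\#Q(M,G)\ge 1$, and $\phi$ is satisfiable. Thus $\phi\mapsto h(\phi)$ is a Karp reduction from $\SAT$ to the existence problem.

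The same observation shows that the image of $h$ consists only of inputs for which every nontrivial homomorphism $\pi_1(M)\to G$ is surjective, so the reduction lands inside the promised class of instances; combined with \Prop{p:mpromise}, which lets one recognize triangulated homology $3$-spheres in polynomial time, both halves of the promise become harmless, and together with the first paragraph this gives Karp $\NP$-completeness. There is no real obstacle beyond this bookkeeping; the only points that deserve care are that a \emph{parsimonious} counting reduction automatically restricts to a many-one reduction between the associated existence problems, and that the auxiliary guarantee $\#Q(M,J)=0$ for proper nontrivial $J$ is exactly what is needed both to equate ``a nontrivial homomorphism exists'' with ``a surjection exists'' and to meet the surjectivity promise.
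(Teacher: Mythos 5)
Your proposal is correct and follows essentially the same route as the paper, which derives \Cor{c:np} immediately from \Thm{th:main1} via the observation in \Sec{s:classes} that a parsimonious $\shP$-hard counting problem has a Karp $\NP$-hard existence problem, with $\NP$-membership supplied by the certificate argument of \Thm{th:inshp} and the surjectivity promise handled by the guarantee $\#Q(M,J)=0$ for nontrivial proper $J<G$. You have simply written out the details that the paper leaves implicit.
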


\Cor{c:np} in turn has a corollary concerning connected covering spaces.
In the proof of the corollary and later in the dissertation, we let $\Sym(m)$
be the symmetric group and $\Alt(m)$ be the alternating group, both acting
on $m$ letters.

\begin{cor}
For each fixed $m \ge 5$, it is $\NP$-complete to decide
whether a homology 3-sphere $M$ has a connected $m$-sheeted cover, even
with the promise that it has no connected $k$-sheeted cover with $1 < k < m$.
\label{c:covers} \end{cor}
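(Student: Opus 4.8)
The plan is to deduce this from Corollary \ref{c:np} by relating connected covering spaces of $M$ to transitive homomorphisms $\pi_1(M) \to \Sym(m)$, and then relating those to homomorphisms onto $\Alt(m)$ when $m \ge 5$. First, recall the standard correspondence: connected $m$-sheeted covers of $M$ (up to isomorphism of covers) are in bijection with transitive permutation representations $\pi_1(M) \to \Sym(m)$ up to conjugacy, equivalently with conjugacy classes of index-$m$ subgroups of $\pi_1(M)$. So $M$ has a connected $m$-sheeted cover if and only if there is a homomorphism $f:\pi_1(M) \to \Sym(m)$ with transitive image. The key reduction step is to take the homology 3-sphere $M$ produced by \Thm{th:main1}/\Cor{c:np} for the group $G = \Alt(m)$ (which is non-abelian simple for $m \ge 5$), with the promise that every non-trivial homomorphism $\pi_1(M) \to \Alt(m)$ is surjective, and also---crucially---that $\pi_1(M)$ has no non-trivial homomorphism to any proper subgroup $J < \Alt(m)$. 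I would feed exactly this $M$ into the covering-space problem.

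Next I must argue the promise and the equivalence. Compose $\Alt(m) \into \Sym(m)$ with any $f:\pi_1(M)\to\Alt(m)$; a non-trivial such $f$ is surjective onto $\Alt(m)$, which acts transitively on $m$ letters (for $m\ge 3$), so $M$ has a connected $m$-sheeted cover. Conversely, suppose $g:\pi_1(M)\to\Sym(m)$ has transitive image $H \le \Sym(m)$. Since $H_1(M)=0$, the group $\pi_1(M)$ is perfect, so $g$ factors through $\pi_1(M)^{\ab}=0$ followed by... more carefully: the image $H$ is a quotient of a perfect group, hence perfect, so $H \le \Alt(m)$ (as the only perfect subgroup must lie in the commutator subgroup $\Alt(m)$ of $\Sym(m)$ — indeed $H = [H,H] \le [\Sym(m),\Sym(m)] = \Alt(m)$). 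Thus $g$ is really a homomorphism $\pi_1(M)\to\Alt(m)$. By the subgroup clause of \Thm{th:main1}, $g$ is either trivial or surjective onto $\Alt(m)$. If $g$ is trivial its image is the trivial group, which is not transitive on $m \ge 2$ letters; hence $g$ is surjective. Therefore $M$ has a connected $m$-sheeted cover if and only if $\pi_1(M)$ surjects onto $\Alt(m)$, which by \Cor{c:np} is $\NP$-complete to decide. Membership in $\NP$ is immediate: a witness is a transitive representation $\pi_1(M)\to\Sym(m)$, i.e. the monodromy of the cover, checkable in polynomial time from the triangulation.

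Finally I address the strengthened promise in the statement: that $M$ has \emph{no} connected $k$-sheeted cover for $1 < k < m$. Translating as above, a connected $k$-sheeted cover corresponds to a transitive $g:\pi_1(M)\to\Sym(k)$; by the same perfectness argument its image lies in $\Alt(k)$, and a transitive subgroup of $\Sym(k)$ of order $>1$ has order at least $k$. So it suffices to guarantee that $\pi_1(M)$ has no non-trivial homomorphism to $\Sym(k)$, equivalently no non-trivial homomorphism to $\Alt(k)$, for every $k$ with $1 < k < m$; and it suffices to kill homomorphisms onto every non-trivial subgroup of $\Sym(k)$ of order at most (say) $m!$, a finite list of finite target groups $J$ depending only on $m$. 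I expect this to follow from exactly the mechanism already used to prove \Thm{th:main1}: the construction there suppresses homomorphisms to proper subgroups $J < G$, and (as the sketch in \Sec{s:sketch} indicates) the technique more generally lets one arrange $\#Q(M,J)=0$ for any prescribed finite collection of finite groups $J$ that are not quotients we wish to realize. The main obstacle is precisely this last point: verifying that the reduction underlying \Thm{th:main1} can be run ``equivariantly'' so as to simultaneously block all the unwanted small quotients while still encoding an arbitrary $\shP$ (or $\NP$) computation into $\#Q(M,\Alt(m))$. Modulo that bookkeeping, the corollary is a formal consequence of \Cor{c:np} together with the covering-space/permutation-representation dictionary and the perfectness of $\pi_1$ of a homology sphere.
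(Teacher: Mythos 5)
Your approach is the same as the paper's: use the dictionary between connected $m$-sheeted covers and transitive homomorphisms to $\Sym(m)$, use perfectness of $\pi_1(M)$ (since $H_1(M)=0$) to push the image into $\Alt(m)$, and invoke \Cor{c:np} with $G=\Alt(m)$. The first two paragraphs are correct and complete.

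The ``main obstacle'' you flag in the last paragraph is not actually an obstacle, and no re-engineering of the reduction behind \Thm{th:main1} is needed. A connected $k$-sheeted cover with $1<k<m$ gives a transitive homomorphism $g:\pi_1(M)\to\Sym(k)$ whose image $H$ lies in $\Alt(k)$ by your perfectness argument; under the standard inclusion $\Sym(k)\into\Sym(m)$ (fixing the last $m-k$ letters), $H$ becomes a non-trivial subgroup of $\Alt(m)$ that is proper because $|H|\le k!\le (m-1)!<m!/2$. Since $Q(M,J)$ counts quotients of $\pi_1(M)$ isomorphic to $J$, the guarantee $\#Q(M,J)=0$ for all non-trivial proper $J<\Alt(m)$ --- which is already part of the statement of \Thm{th:main1} and is inherited by the instances used in \Cor{c:np} --- rules out any such $H$ as a quotient. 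So the promise that $M$ has no connected $k$-sheeted cover for $1<k<m$ holds automatically for the manifolds produced by the existing reduction; there is no extra family of target groups to suppress and no bookkeeping left to do.
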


\begin{proof} Recall that $\Alt(m)$ is simple when $n \ge 5$.
The $m$-sheeted covers $\tM$ of $M$ are bijective with homomorphisms
$f:\pi_1(M) \to \Sym(m)$, considered up to conjugation in $\Sym(m)$.
If $M$ is a homology 3-sphere, then $\pi_1(M)$ is a perfect group and we can
replace $\Sym(m)$ by $\Alt(m)$. If $\tM$ is disconnected, then $f$ does not
surject onto $\Alt(m)$.  Thus, we can apply \Cor{c:np} with $G = \Alt(m)$.
\end{proof}

\subsection{Knot complements}
\label{ss:knotresults}
Our second main result is an analog of Theorem \ref{th:main1} for complements of knots in $S^3$.  In this case, we analyze coloring invariants, which are more refined invariants than $\#H(X,G)$.  Our conclusions are slightly different from Theorem \ref{th:main1} because knot complements have nontrivial homology.

Let $K$ be a diagram of an oriented knot in the 3-sphere $S^3$.  Around 1960, Ralph Fox defined the idea of a $3$-coloring of the diagram $K$: an assignment of a color $1,2,3$ to each arc in the diagram so that at every crossing, the color of the over-arc is the average (mod $3$) of the colors of the two other arcs (see \cite[Ch.~6, Exercises 6-7]{CF:knot}).  It is easy to check that the number of $3$-colorings of a diagram is invariant with respect to the Reidemeister moves, and, hence, is an isotopy invariant of the knot represented by the diagram $K$.

With some basic algebraic topology available, generalizations of this definition abound (as Fox was well aware).  Fix a finite group $G$, and a conjugacy class $C \subset G$.  For convenience, we fix a \emph{meridian}, \ie, an element $\gamma$ of the knot group $\pi_1(S^3 \setminus K)$ that is freely homotopic to a simple closed curve that winds once around $K$ in the direction determined by the right hand rule.  (For example, we can let $\gamma$ be any of the Wirtinger generators of the knot group.)  Let
\[ H(K,G,C) \defeq \{ f: \pi_1(S^3 \setminus K) \to G \mid f(\gamma) \in C \}\]
denote the set of homomorphisms from the knot group to $G$ taking $\gamma$ to $C$.  Since all meridians are conjugate in $\pi_1(S^3\setminus K)$, $H(K,G,C)$ does not depend on the choice of $\gamma$, and $\#H(K,G,C)=|H(K,G,C)|$ is an integer valued invariant of knots.

If $G=D_{6}$ is the dihedral group of order $6$, and $C$ is the conjugacy class of reflections, then $\#H(K,G,C)$ is precisely the number of Fox $3$-colorings of $K$ as defined above.  For general $G$ and $C$, we call elements of the set $H(K,G,C)$ \emph{$C$-colorings} of $K$.  The goal of our second main theorem is to show that for many choices of $G$ and $C$, counting $C$-colorings of knots is computationally intractable.  Fox observed ``\dots $A_5$ is a simple group, so that I know of no method of finding representations on $A_5$ other than just trying" \cite{F:quick}.  Theorem \ref{th:main2} makes this precise by showing the existence of an efficient method for this problem implies $\mathsf{P} = \mathsf{NP}$.  It is expected that this is not the case, and that for the most difficult problems in $\NP$ one can do no better than ``just trying."

Before stating our precise results, we make some observations about $H(K,G,C)$.  First, we can refine the coloring invariant $\#H(K,G,C)$ by keeping track of which element of $C$ the meridian $\gamma$ maps to: for $c \in C$, we let
\[ H(K,\gamma,G,c) \defeq \{ f \in H(K,G,C) \mid f(\gamma) = c \}. \]
If $\alpha: G \to G$ is any automorphism, then
\[ |H(K,\gamma,G,c)| = |H(K,\gamma,G,\alpha(c))|.\]
In particular, $|H(K,\gamma,G,c)| = |H(K,\gamma,G,c')|$ for any $c,c' \in C$, so
\[ |H(K,G,C)| = |C| \cdot |H(K,\gamma,G,c)| \]
and $\#H(K,\gamma,G,c)=|H(K,\gamma,G,c)|$ does not depend on the choice of $\gamma$.

Let $\Aut(G,c)$ denote the automorphisms of $G$ that fix $c$.  Our second set of observations comes from considering the action of $\Aut(G,c)$ on $H(K,\gamma,G,c)$.  Define
\[ Q(K,\gamma,G,c) \defeq  \{ \Gamma \lhd \pi_1(S^3 \setminus K) \mid \exists \alpha: \pi_1(S^3 \setminus K) / \Gamma \xrightarrow{\cong} G, \alpha(\gamma) = c \}. \]
Alternatively, we can equate $Q(K,\gamma,G,c)$ with the $\Aut(G,c)$-equivalence classes of surjective homomorphisms in $H(K,\gamma,G,c)$.  Thus, $\#Q(K,\gamma,G,c)=|Q(K,\gamma,G,c)|$ does not depend on the choice of $\gamma$ and is an invariant of $K$.  Since $\Aut(G,c)$ acts freely on the subset of surjections in $H(K,\gamma,G,c)$, we have
\[ |H(K,\gamma,G,c)| = \sum_{c \in J \leq G} |\Aut(J,c)| \cdot |Q(K,\gamma,J,c)|.\]
Combining our observations, we conclude
\[ |H(K,G,C)| = \sum_{c \in J \leq G} |C| \cdot |\Aut(J,c)| \cdot |Q(K,\gamma,J,c)|.\]
This formula is important for understanding the complexity of $\#H(K,G,C)$, because it shows the invariant is constrained: $\#H(K,G,C)$ is always a weighted sum of the $\#Q(K,\gamma,H,c)$ invariants, with multiplicities independent of $K$ and $\gamma$.  Our main theorem implies that if $G$ is nonabelian simple, then $\#H(K,G,C)$ is as difficult to compute as could be expected, given these constraints.

\begin{mthm}
Let $G$ be a fixed, finite, non-abelian simple group, and fix a nontrivial element $c \in G$. If $K$ is an oriented knot diagram with meridian $\gamma \in \pi_1(S^3 \setminus K)$, together regarded as computational input, then the invariant $\#Q(K,\gamma,G,c)$ is $\shP$-complete via parsimonious reduction.  The reduction also guarantees that $\#Q(K,\gamma,J,c) = 0$ whenever $J$ is a proper subgroup of $G$ distinct from the cyclic subgroup $\langle c \rangle$.
\label{th:main2}
\end{mthm}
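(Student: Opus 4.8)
We outline the plan; it separates into an easy upper bound and a hard lower bound, the latter being a parsimonious (in fact Levin) reduction from a $\shP$-complete counting problem such as $\shCSAT$.

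\emph{Containment in $\shP$.} Present $K$ by a Wirtinger diagram with $N$ crossings, so $\pi_1(S^3 \setminus K)$ has $N$ generators (the arcs) and $N$ relations, and let $\gamma$ be the designated arc. An element counted by $Q(K,\gamma,G,c)$ is, up to the free action of the fixed finite group $\Aut(G,c)$ on surjections, a tuple in $C^N$ satisfying the Wirtinger relations, sending $\gamma$ to $c$, and having image all of $G$; every one of these conditions is checkable in polynomial time. Hence a nondeterministic polynomial-time machine that guesses such a tuple and accepts iff it is additionally the lexicographically least element of its $\Aut(G,c)$-orbit has exactly $\#Q(K,\gamma,G,c)$ accepting paths.

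\emph{Hardness.} I would work with the pointed mapping class group of an $n$-punctured disk $D_n$, which is the braid group $B_n$, acting by the Hurwitz action on the $C$-colorings of $D_n$, i.e.\ on tuples $(g_1,\dots,g_n)\in C^n$ with prescribed boundary product $g_1\cdots g_n$. The trace closure (or a bridge closure) of a braid $\beta\in B_n$ whose underlying permutation is an $n$-cycle is a knot $K_\beta$ whose $C$-colorings sending a chosen meridian to $c$ are exactly the fixed points of the Hurwitz action of $\beta$ on the relevant slice of $C^n$. The engine of the argument---parallel to the ``topological classical computing'' result behind \Thm{th:main1}, but carried out with braids on a punctured disk instead of handlebody mapping classes---is then a lemma to the effect that a bit can be encoded by the colors on a bounded block of strands, and that a finite family of tangles realizes a universal reversible gate set (say $\NOT$, $\CNOT$ and the Toffoli gate $\CCNOT$) on such encoded bits, \emph{equivariantly} with respect to the residual symmetry of the color set that is quotiented out in the definition of $\#Q(K,\gamma,G,c)$. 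Since a nontrivial conjugacy class in a non-abelian simple group generates the group, one further arranges the gadgets so that the colors appearing in any non-constant coloring generate all of $G$; together with the fact that the only $C$-coloring with $\gamma\mapsto c$ and abelian image is the constant coloring $g_i\equiv c$ (image $\langle c\rangle$), this yields the promised vanishing $\#Q(K,\gamma,J,c)=0$ for proper $J\ne\langle c\rangle$. Given a formula or circuit $\varphi$, I would then assemble in polynomial time a braid $\beta_\varphi$ that, on the encoded registers, leaves the $k$ ``variable'' strands free to take either bit-value, evaluates $\neg\varphi$ into a ``flag'' register using ancilla strands, uncomputes the ancillas, and is closed up through a ``readout'' tangle admitting a consistent coloring precisely when the flag carries the accepting value; after standard adjustments (Markov stabilizations, padding by trivial strands) the closure is a knot $K_\varphi$ with a distinguished meridian $\gamma$, and the surjective $C$-colorings with $\gamma\mapsto c$ biject---modulo $\Aut(G,c)$, and setting aside the one constant coloring---with the accepting inputs of $\varphi$, so $\#Q(K_\varphi,\gamma,G,c)=\shCSAT(\varphi)$.

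The main obstacle is precisely the construction of the equivariant universal gate set by explicit tangles, done simultaneously with the subgroup-generation constraint and uniformly over all non-abelian simple $G$ and all nontrivial $c$: one must produce tangles whose Hurwitz action implements reversible Boolean gates on the chosen color-encoding, verify that these gates commute with the residual color symmetry (so that counting colorings up to $\Aut(G,c)$ genuinely counts computations, with no parasitic multiplicity), and verify that every non-constant coloring is forced to be surjective. Coupled to this is the bookkeeping that keeps the reduction exactly parsimonious---the ancilla strands must be truly pinned down by the gadgets and the closure, so that no extra factor of $2^{\#\text{ancillas}}$ intrudes, and each variable strand must admit exactly two colorings. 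It is here that the richness of conjugacy classes in a finite simple group, and the absence of nontrivial normal subgroups, are actually used; once they are in place, combining the reduction with containment in $\shP$ gives $\shP$-completeness.
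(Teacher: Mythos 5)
Your upper bound is essentially the paper's, and your overall architecture for hardness (Hurwitz action of braids on $C$-colorings of a punctured disk, encoding reversible circuits, closing up to a knot) is the right skeleton. But the proposal has two genuine gaps. First, the closure you lean on --- the trace/Markov closure, where solutions are \emph{fixed points} of the Hurwitz action of $\beta$ --- does not work here: every non-surjective coloring (in particular every coloring valued in a proper subgroup containing $c$, and every ``misaligned'' coloring) is automatically a fixed point of any gadget braid that is built to act trivially on non-surjective data, so these spurious solutions all survive the closure and swamp the count; there is no parsimonious way to subtract them. The paper instead uses the \emph{plat} closure, precisely because the top and bottom trivial tangles impose initialization and finalization conditions that the spurious states fail to satisfy (they are preserved by every gadget and then cannot finalize). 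Even then one constant coloring with image $\langle c\rangle$ unavoidably survives, which is why the circuit model must be modified to carry a ``zombie'' symbol and why the reduction to $\#H$ is only almost parsimonious while the reduction to $\#Q$ is parsimonious.

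Second, the step you flag as ``the main obstacle'' --- producing a finite family of \emph{pure} braids that realize a universal equivariant reversible gate set on the encoded alphabet while acting trivially on all non-surjective colorings, uniformly in $G$ and $c$ --- is not an implementation detail but the mathematical core of the theorem, and nothing in the proposal supplies it. In the paper it is Theorem \ref{th:rvrefine}, which requires: the Conway--Parker lifting invariant $\inv_v$ and the Conway--Parker/Roberts--Venkatesh theorems (Theorem \ref{th:rv}) to control the braid orbits on surjections; the Rubik group structure theorem (Theorem \ref{th:rubik}) to lift the action from $\Alt(R_v^0/\Aut(G,C))$ to $\Rub_{\Aut(G,C)}(R_v^0)$, which is exactly what makes the count parsimonious rather than off by a factor of $|\Aut(G,c)|^n$; and Goursat-type joint surjectivity arguments plus a cardinality estimate to disentangle the action on $R_v^0$ from the action on non-surjective homomorphisms. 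Without these inputs your gadgets are not known to exist, the equivariance and alignment bookkeeping cannot be verified, and the vanishing of $\#Q(K,\gamma,J,c)$ for proper $J \neq \langle c\rangle$ does not follow. Mochon-style universality results are not sufficient substitutes, because they say nothing about the specific initialization and finalization conditions forced by a plat presentation.
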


We refer the reader to the paragraph after the statement of Theorem \ref{th:main1} for a brief review of the relevant complexity theory.  See Chapter \ref{ch:complexity} for more details.

The theorem provides knots $K$ with meridians $\gamma$ such that
\[\begin{aligned}
|H(K,G,C)| &= |C| \cdot |\Aut(\langle c\rangle,c)| \cdot 1 + |C| \cdot |\Aut(G,c)| \cdot |Q(K,\gamma,G,c)|\\
&= |C| + |C| \cdot |\Aut(G,c)| \cdot |Q(K,\gamma,G,c)|,
\end{aligned}\]
where  $C$ is the conjugacy class of the group element $c$.  The first term corresponds to $|C|$ unavoidable homomorphisms that factor through the abelianization $\pi_1(K)_{ab} \cong \mathbb{Z}$ and have a cyclic image generated by an element of $C$.  In other words, the first term counts the $|C|$ unavoidable trivial $C$-colorings.  The remaining homomorphisms are all surjective.  Thus, our reduction to $\#H(K,G,C)$ is almost parsimonious in the same way that our reduction to $\#H(M,G)$ is.

Just as we had Corollary \ref{c:np} from Theorem \ref{th:main1}, we conclude that finding nontrivial $C$-colorings is $\NP$-complete in the strictest sense:

\begin{cor}
Let $G$ be a fixed, finite, non-abelian simple group, and fix a nontrivial conjugacy class $C \subset G$.  If $K$ is an oriented knot diagram, thought of as computational input, then deciding whether $\#H(K,G,C) > \#C$ is $\NP$-complete via Karp reduction, even with the promise that every such homomorphism is surjective.
\label{c:main}
\end{cor}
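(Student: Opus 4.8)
The plan is to derive this corollary directly from Theorem \ref{th:main2}, mirroring how Corollary \ref{c:np} is obtained from Theorem \ref{th:main1}. First I would observe that the decision problem is in $\NP$: given a knot diagram $K$, a nondeterministic machine guesses a function $f$ from the Wirtinger generators of $\pi_1(S^3\setminus K)$ to $G$, checks in polynomial time that $f$ respects all the Wirtinger relations (so that it is a genuine homomorphism), checks that $f(\gamma)\in C$, and accepts; since $|G|$ is a constant, each such $f$ is described by a polynomial amount of data and verification is polynomial-time. A certificate exists precisely when there is a $C$-coloring beyond the $\#C$ trivial ones, i.e.\ when $\#H(K,G,C)>\#C$, because the $\#C$ homomorphisms factoring through $\pi_1(K)_{\ab}\cong\Z$ with cyclic image in $C$ are always present and any additional $C$-coloring witnesses ``$>\#C$''. (One must be mildly careful that the guessed $f$ is \emph{not} one of these trivial colorings; but this too is a polynomial-time check, or alternatively one guesses a normal subgroup presentation and verifies the quotient is $G$ with meridian going to $c$.)

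Next I would establish $\NP$-hardness under Karp reduction. Here I invoke the parsimonious reduction of Theorem \ref{th:main2}: there is a polynomial-time map $x\mapsto (K_x,\gamma_x)$ from instances of a fixed $\NP$-complete problem (say $\SAT$, viewed through its counting version $\shSAT$) such that $\#Q(K_x,\gamma_x,G,c)$ equals the number of satisfying assignments of $x$, and moreover $\#Q(K_x,\gamma_x,J,c)=0$ for every proper subgroup $J<G$ other than $\langle c\rangle$. By the displayed formula just before the statement of Theorem \ref{th:main2},
\[
|H(K_x,G,C)| = \#C + \#C\cdot|\Aut(G,c)|\cdot\#Q(K_x,\gamma_x,G,c),
\]
so $\#H(K_x,G,C)>\#C$ if and only if $\#Q(K_x,\gamma_x,G,c)>0$, which holds if and only if $x$ is satisfiable. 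This is exactly a Karp reduction from $\SAT$ to our decision problem. The promise that ``every such homomorphism is surjective'' is automatically satisfied by the $J=0$ guarantee of Theorem \ref{th:main2}: apart from the $\#C$ trivial colorings with image $\langle c\rangle$, there are no colorings with image a proper subgroup, so every nontrivial $C$-coloring of $K_x$ is onto $G$.

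The only genuine subtlety—and the step I would be most careful about—is the bookkeeping with the trivial colorings and the choice of meridian: one must confirm that the $\#C$ homomorphisms with cyclic image really are the \emph{only} colorings that can occur ``for free'' on the constructed knots, which is precisely what the $\#Q(K,\gamma,J,c)=0$ clause of Theorem \ref{th:main2} buys us, and that the reduction of that theorem outputs the meridian $\gamma$ explicitly so that the hypothesis of the present corollary is met. Everything else is routine: $\NP$ membership via Wirtinger presentations, and $\NP$-hardness as a one-line consequence of a parsimonious (hence existence-preserving) reduction. I would therefore present the proof in three short paragraphs—(i) membership in $\NP$, (ii) the reduction and the if-and-only-if via the displayed formula, (iii) verification of the surjectivity promise—and close.
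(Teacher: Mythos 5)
Your proposal is correct and follows exactly the route the paper intends: the paper leaves the proof of Corollary~\ref{c:main} implicit (``just as we had Corollary~\ref{c:np} from Theorem~\ref{th:main1}''), and the intended argument is precisely yours---membership in $\NP$ via Wirtinger presentations, Karp hardness from the parsimonious reduction to $\#Q(K,\gamma,G,c)$ combined with the displayed formula for $\#H(K,G,C)$, and the surjectivity promise from the vanishing of $\#Q(K,\gamma,J,c)$ for proper subgroups $J \neq \langle c\rangle$. No gaps.
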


We remark that there is no analog of Corollary \ref{c:covers} for knots because knot complements always have cyclic covers of every degree.

\section{Proof sketches}
\label{s:sketch}
We now sketch the proofs of Theorems \ref{th:main1} and \ref{th:main2}.  We first discuss Theorem \ref{th:main1} because it is easier.

\subsection{Theorem \ref{th:main1} proof sketch}
\label{ss:sketch1}
Let $\Sigma_g$ be a standard oriented surface of genus $g$ with a marked basepoint, and let $G$
be a (not necessarily simple) finite group.  Then we can interpret the
set of homomorphisms, or \emph{representation set}, \[ \hR_g(G) \defeq
H(\Sigma_g,G) = \{f:\pi_1(\Sigma_g) \to G\} \] as roughly the set of
states of a computer memory.  We can interpret a word in a fixed generating
set of the pointed, oriented mapping class group $\MCG_*(\Sigma_g)$ as a
reversible digital circuit acting on $\hR_g(G)$, the set of memory states.
(See Section \ref{s:circuits} and Chapter \ref{ch:CSP}  for discussion of complexity of circuits and
reversible circuits.)  Every closed, oriented 3-manifold $M$ can be
constructed as two handlebodies $(H_g)_I$ and $(H_g)_F$ that are glued
together by an element $\phi \in \MCG_*(\Sigma_g)$.  We can interpret
$\phi$ as a reversible digital circuit in which the handlebodies partially
constrain the input and output.

To understand the possible effect of $\phi$, we want to decompose $\hR_g(G)$
into $\MCG_*(\Sigma_g)$-invariant subsets.  The obvious invariant of $f
\in \hR_g(G)$ is its image $f(\pi_1(\Sigma_g)) \leq G$; to account for it,
we first restrict attention to the subset
\[ R_g(G) \defeq \{f: \pi_1(\Sigma_g) \onto G\} \subseteq \hR_g(G) \]
consisting of surjective homomorphisms.

We must also consider a less obvious invariant.  Let $BG$ be the classifying
space of $G$, and recall that the group homology $H_*(G) = H_*(G;\Z)$
can be defined as the topological homology $H_*(BG)$.  Recall that a
homomorphism $f:\pi_1(\Sigma_g) \to G$ corresponds to a map $f:\Sigma_g
\to BG$ which is unique up to pointed homotopy.  Every $f \in \hR_g(G)$
then yields a homology class
\[ \sch(f) \defeq f_*([\Sigma_g]) \in H_2(G), \]
which we call the \emph{Schur invariant} of $f$; it is
$\MCG_*(\Sigma_g)$-invariant.  Given $s \in H_2(G)$, the subset
\[ R_g^s(G) \defeq \{f \in R_g \mid \sch(f) = s\} \]
is then also $\MCG_*(\Sigma_g)$-invariant.  Note that $\sch(f)$ is not always
$\Aut(G)$-invariant because $\Aut(G)$ may act non-trivially on $H_2(G)$.
Fortunately, $R_g^0(G)$ is always $\Aut(G)$-invariant.  We summarize the
relevant results of Dunfield-Thurston in the following theorem.

\begin{theorem}[Dunfield-Thurston {\cite[Thms. 6.23 \& 7.4]{DT:random}}] Let
$G$ be a finite group.
\begin{enumerate}
\item For every sufficiently large $g$ (depending on $G$), the Schur invariant $\sch$ is a complete invariant for the orbits of the action of $\MCG_*(\Sigma_g)$ on $R_g(G)$.
\item If $G$ is non-abelian and simple, then for every sufficiently large
$g$, the image of the action of $\MCG_*(\Sigma_g)$ on $R^0_g(G)/\Aut(G)$
is $\Alt(R^0_g(G)/\Aut(G))$.
\end{enumerate}
\label{th:dt} \end{theorem}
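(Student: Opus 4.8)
The plan is to prove the two parts separately, each by a stabilization argument in the genus $g$. For Part~(1), one inclusion is immediate: an orientation-preserving homeomorphism fixes the fundamental class $[\Sigma_g]\in H_2(\Sigma_g;\Z)$, so $\sch(f\circ\phi_*)=\sch(f)$ for every $\phi\in\MCG_*(\Sigma_g)$; thus $\sch$ is constant on orbits and each $R_g^s(G)$ is a union of orbits. The real content is the converse for $g$ large. First I would introduce the handle-stabilization map $\mathrm{st}\colon R_g(G)\to R_{g+1}(G)$ that sends the two new standard generators to the identity; it is equivariant for the natural inclusion $\MCG_*(\Sigma_g)\hookrightarrow\MCG_*(\Sigma_{g+1})$, so it descends to a map of orbit sets, and --- because a spare handle can always be freed up by handle slides once $g$ exceeds the number of generators $G$ needs --- every orbit of surjections at genus $g+1$ meets the image of $\mathrm{st}$. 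Hence the number of $\MCG_*$-orbits of surjections is nonincreasing in $g$ and bounded below, so it eventually stabilizes.

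The next step is to identify the stable orbit set with $H_2(G)$. Fixing a free presentation $1\to R\to F\to G\to 1$ with $F=F_{2g}$, a surjection $f\colon\pi_1(\Sigma_g)\onto G$ together with a lift of a standard generating tuple produces a word $\prod_i[a_i,b_i]\in R\cap[F,F]$ whose class in $(R\cap[F,F])/[F,R]\cong H_2(G)$ (Hopf's formula) is exactly $\sch(f)$, matching the group-homological class with the bordism description of $f_*[\Sigma_g]$. The point is that the relations defining $[F,R]$ --- conjugates of commutators $[x,r]$ with $r\in R$ --- together with the moves that rechoose a generating tuple within its natural orbit are all realized by explicit mapping classes (Dehn twists along nonseparating curves, handle slides, handle interchanges) as soon as one or two spare handles are available; hence two surjections with equal Schur class become $\MCG_*$-conjugate after adjoining boundedly many handles. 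Combined with the previous paragraph, this shows that for $g\ge g_0(G)$ the number of orbits of surjections equals the number of realized Schur classes, i.e.\ $\sch$ is a complete orbit invariant, proving Part~(1).

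For Part~(2), transitivity of $\MCG_*(\Sigma_g)$ on $R_g^0(G)$, hence on $R_g^0(G)/\Aut(G)$, is immediate from Part~(1), since $\sch\equiv 0$ on $R_g^0(G)$. That the image in the symmetric group lies in the alternating group follows because $\MCG_*(\Sigma_g)$ is perfect for $g\ge 3$: its abelianization is trapped between that of $\MCG(\Sigma_g)$, which vanishes, and a quotient of the $\Sp(2g,\Z)$-coinvariants of $H_1(\Sigma_g;\Z)$, which also vanish. What is left --- and what I expect to be the main obstacle --- is to show the image is all of $\Alt(R_g^0(G)/\Aut(G))$.

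For this last step the plan is to combine surface topology with permutation-group theory. First, show the transitive action is primitive: a nontrivial block system is an $\MCG_*$-equivariant partition of $R_g^0(G)/\Aut(G)$, equivalently a mapping-class invariant of surface-group epimorphisms refining ``being a surjection of Schur class $0$'', and one argues that for $G$ simple and $g$ large no such invariant exists, since stably every such invariant is a homotopy/bordism invariant already recorded by $\sch$. Second, produce a mapping class assembled from a bounded number of Dehn twists supported near a few handles whose action on $R_g^0(G)/\Aut(G)$ moves only a number of points bounded independently of $g$, while $|R_g^0(G)/\Aut(G)|\to\infty$; for $g$ large this is an element of prime order and small support inside a primitive group, so Jordan's theorem forces the image to contain the alternating group, and with the reverse containment from perfectness this gives equality. (Alternatively one can try to bootstrap the transitivity upward in $g$ until the action is highly transitive, hence symmetric or alternating.) The delicate points throughout --- uniformity in $g$, the primitivity claim, and the honest topological realization of exactly the permutations one needs --- are precisely where one must use that $G$ is non-abelian simple and that $g$ is large.
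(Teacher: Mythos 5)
First, a point of reference: the paper does not prove \Thm{th:dt} at all --- it is quoted from Dunfield--Thurston \cite{DT:random} --- but it does reprove the exactly analogous braid-group statement (part 2 of Theorem \ref{th:rv}, in Subsection \ref{ss:rvrefine}) and notes there that the argument is identical to Dunfield and Thurston's. So I will judge your proposal against that argument.

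Your treatment of part (1) is, in outline, the standard Dunfield--Thurston/Livingston stabilization argument: $\sch$ is constant on orbits; every orbit at genus $g+1$ meets the image of the stabilization map once $g$ is large enough to free a redundant handle; the orbit count is therefore eventually nonincreasing and bounded below by the number of realized classes in $H_2(G)$; and Hopf's formula plus the realizability of the relevant Nielsen-type moves by handle slides and Dehn twists shows two surjections with equal Schur class become equivalent after boundedly many stabilizations. All of the real work (realizing the algebraic moves by explicit mapping classes, and the redundancy lemma) is deferred, but the architecture is the correct one.

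Part (2) has a genuine gap, in two places. First, your primitivity argument is circular: you assert that any $\MCG_*$-invariant partition of $R^0_g(G)/\Aut(G)$ would be ``a bordism invariant already recorded by $\sch$,'' but that is essentially a restatement of what must be proved. Second, and more seriously, the element of small support needed for Jordan's theorem is not constructed, and there is no reason to expect one to exist a priori: a mapping class built from boundedly many Dehn twists will generically move a positive \emph{proportion} of $R^0_g(G)/\Aut(G)$, not a bounded number of points, and in these arguments the existence of $3$-cycles in the image is an output of knowing the image is $\Alt$, not an input. The actual proof runs through your parenthetical alternative, but the bootstrapping mechanism you omit is the whole point: $n$-tuples of pairwise $\Aut(G)$-inequivalent surjections onto $G$ correspond, by \Lem{l:sjoint} (Goursat/Hall), to surjections onto $G^n$, and the K\"unneth formula identifies their Schur invariants, so part (1) applied to the group $G^n$ gives $\Aut(G)$-set $n$-transitivity of $\MCG_*(\Sigma_g)$ on $R^0_g(G)$ for $g$ large. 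Taking $n=6$ and invoking the classification-of-finite-simple-groups fact that a $6$-transitive group contains the alternating group (or Gilman's theorem, to avoid the classification) finishes the proof; in particular $2$-transitivity, hence primitivity, comes for free from the same device. Your containment of the image in $\Alt$ via perfectness of $\MCG_*(\Sigma_g)$ for $g\ge 3$ is correct.
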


To make effective use of \Thm{th:dt}, we strengthen its second part in
three ways to obtain \Thm{th:dtrefine}.   First, \Thm{th:dt} holds for the
pointed Torelli group $\Tor_*(\Sigma_g)$.  Second, we define an analogue
of alternating groups for $G$-sets that we call \emph{Rubik groups}, and
we establish \Thm{th:rubik}, a non-trivial structure theorem to generate a
Rubik group.  \Thm{th:dtrefine} gives a lift of the image of $\MCG_*(\Sigma_g)$
from $\Alt(R^0_g(G)/\Aut(G))$ to the Rubik group $\Rub_{\Aut(G)}(R^0_g(G))$.
Third, we still obtain the image $\Rub_{\Aut(G)}(R^0_g(G))$ even if
we restrict to the subgroup of $\Tor_*(\Sigma_g)$ that pointwise fixes
$\hR_g(G) \setminus R_g(G)$, the set of non-surjective homomorphisms.

As a warm-up for our proof of \Thm{th:main1}, we can fix $g$, and try
to interpret
\[ A = R_g^0(G)/\Aut(G) \]
as a computational alphabet.  If $g$ is large enough, then we can apply
\Thm{th:dt} to $R^0_{2g}(G)$ to obtain a universal set of reversible binary
gates that act on $A^2 \subset R^0_{2g}(G)/\Aut(G)^2$, implemented as mapping
class elements or \emph{gadgets}.  (A gadget in computational complexity is
an informal concept that refers to a combinatorial component of a complexity
reduction.)  The result can be related to a certain constraint satisfaction problem for reversible circuits $\RSAT_{A,I,F}$.  (See \Sec{s:csp}.  The $\shP$-hardness of
$\RSAT$, established in \Thm{th:rsat}, is a standard result but still takes
significant work.)  We can convert a reversible circuit of width $n$ to an
element $\phi \in \MCG_*(\Sigma_{ng})$ that acts on $A^n$, and then make
$M$ from $\phi$.  In this way, we can reduce $\shRSAT_{A,I,F}$ to $\#Q(M,G)$.

For our actual reduction, we will need to take steps to address three
issues, which correspond to the three ways that \Thm{th:dtrefine} is sharper
than \Thm{th:dt}.
\begin{itemize}
\item We want the larger calculation in $\hR_{ng}(G)$ to avoid symbols in
$\hR_g(G) \setminus R^0_g(G)$ that could contribute to $\#Q(M,G)$.

\item We want a parsimonious reduction to $\#Q(M,G)$, which means that we
must work with $R^0_g(G)$ rather than its quotient $A$.

\item Mapping class gadgets should be elements of the Torelli group,
to guarantee that $M$ is a homology 3-sphere.
\end{itemize}

To address the first issue: We can avoid states in $R^s_g(G)$ with
$s \ne 0$ because, if a surface group homomorphism $f:\pi_1(\Sigma_g)
\onto G$ has $\sch(f) \ne 0$, then it cannot extend over a handlebody.
If $f(G)$ has a non-trivial abelianization, then the fact that we will
produce a homology 3-sphere will kill its participation.  If $f$ is not
surjective but $f(G)$ is perfect, then we will handle this case by acting
trivially on $R_g(Q)$ for a simple quotient $Q$ of $f(G)$.  The trivial
homomorphism $z \in \hR_g(G)$ is particularly problematic because it cannot
be eliminated using the same techniques; we call it the \emph{zombie symbol}.
We define an ad hoc reversible circuit model, $\ZSAT$, that has zombie symbols.
We reduce $\RSAT$ to $\ZSAT$ by converting the zombie symbols to warning
symbols that do not finalize, unless all of the symbols are zombies.
The full construction, given in Lemmas \ref{l:zsat} and \ref{l:glue},
is more complicated because these steps must be implemented with binary
gates in $\MCG_*(\Sigma_{2g})$ rather than unary gates in $\MCG_*(\Sigma_g)$.

To address the second issue: A direct application of \Thm{th:dt} would
yield a factor of $|\Aut(G)|^n$ in the reduction from $\shRSAT_{A,I,F}$ to
$\#H(M,G)$, when the input is a reversible circuit of width $n$.  We want
to reduce this to a single factor of $|\Aut(G)|$ in order to construct a
parsimonious reduction to $\#Q(M,G)$.  The $\ZSAT$ model also has an action
of $J = \Aut(G)$ on its alphabet to model this.  \Lem{l:zsat} addresses
the problem by relying on the Rubik group refinement in \Thm{th:dtrefine},
and by creating more warning symbols when symbols are misaligned relative
to the group action.

To ensure that the resulting manifold is a homology 3-sphere, we
implement gates in the pointed Torelli subgroup $\Tor_*(\Sigma_g)$ of
$\MCG_*(\Sigma_g)$.  This is addressed in \Thm{th:dtrefine}.  Recall that
$\Tor_*(\Sigma_g)$ is the kernel of the surjective homomorphism
\[ f:\MCG_*(\Sigma_g) \to \Aut(H_1(\Sigma_g)) \cong \Sp(2g,\Z) \]
where $H_1(\Sigma_g)$ is equipped with its integral symplectic intersection
form.  The proof of \Thm{th:dtrefine} uses rigidity properties of $\Sp(2g,\Z)$
combined with Goursat's lemma (\Lem{l:goursat}).

\subsection{Theorem \ref{th:main2} proof sketch}
\label{ss:sketch2}
The proof of Theorem \ref{th:main2} is conceptually very similar to the Proof of Theorem \ref{th:main1} but requires some less familiar ideas in place of the Schur invariant.  In this case we consider the action of the $k$-strand braid group $B_k$ on the finite set of $G$-representations
\[ \hat{R}_k(G) \defeq \{ \pi_1(D_k) \to G \}. \]
Here $D_k$ is the $k$-punctured disk
\[ D_k \defeq D^2 \setminus \{p_1,\dots,p_k\}, \]  
where $p_1,\dots,p_k$ are $k$ distinct points in the interior of $D^2$.

By choosing simple closed loops that wind counterclockwise around the $k$ punctures in a standard way, we identify $\pi_1(D^2_k)$ with $F_k$, the free group on $k$ generators.  We further make the identification
\[ \hat{R}_k(G) = G^k.\]
In this notation, the elementary braid generator $\sigma_i$, which swaps strand $i$ over strand $i+1$, takes $(g_1,\dots,g_k)$ to
\[ (g_1,\dots, g_{i-1},g_ig_{i+1}g_i^{-1},g_i,g_{i+2},\dots,g_k). \]

In fact, for the purposes of Theorem \ref{th:main2}, we do not need to study the action of all of $B_k$ on all of $\hat{R}_k(G)$, but only of a certain subgroup $B_v$ on a certain $B_v$-invariant subset $\hat{R}_v(G)$.  See Section \ref{s:disks} for the definitions.  Associated to a homomorphism $f \in \hat{R}_v(G)$ is a $B_v$-invariant denoted $\inv_v(f)$ that plays a role analogous to the Schur invariant in the reduction to closed 3-manifolds.  In analogy with $R_g(G)$ and $R_g^0(G)$ in the previous subsection, we then define subsets $R_v(G)$ (the surjections in $\hat{R}_v(G)$) and $R_v^0(G)$ (the surjections with $\inv_v(f)=0$).  We refer the reader to Section \ref{s:disks} for details.  The importance of $\inv_v(f)$ is exhibited by the following theorem, which is an analog of Theorem \ref{th:dt} for the action of $B_v$ on $R_v(G)$.

\begin{theorem}
Let $G$ be a finite group and let $C$ be a union of conjugacy classes that generates $G$.
\begin{enumerate}
\item (Conway-Parker \cite{CP:hurwitz}, see {\cite[Prop.~4.1]{RV:hurwitz}}) For every sufficiently large $k$ (depending on $G$ and $C$), $\inv_v$ is a complete invariant for the orbits of the action of $B_v$ on $R_v(G)$.
\item (Roberts-Venkatesh {\cite[Thm.~5.1]{RV:hurwitz}}) If $G$ is non-abelian and simple, then for every sufficiently large $k$, the image of the action of $B_v$ on $R_v^0(G)/\Aut(G)$ is $\Alt(R_v^0(G)/\Aut(G))$.
\end{enumerate}
\label{th:rv}
\end{theorem}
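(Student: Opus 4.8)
The plan is to transport the braid-group statement back to the mapping-class-group setting, where Theorem \ref{th:dt} already does the work. The key observation is that $D_k$ is homotopy equivalent to a wedge of $k$ circles, so $\hat R_k(G) = G^k$ via the chosen loops around the punctures; and a homomorphism $f:\pi_1(D_k)\to G$ with all meridians sent into a fixed conjugacy class data $C$ is exactly a $G$-branched-cover datum. The standard device (going back to Conway--Parker, and worked out carefully in \cite{RV:hurwitz}) is to ``double'' or ``close up'' the punctured disk: glue $D_k$ to a surface-with-boundary, or cap the punctures with once-punctured tori, so that a surjection $\pi_1(D_k)\onto G$ with prescribed local monodromy extends to a surjection from a closed surface group $\pi_1(\Sigma_g)\onto G$, with $g$ growing linearly in $k$. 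Under such a capping, the braid group $B_v$ maps into the pointed mapping class group $\MCG_*(\Sigma_g)$ (half-twists on the disk become Dehn twists on the capped surface), and the invariant $\inv_v(f)$ is engineered precisely so that it matches the Schur invariant $\sch$ of the extended homomorphism — this is the content of Section \ref{s:disks}, which I would invoke rather than reprove.

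For part (1): I would first reduce to the case where $C$ is a single conjugacy class by a standard refinement argument (a disjoint union of classes is handled by tracking which class each puncture lies in, which is itself a braid invariant), then set up the Hurwitz/branched-cover correspondence and quote \cite[Prop.~4.1]{RV:hurwitz} directly. Concretely: two elements of $R_v(G)$ with the same $\inv_v$ cap to two surjections $\pi_1(\Sigma_g)\onto G$ with the same $\sch$; by Theorem \ref{th:dt}(1) these lie in one $\MCG_*(\Sigma_g)$-orbit for $g$ large; and one then checks that the connecting mapping classes can be chosen to come from $B_v$ (this is where the ``stabilization'' — adding enough punctures — is essential, since it buys room to realize an arbitrary mapping class by a braid after absorbing handles). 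The completeness of $\inv_v$ is the other direction and is immediate from its definition as a homomorphism-type invariant.

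For part (2): here I would run the ``Rubik / alternating-group'' machinery of Roberts--Venkatesh in parallel with the Dunfield--Thurston argument. Quotient by $\Aut(G)$ and restrict to $\inv_v = 0$. One shows the image of $B_v$ in $\Sym(R_v^0(G)/\Aut(G))$ is (a) contained in $\Alt$ — because sufficiently many of the braid generators act as products of an even number of transpositions on the orbit set, once $k$ is large (a parity/sign computation), and (b) transitive and in fact primitive, using part (1) together with the fact that $B_v$ acts transitively on $R_v^0(G)$ and that the point-stabilizers generate a large subgroup; then a Jordan/O'Nan--Scott-type argument, exactly as in \cite[Thm.~5.1]{RV:hurwitz} and mirroring \cite[Thm.~7.4]{DT:random}, forces the image to be all of $\Alt$. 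I would simply cite \cite{RV:hurwitz} for the detailed verification.

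The main obstacle — and the step I would spend real effort on — is the capping/stabilization bookkeeping that makes ``braids on $D_k$'' line up with ``mapping classes on $\Sigma_g$'' in a way that is compatible with both the invariants ($\inv_v \leftrightarrow \sch$) and the group actions, and that does not lose surjectivity when passing between the punctured-disk and closed-surface pictures. Once that dictionary is set up cleanly (Section \ref{s:disks}), parts (1) and (2) follow from the cited results of Conway--Parker and Roberts--Venkatesh with only routine checking; the theorem is essentially a repackaging of their work in the notation we need for the reduction in Theorem \ref{th:main2}.
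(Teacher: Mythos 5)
To the extent that your proof is ``cite Conway--Parker for (1) and Roberts--Venkatesh for (2),'' you agree with the paper: \Thm{th:rv} is an attribution theorem, part (1) is never reproved (it reappears as \Thm{th:CP}, quoted from \cite{EVW:hurwitz2,RV:hurwitz}), and part (2) is reproved only later, in Subsection \ref{ss:rvrefine}, as a corollary of part (1). The problem is that the independent derivations you sketch around those citations both have real gaps.

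For part (1), your plan to cap the punctured disk to a closed surface and deduce the orbit classification from \Thm{th:dt}(1) founders exactly at the step you flag: after capping, $B_v$ is a very small subgroup of $\MCG_*(\Sigma_g)$, and \Thm{th:dt}(1) only produces \emph{some} mapping class carrying one surjection to the other. Showing that this mapping class can be chosen in the image of $B_v$ is not ``routine checking''; it is the entire content of the Conway--Parker theorem, whose actual proof is a stabilization argument about Hurwitz moves on tuples in $C$, not a reduction to the closed-surface case. (There is also a mismatch of invariants: $\inv_v$ takes values in a torsor over the \emph{reduced} Schur multiplier $H_2(G,C\cup C^{-1})$, a proper quotient of $H_2(G)$ in general, so $\inv_v$ does not simply ``equal'' $\sch$ of a capped-off homomorphism.) For part (2), your route --- parity to get into $\Alt$, then transitivity, primitivity, and a Jordan/O'Nan--Scott argument --- is not what either the paper or \cite{RV:hurwitz} does, and as written it is incomplete: primitivity alone does not force the image to contain $\Alt$, and you never exhibit the extra hypothesis (a Jordan element, or high transitivity) that such theorems require. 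The paper's argument (\Lem{l:trans} and the proof following it) instead applies part (1) to the group $G^6$ with conjugacy class $C^6$, uses \Lem{l:sjoint} and \Lem{l:props}(3) to identify $R^0_v(G^6)$ with $6$-tuples of pairwise $\Aut(G)$-inequivalent elements of $R^0_v(G)$, concludes that $B_v$ acts $6$-transitively on $R^0_v/\Aut(G,C)$, and then invokes the CFSG fact that a $6$-transitive group contains the alternating group. If you want a self-contained proof of (2), that product/multi-transitivity trick is the missing idea; otherwise, simply cite \cite{RV:hurwitz} and do not suggest that (1) follows from \Thm{th:dt}.
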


We reprove the second part of Theorem \ref{th:rv} in Subsection \ref{ss:rvrefine}.  We then prove Theorem \ref{th:rvrefine}, which refines Theorem \ref{th:rv} in all of the same ways that Theorem \ref{th:dtrefine} refines Theorem \ref{th:dt}.  In Section \ref{s:knotreduction}, we reduce $\ZSAT$ to $\#Q(K,\gamma,G,c)$.

\subsection{Outline}
\label{ss:outline}
In Chapter \ref{ch:complexity}, we review the complexity theory necessary for understanding the results and proofs contained in this dissertation.  Section \ref{s:standard} includes some standard algorithms that supplement our main theorems.

Chapter \ref{ch:group} contains a number of results in group theory that are useful later.

Chapter \ref{ch:mcg} studies two different families of mapping class group actions in detail.  Section \ref{s:closed} analyzes the action of $\MCG_*(\Sigma_g)$ on $\hat{R}_g(G)$.  The main result is Theorem \ref{th:dtrefine}, which is a refinement of Theorem \ref{th:dt}.
Section \ref{s:disks} analyzes the action of (a certain subgroup of) $B_k$ on (a certain subset of) $\hat{R}_k(G)$.  The main result is Theorem \ref{th:rvrefine}, which refines Theorem \ref{th:rv} in the same way that Theorem \ref{th:dtrefine} refines Theorem \ref{th:dt}.

The main goal of Chapter \ref{ch:CSP} is to introduce $\#\ZSAT$.  In fact, $\#\ZSAT = \#\ZSAT_{J,A,I,F}$ depends on several parameters.  Lemma \ref{l:zsat} says that $\#\ZSAT$ is $\shP$-complete for any choice of these parameters satisfying some technical conditions.

Chapter \ref{ch:reductions} contains the proofs of the main theorems.  In Section \ref{s:homologyreduction}, we combine Lemma \ref{l:zsat} with Theorem \ref{th:dtrefine} to prove Theorem \ref{th:main1}.  In Section \ref{s:knotreduction}, we combine Lemma \ref{l:zsat} with Theorem \ref{th:rvrefine} to prove Theorem \ref{th:main2}.   We tune the parameters $J,A,I,F$ for $\ZSAT$ differently for each of our two reductions.

The final Chapter \ref{ch:discussion} discusses some ways one might try to improve or extend our results.

\begin{figure}[t]
\begin{tikzpicture}
\node (a) at (0,0) {$\#\CSAT$};
\node[above] at (1.5,0) {Sec.~\ref{s:revcircuits}};
\node (b) at (3,0) {$\#\RSAT$};
\node[above] at (4.5,0) {Sec.~\ref{s:zombies}};
\node (c) at (6,0) {$\#\ZSAT$};
\node[left] at (7,1.2) {Sec.~\ref{s:homologyreduction}};
\node (d) at (8,2) {$\#Q(M,G)$};
\node[left] at (7,-1.2) {Sec.~\ref{s:knotreduction}};
\node (e) at (8,-2) {$\#Q(K,\gamma,G,c)$};
\draw[->] (a) to (b);
\draw[->] (b) to (c);
\draw[->] (c) to (d);
\draw[->] (c) to (e);
\end{tikzpicture}
\caption{The reductions in the proofs of Theorems \ref{th:main1} and \ref{th:main2}.}
\label{f:reductions} \end{figure}
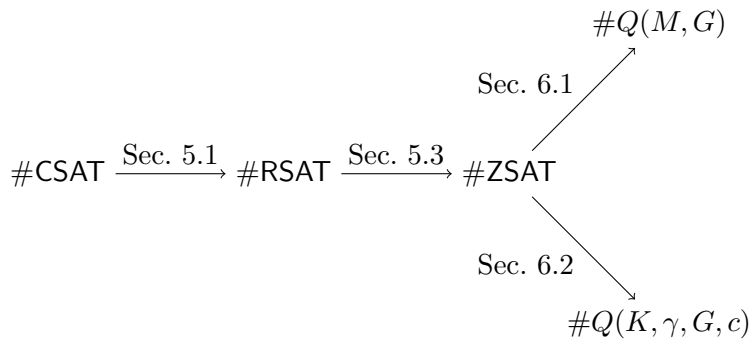

% I should modify this figure to include an arrow to Q(K,gamma,G,c).

\Fig{f:reductions} summarizes the main reductions in the proofs of
Theorems \ref{th:main1} and \ref{th:main2}, and the sections in which they are constructed.

We note that, with the exception of the main results Theorems \ref{th:main1} and \ref{th:main2}, all lemmas, propositions, corollaries and theorems are numbered by section using the same counter.  Equations and figures are numbered by chapter on their own counters.

\section{Related results}
\label{s:related}
As far as we know, the closest prior result to our Theorems \ref{th:main1} and \ref{th:main2} is due to
Krovi and Russell \cite{KR:finite}.  They prove that $\#H(L,\Alt(m),C)$, considered as an invariant of \emph{links} $L$ in $S^3$ and not just knots, is $\shP$-complete when $m \geq 5$ and elements of $C$ have at least 4 fixed points \cite{KR:finite}.  Note their reduction is not almost parsimonious (or even weakly parsimonious), as it contains an exponentially small error term.  In particular, they are unable to deduce $\NP$-completeness of the corresponding decision problem as in our Corollary \ref{c:main}.

\begin{figure}[htb]
\begin{tabular}{l|c|c|c|c|c|c}
& \multicolumn{2}{|c}{one equation} &
    \multicolumn{2}{|c}{equations} &
    \multicolumn{2}{|c}{homomorphisms} \\ \hline
finite target $G$ & $\exists$ \cite{GR:solving} & \#  \cite{NJ:counting}
    & $\exists$ \cite{GR:solving} & \#  \cite{NJ:counting}
    & $\exists$ & \# \\ \hline
abelian & $\ccP$ & $\FP$ & $\ccP$ & $\FP$ & $\ccP$ & $\FP$ \\
nilpotent & $\ccP$ & ? & $\NPC$ & $\shPC$ & ? & ? \\
solvable & ?& ? & $\NPC$ & $\shPC$ & ? & ? \\
non-solvable & $\NPC$ & $\shPC$ & $\NPC$ & $\shPC$ & ? & ? \\
non-ab. simple & $\NPC$ & $\shPC$ & $\NPC$ & $\shPC$ & $\NPC$! & $\shPC$!
\end{tabular}
\caption{The complexity of solving equations over or finding homomorphisms
to a fixed finite target group $G$.  Here $\ccP$ denotes polynomial time for
a decision problem, $\FP$ denotes polynomial time for a function problem,
$\NPC$ is the class of $\NP$-complete problems, and $\shPC$ is the class of
$\shP$-complete problems.  Exclamation marks indicate results implied by Theorem \ref{th:main1}.}
\label{f:knownunknowns}
\end{figure}

We can place \Thm{th:main1} in the context of other counting
problems involving finite groups.   We summarize what is known in
\Fig{f:knownunknowns}.  Given a finite group $G$, the most general analogous
counting problem is the number of solutions to a system of equations that
may allow constant elements of $G$ as well as variables.  Nordh and Jonsson
\cite{NJ:counting} showed that this problem is $\shP$-complete if and only
if $G$ is non-abelian, while Goldman and Russell \cite{GR:solving} showed
that the existence problem is $\NP$-complete.  If $G$ is abelian, then any
finite system of equations can be solved by the Smith normal form algorithm.
These authors also considered the complexity of a single equation.  In this
case, the existence problem has unknown complexity if $G$ is solvable but
not nilpotent, while the counting problem has unknown complexity if $G$
is solvable but not abelian.

If all of the constants in a system of equations over $G$ are set to $1
\in G$, then solving the equations amounts to finding group homomorphisms
$f:\Gamma \to G$ from the finitely presented group $\Gamma$ given by the
equations.  By slight abuse of notation, we can call this counting problem
$\#H(\Gamma,G)$.  This is equivalent to the topological invariant $\#H(X,G)$
when $X$ is a simplicial complex, or even a triangulated $n$-manifold for
any fixed $n \ge 4$; in this case, given any finitely presented $\Gamma$,
we can construct $X$ with $\Gamma = \pi_1(X)$ in polynomial time.  To our
knowledge, \Thm{th:main1} is a new result for the invariant $\#H(\Gamma,G)$,
even though we specifically construct $\Gamma$ to be a 3-manifold group
rather than a general finitely presented group.  For comparison, both
the non-triviality problem and the word problem are as difficult as the
halting problem for general $\Gamma$ \cite{Poonen:sampler}.  By contrast,
the word problem and the isomorphism problem are both recursive for
3-manifold groups, in fact elementary recursive \cite{AFW:decision,K:homeo3}.

Going a dimension lower, if $M$ is a closed 2-manifold, then
there are well known formulas of Frobenius-Schur and Mednykh for
$\#H(M,G)$ \cite{FS:gruppen,Mednykh:compact,FQ:finite} for any
finite group $G$ as a function of the genus and orientability of $M$
\cite{FS:gruppen,Mednykh:compact,FQ:finite}.  Mednykh's formula was
generalized by Chen \cite{Chen:seifert} to the case of Seifert-fibered
3-manifolds.  In \Sec{s:standard}, we give a generalization of these
formulas to the class of bounded-width simplicial complexes.

Our approach to Theorems \ref{th:main1} and \ref{th:main2} (like that of Krovi and Russell)
is inspired by quantum computation and topological quantum
field theory.  Every unitary modular tensor category (UMTC) $\cC$
yields a unitary 3-dimensional topological quantum field theory
\cite{RT:ribbon,RT:manifolds,Turaev:quantum}.  The topological quantum
field theory assigns a vector space $V(\Sigma_g)$, or \emph{state
space}, to every oriented, closed surface.  It also assigns a state
space $V(\Sigma_{g,n},C)$ to every oriented, closed surface with $n$
boundary circles, where $C$ is an object in $\cC$ interpreted as the
``color" of each boundary circle.  Each state space $V(\Sigma_{g,n},C)$
has a projective action of the mapping class group $\MCG_*(\Sigma_{g,n})$.
(In fact the unpointed mapping class group $\MCG(\Sigma_{g,n})$ acts, but
we will keep the basepoint for convenience.)  These mapping class group
actions then extend to invariants of 3-manifolds and links in 3-manifolds.

Finally, the UMTC $\cC$ is universal for quantum computation if the image of
the mapping class group action on suitable choices of $V(\Sigma_{g,n},C)$
is large enough to simulate quantum circuits on $m$ qubits, with $g,n =
O(m)$.  If the action is only large enough to simulate classical circuits
on $m$ bits, then it is still classically universal.  These universality
results are important for the fault-tolerance problem in quantum computation
\cite{FLW:universal,K:tvcodes}.

One early, important UMTC is the (truncated) category $\rep_q(\SU(2))$
of quantum representations of $\SU(2)$ at a principal root of unity.  This
category yields the Jones polynomial for a link $L \subseteq S^3$ (taking $C
= V_1$, the first irreducible object) and the Jones-Witten-Reshetikhin-Turaev
invariant of a closed 3-manifold.  In separate papers, Freedman, Larsen,
and Wang showed that $V(\Sigma_{g,0})$ and $V(\Sigma_{0,n},V_1)$ are
both quantumly universal representations of $\MCG_*(\Sigma_{g,0})$ and $\MCG_*(\Sigma_{0,n})$ \cite{FLW:universal,FLW:two}.

Universality also implies that any approximation of these invariants that
could be useful for computational topology is $\shP$-hard.  Kuperberg
\cite{K:jones} obtained such results for the Jones polynomial of knots (see also
Aharonov-Arad \cite{AA:hardness}), while Alagic and Lo \cite{AL:manifolds}
obtained the analogous result for the corresponding 3-manifold invariant.
Note that exact evaluation of the Jones polynomial was earlier shown to
be $\shP$-hard without quantum computation methods \cite{JVW:complexity}.

If $G$ is a finite group, then the invariant $\#H(M,G)$ for a 3-manifold
$M$ also comes from a UMTC, namely the categorical double $D(\rep(G))$
of $\rep(G)$, that was treated (and generalized) by Dijkgraaf and Witten
and others \cite{K:hopf,DW:group,FQ:finite}.  In this case, the state
space $V(\Sigma_{g,0})$ is the vector space $\C[\hR_g(G)/\Inn(G)]$, and
the action of $\MCG_*(\Sigma_{g,0})$ on $V(\Sigma_{g,0})$ is induced by
its action on $\hR_g(G)$.  Some of the objects in $D(\rep(G))$ are given by
conjugacy classes $C \subseteq G$, and the representation of the braid group
$\MCG_*(\Sigma_{0,n})$ with braid strands colored by a conjugacy class $C$
yields the invariant $\#H(S^3 \setminus L,G,C)$.  
In this framework, Theorem \ref{th:main1} can be understood as a classical, combinatorial analog of \cite{FLW:universal}, whereas Theorem \ref{th:main2} is an analogue of \cite{FLW:two}.

Motivated by the fault tolerance problem, Ogburn and Preskill
\cite{OP:topological} found that the braid group action for $G = \Alt(5)$
is classically universal (with $C$ the conjugacy class of 3-cycles) and they
reported that Kitaev showed the same thing for $\Sym(5)$.   They also showed
if these actions are enhanced by quantum measurements in a natural sense,
then they become quantumly universal.  Later Mochon \cite{Mochon:finite}
extended this result to any non-solvable finite group $G$.  In particular,
he proved that the action of $\MCG_*(\Sigma_{0,n})$ is classically universal
for a suitably chosen conjugacy class $C$.

Mochon's result is evidence, but not proof, that $\#H(S^3 \setminus L,G,C)$
is $\shP$-complete for every fixed, non-solvable $G$ and every suitable
conjugacy class $C \subseteq G$ that satisfies his theorem.  His result
implies that if we constrain the associated braid group action with arbitrary
initialization and finalization conditions, then counting the number of
solutions to the constraints is parsimoniously $\shP$-complete.  However,
when we use a braid to describe a link via a plat presentation, as we do in our proof of Theorem \ref{th:main2},
then the description yields specific initialization and
finalizations conditions that must be handled algorithmically to obtain
hardness results.  Similarly, in our proof of \Thm{th:main1}, states in
$\hR_g(G)$ are initialized and finalized using the handlebodies $(H_g)_I$
and $(H_g)_F$.
If we could choose any initialization and finalization
conditions whatsoever, then it would be much easier to establish (weakly
parsimonious) $\shP$-hardness; it would take little more work than to
cite Theorems \ref{th:dt} and \ref{th:rv}.

For further discussion of our results, please see Chapter \ref{ch:discussion}.

\chapter{Complexity theory and algorithms review}
\label{ch:complexity}
This chapter reviews the necessary background from computational complexity.  In Section \ref{s:classes}, we define the complexity classes and notions of reduction relevant to our main theorems.  In Section \ref{s:circuits}, we define Boolean circuits, and state the Cook-Levin theorem, which provides the starting point of our reductions.  The final Section \ref{s:standard} contains a review of some standard algorithms in topology.  For more background on complexity theory, see Arora and Barak \cite{AB:modern} and the Complexity Zoo \cite{W:zoo}.

\section{Complexity classes}
\label{s:classes}

Let $A$ be a finite \emph{alphabet} (a finite set with at least 2 elements)
whose elements are called \emph{symbols}, and let $A^*$ be the set of finite
words in $A$.   We can consider three kinds of computational problems with
input in $A^*$: decision problems $d$, counting problems $c$, and function
problems $f$, which have the respective forms
\begin{equation} d:A^* \to \{\yes,\no\} \qquad \qquad c:A^* \to \N \qquad \qquad
    f:A^* \to A^*. \label{e:dcf} \end{equation}
The output set of a decision problem can also be identified with the
Boolean alphabet
\[ A = \Z/2 = \{1,0\} \cong \{\text{true},\text{false}\}
    \cong \{\yes,\no\}. \]

A \emph{complexity class} $\ccC$ is any set of function, counting, or
decision problems, which may either be defined on all of $A^*$ or require
a promise.  A specific, interesting complexity class is typically defined as
the set of all problems that can be computed with particular computational
resources.  For instance, $\ccP$ is the complexity class of all decision
problems $d$ such that $d(x)$ can be computed in polynomial time (in the
length $|x|$ of the input $x$) by a deterministic Turing machine.  $\FP$
is the analogous class of function problems that are also computable in
polynomial time.

A \emph{promise problem} is a function $d$, $c$, or $f$ of the same
form as \eqref{e:dcf}, except whose domain can be an arbitrary subset
$S \subseteq A^*$.  The interpretation is that an algorithm to compute
a promise problem can accept any $x \in A^*$ as input, but its output is
only taken to be meaningful when it is promised that $x \in S$.

The input to a computational problem is typically a data type such as an
integer, a finite graph, a simplicial complex, etc.  If such a data type can
be encoded in $A^*$ in some standard way, and if different standard encodings
are interconvertible in $\FP$, then the encoding can be left unspecified.
For instance, the decision problem of whether a finite graph is connected is
easily seen to be in $\ccP$; the specific graph encoding is not important.
Similarly, there are various standard encodings of the non-negative integers
$\N$ in $A^*$.  Using any such encoding, we can also interpret $\FP$
as the class of counting problems that can be computed in polynomial time.

The complexity class $\NP$ is the set of all decision problems $d$ that can
be answered in polynomial time with the aid of a prover who wants to convince
the algorithm (or verifier) that the answer is ``yes".  In other words, every
$d \in \NP$ is given by a two-variable predicate $v \in \ccP$.  Given an input
$x$, the prover provides a witness $y$ whose length $|y|$ is some polynomial
in $|x|$.  Then the verifier computes $v(x,y)$, with the conclusion that
$d(x) = \yes$ if and only if there exists $y$ such that $v(x,y) = \yes$.
The witness $y$ is also called a \emph{proof} or \emph{certificate},
and the verification $v$ is also called a \emph{predicate}.  Likewise,
a function $c(x)$ is in $\shP$ when it is given by a predicate $v(x,y)$;
in this case $c(x)$ is the number of witnesses $y$ that satisfy $v(x,y)$.
For instance, whether a finite graph $G$ (encoded as $x$) has a 3-coloring
is in $\NP$, while the number of 3-colorings of $G$ is in $\shP$.  In both
cases, a 3-coloring of $G$ serves as a witness $y$.

A computational problem $f$ may be $\NP$-hard or $\shP$-hard with the
intuitive meaning that it is provably at least as difficult as any problem
in $\NP$ or $\shP$.   A more rigorous treatment leads to several different
standards of hardness.  One quite strict standard is that any problem $g$
in $\NP$ or $\shP$ can be reduced to the problem $f$ by converting the
input; \ie, there exists $h \in \FP$ such that
\[ g(x) = f(h(x)). \]
If $f, g \in \NP$, then this is called \emph{Karp reduction}; if $f, g \in
\shP$, then it is called \emph{parsimonious reduction}.  Evidently, if a
counting problem $c$ is parsimoniously $\shP$-hard, then the corresponding
existence problem $d$ is Karp $\NP$-hard.

When a problem $f$ is $\shP$-hard by some more relaxed standard than
parsimonious reduction, there could still be an algorithm to obtain
some partial information about the value $f$, such as a congruence or an
approximation, even if the exact value is intractable.   For instance,
the permanent of an integer matrix is well-known to be $\shP$-hard
\cite{Valiant:permanent}, but its parity is the same as that of the
determinant, which can be computed in polynomial time.  However, when
a counting problem $c$ is parsimoniously $\shP$-hard, then the standard
conjecture that $\NP \not\subseteq \BPP$ implies that it is intractable to
obtain any partial information about $c$.  Here $\BPP$ is the set of problems
solvable in randomized polynomial time with a probably correct answer.

\begin{thm}[Corollary of Valiant-Vazirani \cite{VV:unique}]  Let $c$
be a parsimoniously $\shP$-hard problem, and let $b > a \ge 0$ be distinct,
positive integers.  Then distinguishing $c(x) = a$ from $c(x) = b$ is
$\NP$-hard via a Cook reduction in $\BPP$, given the promise that $c(x)
\in \{a,b\}$.
\label{th:vv} \end{thm}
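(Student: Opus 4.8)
The plan is to recall the structure of the Valiant--Vazirani theorem and then wrap it around the hypothesis that $c$ is parsimoniously $\shP$-hard. Recall that Valiant--Vazirani gives a randomized polynomial-time procedure that, on input a Boolean formula $\varphi$, produces a formula $\psi$ such that: if $\varphi$ is unsatisfiable then so is $\psi$, and if $\varphi$ is satisfiable then with probability at least $\Omega(1/n)$ the formula $\psi$ has exactly one satisfying assignment (and never more than we can control). The standard way to phrase what we need is the ``unique satisfiability'' promise version: there is a $\BPP$ (in fact randomized polynomial-time, one-sided) Turing reduction from $\SAT$ to $\mathsf{USAT}$, the promise problem of deciding satisfiability under the promise that the number of satisfying assignments is $0$ or $1$. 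More quantitatively, one runs the isolation lemma / hashing construction: pick random affine hash functions cutting the solution space down by factors of $2$, and argue that for the correct number of hash constraints a unique survivor appears with noticeable probability; amplify by repetition.

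First I would reduce the problem to the case $a = 0$. Given distinct positive integers $b > a \ge 0$, I claim it suffices to handle $a=0$: more precisely, since $c$ is parsimoniously $\shP$-hard, I can take any $\NP$-complete problem $L$, view its witness-counting function as a $\shP$ problem $\#L$ with predicate $v(x,y)$, and parsimoniously reduce $\#L$ to $c$ via some $h \in \FP$, so that $c(h(x)) = \#L(x)$ equals the number of $\NP$-witnesses of $x$. Now apply the Valiant--Vazirani isolation construction to the predicate $v$: it yields (with probability $\Omega(1/\mathrm{poly})$, boostable by independent repetition and majority vote inside a Cook reduction) a new predicate $v'$ whose witness count is $0$ when $x \notin L$ and is exactly some target value $t$ we engineer when $x \in L$. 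To hit the target pair $\{a,b\}$ rather than $\{0,1\}$, I pad: I augment the witness with $\lceil \log_2 b\rceil$ auxiliary bits and add clauses forcing the auxiliary bits to encode an integer in $\{1,\dots,b\}$ when the core witness is the isolated one, and forcing them to encode an integer in $\{1,\dots,a\}$ unconditionally when the core is unsatisfied—more cleanly, take the disjoint union (at the level of predicates) of ``$a$ guaranteed dummy witnesses'' with ``$(b-a)$ witnesses that exist iff the isolated witness exists.'' Concretely, build $v''(x, (i, w))$ that accepts when $1\le i \le a$ (dummy branch, always $a$ witnesses) or when $a < i \le b$ and $w$ is the unique VV-isolated witness of $x$ (present iff $x\in L$). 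Then the number of witnesses of $v''$ is $a$ if $x \notin L$ and $b$ if $x\in L$, assuming the isolation succeeded. Feeding $v''$ through the parsimonious reduction $h$ to $c$ turns this into: $c$ of the constructed instance is $a$ or $b$ according to whether $x \in L$, so the promise $c(\cdot)\in\{a,b\}$ is met, and distinguishing the two values decides $L$. Since $L$ is $\NP$-complete and the whole construction is a polynomial-time (Cook) reduction with $\BPP$ randomness and bounded error after amplification, distinguishing $c(x)=a$ from $c(x)=b$ is $\NP$-hard via a Cook reduction in $\BPP$.

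The main obstacle is bookkeeping the one-sided error of the isolation step through the counting equality, rather than anything deep. When VV's randomness fails on a positive instance, the isolated-witness branch may contribute $0$ or more than one witness, so $c$ of the constructed instance could fall outside $\{a,b\}$, violating the promise. The fix is standard: run polynomially many independent trials, and note that on a negative instance \emph{every} trial gives $c = a$ (the isolation construction has no false positives), while on a positive instance at least one trial gives $c = b$ with overwhelming probability; answer ``$x\in L$'' iff some trial reports $b$ and all reported values lie in $\{a,b\}$. One must check that trials reporting out-of-promise values can be detected or simply ignored without harming correctness—here we invoke that a $c$-oracle answering the promise problem is queried only on instances we have arranged to satisfy the promise \emph{per trial}, which holds by construction since each trial's instance has witness count exactly $a$ (neg.) or exactly $a$ or $b$ (pos., depending on isolation success), never anything else when the padding is done as above. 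Thus the Cook reduction only ever makes legal oracle calls, and correctness follows from the $\Omega(1/\mathrm{poly})$ success probability of a single trial amplified across trials.
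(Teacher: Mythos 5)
Your proposal is correct and follows essentially the same route as the paper's proof: run the Valiant--Vazirani isolation on an $\NP$ predicate, build a new predicate with $a$ unconditional dummy witnesses plus $b-a$ witnesses tied to the isolated one, push the resulting count through the parsimonious reduction to $c$, and amplify the one-sided randomized (Cook) reduction. (One small slip: your closing claim that every trial's oracle query satisfies the promise is false when isolation leaves two or more witnesses, but as you yourself note earlier this does not matter---negative instances always yield count exactly $a$, so answering ``yes'' iff some trial reports $b$ remains correct with high probability.)
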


When we say that an algorithm $\cA$ obtains partial information about
the value of $c(x)$, we mean that it can calculate $f(c(x))$ for some
non-constant function $f$.  Thus it can distinguish some pair of cases
$c(x) = a$ and $c(x) = b$; and by \Thm{th:vv}, this is $\NP$-hard.  Here a
\emph{Cook reduction} is a polynomial-time algorithm $\cB$ (in this case
randomized polynomial time) that can call $\cA$ as a subroutine.

\begin{proof} Given a problem $d \in \NP$, Valiant and Vazirani construct
a randomized algorithm $\cB$ that calculates $d(x)$ using a collection of
predicates $v_1(x,y)$ in $\ccP$ that usually have at most one solution in $y$.
Thus, if an algorithm $\cA$ can solve each problem
\[ d_1(x) = \exists? y \text{\ such that\ } v_1(x,y) = \yes \]
under the promise that at most one $y$ exists, then $\cA$ can be used as
a subroutine to compute the original $d$.  Such a predicate $v_1(x,y)$
may occasionally have more than one solution, but this happens rarely and
still allows $\cB$ to calculate $d$ by the standard that its output only
needs to be probably correct.

Given such a predicate $v_1(x,y)$, it is easy to construct another
predicate $v_2(x,y)$ in $\ccP$ that has $b-a$ solutions in $y$ for each
solution to $v_1(x,y)$, and that has $a$ other solutions in $y$ regardless.
Thus $v_2(x,y)$ has $b$ solutions when $d_1(x) = \yes$ and $a$ solutions
when $d_1(x) = \no$.  Thus, an algorithm $\cA$ that can distinguish
$c(x) = a$ from $c(x) = b$ can be used to calculate $d_1(x)$, and by the
Valiant-Vazirani construction can be used to calculate $d(x)$.
\end{proof}

A decision problem $d$ which is both in $\NP$ and $\NP$-hard is called
\emph{$\NP$-complete}, while a counting problem which is both in $\shP$
and $\shP$-hard is called \emph{$\shP$-complete}.   For instance the
decision problem $\CSAT$, circuit satisfiability over an alphabet $A$, is
Karp $\NP$-complete, while the counting version $\shCSAT$ is parsimoniously
$\shP$-complete (\Thm{th:csat}).  Thus, we can prove that any other problem
is $\NP$-hard by reducing $\CSAT$ to it, or $\shP$-hard by reducing $\shCSAT$
to it.

We mention three variations of parsimonious reduction.   A counting function
$c$ is \emph{weakly parsimoniously} $\shP$-hard if for every $b \in \shP$,
there are $f,g \in \FP$ such that
\[ b(x) = f(c(g(x)),x). \]
The function $c$ is \emph{almost parsimoniously} $\shP$-hard if $f$ does
not depend on $x$, only on $c(g(x))$.  In either case, we can also ask for
$f(c,x)$ to be 1-to-1 on the set of values of $c$ with $f^{-1} \in \FP$,
linear or affine linear in $c$, etc.  So, for instance, \Thm{th:main1} says that $\#H(M,G)$
is almost parsimoniously $\shP$-complete.

Finally, suppose that $c(x)$ counts the number of solutions to $v(x,y)$
and $b(x)$ counts the number of solutions to $u(x,y)$.  Then a \emph{Levin
reduction} is a map $h \in \FP$ and a bijection $f$ with $f,f^{-1} \in \FP$
such that
\[ u(x,y) = v(h(x),f(y)). \]
Obviously Levin reduction implies parsimonious reduction.

\section{Circuits}
\label{s:circuits}

Given an alphabet $A$, a \emph{gate} is a function $\alpha:A^k \to A^\ell$.
A \emph{gate set} $\Gamma$ is a finite set of gates, possibly with
varying sizes of domain and target, and a \emph{circuit} over $\Gamma$ is a
composition of gates in $\Gamma$ in the pattern of a directed, acyclic graph.
A gate set $\Gamma$ is \emph{universal} if every function $f:A^n \to A^m$
has a circuit.  For example, if $A = \Z/2$, then the gate set
\[ \Gamma =\{\AND, \OR, \NOT, \COPY\} \]
is universal, where $\AND$, $\OR$, and $\NOT$ are the standard Boolean
operations and the $\COPY$ gate is the diagonal embedding $a \mapsto (a,a)$.

Let $A$ be an alphabet with a universal gate set $\Gamma$, and suppose that
$A$ has a distinguished symbol $\yes \in A$.  Choose a standard algorithm
to convert an input string $x \in A^*$ to a circuit $Z_x$ with one output.
Then the \emph{circuit satisfiability problem} $\CSAT_{A,\Gamma}(x)$ asks
whether the circuit $Z_x$ has an input $y$ such that $Z_x(y) = \yes$.
It is not hard to construct a Levin reduction of $\CSAT_{A,\Gamma}$ from
any one alphabet and gate set to any other, so we can just call any such
problem $\CSAT$.  $\CSAT$ also has an obvious counting version $\shCSAT$.

\begin{thm}[Cook-Levin-Karp] $\CSAT$ is Karp $\NP$-complete and $\shCSAT$
is parsimoniously $\shP$-complete.
\label{th:csat} \end{thm}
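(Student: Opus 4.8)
The plan is to prove the Cook-Levin-Karp theorem in two halves: membership and hardness.

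\textbf{Membership.} First I would verify that $\CSAT \in \NP$ and $\shCSAT \in \shP$. Given an input string $x \in A^*$, the standard algorithm produces the circuit $Z_x$ in polynomial time, so the circuit itself is available as part of a polynomial-time preprocessing step. The natural predicate is $v(x,y) = \yes$ iff $y$ encodes an input to $Z_x$ and $Z_x(y) = \yes$. Evaluating a circuit on a given input is a matter of topologically sorting the underlying DAG and applying each gate in $\Gamma$ to the already-computed values on its input wires; since $\Gamma$ is finite and each gate has constant-size domain and target, this takes time polynomial in the size of $Z_x$, hence polynomial in $|x|$. So $v \in \ccP$, which places $\CSAT$ in $\NP$; and since the witnesses $y$ accepted by $v$ are exactly the satisfying inputs of $Z_x$, the number of such $y$ is $\shCSAT(x)$, placing $\shCSAT$ in $\shP$.

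\textbf{Hardness.} This is the heart of the matter and where I expect the real work to be. Let $d \in \NP$ be arbitrary, given by a verifier predicate $v(x,y) \in \ccP$ with $|y|$ bounded by a fixed polynomial $p(|x|)$. By definition of $\ccP$, there is a deterministic Turing machine $T$ running in time $q(|x| + |y|)$ for some polynomial $q$ that computes $v$. The classical ``tableau'' construction converts a time-$t$ computation of $T$ on an input of length $n$ into a Boolean circuit of size $O(t^2)$ (or $O(t \log t)$ with more care) that is satisfiable exactly when $T$ accepts: one lays out the $t \times t$ grid of tape-cell/head/state configurations, and each cell of row $i+1$ is a fixed local function of a constant-size window of row $i$ determined by $T$'s transition table; one then hard-wires the input portion $x$ and leaves the witness portion $y$ as free input wires, and conjoins the acceptance condition. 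I would carry this out over the Boolean alphabet $A = \Z/2$ with the universal gate set $\{\AND,\OR,\NOT,\COPY\}$, observing that every step of the translation — reading off $T$, computing window sizes, emitting gates — is itself computable in polynomial time, so the map $x \mapsto (\text{this circuit})$ lies in $\FP$. Composing with the fixed $\FP$-reductions between $\CSAT$ instances over different alphabets and gate sets (as noted in the text just before the theorem) yields an $h \in \FP$ with $d(x) = \CSAT(h(x))$, i.e.\ a Karp reduction. The same $h$ works for the counting versions: the satisfying inputs $y$ of the tableau circuit are in bijection with the witnesses accepted by $v(x,\cdot)$, since the non-witness wires (the internal tableau variables) are functionally determined by $y$ and the hard-wired $x$. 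Hence $c(x) = \shCSAT(h(x))$ for every $c \in \shP$ with $h \in \FP$, which is exactly parsimonious $\shP$-completeness.

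\textbf{Main obstacle.} The conceptually routine but technically delicate step is the tableau construction: one must be careful that the local update circuitry is genuinely constant-size per cell (so that the total circuit size is polynomial), that the Turing machine's computation is correctly ``clocked'' so unused cells are handled consistently, and — crucially for the parsimonious claim — that the internal tableau wires are uniquely determined by the free input wires, so no spurious multiplicity of satisfying assignments is introduced. I would handle this by standard means: pad $T$ to run for exactly $q(|x|+|y|)$ steps on every branch, use a single well-formedness gadget to force the internal variables to the unique legal tableau, and verify the bijection between satisfying assignments and accepted witnesses directly. With that in hand, both the Karp and the parsimonious reductions follow.
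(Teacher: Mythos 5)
Your proposal is correct and is essentially the standard Cook--Levin tableau argument; the paper itself gives no proof of this theorem, deferring entirely to Arora--Barak [Sec.\ 6.1.2 \& Thm.\ 17.10], which is the same construction you outline. One small simplification worth noting: for \emph{circuit} satisfiability (as opposed to CNF-SAT) the parsimoniousness worry in your ``main obstacle'' paragraph largely evaporates, since the tableau cells are computed by deterministic gates from the free input wires $y$ rather than being independent existential variables, so no well-formedness gadget is needed to kill spurious assignments.
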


See Arora-Barak \cite[Sec. 6.1.2 \& Thm. 17.10]{AB:modern} for a proof of
\Thm{th:csat}.

\section{Standard algorithms}
\label{s:standard}

In this section we will review a few standard algorithms that supplement
Theorems \ref{th:main1} and \ref{th:main2}.  Instead of hardness results, they are all easiness
results. (Note that \Thm{th:inshp} produces a conditional type of easiness,
namely predicates that can be evaluated in polynomial time.)

\begin{thm} The integer homology $H_*(X)$ of a finite simplicial
complex $X$ can be computed in polynomial time.
\label{th:homology} \end{thm}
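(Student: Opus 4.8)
The plan is to reduce the problem to linear algebra over $\Z$ and then invoke a polynomial-time algorithm for Smith normal form. First I would set up the simplicial chain complex $C_\bullet(X)$: fixing an ordering of the vertices of $X$, the chain group $C_k(X)$ is the free abelian group on the $k$-simplices, and the boundary map $\partial_k\colon C_k(X)\to C_{k-1}(X)$ is the integer matrix whose entries lie in $\{0,\pm 1\}$, given by the usual alternating-sum-over-facets formula. Taking the input encoding to record all simplices of $X$ (which is automatic for the bounded-dimension complexes relevant to this dissertation), the number $n_k$ of $k$-simplices, and hence the size of each matrix $\partial_k$, is polynomial in the length of the input, and the matrices themselves are clearly computable in polynomial time.

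Next I would recall the standard description of homology via Smith normal forms. Writing $\partial_k$ in Smith normal form produces its rank $r_k=\mathrm{rank}\,\partial_k$ together with its positive elementary divisors $d^{(k)}_1 \mid d^{(k)}_2 \mid \cdots \mid d^{(k)}_{r_k}$. Since $\mathrm{im}\,\partial_{k+1}$ sits inside $\ker\partial_k$ with the elementary divisors of $\partial_{k+1}$ governing the quotient, one gets
\[
H_k(X)\;\cong\;\Z^{\,n_k - r_k - r_{k+1}}\;\oplus\;\bigoplus_{i=1}^{r_{k+1}}\Z/d^{(k+1)}_i\Z,
\]
where the summands with $d^{(k+1)}_i=1$ are trivial and $b_k=n_k-r_k-r_{k+1}$ is the $k$-th Betti number. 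So it suffices to compute, for each $k=0,1,\dots,\dim X$, the rank and elementary divisors of the integer matrix $\partial_k$, and then assemble the answer with the displayed formula.

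The main obstacle is the familiar phenomenon of intermediate coefficient swell: naive integer row/column reduction toward Smith normal form can create entries of exponential bit-length, even though the final output is small — the $i$-th elementary divisor divides a gcd of $i\times i$ minors of $\partial_k$, which by the Hadamard bound has only polynomially many bits. This is exactly the point that requires care rather than a one-line argument, and it is handled by using an algorithm that keeps every intermediate entry polynomially bounded, for instance the polynomial-time Hermite/Smith normal form algorithm of Kannan and Bachem or one of its subsequent refinements. (Alternatively one may compute the ranks $r_k$, and hence the Betti numbers, by fraction-free Gaussian elimination over $\mathbb{Q}$, and compute the torsion coefficients by applying such a Smith normal form routine to each $\partial_{k+1}$.) Running this for all $k$ and combining the outputs via the formula above computes $H_*(X)$ in polynomial time, which is what we want.
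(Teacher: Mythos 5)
Your proof is correct and follows essentially the same route as the paper: reduce to computing Smith normal forms of the boundary matrices and invoke the Kannan--Bachem polynomial-time algorithm to control intermediate coefficient growth. The paper only sketches this in two sentences (citing Donald--Chang and Kannan--Bachem), so your more explicit write-up, including the standard formula for $H_k$ in terms of ranks and elementary divisors, is a faithful elaboration of the intended argument.
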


Briefly, \Thm{th:homology} reduces to computing the Smith normal form of an
integer matrix and a corresponding matrix factorization \cite{DC:homology}.
Kannan and Bachem \cite{KB:smith} showed that a Smith factorization can
be computed in polynomial using a refinement of the standard Smith normal
form algorithm based on row and column operations.

\begin{prop} If $X$ is a finite simplicial complex given as
computational input, then it can be confirmed in polynomial time whether
$X$ is a closed 3-manifold $M$, and whether $M$ is a homology 3-sphere.
\label{p:mpromise} \end{prop}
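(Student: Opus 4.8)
The plan is to reduce the proposition to elementary combinatorial tests on the face poset of $X$, supplemented by a single call to the polynomial-time homology algorithm of \Thm{th:homology}. The \emph{key structural fact} I would use is the standard local criterion: a finite simplicial complex $X$ is a closed $3$-manifold (in the PL, hence by Moise also the topological, sense) if and only if the link $\lk(v)$ of every vertex $v$ is a triangulated $2$-sphere. One direction is clear; for the other, if every vertex link is a triangulated $2$-sphere then $X$ is automatically pure of dimension $3$ and every triangle lies in exactly two tetrahedra (any failure of this would be visible as a vertex whose link is not a pure, closed $2$-complex), and then the usual argument---each point of $X$ has a neighborhood that is a cone on a $2$-sphere, a circle times an interval, or a $3$-ball, hence an open $3$-ball---shows $X$ is closed and $3$-dimensional.

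So the algorithm first parses $X$ into its lists of vertices, edges, triangles and tetrahedra, which is polynomial in the input size under any reasonable encoding. Then, for each vertex $v$, it forms $\lk(v)$---a $2$-complex with at most $|X|$ faces---and tests whether it is a triangulated $2$-sphere. That test is itself purely combinatorial: check that $\lk(v)$ is pure $2$-dimensional, that every edge of $\lk(v)$ lies in exactly two triangles, and that for each vertex $w$ of $\lk(v)$ the link $\lk_{\lk(v)}(w)$ is a single cycle (a connected graph with all vertex degrees equal to $2$); these say $\lk(v)$ is a triangulated closed surface. Finally check that $\lk(v)$ is connected with Euler characteristic $\#V-\#E+\#F=2$, since by the classification of surfaces a connected closed triangulated surface with Euler characteristic $2$ is homeomorphic to $S^2$. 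Every step is a counting, degree, graph-connectivity or Euler-characteristic computation, so each of the $O(|V|)$ vertex links is processed in polynomial time; the recursion through links bottoms out immediately because recognizing $S^1$ (a connected $1$-complex with all vertex degrees $2$) and $S^0$ (two points) is trivial. Hence it is decided in polynomial time whether $X$ is a closed $3$-manifold $M$.

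Once $X$ is confirmed to be a closed $3$-manifold $M$, deciding whether $M$ is a homology $3$-sphere is immediate: by \Thm{th:homology} one computes $H_*(M;\Z)$ in polynomial time and checks that it is isomorphic to $H_*(S^3;\Z)$; equivalently, since $M$ is a closed $3$-manifold it suffices to check that $M$ is connected and $H_1(M;\Z)=0$, whence $M$ is orientable and $H_2(M)=0$, $H_3(M)=\Z$ by Poincar\'e duality (or one simply reads this off the computed groups). There is no genuine difficulty in any of this; the only points requiring care are the correctness of the local characterization of closed $3$-manifolds and the fact that triangulated $2$-spheres are recognizable from the Euler characteristic. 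This is precisely what makes dimension three tractable: the procedure never has to recognize a sphere of dimension $\geq 3$, so it sidesteps the undecidability of $S^n$-recognition that appears for $n \geq 5$.
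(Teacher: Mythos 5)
Your proposal is correct and follows essentially the same route as the paper: verify the local link conditions combinatorially so that every vertex link is a closed surface, recognize $S^2$ among those links, and then invoke \Thm{th:homology} for the homology 3-sphere test. The only (immaterial) difference is that you certify each vertex link is $S^2$ via connectivity plus Euler characteristic and the classification of surfaces, whereas the paper checks that edge links are polygons and then computes $H_*(\lk(v))$ with \Thm{th:homology}; both are polynomial time.
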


\begin{proof} To be concrete, $X$ is described by a set of vertices and
a set of subsets of those vertices representing simplices.  We can then
trivially check the first two properties:
\begin{enumerate}
\item That every maximal simplex is 3-dimensional.
\item That the link of every edge is a polygon.
\end{enumerate}
It follows that the link $\lk(v)$ of every vertex $v$ is a surface; to check
that that $M$ is a closed 3-manifold, we want to know that every $\lk(v)$ is
a 2-sphere.   We can confirm this for instance by computing $H_*(\lk(v))$
using \Thm{th:homology}.  Then to confirm that $M$ is a homology 3-sphere
(including that it is orientable), we can again use \Thm{th:homology}
to calculate $H_*(M)$.
\end{proof}

\begin{thm} If $G$ is a fixed finite group and $X$ is a finite,
connected simplicial complex regarded as the computational input, then
$\#H(X,G)$ and $\#Q(X,G)$ are both in $\shP$.
\label{th:inshp} \end{thm}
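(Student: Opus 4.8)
The plan is to show both counting problems lie in $\shP$ by exhibiting, for each, a polynomial-time verifiable predicate whose witnesses are in bijection with the objects being counted. Recall that a counting problem $c$ is in $\shP$ precisely when there is a two-variable predicate $v(x,y) \in \ccP$ with $|y|$ polynomially bounded in $|x|$ such that $c(x)$ equals the number of $y$ satisfying $v(x,y)$. So the task reduces to: (i) specifying a witness format, (ii) bounding witness length, and (iii) giving a polynomial-time check.

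First I would handle $\#H(X,G)$. Since $X$ is a finite connected simplicial complex, we can compute in polynomial time a finite presentation of $\pi_1(X)$: pick a spanning tree $T$ of the $1$-skeleton, take the edges not in $T$ as generators, and the $2$-simplices as relators. This presentation has size polynomial in the size of $X$, and the number of generators $N$ and the number of relators $R$ are both bounded by the number of simplices of $X$. A homomorphism $f:\pi_1(X)\to G$ is then specified by an $N$-tuple $(g_1,\dots,g_N)\in G^N$; since $G$ is fixed, each $g_i$ takes one of $|G|$ values, so the witness $y$ encoding such a tuple has length $O(N\log|G|) = O(N)$, which is polynomial in $|x|$. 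The predicate $v(x,y)$ parses $y$ as a tuple in $G^N$ (rejecting malformed $y$) and then checks, for each of the $R$ relators $w_j$, that substituting the $g_i$ into $w_j$ and multiplying out in $G$ yields the identity. Each such check is a product of at most (length of $w_j$) elements of $G$, computable in polynomial time since $G$ is a fixed finite group with an explicit multiplication table; doing this for all $R$ relators is still polynomial. Distinct tuples satisfying all relators correspond bijectively to distinct homomorphisms, so the number of accepting witnesses is exactly $\#H(X,G)$. Hence $\#H(X,G)\in\shP$.

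Next I would handle $\#Q(X,G)$, the number of normal subgroups $\Gamma\normaleq\pi_1(X)$ with $\pi_1(X)/\Gamma\cong G$. The key observation is the relation between $Q(X,G)$ and surjective homomorphisms: a normal subgroup $\Gamma$ with quotient isomorphic to $G$ is the kernel of a surjection $\pi_1(X)\onto G$, and two surjections have the same kernel if and only if they differ by an automorphism of $G$; since $\Aut(G)$ acts freely on the set of surjections, we get $|Q(X,G)| = (\#\{\text{surjections } \pi_1(X)\onto G\})/|\Aut(G)|$. This is not directly a count of witnesses, but it is easy to repair: since $|\Aut(G)|$ is a fixed constant, I would take as witnesses pairs $(f,\,\text{(dummy)})$ — more precisely, use a predicate $v'(x,y)$ that accepts $y$ encoding a homomorphism $f\in G^N$ together with a choice of coset representative structure, or more simply, define $v'$ so that for each surjective $f$ exactly one choice among its $\Aut(G)$-orbit is accepted. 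Concretely: fix once and for all a linear order on $G$, hence a lexicographic order on $G^N$; the predicate $v'(x,y)$ parses $y$ as $f\in G^N$, checks (as above, in polynomial time) that $f$ respects all relators, checks that $f$ is surjective (compute the subgroup of $G$ generated by the images $g_1,\dots,g_N$ — doable in polynomial time since $|G|$ is a constant — and test whether it equals $G$), and finally checks that $f$ is the lexicographically least element of its $\Aut(G)$-orbit (enumerate the $|\Aut(G)|$ automorphisms, apply each to $f$, compare — a constant amount of work). The accepting witnesses are then in bijection with the $\Aut(G)$-orbits of surjections, i.e.\ with $Q(X,G)$. All checks are polynomial-time and the witness length is again $O(N)$, so $\#Q(X,G)\in\shP$.

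The only mild subtlety — and the step I would flag as requiring the most care — is the orbit-representative trick for $\#Q(X,G)$: one must make sure the predicate counts each normal subgroup exactly once rather than $|\Aut(G)|$ times, and the cleanest way is the ``lexicographically minimal in its $\Aut(G)$-orbit'' device just described, which works precisely because $\Aut(G)$ acts freely on surjections. Everything else is routine: computing a presentation of $\pi_1(X)$ from a simplicial complex in polynomial time is standard, and all the group-theoretic checks (evaluating relators, testing surjectivity, enumerating automorphisms) cost only polynomial time because $G$ is a fixed finite group.
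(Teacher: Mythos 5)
Your proposal is correct and follows essentially the same route as the paper's proof: a spanning-tree presentation of $\pi_1(X)$, tuples of generator images verified against the relators for $\#H(X,G)$, and for $\#Q(X,G)$ the additional surjectivity check together with accepting only the lexicographically (``alphabetically'') first element of each $\Aut(G)$-orbit. The extra details you supply (witness length bounds, the free action of $\Aut(G)$ on surjections) are accurate elaborations of the same argument.
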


\begin{proof} By choosing a spanning tree for the 1-skeleton of $X$, we
can convert its 2-skeleton to a finite presentation $P$ of $\pi_1(X)$.
Then we can describe a homomorphism $f:\pi_1(X) \to G$ by the list of
its values on the generators in $P$.  This serves as a certificate; the
verifier should then check whether the values satisfy the relations in $P$.
This shows that $\#H(X,G)$ is in $\shP$.

The case of $\#Q(X,G)$ is similar but slightly more complicated.  The map $f$
is surjective if and only if its values on the generators in $P$ generate
$G$; the verifier can check this.  The verifier can also calculate the
$\Aut(G)$-orbit of $f$.   Given an ordering of the generators and an
ordering of the elements of $G$, the verifier can accept $f$ only when
it is alphabetically first in its orbit.  Since only surjections are
counted and each orbit is only counted once, we obtain that $\#Q(X,G)$
certificates are accepted.
\end{proof}

There is an analog of Theorem \ref{th:inshp} for coloring invariants of knot diagrams, which has a similar proof.

\begin{thm}
If $G$ is a fixed finite group with $c \in G$ fixed, and $K$ is a knot diagram with meridian $\gamma$, together regard as computational input, then $\#H(K,\gamma,G,c)$ and $\#Q(K,\gamma,G,c)$ are both in $\shP$. \qed
\end{thm}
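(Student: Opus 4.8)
The plan is to mimic the proof of \Thm{th:inshp} almost verbatim, adding only the bookkeeping needed to track the meridian data. First I would reduce the topological input to a group-theoretic one: given a knot diagram $K$ with a distinguished Wirtinger generator $\gamma$, the Wirtinger presentation itself is a finite presentation $P$ of $\pi_1(S^3\setminus K)$, computable in polynomial time from the diagram, in which $\gamma$ appears literally as one of the generators. So a homomorphism $f:\pi_1(S^3\setminus K)\to G$ is specified by its list of values on the generators of $P$, a string of length $O(|P|\log|G|)=O(|P|)$ since $G$ is fixed; this is the certificate.

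For $\#H(K,\gamma,G,c)$: the verifier, given such a list, checks (i) that each Wirtinger relation is satisfied in $G$ — a constant-time check per relation since $G$ is fixed — and (ii) that the value assigned to the generator $\gamma$ equals $c$. Both checks are polynomial time in $|K|$, and the accepted certificates are exactly the elements of $H(K,\gamma,G,c)$, so $\#H(K,\gamma,G,c)\in\shP$.

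For $\#Q(K,\gamma,G,c)$ the argument is the one used for $\#Q(X,G)$ in \Thm{th:inshp}, refined by $c$. Recall $Q(K,\gamma,G,c)$ is identified with the set of $\Aut(G,c)$-orbits of surjections $f\in H(K,\gamma,G,c)$, and that $\Aut(G,c)$ acts freely on this set of surjections. The verifier, given a candidate list of generator values, first checks that $f$ is a homomorphism with $f(\gamma)=c$ as above; then checks that the values generate $G$ (membership in the subgroup they generate is decidable in constant time since $|G|$ is fixed, so this is polynomial time); then computes the finite $\Aut(G,c)$-orbit of $f$ (again constant-size work per orbit element, as $\Aut(G,c)$ is a fixed finite group), and accepts $f$ only if, under a fixed ordering of the generators of $P$ and a fixed ordering of the elements of $G$, the tuple $f$ is lexicographically least in its $\Aut(G,c)$-orbit. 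Since only surjections with $f(\gamma)=c$ are counted, each such orbit contains exactly one accepted representative, so the number of accepted certificates is $\#Q(K,\gamma,G,c)$, and the verifier runs in polynomial time. Hence $\#Q(K,\gamma,G,c)\in\shP$.

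There is no real obstacle here — the only point requiring a moment's care is the canonical-representative trick for $\#Q$, which must pick out exactly one element per $\Aut(G,c)$-orbit rather than per $\Aut(G)$-orbit, but since $\Aut(G,c)\le\Aut(G)$ is itself a fixed finite group this is handled exactly as in \Thm{th:inshp}. Everything else is routine, which is why the statement is marked \qed in the excerpt; the proof above is the expected one.
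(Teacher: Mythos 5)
Your proposal is correct and is essentially the proof the paper intends: the statement is marked \qed precisely because it is the ``similar proof'' to \Thm{th:inshp} --- extract a finite presentation in polynomial time (your use of the Wirtinger presentation, with $\gamma$ as a literal generator, is the natural choice), take generator values as the certificate, verify the relations and $f(\gamma)=c$, and for $\#Q$ additionally verify surjectivity and accept only the lexicographically least element of each $\Aut(G,c)$-orbit. Your one flagged point of care --- replacing $\Aut(G)$ by the fixed finite subgroup $\Aut(G,c)$ to match the paper's identification of $Q(K,\gamma,G,c)$ with $\Aut(G,c)$-orbits of surjections --- is handled exactly as you say.
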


In the input to our last algorithm, we decorate a finite simplicial
complex $X$ with a complete ordering of its simplices (of all dimensions)
that refines the partial ordering of simplices given by inclusion.  If there
are $n$ simplices total, then for each $0 \le i \le n$, we let $X_i$ be
subcomplex formed by the first $i$ simplices, so that $X_0 = \emptyset$
and $X_n = X$.  Each $X_i$ has a relative boundary $\bd(X_i)$ in $X$.
(Here we mean boundary in the set of general topology rather than manifold
theory, \ie, closure minus interior.)  We define the \emph{width} of $X$
with its ordering to be the maximum number of simplices in any $\bd(X_i)$.

\begin{thm} If $G$ is a fixed finite group and $X$ is a finite, connected
simplicial complex with a bounded-width ordering, then $\#H(X,G)$ and
$\#Q(X,G)$ can be computed in polynomial time (non-uniformly in the width).
\label{th:width} \end{thm}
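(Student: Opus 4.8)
The plan is to reduce both quantities to counting $G$-valued simplicial $1$-cocycles on $X$, and then to evaluate that count by a transfer-matrix (dynamic programming) sweep along the bounded-width ordering $X_0\subseteq X_1\subseteq\cdots\subseteq X_n=X$.

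\emph{Step 1: from homomorphisms to colorings.} Fix an orientation of each edge of $X$ and call a function $c$ assigning $c(e)\in G$ to every oriented edge a \emph{$G$-coloring} if $c(e_1)c(e_2)c(e_3)=1$ for the oriented boundary edges $e_1,e_2,e_3$ of each $2$-simplex; let $Z(X,G)$ denote the finite set of $G$-colorings. Simplices of dimension $\ge 3$ impose no further constraints, since a tetrahedron's four triangle relations are dependent. The gauge group $G^{V(X)}$ acts on $Z(X,G)$ by $c(e)\mapsto h_v\,c(e)\,h_w^{-1}$ for $e$ running from $v$ to $w$. Choosing a spanning tree $T$ of the $1$-skeleton, the colorings that are trivial on $T$ are in natural bijection with $H(X,G)=\mathrm{Hom}(\pi_1(X),G)$ — they record exactly the standard presentation of $\pi_1(X)$ coming from the $2$-skeleton — and the residual gauge freedom on such a coloring consists of the constant gauge transformations, which act as conjugation. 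A short orbit-counting argument (using that $X$ is connected, so $T$ spans) then gives $|Z(X,G)|=|G|^{\,|V(X)|-1}\cdot\#H(X,G)$. Hence it suffices to compute the integer $|Z(X,G)|$ exactly.

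\emph{Step 2: the transfer matrix.} For each $i$ let $F_i$ be the set of edges lying in the relative boundary $\bd(X_i)$; since $F_i\subseteq\bd(X_i)$ and the ordering refines inclusion, $|F_i|\le w$, where $w$ is the width. I would maintain a table $T_i\colon G^{F_i}\to\N$ in which $T_i(\phi)$ is the number of $G$-colorings of the edges of $X_i$ that satisfy all triangle relations of $X_i$ and restrict to $\phi$ on $F_i$. Then $T_0$ is $1$ on the empty domain, and since $\bd(X_n)=\emptyset$ the final table $T_n$ is the single integer $|Z(X,G)|$. The update from $T_i$ to $T_{i+1}$ depends only on the type of the new simplex $\sigma_{i+1}$: adjoining a vertex leaves the edge table unchanged; adjoining an edge introduces a fresh free color (immediately summed out if that edge lies in no triangle of $X$); adjoining a $2$-simplex $t$ imposes its relation among the colors of its three boundary edges — which all lie in $F_i$, precisely because $t\notin X_i$ while its proper faces are, so they lie in $X_i\cap\overline{X\setminus X_i}$ — after which one sums out whichever of those edges has thereby left the frontier; and adjoining a tetrahedron or larger simplex only sums out edges that have left the frontier. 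Each table has at most $|G|^w$ entries, each update costs time polynomial in that size, and the integers involved have $O(n\log|G|)$ bits, so $|Z(X,G)|$, and hence $\#H(X,G)$, are computed in time $\mathrm{poly}(n)$ — polynomial for each fixed $w$, but non-uniformly in $w$ since the exponent of the bound $|G|^w$ grows with $w$.

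\emph{Step 3: the invariant $\#Q(X,G)$.} Run Step 2 once for each subgroup $J\le G$ — there are only $O(1)$ many, as $G$ is fixed — to obtain $\#H(X,J)$ for all of them, and then solve equation~\eqref{e:homsum} recursively from the smallest subgroup upward; in particular $\#Q(X,G)=\tfrac{1}{|\Aut(G)|}\bigl(\#H(X,G)-\sum_{J<G}|\Aut(J)|\cdot\#Q(X,J)\bigr)$, again in polynomial time.

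The only non-routine point is Step 1: pinning down the homomorphism--cocycle dictionary together with the exact normalization factor $|G|^{\,|V(X)|-1}$, and verifying that at the moment each $2$-simplex is adjoined its three boundary edges already lie on the frontier $\bd(X_i)$, so that every relation ever imposed involves only the at most $w$ colors the table tracks. Granting that, the rest is a standard bounded-width dynamic program.
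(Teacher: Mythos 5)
Your proposal is correct and follows essentially the same route as the paper: identify $\#H(X,G)$ with $|Z^1(X;G)|/|G|^{|V(X)|-1}$ via (non-abelian) $1$-cocycles modulo based gauge transformations, compute the cocycle count by a dynamic program whose state is the restriction to the at most $w$ boundary simplices of each $X_i$, and recover $\#Q(X,G)$ by inverting equation~\eqref{e:homsum} over the finitely many subgroups of $G$. The only difference is cosmetic (spanning-tree/gauge language versus the paper's citation of Olum's simplicial non-abelian cohomology), so there is nothing further to add.
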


It is easy to make triangulations for all closed surfaces with uniformly
bounded width.  For instance, we can make such a triangulation of an
orientable surface $\Sigma_g$ from a Morse function chosen so that each
regular level is either one or two circles.  With more effort, we can make
a bounded-width triangulation of a Seifert-fibered 3-manifold $M$ using a
bounded-width triangulation of its orbifold base.   Thus \Thm{th:width}
generalizes the formulas of Mednykh \cite{Mednykh:compact} and Chen
\cite{Chen:seifert} in principle, although in practice their formulas
are more explicit and use better decompositions than triangulations.
\Thm{th:width} also applies to 3-manifolds with bounded Heegaard genus
(more generally, $n$-manifolds with bounded Morse width), as well as knot diagrams.

\begin{proof} We can calculate $|H(X,G)|$ using the formalism of non-abelian
simplicial cohomology theory with coefficients in $G$ \cite{Olum:nonabelian}.
In this theory, we orient the edges of $X$ and we mark a vertex $x_0 \in
X$ as a basepoint.  A 1-cocycle is then a function from the edges of $X$ to
$G$ that satisfies a natural coboundary condition on each triangle, while
a 0-cochain is a function from the vertices to $G$ that takes the value 1
at $x_0$.  The 1-cocycle set $Z^1(X;G)$ has no natural group structure when
$G$ is non-commutative, while the relative 0-cochain set $C^0(X,x_0;G)$
is a group that acts freely on $Z^1(X;G)$. Then the set of orbits
\[ H^1(X,x_0;G) \defeq Z^1(X;G) / C^0(X,x_0;G). \]
can be identified with the representation set $H(X,G)$, while if $X$ has $v$
vertices, then $C^0(X,x_0;G) \cong G^{v_1}$.  Thus
\[ |H(X,G)| = |Z^1(X;G)|/|G|^{v-1}. \]
Our approach is to compute $|Z^1(X;G)|$ and divide.  We can then also
obtain $|Q(X,G)|$ from $|H(X,G)|$ by applying M\"obius inversion to
equation \eqref{e:homsum}.

The algorithm is an example of \emph{dynamical programming} in computer
science.  Working by induction for each $i$ from $0$ to $n$, it maintains a
vector $v_i$ of non-negative integers that consists of the number of ways to
extend each 1-cocycle on $\bd(X_i)$ to a 1-cocycle of $X_i$.  The dimension
of $v_i$ may be exponential in the number of edges of $\bd(X_i)$, but
since that is bounded, the dimension of $v_i$ is also bounded.  It is
straightforward to compute $v_{i+1}$ from $v_i$ when we pass from $X_i$
to $X_{i+1}$.  If $X_{i+1} \setminus X_i$ is an edge, then $v_{i+1}$
consists of $|G|$ copies of $v_i$.  If $X_{i+1} \setminus X_i$ is a triangle
and $\bd(X_{i+1})$ has the same edges as $\bd(X_i)$, then $v_{i+1}$ is a
subvector of $v_i$.   If $\bd(X_{i+1})$ has fewer edges than $\bd(X_i)$,
then $v_{i+1}$ is obtained from $v_i$ by taking local sums of entries.
\end{proof}

\chapter{Group theory lemmas}
\label{ch:group}

In this chapter we collect some group theory results.  We do not consider any
of these results to be especially new, although we found it challenging
to prove \Thm{th:rubik}.

\section{Generating alternating groups}
\label{ss:alt}

\begin{lemma}[\Cf\ {\cite[Lem. 7]{DGK:shuffles}}] Let $S$ be a finite set
and let $T_1,T_2,\dots,T_n \subseteq S$ be a collection of subsets with at
least 3 elements each, whose union is $S$, and that form a connected graph
under pairwise intersection.  Then the permutation groups $\Alt(T_i)$
together generate $\Alt(S)$.
\label{l:alt} \end{lemma}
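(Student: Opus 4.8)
The plan is to prove the statement by induction on $n$, building up the full alternating group $\Alt(S)$ by gluing together the pieces $\Alt(T_i)$ along the connectivity graph. First I would observe that the case $n=1$ is trivial and establish the key two-set lemma: if $T, T' \subseteq S$ are finite sets with $|T \cap T'| \geq 1$, $|T| \geq 3$, $|T'| \geq 3$, and $T \cup T'$ has at least one more element than each of $T, T'$ alone — in fact the relevant hypothesis, as used by \cite{DGK:shuffles}, is that $|T \cap T'| \geq 1$ together with $|T|, |T'| \geq 3$ — then $\langle \Alt(T), \Alt(T') \rangle = \Alt(T \cup T')$. Actually the cleanest sufficient hypothesis for this step is $T \cap T' \neq \emptyset$ and $|T|, |T'| \geq 3$: pick $p \in T \cap T'$; both $\Alt(T)$ and $\Alt(T')$ contain $3$-cycles through $p$, and $3$-cycles of the form $(p\,a\,b)$ with $a,b$ ranging over $(T \cup T') \setminus \{p\}$ generate $\Alt(T \cup T')$ — so it suffices to show all such $3$-cycles lie in the generated group. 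When $a,b$ are both in $T$ (or both in $T'$) this is immediate; when $a \in T \setminus T'$ and $b \in T' \setminus T$, I would write $(p\,a\,b) = (p\,c\,b)(p\,a\,c)(p\,c\,b)^{-1}$ for some $c \in T \cap T'$, using $(p\,a\,c) \in \Alt(T)$ and $(p\,c\,b) \in \Alt(T')$.

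Second, I would run the induction. Given $T_1, \dots, T_n$ forming a connected intersection graph, relabel so that $T_n$ has a neighbor among $T_1, \dots, T_{n-1}$ and so that $T_1, \dots, T_{n-1}$ still form a connected graph — this is possible because any connected graph on $n$ vertices has a vertex whose removal leaves it connected (e.g. a non-cut vertex, such as a leaf of a spanning tree). Set $S' = T_1 \cup \dots \cup T_{n-1}$. By the inductive hypothesis, $\langle \Alt(T_1), \dots, \Alt(T_{n-1}) \rangle = \Alt(S')$. Since $T_n$ intersects some $T_j$ with $j < n$, we have $T_n \cap S' \neq \emptyset$, and both $S'$ and $T_n$ have at least $3$ elements, so the two-set lemma gives $\langle \Alt(S'), \Alt(T_n) \rangle = \Alt(S' \cup T_n) = \Alt(S)$. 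This completes the induction.

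The main obstacle — and the only genuinely non-formal point — is the two-set lemma, specifically verifying that the "mixed" $3$-cycles $(p\,a\,b)$ with $a \in T \setminus T'$, $b \in T' \setminus T$ are generated. The conjugation trick above handles it, but one must be careful that the needed intermediate $3$-cycles genuinely lie inside $\Alt(T)$ or $\Alt(T')$ (they do, since $p, a, c \in T$ and $p, c, b \in T'$), and that $3$-cycles through a fixed point $p$ do generate the whole alternating group on a set of size $\geq 3$ (standard). Everything else is bookkeeping: the reduction to a connected subgraph on $n-1$ vertices, and checking the size hypotheses are preserved (here $|S'| \geq |T_1| \geq 3$ automatically). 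I would also remark that this is essentially the argument in \cite[Lem. 7]{DGK:shuffles}, reproduced for completeness and because we need the precise form of the hypotheses stated here.
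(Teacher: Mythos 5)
Your overall strategy -- induction on $n$ via a two-set lemma, as opposed to the paper's induction on $|S \setminus T_1|$ (which adjoins one element $b \in T_2 \setminus T_1$ at a time by conjugating the $3$-cycles of $\Alt(T_1)$ by a single $3$-cycle $\alpha \in \Alt(T_2)$ with $\alpha(a)=b$) -- is a perfectly legitimate alternative route, and the outer induction (non-cut vertex, $S' = T_1 \cup \dots \cup T_{n-1}$, sizes preserved) is fine. The problem is in the two-set lemma, precisely the step you flag as the only non-formal point. First, the displayed identity is not correct: with the usual convention that $\sigma\tau\sigma^{-1}$ relabels the cycle $\tau$ by $\sigma$, one gets
\[
(p\,c\,b)\,(p\,a\,c)\,(p\,c\,b)^{-1} \;=\; (c\,a\,b),
\]
a $3$-cycle that fixes $p$, not $(p\,a\,b)$ (with the opposite composition convention you get $(p\,b\,a)=(p\,a\,b)^{-1}$, which would be harmless). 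Second, and more seriously, your construction needs an element $c \in T \cap T'$ with $c \ne p$, i.e.\ $|T \cap T'| \ge 2$; the lemma's hypotheses, and the induction step where $S' \cap T_n$ may well be a single point, only guarantee a nonempty intersection. When $T \cap T' = \{p\}$ your intermediate cycles $(p\,a\,c)$ and $(p\,c\,b)$ degenerate, so the mixed case is genuinely not covered by the written argument.

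The gap is patchable without changing your architecture. Since $|T|,|T'| \ge 3$, choose $a' \in T \setminus \{p,a\}$ and $b' \in T' \setminus \{p,b\}$; then
\[
(p\,b\,b')\,(p\,a\,a')\,(p\,b\,b')^{-1} \;=\; (b\,a\,a'),
\qquad
(p\,a'\,a)\,(b\,a\,a')\,(p\,a'\,a)^{-1} \;=\; (p\,a\,b),
\]
with $(p\,a\,a'),(p\,a'\,a) \in \Alt(T)$ and $(p\,b\,b') \in \Alt(T')$, and this needs no assumption on $|T\cap T'|$ beyond nonemptiness. With that repair (and the standard fact you invoke, that $3$-cycles through a fixed point generate the alternating group), your induction on $n$ goes through and gives the lemma by a somewhat different decomposition than the paper's element-by-element growth of $T_1$.
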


\begin{proof} We argue by induction on $|S \setminus T_1|$.  If $T_1 =
S$, then there is nothing to prove.  Otherwise, we can assume (possibly
after renumbering the sets) that there is an element $a \in T_1 \cap T_2$
and an element $b \in T_2 \setminus T_1$.  Let $\alpha \in \Alt(T_2)$ be a
3-cycle such that $\alpha(a) = b$.  Then the 3-cycles in $\Alt(T_1)$, and
their conjugates by $\alpha$, and $\alpha$ itself if it lies in $\Alt(T_1
\cup \{b\})$, include all 3-cycles in $\Alt(T_1 \cup \{b\})$.  Thus we
generate $\Alt(T_1 \cup \{b\})$ and we can replace $T_1$ by $T_1 \cup \{b\}$.
\end{proof}

\section{Joint surjectivity}
\label{ss:joint}

Recall the existence half of the Chinese remainder theorem:  If 
$d_1,d_2,\ldots,d_n$ are pairwise relatively prime integers, then the
canonical homomorphism
\[ f:\Z \to \Z/d_1 \times \Z/d_2 \times \dots \times \Z/d_n \]
is (jointly) surjective.   The main hypothesis is ``local" in the
sense that it is a condition on each pair of divisors $d_i$ and $d_j$,
namely $\mathrm{gcd}(d_i,d_j) = 1$.  For various purposes, we will need
non-commutative joint surjectivity results that resemble the classic Chinese
remainder theorem.  (But we will not strictly generalize the Chinese
remainder theorem, although such generalizations exist.)  Each version
assumes a group homomorphism
\[ f:K \to G_1 \times G_2 \times \dots \times G_n \]
that surjects onto each factor $G_i$, and assumes certain other local
hypotheses, and concludes that $f$ is jointly surjective.  Dunfield-Thurston
\cite[Lem. 3.7]{DT:random} and Kuperberg \cite[Lem. 3.5]{K:zdense}
both have results of this type and call them ``Hall's lemma", but Hall
\cite[Sec. 1.6]{Hall:eulerian} only stated without proof a special case
of Dunfield and Thurston's lemma.  Ribet \cite[Lem. 3.3]{Ribet:adic} also
has such a lemma with the proof there attributed to Serre.  In this paper,
we will start with a generalization of Ribet's lemma.

We define a group homomorphism
\[ f:K \to G_1 \times G_2 \times \dots \times G_n \]
to be \emph{$k$-locally surjective} for some integer $1 \le k \le n$ if it
surjects onto every direct product of $k$ factors.  Recall also that if $G$
is a group, then $G' = [G,G]$ is a notation for its commutator subgroup.

\begin{lemma}[After Ribet-Serre {\cite[Lem. 3.3]{Ribet:adic}}] 
Let 
\[ f:K \to G_1 \times G_2 \times \dots \times G_n \]
be a 2-locally surjective group homomorphism, such that also
its abelianization
\[ f_\ab:K \to (G_1)_\ab \times (G_2)_\ab \times \dots \times (G_n)_\ab \]
is $\ceil{(n+1)/2}$-locally surjective.  Then
\[ f(K) \ge G_1' \times G_2' \times \dots \times G_n'. \]
\label{l:ribet} \end{lemma}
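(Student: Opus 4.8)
The plan is to induct on $n$, using Goursat's lemma (\Lem{l:goursat}) together with a balanced split of the factors into two blocks; the abelianization hypothesis is exactly what makes the induction close, and its $\ceil{(n+1)/2}$ shape is forced by the bookkeeping of how local surjectivity passes to the blocks. First I would reduce to the case $K = f(K) =: H$, a subgroup of $\hat G := G_1 \times \dots \times G_n$ with $f$ the inclusion. For $n \le 2$ the statement is immediate, since $2$-local surjectivity already gives $H = G_1 \times G_2$. For $n \ge 3$, fix a partition $\{1,\dots,n\} = I \sqcup J$ with $|I| = \ceil{n/2}$ and $|J| = \lfloor n/2\rfloor$, and set $G_I = \prod_{i \in I} G_i$, $G_J = \prod_{j \in J} G_j$, so $\hat G = G_I \times G_J$; both blocks are nonempty, proper, and of size $\le \ceil{(n+1)/2}$.

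Now the hypotheses get used twice. (i) The projection $f_I\colon H \to G_I$ is again $2$-locally surjective, and its abelianization is $\ceil{(|I|+1)/2}$-locally surjective because $\ceil{(|I|+1)/2} \le \min(|I|,\ceil{(n+1)/2})$; so by the inductive hypothesis $f_I(H) \ge [G_I,G_I] = \prod_{i\in I} G_i'$, and likewise $f_J(H) \ge \prod_{j\in J} G_j'$. (ii) Since $|I| \le \ceil{(n+1)/2}$, the abelianization hypothesis says $H$ surjects onto $G_I/[G_I,G_I] = \prod_{i\in I}(G_i)_\ab$; combined with (i), which says $f_I(H)$ both contains $[G_I,G_I]$ and covers the quotient by it, this forces $f_I(H) = G_I$, and similarly $f_J(H) = G_J$. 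Thus $H \le G_I \times G_J$ surjects onto both factors, and Goursat's lemma yields $A \normaleq G_I$, $B \normaleq G_J$ with $G_I/A \cong G_J/B =: Q$, an isomorphism whose graph is $H$, and with $A \times B \subseteq H$. It then suffices to show $Q$ is abelian: for then $A \supseteq [G_I,G_I]$ and $B \supseteq [G_J,G_J]$, whence $H \supseteq A\times B \supseteq \prod_{i\in I} G_i' \times \prod_{j\in J} G_j' = \prod_{i=1}^n G_i'$.

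To prove $Q$ abelian I would feed in the remaining content of $2$-local surjectivity, namely that $H$ surjects onto $G_i\times G_j$ for every $i\in I$, $j\in J$. Write $\bar G_i \normaleq Q$ ($i\in I$) for the image of the $i$-th factor of $G_I$, and $\hat G_j \normaleq Q$ ($j\in J$) for the image of the $j$-th factor of $G_J$; each family is pairwise commuting and generates $Q$. Unwinding the Goursat description of $H$, surjectivity of $H$ onto $G_i\times G_j$ becomes the identity $Q = \hat G_{J\setminus\{j\}}\cdot \bar G_{I\setminus\{i\}}$, for all $i\in I$, $j\in J$ --- a short elimination, and essentially the computation that settles $n=3$ directly (there $J$ is a single factor and the identity reads $Q = \bar G_{I\setminus\{i\}}$, immediately giving $\bar G_i$ central and $Q$ abelian). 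In general I would pass to $\bar Q := Q/Z(Q)$: it is centerless, the pairwise intersections among the $\bar G_i$ (and among the $\hat G_j$) become trivial so both families now give internal direct product decompositions of $\bar Q$, and the displayed identities persist. For a centerless group carrying two such transverse direct product decompositions linked by these identities, one checks --- applying Goursat inside $\bar Q$ and using that a subdirect normal subgroup of a direct product has abelian Goursat quotients --- that $\bar Q$ collapses to the trivial group, so $Q = Z(Q)$ is abelian.

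I expect this last step --- wringing abelianness of $Q$ out of nothing stronger than $2$-local surjectivity across the two blocks --- to be the main obstacle; the block version of the elimination, and the centerless reduction that tames it, are where the real work lies. A secondary, purely combinatorial point is the tracking of local-surjectivity indices through the recursion, which is what pins down the balanced choice $|I| = \ceil{n/2}$ and the constant $\ceil{(n+1)/2}$ in the statement.
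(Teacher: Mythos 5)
Your setup through the application of Goursat's lemma is sound, and the reduction of the conclusion to ``$Q$ is abelian'' is correct (indeed it is equivalent to the conclusion). The gap is the final step: abelianness of $Q$ does \emph{not} follow from surjectivity onto each block together with $2$-local surjectivity across the blocks, which is all you permit yourself there. Concretely, for $n=4$ let $E$ be extraspecial of order $p^3$ and exponent $p$, and let $Q=E\circ E$ be the extraspecial central product of order $p^5$. Choose two transverse central decompositions $Q=E_1\circ E_2=E_3\circ E_4$ (these correspond to perpendicular splittings of the symplectic space $Q/Z(Q)\cong(\Z/p)^4$ into nondegenerate planes, and transverse pairs exist), and let $H\le E^4$ be the fiber product $\{(x_1,\dots,x_4):\mu_1(x_1)\mu_2(x_2)=\nu_3(x_3)\nu_4(x_4)\}$ of the two resulting surjections $E^2\onto Q$. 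Then $H$ surjects onto each block $E^2$, is $2$-locally surjective (because $E_iE_j=Q$ for $i\in\{1,2\}$, $j\in\{3,4\}$), its Goursat quotient across the bipartition is the nonabelian group $Q$, and $H\not\ge Z(E)^4=\prod_i G_i'$. The lemma is not contradicted, since the abelianization of $H$ is only the graph of an isomorphism $(\Z/p)^4\to(\Z/p)^4$ and hence is not $3$-locally surjective; but every identity $Q=\hat G_{J\setminus\{j\}}\cdot\bar G_{I\setminus\{i\}}$ that you extract from $2$-local surjectivity holds here, so no argument from those identities alone can succeed. Your fallback reduction is also flawed on its own terms: $Q/Z(Q)$ need not be centerless (for nilpotent $Q$ it again has nontrivial center), and in the example it is $(\Z/p)^4$ carrying two transverse direct decompositions with nothing collapsing.

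The fix is that the $\ceil{(n+1)/2}$-local surjectivity of $f_\ab$ must enter the endgame, not just the recursion. The paper's proof organizes this around a different decomposition: setting $t=\ceil{(n+1)/2}$, it first uses induction plus the abelianization hypothesis to upgrade $f$ itself to being $t$-locally surjective (project onto any $t$ factors; by induction the image contains the product of commutator subgroups, and by hypothesis it covers the abelianization, hence it is everything). Since $t>n/2$, the windows $\{1,\dots,t\}$ and $\{t,\dots,n\}$ cover all coordinates while overlapping in the single coordinate $t$; the commutator of an element prescribed to be $(g_t,1,\dots,1)$ on the second window with one prescribed to be $(1,\dots,1,h_t)$ on the first is $[g_t,h_t]$ supported on coordinate $t$, so $G_t'\le f(K)$, and permuting the role of $t$ finishes. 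That single-coordinate overlap is precisely what your balanced bipartition gives up, and is why the constant $\ceil{(n+1)/2}$ appears.
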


\begin{proof} We argue by induction on $n$.  If $n=2$, then there is
nothing to do.  Otherwise let $t = \ceil{(n+1)/2}$ and note
that $n > t > n/2$.  Let 
\[ \pi:G_1 \times G_2 \times \dots \times G_n \to
    G_1 \times G_2 \times \dots \times G_t \]
be the projection onto the first $t$ factors.  Then $\pi \circ f$ satisfies
the hypotheses, so
\[ \pi(f(K)) \ge G_1' \times G_2' \times \dots \times G_t'. \]
Morever, $(\pi \circ f)_\ab$ is still $t$-locally surjective, which is to
say that
\[ \pi(f(K))_\ab = (G_1)_\ab \times (G_2)_\ab \times
    \dots \times (G_t)_\ab. \]
Putting these two facts together, we obtain
\[ \pi(f(K)) = G_1 \times G_2 \times \dots \times G_t. \]
Repeating this for any $t$ factors, we conclude that $f$ is $t$-locally
surjective.

Given any two elements $g_t,h_t \in G_t$, we can use $t$-local surjectivity
to find two elements
\begin{multline*} (g_1,g_2,\dots,g_{t-1},g_t,1,1,\dots,1), \\
    (1,1,\dots,1,h_t,h_{t+1},\dots,h_n) \in f(K).
\end{multline*}
Their commutator then is $[g_t,h_t] \in G_t \cap f(K)$.   Since $g_t$
and $h_t$ are arbitrary, we thus learn that $G'_t \le f(K)$, and since
this construction can be repeated for any factor, we learn that
\[ f(K) \ge G_1' \times G_2' \times \dots \times G_n', \]
as desired.
\end{proof}

We will also use a complementary result, Goursat's lemma, which can be used
to establish 2-local surjectivity.  (Indeed, it is traditional in some papers
to describe joint surjectivity results as applications of Goursat's lemma.)

\begin{lemma}[Goursat \cite{Goursat:divisions,BSZ:goursat}] Let $G_1$ and
$G_2$ be groups and let $H \leq G_1 \times G_2$ be a subgroup that
surjects onto each factor $G_i$.  Then there exist normal subgroups $N_i
\normaleq G_i$ such that $N_1 \times N_2 \le H$ and $H/(N_1 \times N_2)$
is the graph of an isomorphism $G_1/N_1 \cong G_2/N_2$.
\label{l:goursat} \end{lemma}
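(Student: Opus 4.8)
The plan is to run the classical argument: read off the candidate normal subgroups directly from $H$, check normality using the two surjectivity hypotheses, and then recognize the quotient of $H$ as the graph of an isomorphism.

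First I would set $N_1 = \{g_1 \in G_1 : (g_1,1) \in H\}$ and $N_2 = \{g_2 \in G_2 : (1,g_2) \in H\}$, so that $N_1 \times \{1\} = H \cap (G_1 \times \{1\})$ and $\{1\} \times N_2 = H \cap (\{1\} \times G_2)$. These are subgroups of $G_1$ and $G_2$. Since $(n_1,1)$ and $(1,n_2)$ commute and both lie in $H$ when $n_i \in N_i$, we get $N_1 \times N_2 \le H$ at once. The one step that genuinely uses a hypothesis is normality: given $g_1 \in G_1$, surjectivity of $H \onto G_1$ provides $(g_1,g_2) \in H$ for some $g_2$, and conjugating $(n_1,1) \in H$ by $(g_1,g_2)$ gives $(g_1 n_1 g_1^{-1},1) \in H$, hence $g_1 n_1 g_1^{-1} \in N_1$; thus $N_1 \normaleq G_1$, and symmetrically $N_2 \normaleq G_2$.

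Next I would pass to $\bar G_i = G_i/N_i$, let $q \colon G_1 \times G_2 \to \bar G_1 \times \bar G_2$ be the product of the quotient maps (so $\ker q = N_1 \times N_2$), and put $\bar H = q(H)$. Because $N_1 \times N_2 \le H$ we have $q^{-1}(\bar H) = H\cdot(N_1\times N_2) = H$, so $\bar H \cong H/(N_1 \times N_2)$. Now $\bar H$ surjects onto each $\bar G_i$ since $H$ surjects onto each $G_i$. Moreover $\bar H$ meets each factor trivially: if $(\bar g_1, \bar 1) \in \bar H$, lift to $(g_1, n_2) \in H$ with $n_2 \in N_2$; since $(1,n_2) \in H$ we get $(g_1,1) \in H$, so $g_1 \in N_1$ and $\bar g_1 = \bar 1$, and symmetrically for the other factor. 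Finally I would invoke the elementary fact that a subgroup of a direct product which surjects onto both factors and meets each factor trivially is the graph of an isomorphism: the restrictions of the two projections to $\bar H$ are then both bijective, and $\pi_{\bar G_2}|_{\bar H} \circ (\pi_{\bar G_1}|_{\bar H})^{-1}$ is an isomorphism $\bar G_1 \cong \bar G_2$ whose graph is exactly $\bar H$.

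There is no serious obstacle here; the only non-formal ingredients are the normality of $N_i$ (the sole place the surjectivity of $H$ onto the factors is used) and the small lemma identifying a "full" trivially-meeting subgroup of a product with a graph, both of which are routine. This is why the statement is simply attributed to Goursat and cited rather than developed at length.
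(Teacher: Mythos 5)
Your proof is correct and is the standard textbook argument for Goursat's lemma; the paper itself gives no proof, simply citing Goursat and \cite{BSZ:goursat}, so there is nothing to compare against. Every step checks out: the definitions of $N_1$ and $N_2$ as the intersections of $H$ with the factors, the use of surjectivity onto each $G_i$ to get normality via conjugation, and the identification of $H/(N_1\times N_2)$ as a subgroup of $\bar G_1 \times \bar G_2$ that surjects onto and meets trivially each factor, hence is the graph of an isomorphism.
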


For instance, if $G_1$ is a simple group, then either $H = G_1 \times G_2$
or $H$ is the graph of an isomorphism $G_1 \cong G_2$.  In other words,
given a joint homomorphism
\[ f = f_1 \times f_2:K \to G_1 \times G_2 \]
which surjects onto each factor, either $f$ is surjective or $f_1$ and $f_2$
are equivalent by an isomorphism $G_1 \cong G_2$.  We can combine this
with the perfect special case of \Lem{l:ribet} to obtain exactly Dunfield
and Thurston's version.

\begin{lemma}[{\cite[Lem. 3.7]{DT:random}}] If 
\[ f:K \to G_1 \times G_2 \times \dots \times G_n \]
is a group homomorphism to a direct product of non-abelian simple groups,
and if no two factor homomorphism $f_i:K \to G_i$ and $f_j:K \to G_j$
are equivalent by an isomorphism $G_i \cong G_j$, then $f$ is surjective.
\label{l:sjoint}\end{lemma}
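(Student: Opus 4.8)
The plan is to deduce \Lem{l:sjoint} from the two results already in hand: \Lem{l:ribet} (the Ribet--Serre lemma) and \Lem{l:goursat} (Goursat's lemma, or more precisely the simple-group corollary stated just after it). The overall strategy is an induction on $n$, where the base case $n = 1$ is trivial (a homomorphism onto a single $G_i$ that ``surjects onto each factor'' is surjective by hypothesis), and the inductive step combines 2-local surjectivity — which we get essentially for free from Goursat — with the conclusion of \Lem{l:ribet}.

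First I would establish that $f$ is 2-locally surjective. Fix indices $i \ne j$ and let $\pi_{ij}\colon G_1 \times \dots \times G_n \to G_i \times G_j$ be the projection; write $H = \pi_{ij}(f(K)) \le G_i \times G_j$, which surjects onto each of $G_i$ and $G_j$ by assumption. By the corollary to Goursat's lemma for simple groups, either $H = G_i \times G_j$, or $H$ is the graph of an isomorphism $G_i \cong G_j$, which would make $f_i$ and $f_j$ equivalent via that isomorphism — contrary to hypothesis. Hence $H = G_i \times G_j$, i.e.\ $f$ surjects onto every pair of factors. Next I would check the abelianization hypothesis of \Lem{l:ribet}: since each $G_i$ is non-abelian simple, $(G_i)_\ab = 1$, so $(G_1)_\ab \times \dots \times (G_n)_\ab$ is the trivial group and $f_\ab$ is trivially $k$-locally surjective for every $k$, in particular for $k = \ceil{(n+1)/2}$. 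Thus \Lem{l:ribet} applies and yields
\[ f(K) \ge G_1' \times G_2' \times \dots \times G_n'. \]
But again each $G_i$ is non-abelian simple, hence perfect, so $G_i' = G_i$, and therefore $f(K) \ge G_1 \times \dots \times G_n$, i.e.\ $f$ is surjective.

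I don't anticipate a serious obstacle here — the lemma is essentially a packaging of \Lem{l:ribet} and \Lem{l:goursat} specialized to the non-abelian simple case, where the abelianizations vanish and the commutator subgroups are everything. The only point requiring any care is the logical shape of the hypothesis: ``no two factor homomorphisms are equivalent by an isomorphism $G_i \cong G_j$'' must be read as applying to \emph{all} pairs $i \ne j$, so that the Goursat dichotomy rules out the graph case for each pair simultaneously; this is exactly what is needed to feed 2-local surjectivity into \Lem{l:ribet}. (One could alternatively run the induction more explicitly in the style of the proof of \Lem{l:ribet}, splitting off one factor at a time, but invoking \Lem{l:ribet} directly is cleaner and avoids redoing that argument.)
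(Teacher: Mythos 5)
Your proposal is correct and matches the paper's own derivation: the text explicitly obtains \Lem{l:sjoint} by combining the simple-group case of Goursat's lemma (to get 2-local surjectivity, since the graph alternative would make $f_i$ and $f_j$ equivalent) with the perfect special case of \Lem{l:ribet}, where the abelianization hypothesis is vacuous and $G_i' = G_i$. The induction you mention is not actually needed once \Lem{l:ribet} is invoked, as your own write-up effectively shows.
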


\begin{cor} Let $K$ be a group and let
\[ N_1,N_2,\dots,N_n \normal K \]
be distinct maximal normal subgroups with non-abelian simple quotients
$G_i = K/N_i$.  Then
\[ G_1 \cong (N_2 \cap N_3 \cap \dots \cap N_n)/
    (N_1 \cap N_2 \cap \dots \cap N_n). \]
\label{c:sjoint} \end{cor}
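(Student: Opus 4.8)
The plan is to realize the product of the quotient maps as a single homomorphism and show it is surjective via \Lem{l:sjoint}, then chase the resulting isomorphism. First I would let $f_i : K \onto G_i$ be the canonical projection with $\ker f_i = N_i$ and assemble
\[ f = f_1 \times f_2 \times \dots \times f_n : K \longto G_1 \times G_2 \times \dots \times G_n, \]
which surjects onto each factor by construction. To apply \Lem{l:sjoint} I must check that no two factor homomorphisms $f_i$, $f_j$ are equivalent by an isomorphism $G_i \cong G_j$; but any isomorphism $\psi : G_i \xrightarrow{\cong} G_j$ with $\psi \circ f_i = f_j$ would force $N_i = \ker f_i = \ker f_j = N_j$, contradicting that the $N_i$ are distinct. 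Since the $G_i$ are non-abelian simple, \Lem{l:sjoint} then gives that $f$ is surjective, and as $\ker f = N_1 \cap N_2 \cap \dots \cap N_n$ we obtain
\[ K/(N_1 \cap N_2 \cap \dots \cap N_n) \;\cong\; G_1 \times G_2 \times \dots \times G_n. \]

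Next I would identify the subgroup $(N_2 \cap \dots \cap N_n)/(N_1 \cap \dots \cap N_n)$ with the first factor under this isomorphism. If $x \in N_2 \cap \dots \cap N_n$ then $f_i(x) = 1$ for all $i \ge 2$, so $f(x) \in G_1 \times \{1\} \times \dots \times \{1\}$; conversely, given any $g \in G_1$, surjectivity of $f$ furnishes $x \in K$ with $f(x) = (g,1,\dots,1)$, whence $x \in N_i$ for every $i \ge 2$ and so $x \in N_2 \cap \dots \cap N_n$. Thus $f$ restricts to a surjection $N_2 \cap \dots \cap N_n \onto G_1 \times \{1\} \times \dots \times \{1\} \cong G_1$, and its kernel is $(N_2 \cap \dots \cap N_n) \cap \ker f = N_1 \cap N_2 \cap \dots \cap N_n$. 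The first isomorphism theorem then yields the claimed isomorphism.

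I do not expect a real obstacle here: once \Lem{l:sjoint} is available the argument is pure bookkeeping. The only points requiring a little care are matching the "no two factors related by an isomorphism" hypothesis precisely to the distinctness of the $N_i$, and the degenerate case $n = 1$, where the intersection over the empty index set is interpreted as $K$ itself and the statement reduces to the definition $G_1 = K/N_1$.
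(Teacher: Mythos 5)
Your proposal is correct and follows essentially the same route as the paper: form the product of the quotient maps, verify the hypothesis of \Lem{l:sjoint} from the distinctness of the $N_i$, and then restrict the resulting surjection to $f^{-1}(G_1) = N_2 \cap \dots \cap N_n$ to extract the isomorphism. Your write-up just spells out the kernel bookkeeping that the paper leaves implicit.
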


\begin{proof} We can take the product of the quotient maps to obtain
a homomorphism
\[ f:K \to G_1 \times G_2 \times \dots \times G_n \]
that satisfies \Lem{l:sjoint}.  Thus we can restrict $f$ to
\[ f^{-1}(G_1) = N_2 \cap N_3 \cap \dots \cap N_n \]
to obtain a surjection
\[ f:N_2 \cap N_3 \cap \dots \cap N_n \onto G_1. \]
This surjection yields the desired isomorphism.
\end{proof}

We will use a more direct corollary of \Lem{l:goursat}.  We say that
a group $G$ is \emph{normally Zornian} if every normal subgroup of $G$
is contained in a maximal normal subgroup.  Clearly every finite group is
normally Zornian, and so is every simple group.  A more interesting result
implied by Neumann \cite[Thm. 5]{Neumann:remarks} is that every finitely
generated group is normally Zornian.  (Neumann's stated result is that
every subgroup is contained in a maximal subgroup, but the proof works
just as well for normal subgroups.  He also avoided the axiom of choice
for this result, despite our reference to Zorn's lemma.)  Recall also the
standard concept that a group $H$ is \emph{involved} in another group $G$
if $H$ is a quotient of a subgroup of $G$.

\begin{lemma} Suppose that 
\[ f:K \to G_1 \times G_2 \]
is a group homomorphism that surjects onto the first factor $G_1$, and
that $G_1$ is normally Zornian.  Then:
\begin{enumerate}
\item If no simple quotient of $G_1$ is involved in $G_2$, then 
$f(K)$ contains $G_1$.
\item If $f$ surjects onto $G_2$, and no simple quotient of $G_1$
is a quotient of $G_2$, then $f$ is surjective.
\end{enumerate}
\label{l:zorn} \end{lemma}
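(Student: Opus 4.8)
The plan is to apply Goursat's lemma (\Lem{l:goursat}) to the image $H \defeq f(K)$ and then use the normally Zornian hypothesis on $G_1$ to show that the ``graph of an isomorphism'' piece of the Goursat decomposition is trivial in the first coordinate. Both parts come out of the same argument.

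\textbf{Part (1).} First I would set $H_2 \defeq \pi_2(H) \le G_2$, the projection of $H$ onto the second factor. Then $H$ is a subgroup of $G_1 \times H_2$ that surjects onto each factor --- onto $G_1$ by hypothesis, and onto $H_2$ by construction --- so Goursat's lemma supplies normal subgroups $N_1 \normal G_1$ and $N_2 \normal H_2$ with $N_1 \times N_2 \le H$ and $H/(N_1 \times N_2)$ the graph of an isomorphism $G_1/N_1 \cong H_2/N_2$. The claim is that $N_1 = G_1$. If not, then since $G_1$ is normally Zornian, $N_1$ lies in some maximal normal subgroup $M_1 \normal G_1$, and $G_1/M_1$ is a nontrivial simple group. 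Because $N_1 \le M_1$, the group $G_1/M_1$ is a quotient of $G_1/N_1 \cong H_2/N_2$, hence a quotient of $H_2$, hence a simple quotient of $G_1$ that is involved in $G_2$ (a quotient of the subgroup $H_2 \le G_2$) --- contradicting the hypothesis of (1). Therefore $N_1 = G_1$, so $H \supseteq G_1 \times N_2 \supseteq G_1 \times \{1\}$; that is, $f(K)$ contains $G_1$.

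\textbf{Part (2).} The additional hypothesis that $f$ surjects onto $G_2$ says exactly that $H_2 = G_2$, so the argument of Part (1) applies verbatim, with the single change that $G_1/M_1$ is now literally a quotient of $G_2$ rather than merely involved in it; hence the weaker hypothesis of (2) is enough to derive the contradiction, and we again conclude $f(K) \supseteq G_1 \times \{1\}$. Since $G_1 \times \{1\}$ is normal in $G_1 \times G_2$ with quotient $G_2$, and $f$ still surjects onto that quotient, the correspondence theorem forces $f(K) = G_1 \times G_2$, i.e.\ $f$ is surjective.

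The whole proof is bookkeeping around Goursat's lemma together with the fact that a maximal normal subgroup is precisely one with simple quotient; there is no serious obstacle. The one point that genuinely requires attention is the reduction to $H_2 = \pi_2(f(K))$ in Part (1): since $f$ need not surject onto $G_2$, Goursat must be applied inside $G_1 \times H_2$ rather than $G_1 \times G_2$, and this is exactly why the hypothesis is phrased in terms of being \emph{involved in} $G_2$ (a quotient of a subgroup) rather than being a quotient of $G_2$.
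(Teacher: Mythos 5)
Your proof is correct and is essentially the paper's argument: both apply Goursat's lemma together with the normally Zornian hypothesis to extract a common simple quotient and derive a contradiction. The only cosmetic difference is direction of reduction --- the paper reduces part (1) to part (2) by replacing $G_2$ with $\pi_2(f(K))$, while you prove part (1) directly inside $G_1 \times \pi_2(f(K))$ and then observe that part (2) is the special case $\pi_2(f(K)) = G_2$.
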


\begin{proof} Case 1 reduces to case 2, since we can replace $G_2$ by the
projection of $f(K)$ in $G_2$.  In case 2, \Lem{l:goursat} yields isomorphic
quotients $G_1/N_1 \cong G_2/N_2$.  Since $G_1$ is normally Zornian, we
may further quotient $G_1/N_1$ to produce a simple quotient $Q$, and we
can quotient $G_2/N_2$ correspondingly.
\end{proof}

Finally, we have a lemma to calculate the simple quotients of a direct
product of groups.

\begin{lemma} If
\[ f:G_1 \times G_2 \times \cdots \times G_n \onto Q \]
is a group homomorphism from a direct product  to a non-abelian simple
quotient, then it factors through a quotient map $f_i:G_i \to Q$ for a
single value of $i$.
\label{l:prodquo} \end{lemma}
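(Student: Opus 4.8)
The plan is to argue by induction on $n$, with the base case $n=1$ being trivial. For the inductive step, write the direct product as $(G_1 \times \dots \times G_{n-1}) \times G_n$ and apply the case $n=2$. So the real content is the case $n=2$: given a surjection $f:G_1 \times G_2 \onto Q$ with $Q$ non-abelian simple, I want to show $f$ kills one of the two factors. First I would consider the images $f(G_1 \times 1)$ and $f(1 \times G_2)$ inside $Q$. These are normal subgroups of $Q$, since $G_1 \times 1$ and $1 \times G_2$ are normal in $G_1 \times G_2$ and $f$ is surjective. Because $Q$ is simple, each of these images is either trivial or all of $Q$.

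The key observation is that $f(G_1 \times 1)$ and $f(1 \times G_2)$ commute elementwise in $Q$ (since $G_1 \times 1$ and $1 \times G_2$ commute in $G_1 \times G_2$), and together they generate $Q$ (since they generate the image of $f$, which is $Q$). If both images were equal to $Q$, then $Q$ would be generated by two commuting copies of itself, forcing $Q$ to be abelian — contradicting that $Q$ is non-abelian. (More precisely, if $N_1, N_2 \normaleq Q$ commute elementwise and generate $Q$, then $Q = N_1 N_2$ is a quotient of $N_1 \times N_2$, and the commutator $[Q,Q] = [N_1 N_2, N_1 N_2]$ lands in $[N_1,N_1][N_2,N_2]$; iterating, if $N_1 = N_2 = Q$ this says $[Q,Q]$ is generated by $[Q,Q]$ in each factor in a way that, combined with simplicity, forces $Q$ abelian — the cleanest route is just: $Q/N_1$ is a quotient of $N_2$, but $N_1 = Q$ gives $Q$ trivial, while the surjectivity forces $N_1 \cdot N_2 = Q$, so not both can be trivial either.) So exactly one of $f(G_1 \times 1)$, $f(1 \times G_2)$ is trivial; say $f(1 \times G_2) = 1$. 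Then $f$ factors through the projection $G_1 \times G_2 \to G_1$, i.e.\ through a surjection $f_1:G_1 \onto Q$, as desired.

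I do not anticipate a serious obstacle here — this is an elementary and well-known fact. The one point requiring a little care is the claim that two commuting normal subgroups $N_1, N_2$ of a non-abelian simple group $Q$ cannot both equal $Q$; the slick argument is that $N_1 \cap N_2$ is central in $N_1 N_2 = Q$ (anything in both commutes with everything in $N_1$ and everything in $N_2$, hence with all of $Q$), so $N_1 \cap N_2 \le Z(Q) = 1$, whence $Q \cong N_1 \times N_2$ with both factors normal and simple — impossible unless one factor is trivial, since a non-abelian simple group is not a nontrivial direct product. Feeding this back: since $N_1 N_2 = Q$ and $N_1 \cap N_2 = 1$, if both $N_i = Q$ we get $Q \cong Q \times Q$, absurd (e.g.\ by comparing the number of minimal normal subgroups, or just orders if $Q$ is finite). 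Thus the inductive step goes through and the lemma follows.
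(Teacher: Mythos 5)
Your proposal is correct and follows essentially the same route as the paper: reduce to $n=2$ by induction, observe that the images of the two factors are commuting normal subgroups of $Q$ that generate $Q$, invoke simplicity to conclude each is trivial or all of $Q$, and rule out both being $Q$ since that would force $Q$ abelian. The extra machinery in your final paragraph (centrality of $N_1\cap N_2$, $Q\cong Q\times Q$) is unnecessary — if both images equal $Q$ and commute elementwise, $Q$ is immediately abelian — but nothing is wrong.
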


\begin{proof} The lemma clearly reduces to the case $n=2$ by induction.  If
\[ f:G_1 \times G_2 \onto Q \]
is a simple quotient, then $f(G_1)$ and $f(G_2)$ commute with each other,
so they are normal subgroups of the group that they generate, which by
hypothesis is $Q$.  So each of $f(G_1)$ and $f(G_2)$ is either trivial
or equals $Q$.  Since $Q$ is non-commutative, then $f(G_1)$ and $f(G_2)$
cannot both be $Q$, again because they commute with each other.  Thus one
of $G_1$ and $G_2$ is in the kernel of $f$, and $f$ factors through a
quotient of the other one.
\end{proof}

\section{Integer symplectic groups}

Recall that for any integer $g \ge 1$ and any commutative ring $A$,
there is an integer symplectic group $\Sp(2g,A)$, by definition the set of
automorphisms of the free $A$-module $A^{2g}$ that preserves a symplectic
inner product.  Likewise the projective symplectic group $\PSp(2g,A)$
is the quotient of $\Sp(2g,A)$ by its center (which is trivial in
characteristic 2 and consists of $\pm I$ otherwise).  For each prime
$p$ and each $g \ge 1$, the group $\PSp(2g,\Z/p)$ is a finite simple
group, except for $\PSp(2,\Z/2)$, $\PSp(2,\Z/3)$, and $\PSp(4,\Z/2)$
\cite[Thm. 11.1.2]{Carter:simple}.  Moreover, $\PSp(2g,\Z/p)$ is never
isomorphic to an alternating group when $g \ge 2$ (because it has the
wrong cardinality).

We want to apply \Lem{l:zorn} to the symplectic group $\Sp(2g,\Z)$,
since it is the quotient of the mapping class group $\MCG_*(\Sigma_g)$
by the Torelli group $\Tor_*(\Sigma_g)$.  To this end, we can
describe its simple quotients when $g \ge 3$.

\begin{lemma} If $g \ge 3$, then the simple quotients of $\Sp(2g,\Z)$
are all of the form $\PSp(2g,\Z/p)$, where $p$ is prime and the quotient
map is induced by the ring homomorphism from $\Z$ to $\Z/p$.
\label{l:symplectic} \end{lemma}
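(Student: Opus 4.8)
The plan is to reduce everything to finite congruence quotients by invoking two deep facts about $\Sp(2g,\Z)$ valid for $g \ge 2$: the affirmative solution of the congruence subgroup problem (Bass--Milnor--Serre and Mennicke), so that every finite-index subgroup contains a principal congruence subgroup $\Gamma(n) = \ker\bigl(\Sp(2g,\Z)\to\Sp(2g,\Z/n)\bigr)$; and Margulis's normal subgroup theorem, which applies because $\Sp(2g,\Z)$ is an (automatically irreducible) lattice in the simple Lie group $\Sp(2g,\R)$ of real rank $g \ge 2$. Fix $g \ge 3$ and a surjection onto a simple group $q\colon \Sp(2g,\Z) \onto Q$, with kernel $N$ a maximal normal subgroup.

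First I would show $Q$ is a finite non-abelian simple group. Since $g \ge 3$, the group $\Sp(2g,\Z)$ is perfect, so $Q$ is not cyclic of prime order; hence $Q$ is non-abelian simple. By Margulis's normal subgroup theorem, $N$ is either central, \emph{i.e.}\ contained in $Z(\Sp(2g,\Z)) = \{\pm I\}$, or of finite index. The central case cannot occur: both $\Sp(2g,\Z)$ and $\PSp(2g,\Z) = \Sp(2g,\Z)/\{\pm I\}$ surject onto the nontrivial finite group $\PSp(2g,\Z/3)$ with infinite kernel (reduction modulo $3$ is surjective), so neither is simple. Hence $N$ has finite index and $Q$ is finite.

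Next I would pin $Q$ down. By the congruence subgroup property, $N \supseteq \Gamma(n)$ for some $n$, and since reduction modulo $n$ is surjective, $Q$ is a quotient of $\Sp(2g,\Z)/\Gamma(n) \cong \Sp(2g,\Z/n)$. The Chinese remainder theorem gives $\Sp(2g,\Z/n) \cong \prod_{p^a \| n} \Sp(2g,\Z/p^a)$, so by \Lem{l:prodquo} the non-abelian simple group $Q$ is a quotient of a single factor $\Sp(2g,\Z/p^a)$. Now the kernel of the further reduction $\Sp(2g,\Z/p^a) \to \Sp(2g,\Z/p)$ has $p$-power order, hence is solvable, so its image in $Q$ is a solvable normal subgroup of the non-abelian simple group $Q$ and is therefore trivial; thus $Q$ factors through $\Sp(2g,\Z/p)$. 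For the same reason the central subgroup $\{\pm I\} \le \Sp(2g,\Z/p)$ dies in $Q$, so $Q$ is a quotient of $\PSp(2g,\Z/p)$. Since $g \ge 3$ there are no exceptional pairs, so $\PSp(2g,\Z/p)$ is simple (as recalled above), forcing $Q \cong \PSp(2g,\Z/p)$; chasing the maps backwards, the composite $\Sp(2g,\Z)\onto Q$ is exactly the one induced by the ring map $\Z \to \Z/p$ followed by projection to $\PSp$.

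The main obstacle is the step that forces $Q$ to be finite. The congruence subgroup property by itself only controls finite-index subgroups, so without extra input one cannot a priori exclude an infinite, non-congruence, maximal normal subgroup; this is exactly what Margulis's normal subgroup theorem rules out (alternatively one may cite the full normal subgroup theorem for $\Sp(2g,\Z)$ directly, namely that every noncentral normal subgroup contains some $\Gamma(n)$, which packages the two inputs together). The remaining ingredients---surjectivity of the reduction maps, the Chinese remainder theorem, the fact that level-$p$ congruence kernels inside $\Sp(2g,\Z/p^a)$ are $p$-groups, and the simplicity of $\PSp(2g,\Z/p)$ for $g \ge 3$---are routine.
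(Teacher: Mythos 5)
Your proof is correct and follows essentially the same route as the paper's: perfectness of $\Sp(2g,\Z)$ to rule out abelian quotients, the Margulis normal subgroup theorem to force finiteness, and the congruence subgroup property to reduce to quotients of $\Sp(2g,\Z/n)$. The only (cosmetic) difference is in the last step: the paper kills the non-prime part by observing that the kernel of the joint map $\PSp(2g,\Z/n)\to\prod_{p\mid n}\PSp(2g,\Z/p)$ is nilpotent and invoking \Cor{c:sjoint}, whereas you factor via the Chinese remainder theorem, apply \Lem{l:prodquo} to isolate one prime-power factor, and then note the level-$p$ congruence kernel is a solvable $p$-group---the same underlying fact packaged slightly differently.
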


As the proof will indicate, \Lem{l:symplectic} is a mutual corollary of
two important results due to others:  the congruence subgroup property of
Mennicke and Bass-Lazard-Serre, and the Margulis normal subgroup theorem.

Note that the finite simple quotients of $\Sp(4,\Z)$ are only slightly
different.  The best way to repair the result in this case is to replace
both $\Sp(4,\Z)$ and $\Sp(4,\Z/2)$ by their commutator subgroups of index 2.
Meanwhile given the well-known fact that $\PSp(2,\Z) \cong C_2 * C_3$,
any simple group generated by an involution and an element of order 3 is
a simple quotient of $\Sp(2,\Z)$, and this is a very weak restriction.
However, we only need \Lem{l:symplectic} for large $g$.

\begin{proof} We note first that $\Sp(2g,\Z)$ is a perfect group when
$g \ge 3$, so every possible simple quotient is non-abelian, and every
such quotient is also a quotient of $\PSp(2g,\Z)$.  It is a special
case of the Margulis normal subgroup theorem \cite{Margulis:discrete}
that $\PSp(2g,\Z)$ is \emph{just infinite} for $g \ge 2$, meaning that
all quotient groups are finite.   Meanwhile, a theorem of Mennicke and
Bass-Lazard-Serre \cite{Mennicke:zur,BLS:fini} says that $\Sp(2g,\Z)$ has
the \emph{congruence subgroup property}, meaning that all finite quotients
factor through $\Sp(2g,\Z/n)$ for some integer $n > 1$.  Every finite
quotient of $\PSp(2g,\Z)$ likewise factors through $\PSp(2g,\Z/n)$, so we
only have to find the simple quotients of $\PSp(2g,\Z/n)$.

Clearly if a prime $p$ divides $n$, then $\PSp(2g,\Z/p)$ is a simple
quotient of $\PSp(2g,\Z/n)$.  We claim that there are no others.  Let $N$
be the kernel of the joint homomorphism
\[f:\PSp(2g,\Z/n) \to \prod_{p|n \text{ prime}} \PSp(2g,\Z/p). \]
If $\PSp(2g,\Z/n)$ had another simple quotient, necessarily non-abelian, then
by \Cor{c:sjoint}, it would also be a simple quotient of $N$.  It is easy
to check that $N$ is nilpotent, so all of its simple quotients are abelian.
\end{proof}

\section{Rubik groups}
\label{ss:rubik}

Recall the notation that $J' = [J,J]$ is the commutator subgroup of a
group $J$.

If $J$ is a group and $X$ is a $J$-set, then we define the $J$-set
symmetric group $\Sym_J(X)$ to be the group of permutations of $X$ that
commute with the action of $J$.  (Equivalently, $\Sym_J(X)$ is the group
of automorphisms of $X$ as a $J$-set.)  In the case that there are only
finitely many orbits, we define the \emph{Rubik group} $\Rub_J(X)$ to be
the commutator subgroup $\Sym_J(X)'$.  (For instance, the actual Rubik's
Cube group has a subgroup of index two of the form $\Rub_J(X)$, where $J =
C_6$ acts on a set $X$ with 12 orbits of order 2 and 8 orbits of order 3.)

If every $J$-orbit of $X$ is free and $X/J$ has $n$ elements, then we can
recognize $\Sym_J(X)$ as the restricted wreath product
\[ \Sym_J(X) \cong J \wr_{X/J} \Sym(X/J) \cong J \wr_n \Sym(n). \]
We introduce the more explicit notation
\begin{align*}
\Sym(n,J) &\defeq J \wr_n \Sym(n) \\
\Alt(n,J) &\defeq J \wr_n \Alt(n) \\
\Rub(n,J) &\defeq \Sym(n,J)'.
\end{align*}
We can describe $\Rub(n,J)$ as follows.  Let $J_\ab$ be the abelianization
of $J$, and define a map $\sigma:J^n \to J_\ab$ by first abelianizing $J^n$
and then multiplying the $n$ components in any order. Let $\AD(n,J) \le J^n$
($\AD$ as in ``anti-diagonal") be the kernel of $\sigma$.  Then:

\begin{prop} For any integer $n > 1$ and any group $J$, the commutator
subgroup of $\Sym(n,J)$ is given by
\[ \Rub(n,J) = \AD(n,J) \rtimes \Alt(n). \]
\end{prop}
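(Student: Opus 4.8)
The plan is to sandwich $\Rub(n,J)=[\Sym(n,J),\Sym(n,J)]$ between $\AD(n,J)\rtimes\Alt(n)$ from both sides. Write elements of $\Sym(n,J)=J^n\rtimes\Sym(n)$ as pairs $(\mathbf{j},\pi)$ with $\mathbf{j}=(j_1,\dots,j_n)\in J^n$ and $\pi\in\Sym(n)$ acting by permuting coordinates. Recall $\sigma:J^n\to J_\ab$ is the map that abelianizes and then multiplies, and $\AD(n,J)=\ker\sigma$; note that $\sigma$ factors through the abelian group $(J_\ab)^n$ and is symmetric in the coordinates, so $\AD(n,J)$ is invariant under conjugation by $J^n$ and by $\Sym(n)$, hence $\AD(n,J)\normaleq\Sym(n,J)$ and $\AD(n,J)\rtimes\Alt(n)$ is a genuine subgroup.

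For the inclusion $\Rub(n,J)\subseteq\AD(n,J)\rtimes\Alt(n)$ I would exhibit a homomorphism onto an abelian group with exactly this kernel. Define $\phi:\Sym(n,J)\to J_\ab\times C_2$ by $\phi(\mathbf{j},\pi)=(\sigma(\mathbf{j}),\operatorname{sgn}(\pi))$. Using that conjugation by $\Sym(n)$ only permutes the entries of $\mathbf{j}$ and that multiplication in $J_\ab$ is commutative, one checks that $\phi$ is a group homomorphism; it is surjective for $n>1$ (put a single nontrivial entry in $\mathbf{j}$, resp.\ take $\pi$ a transposition). Its kernel is $\{(\mathbf{j},\pi):\sigma(\mathbf{j})=1,\ \pi\in\Alt(n)\}=\AD(n,J)\rtimes\Alt(n)$, a normal subgroup with abelian quotient, and therefore contains the commutator subgroup $\Rub(n,J)$.

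For the reverse inclusion I would write down commutators generating $\AD(n,J)\rtimes\Alt(n)$. Since $\Sym(n)$ sits in $\Sym(n,J)$ as the complement and $[\Sym(n),\Sym(n)]=\Alt(n)$, we get $\Alt(n)\le\Rub(n,J)$. For $a,b\in J$, the commutator $[(a,1,\dots,1),(b,1,\dots,1)]$ equals $([a,b],1,\dots,1)$, so $(J')^n\le\Rub(n,J)$; and for $j\in J$ and the transposition $(1\,2)$, the commutator $[(j,1,\dots,1),(1\,2)]$ equals $(j,j^{-1},1,\dots,1)$, whose $\Sym(n)$-conjugates give the analogous ``difference element'' for every ordered pair of distinct coordinates. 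It then suffices to show that $(J')^n$ together with all these difference elements generate $\AD(n,J)$: modulo $(J')^n$ the group $J^n$ becomes $(J_\ab)^n$, the difference elements become the standard generators $(\bar j,-\bar j,0,\dots,0)$ (in additive notation) of the sum-zero subgroup $\ker\sigma/(J')^n$, and a short induction on the number of nonzero coordinates shows these generate that subgroup; lifting back and using $(J')^n\le\Rub(n,J)$ gives $\AD(n,J)\le\Rub(n,J)$. Combining the two inclusions yields $\Rub(n,J)=\AD(n,J)\rtimes\Alt(n)$.

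Everything here is elementary, and the only real content — the step I would be most careful about — is identifying the concretely-defined subgroup generated by the commutators $([a,b],1,\dots,1)$ and $(j,j^{-1},1,\dots,1)$ with the abstractly-defined $\AD(n,J)=\ker\sigma$, together with keeping the wreath-product conventions straight so that $\phi$ is honestly multiplicative. I would also note the degenerate case $n=2$, where $\Alt(2)$ is trivial and the statement reads $[\Sym(2,J),\Sym(2,J)]=\AD(2,J)$; the same argument applies verbatim.
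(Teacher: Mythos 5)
Your proof is correct and follows essentially the same route as the paper's: one inclusion via the abelian quotient $J_\ab \times C_2$, the other by exhibiting $(J')^n$, $\Alt(n)$, and the elements $(j,j^{-1},1,\dots,1)=[(j,1,\dots,1),(1\ 2)]$ as commutators. You merely fill in two details the paper leaves as assertions, namely that $\phi$ is multiplicative and that these elements generate $\AD(n,J)\rtimes\Alt(n)$.
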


\begin{proof} It is easy to check that $(\ker \sigma) \rtimes \Alt(n)$
is a normal subgroup of $\Sym(n,J)$ and that the quotient is the abelian
group $J_\ab \times C_2$. This shows that
\[ \AD(n,J) \rtimes \Alt(n) \supseteq \Rub(n,J). \]

To check the opposite inclusion, note that $\AD(n,J) \rtimes \Alt(n)$
is generated by the union of $(J')^n$, $\Alt(n) = \Sym(n)'$, and all
permutations of elements of the form
\[ (x,x^{-1},1,\dots,1) \in J^n.\]
Clearly $\Rub(n,J)$ contains the former two subsets.  Since
\[ (x,x^{-1},1,\dots,1) = [(x,1,1,\dots,1),(1 \ 2)]\]
(and similarly for other permutations),  we see
\[ \AD(n,J) \rtimes \Alt(n) \le \Rub(n,J).\]
We conclude with the desired equality.
\end{proof}

The main result of this section is a condition on a group homomorphism
to $\Rub(n,J)$ that guarantees that it is surjective.  We say that a group
homomorphism
\[ f:K \to \Sym(n,J)\]
is \emph{$J$-set $i$-transitive} if it acts transitively on ordered lists
of $i$ elements that all lie in distinct $J$-orbits.

\begin{thm} Let $J$ be a group and let $n \ge 7$ be an integer such
that $\Alt(n-2)$ is not a quotient of $J$.  Suppose that a homomorphism
\[ f:K \to \Rub(n,J) \]
is $J$-set 2-transitive and that its composition with the projection
$\Rub(n,J) \to \Alt(n)$ is surjective.  Then $f$ is surjective.
\label{th:rubik} \end{thm}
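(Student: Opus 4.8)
I want to show that a $J$-set 2-transitive homomorphism $f:K \to \Rub(n,J)$ whose projection to $\Alt(n)$ is onto must itself be onto, under the hypotheses $n \ge 7$ and $\Alt(n-2) \not\le J$ as a quotient. The target group is $\Rub(n,J) = \AD(n,J) \rtimes \Alt(n)$, so since $f$ already surjects onto the $\Alt(n)$ quotient, it suffices to prove that $\AD(n,J) \le f(K)$. Write $P = f(K)$. The strategy is to locate inside $P$ the subgroup generated by the ``antidiagonal'' elements $(x,x^{-1},1,\dots,1)$ together with $(J')^n$, as in the proof of the previous proposition, since those generate $\AD(n,J) \rtimes \Alt(n)$ (modulo the $\Alt(n)$ part, which we already have).

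\textbf{Step 1: reduce to producing one coordinate block.} Because $P$ surjects onto $\Alt(n)$ and $n \ge 7$, conjugating by preimages of $3$-cycles lets us move any element supported on a few coordinates around to all positions; so it is enough to produce, inside $P$, a subgroup of the ``first two coordinates'' copy $J \times J \times 1 \times \cdots \times 1 \cap \Rub(n,J)$ that is large enough — concretely, to show $P$ contains $\{(x,x^{-1},1,\dots,1) : x \in J\}$ and $(J')^{n}$, or at least enough to generate $\AD(n,J)$ after spreading out by the $\Alt(n)$-action. This is where $J$-set $2$-transitivity enters: it guarantees $P$ acts on pairs of distinct $J$-orbits transitively, which combined with the $\Alt(n)$ surjection gives genuine control of the kernel $K_0$ of $P \to \Alt(n)$ — in particular $K_0$ is a normal subgroup of $P$ contained in $\AD(n,J)$, and $\Alt(n)$ acts on it by permuting coordinates.

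\textbf{Step 2: analyze $K_0 \normaleq \AD(n,J)$ as an $\Alt(n)$-module-like object.} Let $K_0 = \ker(P \to \Alt(n)) \le \AD(n,J) \le J^n$. The $\Alt(n)$-action permutes the $n$ coordinates. Project $K_0$ to a single coordinate $J$; by $2$-transitivity this projection is some fixed subgroup $L \le J$, independent of the coordinate, and in fact (using $2$-transitivity to build commutators of elements with staggered supports, exactly as in the proof of \Lem{l:ribet}) one shows $L \ge J'$. Now I want to upgrade this to: $K_0$ contains all of $(J')^n$ and all antidiagonal elements $(x,x^{-1},1,\dots,1)$. Getting $(J')^n$: take $g \in K_0$ whose first coordinate is an arbitrary $x \in J'$; using the $\Alt(n)$-action and $n-2 \ge 5$ spare coordinates, one produces commutators isolating the first coordinate — this is a standard ``smash'' argument, and it works as long as $\Alt(n)$ acts nontrivially on the coordinates, which it does. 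Getting the antidiagonals: these are the elements of $\AD(n,J)$ modulo $(J')^n$, i.e. they generate $\AD(n,J)/(J')^n$, which is the kernel of the sum map $J_\ab^n \to J_\ab$; I'd produce them by taking an element of $K_0$ with first coordinate $x$, commuting with a transposition-like $\Alt$-element to get $(x \bmod J', x^{-1} \bmod J', 1, \dots)$ up to $(J')^n$.

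\textbf{Step 3: where the hypothesis $\Alt(n-2) \not\le J$ gets used — the main obstacle.} The delicate point, and the one I expect to be hardest, is ruling out a ``diagonal-type'' obstruction: a priori $P \to \Alt(n)$ could have a kernel $K_0$ that is \emph{smaller} than $\AD(n,J)$ because $P$ might be the graph of some homomorphism relating the $J$-coordinate data to the $\Alt(n)$-permutation data. This is exactly the kind of situation controlled by Goursat's lemma (\Lem{l:goursat}) and by \Lem{l:zorn}: the possible ``shared simple quotient'' between the wreath factor $J^n$ (more precisely $\AD(n,J)$, or $(J')^n$) and $\Alt(n)$ is where $\Alt(n-2)$ shows up. Concretely: if $K_0$ fails to contain $(J')^n$, then by the smash argument the projection of $(J')^n$-coordinates that survives must be invariant under the stabilizer $\Alt(n-2)$ (the subgroup of $\Alt(n)$ fixing two coordinates), forcing $J'$ — hence $J$ — to have $\Alt(n-2)$ as a quotient, contradicting the hypothesis. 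Making this precise requires carefully setting up which coordinates are ``active'' and invoking simplicity of $\Alt(n-2)$ (valid since $n - 2 \ge 5$) together with \Lem{l:prodquo} to see that any simple quotient of a product of $J$'s that is also a quotient of the $\Alt(n)$-action must be $\Alt(n-2)$ (or a sub-alternating group, handled the same way). I would organize this as: first establish $(J')^n \le K_0$ by the smash-plus-contradiction argument, then deduce the antidiagonal elements lie in $K_0$ modulo $(J')^n$, conclude $K_0 = \AD(n,J)$, and finally combine with surjectivity onto $\Alt(n)$ and the semidirect decomposition $\Rub(n,J) = \AD(n,J) \rtimes \Alt(n)$ to get $f$ surjective.
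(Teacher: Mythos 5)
Your overall strategy is the same as the paper's: reduce to showing that $K_0 = f(K) \cap J^n$ equals $\AD(n,J)$, extract local surjectivity from $J$-set $2$-transitivity via a Goursat/Zorn argument in which the $\Alt(n-2)$ hypothesis kills a shared quotient, and then isolate single coordinates by commutators. But two load-bearing steps are gaps. First, the mechanism by which the hypothesis enters is mis-stated. The correct argument projects $f(K) \cap (J^n \rtimes \Alt(n-2))$ onto $J^2 \times \Alt(n-2)$ (the last two $J$-coordinates together with the two-point stabilizer); $J$-set $2$-transitivity gives surjectivity onto $J^2$, surjectivity onto $\Alt(n)$ gives surjectivity onto $\Alt(n-2)$, and \Lem{l:zorn} together with \Lem{l:prodquo} forces this projection to be everything unless $\Alt(n-2)$ is a quotient of $J$ --- of $J$, not of $J'$. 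Your version concludes that $J'$ has $\Alt(n-2)$ as a quotient and then says ``hence $J$''; that implication is false in general (for $J=\Sym(5)$ and $n=7$, $\Alt(5)$ is a quotient of $J'=\Alt(5)$ but not of $J$), so your contradiction does not land on the stated hypothesis. Moreover this Goursat step must come \emph{before} any claim about $K_0$: $2$-transitivity by itself produces elements of $f(K)$ with prescribed $J$-coordinates but with an uncontrolled $\Alt(n-2)$ permutation part, so it says nothing directly about the kernel; its correct output is that $K_0$ surjects onto every pair of coordinates of $J^n$ ($2$-local surjectivity), which is the foundation for everything after.

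Second, your ``smash''/staggered-supports argument for $(J')^n \le K_0$ does not go through from $2$-local surjectivity alone. To isolate coordinate $1$ by a commutator $[g,h]$ you need $g$ and $h$ to have supports overlapping only in that coordinate, which requires $t$-local surjectivity for some $t > n/2$; this is exactly the extra hypothesis in \Lem{l:ribet}, whose conclusion needs the abelianization to be $\ceil{(n+1)/2}$-locally surjective. The paper supplies this by first treating abelian $J$, where conjugation by $f(K)$ genuinely reduces to permutation of coordinates and an explicit pair of commutators with $(1\;2)(3\;4)$ and $(1\;2\;5)(3\;4)(6\;7)$ (this is where $n \ge 7$ is used) upgrades $2$-local surjectivity to $K_0$ mapping onto all of $\AD(n,J_\ab)$; only then does \Lem{l:ribet} apply to give $(J')^n \le K_0$ in general. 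Your sketch also silently assumes that $\Alt(n)$ permutes the coordinates of $K_0$; for non-abelian $J$, conjugation by a preimage of a permutation also conjugates each coordinate by an element of $J$, which is why the detour through the abelian case is not merely a convenience.
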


\begin{proof}
In the proof we will mix Cartesian
product notation for elements of $J^n$ with cycle notation for permutations.
The proof is divided into three steps.

Step 1: We let $H = f(K)$, and we consider its normal subgroup
\[ D \defeq H \cap J^n. \]
We claim that $D$ is $2$-locally surjective.  To this end, we look at the
subgroup $\Alt(n-2) \le \Alt(n)$ that fixes the last two letters (say).
Then there is a projection
\[ \pi:J^n \rtimes \Alt(n-2) \to J^2 \times \Alt(n-2) \]
given by retaining only the last two coordinates of $g \in J^n$.  We let
\[ P = \pi(H \cap (J^n \rtimes \Alt(n-2))). \]
Since $H$ is $J$-set 2-transitive, the group $P$ surjects onto $J^2$;
since $H$ surjects onto $\Alt(n)$, $P$ surjects onto $\Alt(n-2)$.  Thus we
can apply \Lem{l:zorn} to the inclusion
\[ P \leq J^2 \times \Alt(n-2). \]
Since $\Alt(n-2)$ is not a quotient of $J$ and therefore not $J^2$ either
(by \Lem{l:prodquo}), we learn that
\[ P = J^2 \times \Alt(n-2) \]
and that 
\[ J^2 \leq H \cap (J^n \rtimes \Alt(n-2)). \]
So the group $D = H \cap J^n$ surjects onto the last two coordinates
of $J^n$.  Since we can repeat the argument for any two coordinates, $D$
is 2-locally surjective.

Step 2: Suppose that $J$ is abelian.  Then $D$ is a subgroup of $J^n$
which is 2-locally surjective.  Since $J^n$ is abelian, conjugation of
elements of $D$ by elements of $H$ that surject onto $\Alt(n)$ coincides
with conjugation by $\Alt(n)$; thus $D$ is $\Alt(n)$-invariant.  By step 1,
for each $g_1 \in J$, there exists an element
\[ d_1 \defeq (x_1,1,x_3,x_4,\dots,x_n) \in D. \]
We now form a commutator with elements in $\Alt(n)$ to obtain
\begin{align*}
d_2 &\defeq [d_1,(1\;2)(3\;4)]
    = (x_1,x_1^{-1},x_3x_4^{-1},x_3^{-1}x_4,1,\dots,1) \in D. \\
d_3 &\defeq [d_2,(1 \ 2 \ 5)(3 \ 4)(6 \ 7)]
    = (x,1,1,1,x^{-1},1,\dots,1) \in D.
\end{align*}
The $\Alt(n)$-orbit of $d_3$ generates $\AD(n,J)$, thus $D = \AD(n,J)$.

Step 3: In the general case, step 2 tells us that $D_\ab = \AD(n,J_\ab)$
is $(n-1)$-locally surjective.  This together with step 1 tells us that $D
\le J^n$ satisfies the hypothesis of \Lem{l:ribet}, which tells us that $D
= \AD(n,J)$.  It remains only to show that $\Alt(n) \le H$.  It suffices
to show that $H/D$ contains (indeed is) $\Alt(n)$ in the quotient group
\[ \Alt(n,J)/D \cong (J^n/D) \rtimes \Alt(n)
    \cong (J^n/D) \times \Alt(n). \]
Now let $D_0 = (J^n \cap H)/D$, so that $H$ surjects onto $D_0 \times
\Alt(n)$.  Since $\Alt(n)$ is not a quotient of $D_0$ (for one reason,
because $J^n/D$ is abelian), we can thus apply \Lem{l:zorn} to conclude
that $H/D$ contains $\Alt(n)$.
\end{proof}

\begin{lemma} If $J$ is a group and $n \ge 5$, then $\Rub(n,J)/\AD(n,J)
\cong \Alt(n)$ is the unique simple quotient of $\Rub(n,J)$.
\label{l:rubikquo} \end{lemma}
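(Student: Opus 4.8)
The plan is to exploit the earlier structural description $\Rub(n,J) = \AD(n,J) \rtimes \Alt(n)$ together with \Lem{l:prodquo} on simple quotients of direct products. Since $\AD(n,J)$ is normal in $\Rub(n,J)$ with quotient $\Alt(n)$, and $\Alt(n)$ is simple for $n \ge 5$, the quotient map $\Rub(n,J) \onto \Rub(n,J)/\AD(n,J) \cong \Alt(n)$ already exhibits $\Alt(n)$ as a simple quotient, so what remains is uniqueness. So let $\phi \colon \Rub(n,J) \onto Q$ be an arbitrary surjection onto a simple group $Q$. Since $\AD(n,J) \normaleq \Rub(n,J)$, the image $\phi(\AD(n,J))$ is a normal subgroup of $Q$, hence is trivial or all of $Q$. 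If it is trivial, then $\phi$ factors through $\Rub(n,J)/\AD(n,J) \cong \Alt(n)$, forcing $Q \cong \Alt(n)$. The real work is to rule out the case $\phi(\AD(n,J)) = Q$.

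In that case I would first show that $\Rub(n,J)$ is perfect for $n \ge 5$, so that its simple quotient $Q$ must be non-abelian. Since $\Alt(n)$ is perfect, $\Alt(n) = [\Alt(n),\Alt(n)] \le \Rub(n,J)'$, and it is enough to check $\AD(n,J) \le \Rub(n,J)'$, for which it suffices to prove $[\Alt(n),\AD(n,J)] = \AD(n,J)$. I would argue with generators inside the wreath product: a suitable commutator of $(x,1,x^{-1},1,\dots,1) \in \AD(n,J)$ with a double transposition such as $(2\,3)(4\,5) \in \Alt(n)$ is the ``anti-diagonal'' element $(1,x,x^{-1},1,\dots,1)$, so all anti-diagonal elements lie in the normal subgroup $[\Alt(n),\AD(n,J)]$; commutators of two of these, e.g.\ $[(x,x^{-1},1,\dots),(y,1,y^{-1},1,\dots)] = ([x,y],1,\dots)$, then produce all of $(J')^n$; and the anti-diagonal elements together with $(J')^n$ generate $\AD(n,J)$ (collapse a general element of $\AD(n,J)$ by anti-diagonals to one supported in a single coordinate, which then lies in $(J')^n$ since its abelianized coordinate sum vanishes). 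Equivalently, this is the statement that the $\Alt(n)$-coinvariants of $\AD(n,J)$ vanish: modulo $(J')^n$ it reduces to the sum-zero submodule of the permutation module $(J_\ab)^n$, whose $\Alt(n)$-coinvariants vanish for $n \ge 5$ by a short direct computation.

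Next I would use the normal subgroup $(J')^n \le \AD(n,J) \le \Rub(n,J)$, normal because it is characteristic in $J^n$ and preserved by the coordinate action of $\Alt(n)$. Then $\phi((J')^n) \normaleq Q$ is trivial or $Q$. If it is trivial, $\phi$ factors through $\Rub(n,J)/(J')^n = \AD(n,J_\ab) \rtimes \Alt(n)$, whose normal subgroup $\AD(n,J_\ab)$ is abelian; but $\phi(\AD(n,J)) = Q$ would then force $Q$ to be a quotient of an abelian group, contradicting that $Q$ is non-abelian. Hence $\phi((J')^n) = Q$. Now $(J')^n = J' \times \dots \times J'$ surjects onto the non-abelian simple group $Q$, so by \Lem{l:prodquo} this surjection factors through the projection to a single coordinate $i$, whence $\ker\phi \cap (J')^n = \{(g_1,\dots,g_n) \in (J')^n : g_i \in K\}$ for some proper normal subgroup $K \normaleq J'$. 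But $\ker\phi \cap (J')^n$ is normal in $\Rub(n,J)$, hence invariant under the coordinate-permutation action of $\Alt(n)$; applying a $3$-cycle that moves $i$ to some $j \ne i$ (available since $n \ge 5$) carries this subgroup onto $\{(g_\bullet) : g_j \in K\}$, which is properly different from it because $K \subsetneq J'$. This contradiction disposes of the case $\phi(\AD(n,J)) = Q$ and finishes the proof.

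I expect the main obstacle to be the perfectness step: the rest of the argument is formal bookkeeping, but establishing $[\Alt(n),\AD(n,J)] = \AD(n,J)$ (equivalently, that $\Rub(n,J)$ admits no nontrivial abelian quotient) requires the explicit generator computation in the wreath product described above, which, like several other points, genuinely uses $n \ge 5$.
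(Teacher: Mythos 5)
Your proposal is correct and follows essentially the same route as the paper: establish perfectness of $\Rub(n,J)$ by explicit commutators in the wreath product, reduce a putative second simple quotient to a surjection from $(J')^n$, apply \Lem{l:prodquo} to localize it to one coordinate, and derive a contradiction with $\Alt(n)$-invariance of $\ker\phi\cap (J')^n$. The only cosmetic difference is that you argue the surjectivity of $\phi$ on $\AD(n,J)$ and $(J')^n$ directly from normality, where the paper invokes \Cor{c:sjoint}.
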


\begin{proof} We first claim that $\Rub(n,J)$ is a perfect group.  For any
two elements $g,h \in J$, we can take commutators such as
\[
[(x_1,x_1^{-1},1,1,\dots,1), (x_2,1,x_2^{-1},1,\dots,1)]
    = ([x_1,x_2],1,1,\dots,1) \in \AD(n,J)', \]
to conclude that
\[ (J')^n = \AD(n,J)' \le \Rub(n,J)'. \]
We can thus quotient $\Rub(n,J)$ by $(J')^n$ and replace $J$ by $J_\ab$,
or equivalently assume that $J$ is abelian.  In this case, we can take
commutators such as
\[
[(x,1,x^{-1},1,1,\dots,1),(1\; 2)(4\; 5)]
    = (x,x^{-1},1,1,\dots,1) \in \Rub(n,J)'
\]
to conclude that $\AD(n,J) \le \Rub(n,J)'$.  Meanwhile $\Alt(n) \le
\Rub(n,J)'$ because it is a perfect subgroup.  Thus $\Rub(n,J)$ is perfect.

Suppose that
\[ f:\Rub(n,J) \onto Q \]
is a second simple quotient map, necessarily non-abelian.  Then \Cor{c:sjoint}
tells us that $f$ is also surjective when restricted to $\AD(n,J)$.  If $J$
is abelian, then so is $\AD(n,J)$ and this is immediately impossible.
Otherwise we obtain that the restriction of $f$ to $\AD(n,J)' = (J')^n$ is
again surjective, and we can apply \Lem{l:prodquo} to conclude that
$f|_{(J')^n}$ factors through a quotient $h:J' \to Q$ on a single factor.
But then $(\ker f) \cap (J')^n$ would not be invariant under conjugation
by $\Alt(n)$ even though it is the intersection of two normal subgroups
of $\Rub(n,J)$, a contradiction.
\end{proof}

\chapter{Mapping class group actions}
\label{ch:mcg}
\section{Closed surfaces}
\label{s:closed}
In this section, we let $G$ be a fixed finite simple group, and we use ``eventually" to mean ``when the genus $g$ is sufficiently large".

Recall from \Sec{ss:sketch1} that we consider several sets of homomorphisms
of the fundamental group of the surface $\Sigma_g$ to $G$:
\begin{align*}
\hR_g(G) &\defeq \{f:\pi_1(\Sigma_g) \to G\} \\
R_g(G) &\defeq \{f: \pi_1(\Sigma_g) \onto G\} \subseteq \hR_g(G) \\
R^s_g(G) &\defeq \{f \in R_g \mid \sch(f) = s\}.
\end{align*}
For convenience we will write $R_g = R_g(G)$, etc., and only give the
argument of the representation set when the target is some group other
than $G$.

The set $\hR_g$ has an action of $J = \Aut(G)$ and a commuting action of
$\MCG_*(\Sigma_g)$, so we obtain a representation map
\[ \rho:\MCG_*(\Sigma_g) \to \Sym_J(\hR_g). \]
Since $\MCG_*(\Sigma_g)$ is perfect for $g \ge 3$ \cite[Thm. 5.2]{FM:primer}
(and we are excluding small values of $g$), we can let the target be
$\Rub_J(\hR_g)$ instead.  Now $R_g$ and $R_g^0$ are both invariant subsets
under both actions; in particular the representation map projects to maps
to $\Sym_J(\hR_g \setminus R_g)$ and $\Sym_J(R^0_g)$.  At the same time,
$\MCG_*(\Sigma_g)$ acts on $H_1(\Sigma_g) \cong \Z^{2g}$, and we get a
surjective representation map
\[ \tau:\MCG_*(\Sigma_g) \to \Sp(2g,\Z), \]
whose kernel is by definition the Torelli group $\Tor_*(\Sigma_g)$.

The goal of this subsection is the following theorem.

\begin{thm} The image of the joint homomorphism
\[
\rho_{R^0_g} \times \rho_{\hR_g \setminus R_g} \times \tau:\MCG_*(\Sigma_g) 
    \to \Rub_J(R^0_g) \times \Rub_J(\hR_g \setminus R_g) \times \Sp(2g,\Z)
\]
eventually contains $\Rub_J(R^0_g)$.
\label{th:dtrefine} \end{thm}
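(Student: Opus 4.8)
The plan is to derive \Thm{th:dtrefine} from \Thm{th:dt} by essentially group-theoretic means, using \Thm{th:rubik} together with Lemmas~\ref{l:zorn}, \ref{l:rubikquo}, and~\ref{l:symplectic}. Recall $J=\Aut(G)$ and put $n=n_g=|R^0_g/J|$. A surjection $\pi_1(\Sigma_g)\onto G$ is fixed by no nontrivial automorphism of $G$, so $J$ acts freely on $R^0_g$; hence $\Sym_J(R^0_g)\cong\Sym(n,J)$ and $\Rub_J(R^0_g)\cong\Rub(n,J)$, and since $\MCG_*(\Sigma_g)$ is perfect for $g\ge 3$ the image of $\rho_{R^0_g}$ already lies in $\Rub_J(R^0_g)$. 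I will use two standard facts underlying \Thm{th:dt}: for large $g$ the surjections with trivial Schur invariant dominate $\hR_g$, so $n_g\to\infty$; and the non-surjections in $\hR_g$, whose images lie in proper subgroups of $G$, number only $O(\mu^{2g})$ with $\mu<|G|$ the largest order of a proper subgroup, hence $|\hR_g\setminus R_g|=o(|\hR_g|)$. Consequently, for large $g$: $n_g\ge 7$; $n_g>|\hR_g\setminus R_g|$; and the fixed finite group $J$ does not have $\Alt(n_g-2)$ as a quotient. It then suffices to prove (A) that $\rho_{R^0_g}$ is eventually onto $\Rub_J(R^0_g)$, and (B) that the first factor splits off inside the joint homomorphism --- which is exactly what produces the Torelli-group refinement and the ``fixes the non-surjections'' refinement of \Thm{th:dt}.

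For (A) I would apply \Thm{th:rubik} to $\rho_{R^0_g}:\MCG_*(\Sigma_g)\to\Rub(n_g,J)$. Its hypotheses $n_g\ge 7$ and ``$\Alt(n_g-2)$ is not a quotient of $J$'' hold eventually as noted, and the composite of $\rho_{R^0_g}$ with the projection $\Rub(n_g,J)\onto\Alt(n_g)=\Alt(R^0_g/J)$ is surjective by part~(2) of \Thm{th:dt}. The remaining hypothesis --- that $\rho_{R^0_g}$ is \emph{$J$-set $2$-transitive}, i.e.\ that $\MCG_*(\Sigma_g)$ acts transitively on ordered pairs in $R^0_g$ lying in distinct $J$-orbits --- is the technical core, and the step I expect to be the main obstacle. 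Part~(1) of \Thm{th:dt} gives transitivity on $R^0_g$, and part~(2) makes the induced action on $R^0_g/J$ highly transitive; what remains is to show that the $\MCG_*(\Sigma_g)$-stabilizer of a surjection acts on the complement of its $J$-orbit in $R^0_g$ as transitively as the quotient action permits. I would obtain this from the connect-sum/stabilization techniques underlying Dunfield--Thurston's proof of \Thm{th:dt}: realize a pair of genus-$g$ surjections as a single surjection of a genus-$2g$ surface, and combine transitivity there with the mapping classes supported on the two sides of the connect sum.

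Granting (A), step (B) is a Goursat-type argument with \Lem{l:zorn}. First, $\rho_{R^0_g}(\Tor_*(\Sigma_g))=\Rub_J(R^0_g)$: since $\Tor_*(\Sigma_g)\normaleq\MCG_*(\Sigma_g)$ with quotient $\Sp(2g,\Z)$, its image is normal in $\Rub_J(R^0_g)\cong\Rub(n_g,J)$ with quotient a quotient of $\Sp(2g,\Z)$; but $\Rub(n_g,J)$ is perfect with unique simple quotient $\Alt(n_g)$ (\Lem{l:rubikquo}), so a proper image would realize $\Alt(n_g)$ as a simple quotient of $\Sp(2g,\Z)$, contradicting \Lem{l:symplectic} (the simple quotients of $\Sp(2g,\Z)$ are the groups $\PSp(2g,\Z/p)$, none an alternating group for $g\ge 2$). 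Now apply part~(1) of \Lem{l:zorn} to the restriction of $\rho_{R^0_g}\times\rho_{\hR_g\setminus R_g}$ to $\Tor_*(\Sigma_g)$: it surjects onto the first factor $\Rub_J(R^0_g)$, which is finite and so normally Zornian, and its unique simple quotient $\Alt(n_g)$ is not involved in $\Rub_J(\hR_g\setminus R_g)$ --- indeed, a group involving $\Alt(n_g)$ has order at least $n_g!/2$, while $|\Rub_J(\hR_g\setminus R_g)|$ divides $(|\hR_g\setminus R_g|)!$ and $n_g>|\hR_g\setminus R_g|$ eventually. Part~(1) of \Lem{l:zorn} then shows that $\rho_{R^0_g}\times\rho_{\hR_g\setminus R_g}$ carries $\Tor_*(\Sigma_g)$ onto a subgroup containing $\Rub_J(R^0_g)\times\{1\}$; since $\tau$ vanishes on $\Tor_*(\Sigma_g)$, the full joint homomorphism carries $\MCG_*(\Sigma_g)$ onto a subgroup containing $\Rub_J(R^0_g)\times\{1\}\times\{1\}$, which is the assertion. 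Besides (A), the one quantitative point needing care is $n_g>|\hR_g\setminus R_g|$ --- that is, that surjections with trivial Schur invariant eventually outnumber all non-surjections.
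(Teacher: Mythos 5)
Your overall architecture coincides with the paper's: step (A) is exactly its first half (apply \Thm{th:rubik} to $\rho_{R^0_g}$, using part (2) of \Thm{th:dt} for surjectivity onto $\Alt(R^0_g/J)$ and the freeness of the $J$-action), and step (B) is exactly its second half (Goursat via \Lem{l:zorn}, with \Lem{l:rubikquo} and \Lem{l:symplectic} showing $\Rub_J(R^0_g)$ and $\Sp(2g,\Z)$ share no simple quotients, and the cardinality bound $|R^0_g/J|>|\hR_g\setminus R_g|$ of \Lem{l:outgrow} showing $\Alt(R^0_g/J)$ is not involved in $\Rub_J(\hR_g\setminus R_g)$). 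The one genuine gap is the $J$-set $2$-transitivity hypothesis of \Thm{th:rubik}, which you rightly single out as the technical core but then propose to handle by ``realizing a pair of genus-$g$ surjections as a single surjection of a genus-$2g$ surface.'' That route confuses source and target: a pair $(f_1,f_2)$ of elements of $R^0_g$ consists of two homomorphisms out of the \emph{same} group $\pi_1(\Sigma_g)$, i.e.\ a single homomorphism $\pi_1(\Sigma_g)\to G\times G$; it is not a homomorphism out of $\pi_1(\Sigma_{2g})$, and mapping classes of $\Sigma_{2g}$ do not act on $R^0_g\times R^0_g$ in the required way, so ``transitivity in genus $2g$'' does not bear on the stabilizer computation you need.

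The clean resolution, which is what the paper does, is to apply part (1) of \Thm{th:dt} with the \emph{target} group $G^2$ on the same surface $\Sigma_g$: eventually $\MCG_*(\Sigma_g)$ acts transitively on $R^0_g(G^2)$. By \Lem{l:sjoint} (Goursat for products of non-abelian simple groups), surjections $\pi_1(\Sigma_g)\onto G^2$ are exactly the pairs $(f_1,f_2)$ of surjections onto $G$ that are inequivalent under $J=\Aut(G)$, and since $H_2(G^2)\cong H_2(G)^2$ the vanishing of the Schur invariant is a componentwise condition. Transitivity on $R^0_g(G^2)$ is therefore literally the statement that $\MCG_*(\Sigma_g)$ is $J$-set $2$-transitive on $R^0_g$. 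With that substitution, your argument is the paper's proof.
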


Comparing \Thm{th:dtrefine} to the second part of \Thm{th:dt}, it says that
\Thm{th:dt} still holds for the smaller Torelli group $\Tor_*(\Sigma_g)$,
and after that the action homomorphism is still surjective if we lift
from $\Alt(R^0_g/J)$ to $\Rub_J(R^0_g)$.   Its third implication is
that we can restrict yet further to the subgroup of $\Tor_*(\Sigma_g)$
that acts trivially on $\hR_g \setminus R_g$, the set of non-surjective
homomorphisms to $G$.

The proof uses a lemma on relative sizes of representation sets.

\begin{lemma} Eventually
\[ |R^0_g/J| > |\hR_g \setminus R_g|.\]
\label{l:outgrow} \end{lemma}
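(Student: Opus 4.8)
The plan is to estimate both sides of the inequality as explicit functions of $g$ and show the left side grows strictly faster. The key observation is that each side is controlled by the number of surjections onto the relevant groups. For the right side, $\hR_g \setminus R_g$ decomposes as a union over proper subgroups $J < G$ of the set of homomorphisms $\pi_1(\Sigma_g) \to G$ with image exactly $J$; each such set is a union of $\Aut(G)/\Aut(G,J)$-translates of the surjections $\pi_1(\Sigma_g) \onto J$, so $|\hR_g \setminus R_g| \le \sum_{J < G} [G:\text{something}] \cdot |\{\pi_1(\Sigma_g) \onto J\}|$. Each term is bounded above by $|\{f:\pi_1(\Sigma_g) \to J\}| \le |J|^{2g}$, so up to a constant (depending only on $G$) we get $|\hR_g \setminus R_g| = O(M^{2g})$, where $M = \max_{J<G}|J|$ is the order of a largest proper subgroup of $G$.

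For the left side, I would first recall that $|H(\Sigma_g,G)| = |\hR_g|$ is given by the Frobenius--Mednykh formula, $|\hR_g| = |G|^{2g-1}\sum_{\chi}\chi(1)^{2-2g}$, which is asymptotically $|G|^{2g-1}$ as $g \to \infty$ since the trivial character contributes $1$ and every nontrivial irreducible contributes a term decaying like $\chi(1)^{2-2g}$. Since $|\hR_g \setminus R_g| = O(M^{2g}) = o(|G|^{2g-1})$ and $|R_g| = |\hR_g| - |\hR_g \setminus R_g|$, we get $|R_g| \sim |G|^{2g-1}$. Next I would control the Schur-invariant stratification: $R_g = \bigsqcup_{s \in H_2(G)} R_g^s$, and the strata $R_g^s$ for $s \ne 0$ are also large, but crucially $|H_2(G)|$ is a finite constant, so $|R_g^0| \ge |R_g|/|H_2(G)|$, hence $|R_g^0| = \Omega(|G|^{2g-1})$. (In fact one can do better using the transitivity of $\MCG_*$ on strata from Theorem~\ref{th:dt}, but the crude bound suffices.) Finally, dividing by $|J| = |\Aut(G)|$, a constant, gives $|R^0_g/J| = \Omega(|G|^{2g-1})$, which dominates $O(M^{2g})$ since $M \le |G|/2 < |G|$, so the ratio $M^{2g}/|G|^{2g-1} = |G| \cdot (M/|G|)^{2g} \to 0$.

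The main obstacle is making the lower bound on $|R^0_g|$ (equivalently on $|R_g^0/J|$) genuinely rigorous: the quickest route is the character-sum asymptotic for $|\hR_g|$ together with the subgroup-counting upper bound on non-surjections, but one must be a little careful that the decomposition $|R_g^0| \ge |R_g|/|H_2(G)|$ is valid — this requires only that $R_g = \bigsqcup_s R_g^s$ is a genuine partition into $|H_2(G)|$ pieces, which is immediate from the definition of $\sch$, so no averaging or equidistribution input is actually needed. The other pieces (the character formula for $|\hR_g|$, the bound $|\{\pi_1(\Sigma_g)\onto J\}| \le |J|^{2g}$, and $|J| = |\Aut(G)|$ being a fixed constant) are all either standard or trivial. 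So the proof is essentially a comparison of the dominant exponential rate $|G|^{2g}$ for the left side against the strictly smaller rate $M^{2g}$ for the right side, with all subexponential and constant factors absorbed in the ``eventually.''
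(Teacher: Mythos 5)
Your upper bound on $|\hR_g \setminus R_g|$ (every non-surjection lands in one of boundedly many proper subgroups, each contributing at most $M^{2g}$ homomorphisms with $M<|G|$) and your asymptotic $|\hR_g|\sim|G|^{2g-1}$ from the Frobenius--Mednykh formula are both fine, and together they recover the content of Dunfield--Thurston's Lemma 6.10 that the paper simply cites. The problem is the step you yourself flag as the main obstacle and then wave away: the claim that $|R^0_g|\ge |R_g|/|H_2(G)|$ ``requires only that $R_g=\bigsqcup_s R_g^s$ is a genuine partition.'' A partition of $R_g$ into $|H_2(G)|$ pieces only guarantees that the \emph{largest} stratum has at least the average size; it says nothing about the specific stratum $s=0$. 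A priori the zero stratum could be anomalously small, and pigeonhole cannot identify which stratum is big. So averaging/equidistribution input really is needed here, contrary to your parenthetical.

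This is exactly the point where the paper invokes Dunfield--Thurston's Lemma 6.13, which asserts $\lim_{g\to\infty}|R^0_g|/|R_g| = 1/|H_2(G)|$, i.e.\ that the Schur invariant genuinely equidistributes among surjections. That statement is a theorem (provable, e.g., by the projective-character analogue of the Frobenius--Mednykh formula: the $s$-stratum count is a signed average of character sums over projective representations of a Schur cover, and the nontrivial multipliers contribute only lower-order terms because they admit no one-dimensional representations), not a formal consequence of the stratification. Note also that the cheap lower bound one can extract without it --- homomorphisms extending over a handlebody all have $\sch=0$, giving $|R^0_g|\gtrsim |G|^{g}$ --- is not enough, since $M^{2g}$ can exceed $|G|^{g}$ (e.g.\ $G=\Alt(m)$ with the index-$m$ point stabilizer). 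Once you replace the pigeonhole step with a citation of (or a proof of) the equidistribution statement, the rest of your comparison of exponential rates goes through and is essentially the paper's argument made more explicit.
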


\begin{proof}
Informally, if $g$ is large and we choose a homomorphism $f \in \hR_g$
at random, then it is a surjection with very high probability;
if it is a surjection, then its Schur invariant $\sch(f)$ is
approximately equidistributed.  In detail, Dunfield-Thurston
\cite[Lems.~6.10~\&~6.13]{DT:random} show that
\[ \lim_{g \to \infty} \frac{|R_g|}{|\hR_g|} = 1 \qquad
    \lim_{g \to \infty} \frac{|R^0_g|}{|R_g|} = \frac{1}{|H_2(G)|}. \]
Thus
\[ \lim_{g \to \infty} \frac{|\hR_g \setminus R_g|}{|R^0_g/J|} =
    |H_2(G)|\cdot|J|\cdot\bigg(\lim_{g \to \infty}
    \frac{|\hR_g|}{|R_g|}-1\bigg) = 0. \qedhere \]
\end{proof}

\begin{proof}[Proof of \Thm{th:dtrefine}] We first claim that $\rho_{R^0_g}$
by itself is eventually surjective. Note that the action of $J$ on $R^0_g$
is free; thus we can apply \Thm{th:rubik} if $\rho_{R^0_g}$ satisfies
suitable conditions.  By part 2 of \Thm{th:dt}, $\rho_{R^0_g}$ is eventually
surjective when composed with the quotient $\Rub_J(R^0_g) \to \Sym(R^0_g/J)$.
Meanwhile, part 1 of \Thm{th:dt} says that $\MCG_*(\Sigma_g)$ eventually acts
transitively on $R^0_g(G^2)$.  Since $G$ is simple, \Lem{l:sjoint} tells us
that the homomorphisms $f \in R^0_g(G^2)$ correspond to pairs of surjections
\[ f_1,f_2:\Sigma_g \onto G \]
that are inequivalent under $J = \Aut(G)$.  This eventuality is thus the
condition that the action of $\MCG_*(\Sigma_g)$ is $J$-set 2-transitive
in its action on $R^0_g$.  (\Cf\ Lemma 7.2 in \cite{DT:random}.)
Thus $\rho_{R^0_g}$ eventually satisfies the hypotheses of \Thm{th:rubik}
and is surjective.

The map $\tau$ also surjects $\MCG_*(\Sigma_g)$ onto $\Sp(2g,\Z)$.
Lemmas~\ref{l:symplectic} and \ref{l:rubikquo} thus imply that
$\Rub_J(R^0_g)$ and $\Sp(2g,\Z)$ do not share any simple quotients.
By \Lem{l:zorn}, $\MCG_*(\Sigma_g)$ surjects onto $\Rub_J(R^0_g) \times
\Sp(2g,\Z)$.  Equivalently, $\ker \tau = \Tor_*(\Sigma_g)$ surjects onto
$\Rub_J(R^0_g)$.

Finally we consider 
\[ \rho_{R^0_g} \times \rho_{\hR_g \setminus R_g}:\Tor_*(\Sigma_g)
    \to \Rub_J(R^0_g) \times \Rub_J(\hR_g \setminus R_g), \]
which we have shown surjects onto the first factor. The unique simple
quotient $\Alt(R^0_g/J)$ of $\Rub_J(R^0_g)$ is eventually not involved in
$\Rub_J(\hR_g \setminus R_g)$ because it is too large.  More precisely,
\Lem{l:outgrow} implies that eventually
\[ |\Alt(R^0_g/J)| > |\Alt(\hR_g \setminus R_g)| >
    |\Rub_J(\hR_g \setminus R_g)|. \]
Thus we can apply \Lem{l:zorn} to conclude that the image of
$\Tor_*(\Sigma_g)$ contains $\Rub_J(R^0_g)$, which is equivalent to the
conclusion.
\end{proof}

\section{Punctured disks}
\label{s:disks}
The main goal of this section is Theorem \ref{th:rvrefine}, which is a refinement of the portion of the ``full monodromy theorem" of Roberts and Venkatesh extracted in Theorem \ref{th:rv}.  Aside from notation introduced herein, Theorem \ref{th:rvrefine} is the only result necessary for the reduction in Section \ref{s:knotreduction}.

\subsection{Actions of interest}
\label{ss:actions}
For this subsection and the following, except where stated otherwise, $G$ can be any finite group, and $c$ any element such that its conjugacy class $C$ generates $G$.  (In other words, $c$ normally generates $G$.). We define the relevant braid subgroups and their actions, and recall the theorem of Conway-Parker characterizing the orbits of these actions in the many puncture limit.

For any positive integer $k$, let
\[ v = (v_1,v_2,\dots,v_{2k-1},v_{2k}) = (C,C^{-1},\dots,C,C^{-1}) \]
be a $2k$-tuple with entries alternating between the symbols $C$ and $C^{-1}$.  We use $v$ to color the $i^\text{th}$ puncture of the $2k$-punctured disk
\[ D_{2k} = D^2 \setminus \{p_1,\dots,p_{2k}\} \]
with the color $v_i$.  Similarly, we alternately color the $2k$ strands of the braid group $B_{2k}$.  Define $B_v \leq B_{2k}$ to be the subgroup of braids that preserve this coloring of strands by $v$.  Note: if $C=C^{-1}$, then both of these colorings are just 1-colorings and $B_v = B_{2k}$.

As in Subsection \ref{ss:sketch2}, let
\[ \hat{R}_{2k}(G) \defeq \{f: \pi_1(D_{2k}) \to G \}. \]
As in the previous section, we will suppress the dependence on $G$ when it is clear, and just write $\hat{R}_{2k}$.  Choose a set of generators of $\pi_1(D_{2k})$ represented by simple closed curves $\gamma_1,\dots,\gamma_{2k}$, where each $\gamma_i$ winds once, counterclockwise, around the puncture $p_i$, and zero times around the other punctures.  The sets that interest us are
\[ \hat{R}_v \defeq \{ f \in \hat{R}_{2k} \mid f(\gamma_i) \in v_i, \prod_{i=1}^{2k}f(\gamma_i) = 1 \} \subset (C \times C^{-1})^k \subset \hat{R}_{2k}\]
and
\[ R_v \defeq \{f \in \hat{R}_v \mid f \text{ is onto}\}. \]
The colored braid group $B_v$ acts on $\hat{R}_v$, and $R_v$ is a $B_v$-invariant subset.  The technical goal of the proof of Theorem \ref{th:main2} is to give a precise enough description of this action so that we can do a gadget construction with it.

\subsection{Schur-type braid invariants and the Conway-Parker theorem}
\label{ss:CP}
As a first step, we describe the orbits of the $B_v$ action on $R_v$ in the limit that $k$---and, hence, $v$---is large enough.  In this context, we use the word ``eventually" to mean ``for all $k$ large enough."  Recall that in this subsection and the previous, we allow $G$ to be any finite group, and $C$ any conjugacy class that generates $G$.  The only exception is Lemma \ref{l:props}(3), where we make the further requirement that $G$ is perfect.

Our main tool is a certain $B_v$-invariant called the \emph{Conway-Parker (universal) lifting invariant}.  The most general definition of this invariant is due to Ellenberg, Venkatesh and Westerland \cite{EVW:hurwitz2}, although the ideas go back to unpublished work of Conway and Parker \cite{CP:hurwitz}, which were first relayed in a publication by Fried and Volklein \cite{FV:inverse}.  Since \cite{EVW:hurwitz2} was never published, we also refer the reader to \cite{RV:hurwitz} for an exposition.  We remark that the construction of \cite{EVW:hurwitz2} and \cite{RV:hurwitz} is carried out for more general colorings $v$ than we consider, but, for expediency's sake, we restrict to our specific setting.

We follow \cite[Sec.~4]{RV:hurwitz}, including their notation.  Let $H_2(G)$ be the Schur multiplier of $G$.  Equivalently, $H_2(G)$ is the second integral homology of the classifying space $K(G,1)$.  Let $H_2(G)_{C\cup C^{-1}} \leq H_2(G)$ denote the subgroup generated by homology classes represented by maps from tori where a meridian maps to an element of $C\cup C^{-1}$.  Then the \emph{reduced Schur multiplier} is defined as the quotient
\[ H_2(G,C\cup C^{-1}) \defeq H_2(G)/H_2(G)_{C\cup C^{-1}}. \]
We emphasize that $H_2(G,C)$ is \emph{not} a relative homology group, despite the unfortunate notation.  The \emph{Conway-Parker invariant} is a $B_v$-invariant map
\[ \inv_v: \hat{R}_v \to H_2(G,C\cup C^{-1},v) \]
where $H_2(G,C\cup C^{-1},v)$ is a torsor over $H_2(G,C\cup C^{-1})$, defined as follows.

Fix a Schur cover $\tilde{G} \to G$, \ie, a central extension with kernel in the derived subgroup of $\tilde{G}$ such that the order of this extension is maximal among all such extensions.  Then the kernel is canonically isomorphic to $H_2(G)$.  We form the \emph{reduced Schur cover} $\tilde{G}_{C\cup C^{-1}} = \tilde{G}/H_2(G)_{C\cup C^{-1}}$, which fits into an exact sequence
\[ H_2(G,C\cup C^{-1}) \into \tilde{G}_{C\cup C^{-1}} \onto G. \]
Define $H_2(G,C\cup C^{-1},C)$ to be the set of conjugacy classes of $\tilde{G}_{C \cup C^{-1}}$ that lie in the preimage of $C$.  Let $\tilde{1} \in H_2(G,C\cup C^{-1})$ and if $\tilde{c}$ is a lift of an element $c \in C$, denote the conjugacy class of $\tilde{c}$ by $[\tilde{c}] \in H_2(G,C\cup C^{-1},C)$.  Then the action $\tilde{1} \cdot [\tilde{c}] = [\tilde{1}\cdot \tilde{c}]$ makes $H_2(G,C\cup C^{-1},C)$ into a torsor over $H_2(G,C\cup C^{-1})$.

We define $H_2(G,C\cup C^{-1},C^{-1})$ similarly, also making it a torsor over $H_2(G,C\cup C^{-1})$.

Torsors over the same abelian group can be multiplied.  If $T_1$ and $T_2$ are torsors over $A$, then their product, as a set, is $(T_1 \times T_2)/\sim$, where $(t_1,t_2) \sim (at_1,a^{-1}t_2)$ for all $a \in A$.  We define
\[ H_2(G,C\cup C^{-1},v) = \prod_{i=1}^{2k} H_2(G,C\cup C^{-1},v_i), \]
where the product symbol indicates the product of $H_2(G,C\cup C^{-1})$-torsors.

The invariant $\inv_v(f)$ is now defined by writing
\[ f = (f_1,\dots,f_{2k}) \in (C \times C^{-1})^{2k} \]
then arbitrarily picking preimages $\tilde{f}_i \in \tilde{G}_{C \cup C^{-1}}$ so that
\[ \tilde{f}_1\cdots \tilde{f}_{2k} = 1 \]
and letting
\[ \inv_v(f) \defeq \prod_{i=1}^{2k} [\tilde{f}_i] \in H_2(G,C\cup C^{-1},v). \]
It is straightforward to verify that $\inv_v(f)$ does not depend on the choices made, and is $B_v$ invariant.  More importantly, the next theorem shows $\inv_v$ is quite useful.

\begin{thm}[Conway-Parker theorem {\cite[Thm.~7.5.1]{EVW:hurwitz2}}; see {\cite[Prop.~4.1]{RV:hurwitz}}]
For any finite group $G$ and conjugacy class $C$ that generates $G$, eventually:
\[ \inv_v: R_v/B_v \leftrightarrow H_2(G,C,v).\]
In other words, the Conway-Parker invariant is eventually a complete invariant for the orbits of the $B_v$ action on $R_v$.
\label{th:CP}
\end{thm}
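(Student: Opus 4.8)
Although we quote \Thm{th:CP} rather than prove it, let us indicate how one would prove it. The plan follows the standard two-part structure: $B_v$-invariance of $\inv_v$ is the routine verification already noted, so it remains to show that $\inv_v$ is surjective onto the torsor $H_2(G,C\cup C^{-1},v)$ and that it separates the $B_v$-orbits on $R_v$, both eventually. For a clean conceptual proof I would follow the homological-stability argument of Ellenberg--Venkatesh--Westerland \cite{EVW:hurwitz2}; if explicit bounds on $k$ were wanted I would instead follow the original combinatorial induction relayed by Fried--V\"olklein \cite{FV:inverse}. Since we only need ``eventually,'' either route suffices.

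For surjectivity, the key observation is that the reduced Schur cover $\tilde{G}_{C\cup C^{-1}}$ is generated by the preimage of $C$: its kernel $H_2(G,C\cup C^{-1})$ is central and lies in the derived subgroup of $\tilde{G}_{C\cup C^{-1}}$ (both properties passing to the quotient from $\tilde{G}$), while the subgroup generated by the preimage of $C$, being generated by a union of conjugacy classes, is normal with abelian quotient and hence contains the derived subgroup, so it contains all of $H_2(G,C\cup C^{-1})$ and therefore equals $\tilde{G}_{C\cup C^{-1}}$. Given a target value of the torsor, I would then lift a short generating configuration of $\tilde{G}_{C\cup C^{-1}}$ drawn from the preimage of $C\cup C^{-1}$, insert correcting factors so the lifted product is $1$ and the lifting invariant matches the target, project down to a surjective tuple over $G$, and pad to length $2k$ by appending canceling pairs $(c,c^{-1})$; for $k$ large this stays inside $R_v$, and since $[\tilde{c}]\cdot[\tilde{c}^{-1}]$ contributes the canonical trivial element it leaves $\inv_v$ unchanged (under the canonical identification of the torsors for different $k$).

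For orbit separation --- the crux --- I would show that for $k$ large every $f\in R_v$ is $B_v$-equivalent to a normal form: a bounded ``core'' depending only on $\inv_v(f)$, followed by a standard string of canceling pairs $(c,c^{-1})$. The moves are of two types. First, \emph{gathering}: Hurwitz moves bring a subtuple of $f$ that already generates $G$ to the front, with the compensating conjugations cascading into the later entries; this part is elementary. Second, with a generating block now sitting at the front so that all of $G$ is available as a conjugator, one repeatedly locates a ``redundant'' pair in the long tail --- a pair whose deletion would change neither the generated subgroup, nor the total product, nor (after the obvious bookkeeping) the lifting invariant --- and slides it to the end, shrinking the non-standard part by two at each step. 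Iterating drives $f$ to the normal form, and two tuples with the same lifting invariant arrive at the same normal form, which is exactly orbit separation.

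The hard part is proving that such a redundant pair always exists once the tuple is long enough and its entries generate $G$; equivalently, that the lifting invariant is the \emph{only} obstruction to Hurwitz equivalence of long generating tuples. This is the true content of Conway--Parker. Combinatorially it is proven by a delicate induction on tuple length that must simultaneously track the generated subgroup and yields an explicit (large) threshold $N(G,C)$; topologically it is the degree-zero case of homological stability for Hurwitz spaces, which reduces to Noetherianity of a suitable ``ring of components'' --- a finite-generation statement --- via a central-stability argument. Once it is in hand, combining it with the surjectivity computation above shows that $\inv_v$ is a bijection onto $H_2(G,C\cup C^{-1},v)$, completing the proof; I would cite \cite{EVW:hurwitz2} together with the exposition in \cite{RV:hurwitz}, as in the statement.
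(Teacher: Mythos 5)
The paper does not actually prove \Thm{th:CP}; it quotes it from \cite{EVW:hurwitz2} and \cite{RV:hurwitz}, and the only argument it supplies is a bridge between the cited form and the stated form. Specifically, \cite[Prop.~4.1]{RV:hurwitz} classifies the $B_v$-orbits on $R_v/G$ (tuples up to global inner automorphism), and the paper observes that for \emph{surjective} tuples the braid equivalence relation refines the $\Inn(G)$-equivalence relation --- moving a strand labelled $g_i$ around the others conjugates the whole tuple by $g_i$, and the $g_i$ generate $G$ --- so the orbit count on $R_v$ and on $R_v/G$ agree. Your sketch goes in a different direction: rather than reconcile the two formulations, you outline the internal structure of the Conway--Parker proof itself (invariance, surjectivity via the fact that the reduced Schur cover is generated by the preimage of $C$, and orbit separation via a normal form obtained by gathering a generating block and excising redundant pairs), and you correctly isolate the existence of redundant pairs as the crux, deferring it to the same references. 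Your surjectivity argument is sound, and your outline matches how the cited proofs are organized. The one caveat: if the crux you defer to is \cite[Prop.~4.1]{RV:hurwitz} as literally stated, you would still need the paper's $R_v$-versus-$R_v/G$ observation, since that proposition separates orbits only modulo inner automorphisms; your normal-form narrative works on honest tuples, so either supply that one-line bridge or lean on \cite[Thm.~7.5.1]{EVW:hurwitz2}, which (as the paper notes) contains the $R_v$ statement directly.
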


Strictly speaking, \cite[Prop.~4.1]{RV:hurwitz} only applies to the orbits of the action of $B_v$ on $R_v/G$, where $G$ acts by postcomposition with inner automorphisms.  However, because the elements of $R_v$ are surjective homomorphisms, one can easily show that the $B_v$-equivalence relation on $R_v$ refines the $G$-equivalence relation.  That is, if two maps $f,h \in R_v$ are equivalent after applying an inner automorphism of $G$ to $f$, then they are also equivalent after applying some braid to $f$.  Alternatively, \cite[Thm.~7.5.1]{EVW:hurwitz2} contains our Theorem \ref{th:CP} as a special case.

Later, we need a few basic properties of the Conway-Parker invariant, one of which requires a definition: we say $f\in R_v$ \emph{bounds a plat} if there is an inclusion $D_{2k}^2 \to B^3$ so that $f$ extends to a homomorphism from the fundamental group of the complement of a trivial tangle in $B^3$.

\begin{lemma}
The Conway-Parker invariant has the following properties:
\begin{enumerate}
\item Given our choice of $v$, the torsor $H_2(G,C,v)$ has a natural basepoint that allows us to identify it with the group $H_2(G,C)$.
%\item Additivity under boundary sum: $\inv_{v\#v}(f\#g) = \inv_v(f)+\inv_v(g)$.
\item If $f \in (C \times C^{-1})^{k}$ bounds a plat, then $\inv_v(f) = 0$.
\item If $G$ is perfect, then for all positive integers $i$,
\[ H_2(G^i,C^i) \cong H_2(G,C)^i.\]
\end{enumerate}
\label{l:props}
\end{lemma}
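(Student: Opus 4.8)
The plan is to treat the three parts in order; parts (1) and (3) are essentially bookkeeping, while part (2) carries the geometric content.

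For part (1): by construction $H_2(G,C\cup C^{-1},v)$ is the product of the torsors $H_2(G,C\cup C^{-1},v_i)$, and since $v$ alternates between $C$ and $C^{-1}$ these group into $k$ pairs, the $j$-th being $H_2(G,C\cup C^{-1},C)\cdot H_2(G,C\cup C^{-1},C^{-1})$ at positions $2j-1,2j$. I would single out a canonical element of each pair-torsor: pick any $c\in C$ and any lift $\tilde c\in\tilde G_{C\cup C^{-1}}$ of $c$, and take the class of $([\tilde c],[\tilde c^{-1}])$. Replacing $\tilde c$ by $a\tilde c$ with $a$ in the central kernel $H_2(G,C\cup C^{-1})$ replaces this by $(a[\tilde c],a^{-1}[\tilde c^{-1}])$, the same class in the quotient defining the product torsor; and a different $c\in C$ yields a conjugate lift, hence the same two conjugacy classes. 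Multiplying these $k$ elements gives a basepoint $\ast\in H_2(G,C\cup C^{-1},v)$, and $a\mapsto a\cdot\ast$ identifies this torsor with $H_2(G,C\cup C^{-1})$ (the group written $H_2(G,C)$ in the statement).

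For part (2): if $f$ bounds a plat then $f$ extends to $\hat f:\pi_1(B^3\setminus T)\to G$, where $T$ is a trivial tangle of $k$ strings with endpoints the punctures and, say, the $j$-th strand joining $p_{2j-1}$ to $p_{2j}$. Since $T$ is trivial, $B^3\setminus T$ is a genus-$k$ handlebody, so $\pi_1(B^3\setminus T)$ is free and $\hat f$ lifts to $\Phi:\pi_1(B^3\setminus T)\to\tilde G_{C\cup C^{-1}}$; set $\tilde f_i\defeq\Phi(\gamma_i)$. Two facts then finish the argument. First, the loop $\gamma_1\cdots\gamma_{2k}=\partial D^2$ bounds a disk in $B^3\setminus T$ (the complementary hemisphere pushed off $T$), so $\tilde f_1\cdots\tilde f_{2k}=1$ and the $\tilde f_i$ legitimately compute $\inv_v(f)=\prod_i[\tilde f_i]$. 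Second, $\gamma_{2j-1}$ and $\gamma_{2j}^{-1}$ are conjugate in $\pi_1(B^3\setminus T)$ (both are conjugate to a meridian of the $j$-th strand), hence $\tilde f_{2j-1}$ and $\tilde f_{2j}^{-1}$ are conjugate in $\tilde G_{C\cup C^{-1}}$, i.e.\ $[\tilde f_{2j}]=[\tilde f_{2j-1}^{-1}]$. Thus the $j$-th pair $([\tilde f_{2j-1}],[\tilde f_{2j}])$ is exactly the canonical element of part (1), so $\inv_v(f)=\ast$, i.e.\ $\inv_v(f)=0$ after the identification of part (1).

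For part (3): I would argue homologically. Since $G$ is perfect, $H_1(G)=0$, so the K\"unneth theorem gives a natural isomorphism $H_2(G^i)\cong H_2(G)^i$ (all cross terms $H_1(G)\otimes H_1(G)$ vanish), under which $(\varphi_1,\dots,\varphi_i)_*[T^2]=((\varphi_1)_*[T^2],\dots,(\varphi_i)_*[T^2])$ for maps $\varphi_j:T^2\to BG$. It then remains to match the defining subgroups. If $\varphi:T^2\to(BG)^i$ sends the meridian into $C^i$ or $(C^i)^{-1}$, each component sends the meridian into $C\cup C^{-1}$, so each $(\varphi_j)_*[T^2]\in H_2(G)_{C\cup C^{-1}}$; hence $H_2(G^i)_{C^i\cup(C^i)^{-1}}$ lands in $H_2(G)_{C\cup C^{-1}}^i$. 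Conversely, given a generating torus class $(a,b)_*$ of $H_2(G)_{C\cup C^{-1}}$ with $a\in C$, place it in slot $j$ and the commuting pair $(c,1)$ (for our fixed $c\in C$) in every other slot; this $\varphi$ has meridian in $C^i$ and pushforward $(a,b)_*$ in slot $j$ and $0$ elsewhere, since $(c,1)_*$ factors through $B\Z=S^1$ and $H_2(S^1)=0$ (symmetrically for $a\in C^{-1}$). So the two subgroups agree, and passing to quotients gives $H_2(G^i,C^i)\cong H_2(G)^i/H_2(G)_{C\cup C^{-1}}^i=H_2(G,C)^i$. The main obstacle is part (2): it requires pinning down the topology of the trivial-tangle complement and the plat pairing and then verifying that the free-group lift genuinely computes $\inv_v(f)$ with the correct product-one normalization; one could alternatively \emph{define} the basepoint of $H_2(G,C\cup C^{-1},v)$ as $\inv_v$ of an arbitrary plat-bounding homomorphism and check independence of choices, which fuses the proofs of (1) and (2).
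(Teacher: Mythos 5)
Your proposal is correct, and parts (1) and (3) match the paper's proof in substance: (1) is the same basepoint $[\tilde c]\times[\tilde c]^{-1}$ argument, and (3) fills in exactly the details the paper compresses into one sentence (K\"unneth with $H_1(G)=0$, plus the identification $H_2(G^i)_{C^i\cup(C^i)^{-1}}\cong\bigl(H_2(G)_{C\cup C^{-1}}\bigr)^{\oplus i}$, for which your padding-by-$(c,1)$ trick supplies the reverse containment the paper leaves implicit). Part (2) is where you genuinely diverge. The paper isotopes the trivial tangle to the standard position, concluding that $f$ lies in the same braid orbit as some $h=(h_1,h_1^{-1},\dots,h_k,h_k^{-1})$, observes that $\inv_v(h)=0$ is immediate for such a normal form, and invokes braid-invariance of $\inv_v$. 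You instead compute $\inv_v(f)$ directly: the trivial-tangle complement is a handlebody, so $\hat f$ lifts through the reduced Schur cover, the lifted generators automatically satisfy the product-one normalization because $\partial D^2$ bounds in $B^3\setminus T$, and the meridian pairing $\gamma_{2j-1}\sim\gamma_{2j}^{-1}$ forces each pair of conjugacy classes to be the basepoint of part (1). Your route is longer but avoids the paper's somewhat hand-waved ``by isotoping the plat'' step and makes the interaction between the basepoint of (1) and the vanishing in (2) explicit; the paper's route is shorter because it outsources the work to the already-established braid-invariance of $\inv_v$. Both are sound.
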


\begin{proof}
\begin{enumerate}
\item The product of torsors equipped with basepoints has a natural basepoint.  So it suffices to show the product torsor
\[ H_2(G,C\cup C^{-1},C) \times H_2(G,C\cup C^{-1},C^{-1}) \]
has a natural basepoint.  If $[\tilde{c}]$ is the conjugacy class of an arbitrary lift of an element $c \in C$ to $\tilde{G}_{C \cup C^{-1}}$, then $[\tilde{c}] \times [\tilde{c}]^{-1}$ does not depend on the choice of $c$, and thus provides a natural basepoint.

\item If $f$ bounds a plat, then by isotoping the plat, we can show $f$ is in the same braid orbit as a map of the form
\[ h=(h_1,h_1^{-1},h_2,h_2^{-1},\dots,h_k,h_k^{-1}). \]
Clearly $\inv_v(h) = 0$, hence $\inv_v(f) = \inv_v(h) = 0$.

\item This follows from the definition of $H_2(G,C)$, the K\"{u}nneth theorem, and the fact that
\[ H_2(G^i)_{C^i} \cong \left( H_2(G)_C\ \right)^{\oplus i}. \]
\end{enumerate}
\end{proof}

In our final reduction, we will be especially interested in homomorphisms that bound plats, and, hence, by the lemma, homomorphisms with $\inv_C(f) = 0$.  We collect them into the subsets
\[ \begin{aligned}
\hat{R}_v^0 &\defeq \{ f \in \hat{R}_v \mid \inv_v(f) = 0 \}, \\
R_v^0 &\defeq \{ f \in R_v \mid \inv_v(f) = 0 \}.
\end{aligned} \]

\subsection{Refining the theorem of Roberts-Venkatesh}
\label{ss:rvrefine}
We now refine Theorem \ref{th:rv}(2) using the Rubik group Theorem \ref{th:rubik}.  The latter is relevant for the following reason.  Let $\Aut(G,C)$ be the group of automorphisms of $G$ fixing $C$ setwise.  Then $\Aut(G,C)$ acts on $R_v^0$.  This action is free, because the homomorphisms in $R_v^0$ are all surjective.  Moreover, the actions of $B_v$ and $\Aut(G,C)$ on $R_v^0$ commute.  In other words, the image of
\[ \rho: B_v \to \Sym(R_v^0) \]
is contained inside $\Sym_{\Aut(G,C)}(R_v^0)$.  In particular, there is an induced action of $B_v$ on the quotient $R_v^0/\Aut(G,C)$, which is precisely the subject of Theorem \ref{th:rv}(2).

\begin{lemma}
Let $G$ be a nonabelian simple group and let $C\subset G$ be a conjugacy class.  Then for every $i>0$, $B_v$ eventually acts $\Aut(G,C)$-set $i$-transitivity on $R_v^0$.
\label{l:trans}
\end{lemma}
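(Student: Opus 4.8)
I want to prove that for every fixed $i>0$, once $k$ (equivalently $v$) is large enough, the braid group $B_v$ acts $\Aut(G,C)$-set $i$-transitively on $R_v^0$; that is, it acts transitively on ordered $i$-tuples of elements of $R_v^0$ lying in distinct $\Aut(G,C)$-orbits. The strategy mirrors the proof of the corresponding ``$J$-set $2$-transitivity'' fact in the closed-surface case (\Cf\ the proof of \Thm{th:dtrefine} and Lemma 7.2 in \cite{DT:random}): an $i$-tuple of homomorphisms $f_1,\dots,f_i \in R_v^0$ is the same data as a single homomorphism $F=(f_1,\dots,f_i) : \pi_1(D_{2k}) \to G^i$ with each $f_j$ surjective, and the condition that the $f_j$ lie in distinct $\Aut(G,C)$-orbits should translate, via Goursat's lemma, into the statement that $F$ itself is surjective onto $G^i$. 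Then I would invoke the Conway-Parker theorem (\Thm{th:CP}) applied to the group $G^i$ with conjugacy class $C^i$ to conclude that the $B_v$-orbit of $F$ is determined by $\inv_v(F) \in H_2(G^i, C^i, v)$, and Lemma \ref{l:props}(3) identifies $H_2(G^i,C^i) \cong H_2(G,C)^i$ compatibly with $\inv_v$, so that $\inv_v(F) = (\inv_v(f_1),\dots,\inv_v(f_i))$. Since each $f_j \in R_v^0$ has $\inv_v(f_j)=0$, we get $\inv_v(F)=0$, so all such $F$ lie in a single $B_v$-orbit, which is exactly $i$-transitivity.

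First I would set up the dictionary: a point of $(R_v^0)^i$ with coordinates in distinct $\Aut(G,C)$-orbits corresponds to a tuple $F=(f_1,\dots,f_i)\colon \pi_1(D_{2k}) \to G^i$ landing in $C^i$-type colorings with $\prod F(\gamma_\ell)=1$, each coordinate surjective. Second, I would show $F$ is jointly surjective onto $G^i$. This is the Goursat step: since $G$ is simple, for each pair $j\neq j'$ the image of $(f_j,f_{j'})$ in $G\times G$ is either all of $G\times G$ or the graph of an automorphism $G\xrightarrow{\cong}G$; the latter forces $f_j$ and $f_{j'}$ to differ by an automorphism of $G$, and I need to check that this automorphism can be taken to preserve $C$ setwise (it does, because both $f_j(\gamma_\ell)$ and $f_{j'}(\gamma_\ell)$ lie in $C$, so the automorphism sends $C$ into a conjugacy class meeting $C$, hence into $C$), contradicting the distinct-orbit hypothesis. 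Pairwise joint surjectivity plus \Lem{l:sjoint} (or directly \Lem{l:zorn}/\Lem{l:prodquo}, using that $G$ simple nonabelian has no two inequivalent factor maps) then gives $F$ surjective, i.e.\ $F\in R_v(G^i)$. Third, I would verify that $\inv_v$ is natural with respect to $G \to G^i$ (diagonal-type) and the projections, so that under the identification of Lemma \ref{l:props}(1) and (3), $\inv_v(F)$ is the tuple of the $\inv_v(f_j)$; since each is $0$, so is $\inv_v(F)$, placing $F$ in $R_v^0(G^i)$. Fourth, apply \Thm{th:CP} to $(G^i, C^i)$: eventually $\inv_v$ is a complete orbit invariant for $B_v$ acting on $R_v(G^i)$, so all the $F$ with $\inv_v(F)=0$ form a single orbit. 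Translating back, $B_v$ acts transitively on the set of $i$-tuples in $R_v^0$ in distinct $\Aut(G,C)$-orbits, which is the claim. I should remark that ``eventually'' here means $k$ large enough depending on $G$, $C$ and $i$, since the Conway-Parker threshold for $G^i$ grows with $i$.

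The main obstacle I anticipate is the Goursat/distinct-orbit step: carefully checking that ``the coordinate homomorphisms lie in distinct $\Aut(G,C)$-orbits'' is exactly equivalent to ``$F$ is jointly surjective,'' rather than merely implied in one direction. The forward direction (distinct orbits $\Rightarrow$ surjective) is what I need and uses simplicity of $G$ as above; the subtle point is ensuring the Goursat isomorphism that would obstruct surjectivity necessarily respects $C$, which I believe follows because the meridian generators $\gamma_\ell$ already pin every $f_j(\gamma_\ell)$ into $C$. A secondary technical point is confirming the compatibility of $\inv_v$ with the product decomposition $H_2(G^i,C^i)\cong H_2(G,C)^i$ of Lemma \ref{l:props}(3); this should be immediate from the construction of $\inv_v$ via lifts to the reduced Schur cover, since the reduced Schur cover of $G^i$ (restricted to the relevant colorings) is the $i$-fold product of reduced Schur covers of $G$, but I would state it explicitly. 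Everything else is a routine transcription of the closed-surface argument with ``handlebody/Schur invariant'' replaced by ``plat/Conway-Parker invariant.''
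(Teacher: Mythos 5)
Your proposal is correct and follows essentially the same route as the paper: form the product homomorphism into $G^i$, deduce joint surjectivity from Goursat/\Lem{l:sjoint} using the distinct-orbit hypothesis, note $\inv_v=0$ via \Lem{l:props}, and apply \Thm{th:CP} to $(G^i,C^i)$. Your extra check that a Goursat automorphism relating two coordinates must preserve $C$ setwise (so distinct $\Aut(G,C)$-orbits really do rule out the obstruction, not just distinct $\Aut(G)$-orbits) is a worthwhile detail that the paper's proof leaves implicit.
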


One can almost recover this lemma from Roberts and Venkatesh's proof of the full monodromy theorem.  They show eventual $\Aut(G,C)$-set $i$-transitivity on $R_v^0/G$.  We could use some tricks to squeeze what we need out of this, but, at this point, it is just as much work (and hopefully more enlightening) to repeat their argument.  We note that this argument is identical to Dunfield and Thurston's proof of Theorem \ref{th:dt}.

\begin{proof}
We begin by choosing $k$ large enough so that the conclusion of Theorem \ref{th:CP} holds for the finite group $G^i$ and the conjugacy class $C^i$.  Let $f_1\dots,f_i \in R_v^0$ lie in distinct $\Aut(G,C)$ orbits and consider the product homomorphism
\[ f = f_1 \times \cdots \times f_i: \pi_1(D_{2k}^2) \to G \times \cdots \times G. \]
Similarly, let $g_1\dots,g_i \in R_v^0$ lie in distinct $\Aut(G,C)$ orbits, and form the product homomorphism $g = g_1 \times \cdots\times g_i$.  By Lemma \ref{l:sjoint}, $f$ and $g$ are both surjective.  Lemma \ref{l:props} shows that $\inv(f) = \inv(g) = 0$.  By Theorem \ref{th:CP}, $f$ and $g$ are in the same braid orbit, which is equivalent to the conclusion of the lemma.  
\end{proof}

With this lemma at our disposal, we can reprove Theorem \ref{th:rv}(2).

\begin{proof}[Proof of Theorem \ref{th:rv}(2)]
The previous lemma shows $B_v$ eventually acts $\Aut(G,C)$-set $6$-transitively on $R_v^0$.  It follows that $B_v$ eventually acts $6$-transitively (in the usual sense) on $R_v^0/\Aut(G,C)$.  Conclude by using the following corollary of the classification of finite simple groups: if a group acts $6$-transitively on a finite set $X$, then the image of that group in $\Sym(X)$ contains $\Alt(X)$.
\end{proof}

Our refinement of Theorem \ref{th:rv} occurs in three ways:
\begin{enumerate}
\item We lift the conclusion to the $B_v$ action on $R_v^0$, and not just the $\Aut(G,C)$-quotient $\R_v^0/\Aut(G,C)$.
\item We ``disentangle" the action of $B_v$ on $R_v^0$ from its action on $\hat{R}_v \setminus R_v$.
\item We replace $B_v$ with the pure braid subgroup $PB_{2k} \leq B_v \leq B_{2k}$.
\end{enumerate}
Precisely, let 
\[ \rho: B_v \to \Sym_{\Aut(G,C)}(R_v^0) \]
and
\[ \hat{\rho}: B_v \to \Sym(\hat{R}_v \setminus R_v) \]
be the pertinent permutation representations, and let
\[ F: B_v \to \Sym(2k) \]
be the forgetful map that only remembers how braid strands are permuted.

\begin{thm}
Let $G$ be a finite, nonabelian, simple group, with conjugacy class $C \subset G$. Then the image of $B_v$ under the joint homomorphism
\[ \rho \times \hat{\rho} \times F: B_v \to \Sym_{\Aut(G,C)}(R_v^0) \times \Sym(\hat{R}_v \setminus R_v) \times \Sym(2k) \]
eventually contains $\Rub_{\Aut(G,C)}(R_v^0)$.
\label{th:rvrefine}
\end{thm}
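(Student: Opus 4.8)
The plan is to follow the proof of \Thm{th:dtrefine} almost line for line, with \Lem{l:trans} and the reproof of \Thm{th:rv}(2) in \Sec{ss:rvrefine} taking over the roles played there by the two parts of \Thm{th:dt}, and with the forgetful map $F$ taking over the role of the symplectic representation $\tau$ — so that the pure braid group $\ker F$ plays the role of the Torelli group. Write $J=\Aut(G,C)$, a fixed finite group since $G$ is, and $n=|R_v^0/J|$; recall that $J$ acts freely on $R_v^0$, so $\Sym_{\Aut(G,C)}(R_v^0)\cong\Sym(n,J)$ and $\Rub_{\Aut(G,C)}(R_v^0)\cong\Rub(n,J)$, and that $n\to\infty$ with $k$ (indeed $|R_v^0|$ grows exponentially in $k$ while $|J|$ is constant). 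The argument has two stages: first show that $\rho$ alone eventually has image containing $\Rub_{\Aut(G,C)}(R_v^0)$, then disentangle that copy of the Rubik group from the remaining two factors.

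For the first stage, the reproof of \Thm{th:rv}(2) shows that the image of $\rho(B_v)$ under $\Sym(n,J)\to\Sym(n)$ is eventually exactly $\Alt(n)$, so that $\rho(B_v)$ lies in $\Alt(n,J)=J\wr_n\Alt(n)$ and surjects onto $\Alt(n)$; \Lem{l:trans} with $i=2$ shows $B_v$ eventually acts $\Aut(G,C)$-set $2$-transitively on $R_v^0$; and eventually $n\ge 7$ and the fixed group $J$ has no quotient isomorphic to $\Alt(n-2)$. Running the proof of \Thm{th:rubik} with $H=\rho(B_v)$ then yields $\rho(B_v)\supseteq\Rub_{\Aut(G,C)}(R_v^0)$: that proof uses the hypothesis $f(K)\subseteq\Rub(n,J)$ only through its consequences that the image lies in $\Alt(n,J)$ and surjects onto $\Alt(n)$, both of which hold here. (This is the one place where the closed-surface argument leaned on perfectness of $\MCG_*(\Sigma_g)$; braid groups are not perfect, but the substitute is precisely that \Thm{th:rv}(2) pins the block-permutation action on $R_v^0/\Aut(G,C)$ to the alternating group.)

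For the second stage, let $N=\rho^{-1}(\Rub_{\Aut(G,C)}(R_v^0))\le B_v$, a finite-index subgroup with $\rho(N)=\Rub_{\Aut(G,C)}(R_v^0)$ by the first stage. Restricting $\rho\times\hat{\rho}\times F$ to $N$, we apply part (1) of \Lem{l:zorn} with $G_1=\Rub_{\Aut(G,C)}(R_v^0)$ (finite, hence normally Zornian) and $G_2=\Sym(\hat{R}_v\setminus R_v)\times\Sym(2k)$. By \Lem{l:rubikquo} the unique simple quotient of $G_1$ is $\Alt(R_v^0/\Aut(G,C))$, and it remains to see it is involved in neither factor of $G_2$ — a simple group involved in a direct product being, via \Lem{l:prodquo}, involved in one of the factors. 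Since $\Alt(m)$ is not involved in any group of order less than $m!/2$, it is not involved in $\Sym(2k)$ once $n>2k$, which holds eventually by exponential-versus-linear growth, nor in $\Sym(\hat{R}_v\setminus R_v)$ once $n>|\hat{R}_v\setminus R_v|$; the latter is the braid-group analog of \Lem{l:outgrow}, and holds because the non-surjective homomorphisms in $\hat{R}_v$ have image in a proper subgroup $J'$ of $G$, hence number at most $\mathrm{poly}(k)\cdot\max_{J'}|C\cap J'|^{2k}$ with $\max_{J'}|C\cap J'|<|C|$ since $C$ generates $G$, whereas $|R_v^0|$ is of order $|C|^{2k}$. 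Part (1) of \Lem{l:zorn} then gives that the image of $N$, hence of $B_v$, contains $\Rub_{\Aut(G,C)}(R_v^0)\times\{1\}\times\{1\}$: triviality in the last coordinate realizes the Rubik group by pure braids (the third refinement), triviality in the middle coordinate disentangles it from $\hat{R}_v\setminus R_v$, and — unlike in the closed case — there is no ``zombie'' symbol to contend with, since the trivial homomorphism is not in $\hat{R}_v$ when $c\ne 1$.

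The step I expect to be the main obstacle is the braid-group analog of \Lem{l:outgrow}. Whereas its closed-surface counterpart was a three-line consequence of the Dunfield--Thurston counting estimates, the input needed here — that surjective homomorphisms with vanishing Conway--Parker invariant exponentially dominate the non-surjective elements of $\hat{R}_v$, giving $n=|R_v^0/\Aut(G,C)|>|\hat{R}_v\setminus R_v|$ eventually — rests on the equidistribution of the Conway--Parker invariant, a homological-stability statement for Hurwitz spaces in the circle of ideas of \cite{EVW:hurwitz2} that also underlies \Thm{th:CP}.
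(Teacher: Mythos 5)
Your proposal is correct and takes essentially the same route as the paper's proof: stage one combines Theorem \ref{th:rv}(2), Lemma \ref{l:trans} and Theorem \ref{th:rubik} to show $\rho(B_v) \supseteq \Rub_{\Aut(G,C)}(R_v^0)$, and stage two is the paper's application of Lemmas \ref{l:zorn}, \ref{l:rubikquo} and \ref{l:prodquo} together with the eventual cardinality bound $|R_v^0/\Aut(G,C)| > |\hat{R}_v \setminus R_v|$ (and $>2k$), which, as you say, rests on the Conway--Parker equidistribution. Your explicit handling of the fact that $B_v$ is not perfect, so $\rho(B_v)$ need only lie in $\Sym_{\Aut(G,C)}(R_v^0)$ and one must check that the proof of Theorem \ref{th:rubik} still yields containment of $\Rub(n,J)$ under the weaker hypothesis, is a sound resolution of a point the paper leaves implicit.
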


In other words, we can find a set of \emph{pure} braids that act by the full Rubik subgroup on $R_v^0$, while simultaneously acting trivially on the non-surjective maps in $\hat{R}_v \setminus R_v$.

\begin{proof}
Let $k$ be large enough for the conclusion of Lemma \ref{l:trans} to hold with $i=6$.  Then we see the image of $\rho$ contains $\Rub_{\Aut(G,C)}(R_v^0)$ by combining Theorem \ref{th:rv}, Lemma \ref{l:trans}, and Theorem \ref{th:rubik}.

Finite groups are always normally Zornian, so Lemma \ref{l:zorn} implies it is enough to show that $\Rub_{\Aut(G,C)}(R_v^0)$ does not have any simple quotients that are subquotients of $\Sym(\hat{R}_v \setminus R_v) \times \Sym(2k)$.  In fact, by Lemmas \ref{l:rubikquo} and \ref{l:prodquo}, it suffices to show that
\[ \Alt(R_v^0/\Aut(G,C)) \]
is not a subquotient of $\Sym(\hat{R}_v \setminus R_v)$ or $\Sym(2k)$.  Finally, by cardinality considerations, it is enough to show 
\[ |R_v^0/\Aut(G,C)| = \frac{R_v^0}{|\Aut(G,C)|} > |\hat{R}_v \setminus R_v| \]
eventually.

Elements of $\hat{R}_v$ are overwhelmingly likely to be surjective,
\[ \lim_{k \to \infty} \frac{|R_v|}{|\hat{R}_v|} \to 1 \]
and, in the large $k$ limit, the value of the Conway-Parker invariant is equidistributed in $R_v$. Thus
\[ \lim_{k \to \infty} \frac{|R_v^0|}{|\hat{R}_v|} \to \frac{1}{|H_2(G,C)|} \]
and we conclude $|R_v^0/\Aut(G,C)|$ grows faster than $|\hat{R}_v \setminus R_v|$ as a function of $k$.
\end{proof}

\chapter[CSPs for reversible circuits]{Constraint satisfaction problems for equivariant reversible circuits}
\label{ch:CSP}
We begin this chapter with an introduction to planar, reversible circuits in Section \ref{s:revcircuits}.  In Section \ref{s:csp} we define various complete decision and counting problems for reversible circuits.  None of the results are new, but we include proofs because we could not find references.  The main goal of this chapter is to introduce the problem $\#\ZSAT$ and to prove that $\#\ZSAT$ is $\shP$-complete via almost parsimonious reduction.  See Section \ref{s:zombies}, in particular Lemma \ref{l:zsat}. 

\section{Reversible circuits}
\label{s:revcircuits}

We will need two variations of the circuit model that still satisfy
\Thm{th:csat}:  Reversible circuits and planar circuits.

A \emph{reversible circuit} \cite{FT:logic} is a circuit $Z$ in which
every gate $\alpha:A^k \to A^k$ in the gate set $\Gamma$ is a bijection;
thus the evaluation of $Z$ is also a bijection.  We say that $\Gamma$ is
\emph{reversibly universal} if for any sufficiently large $n$, the gates of
$\Gamma$ in different positions generate either $\Alt(A^n)$ or $\Sym(A^n)$.
(If $|A|$ is even, then we cannot generate any odd permutations when $n$
is larger than the size of any one gate in $\Gamma$.)

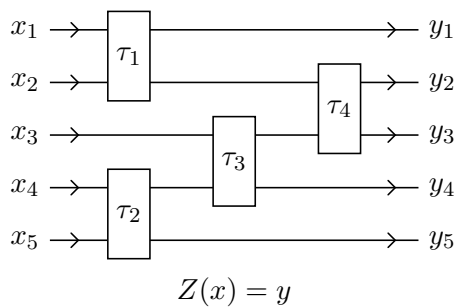
\begin{figure}[htb] \begin{center}
\begin{tikzpicture}[scale=.7,semithick,decoration={markings,
    mark=at position 0.08 with {\arrow{angle 90}},
    mark=at position 0.94 with {\arrow{angle 90}}}]
\draw (0,0) node[anchor=east] {$x_5$};
\draw[postaction={decorate}] (0,0) -- (7,0);
\draw (7,0) node[anchor=west] {$y_5$};
\draw (0,1) node[anchor=east] {$x_4$};
\draw[postaction={decorate}] (0,1) -- (7,1);
\draw (7,1) node[anchor=west] {$y_4$};
\draw (0,2) node[anchor=east] {$x_3$};
\draw[postaction={decorate}] (0,2) -- (7,2);
\draw (7,2) node[anchor=west] {$y_3$};
\draw (0,3) node[anchor=east] {$x_2$};
\draw[postaction={decorate}] (0,3) -- (7,3);
\draw (7,3) node[anchor=west] {$y_2$};
\draw (0,4) node[anchor=east] {$x_1$};
\draw[postaction={decorate}] (0,4) -- (7,4);
\draw (7,4) node[anchor=west] {$y_1$};
\draw[fill=white] (1.1,-.35) rectangle (1.9,1.35);
\draw (1.5,.5) node {$\tau_2$};
\draw[fill=white] (1.1,2.65) rectangle (1.9,4.35);
\draw (1.5,3.5) node {$\tau_1$};
\draw[fill=white] (3.1,.65) rectangle (3.9,2.35);
\draw (3.5,1.5) node {$\tau_3$};
\draw[fill=white] (5.1,1.65) rectangle (5.9,3.35);
\draw (5.5,2.5) node {$\tau_4$};
\draw (3.5,-.5) node[anchor=north] {$Z(x) = y$};
\end{tikzpicture} \end{center}
\caption{A planar, reversible circuit.}
\label{f:planar} \end{figure}

A circuit $Z$ is \emph{planar} if its graph is a planar graph placed in a
rectangle in the plane, with the inputs on one edge and the output on an
opposite edge.  The definition of a universal gate for general circuits
carries over to planar circuits; likewise the definition for reversible
circuits carries over to reversible planar circuits.  (See \Fig{f:planar}.)
We can make a circuit or a reversible circuit planar using reversible
$\SWAP$ gates that take $(a,b)$ to $(b,a)$.  Likewise, any universal gate
set becomes planar-universal by adding the $\SWAP$ gate.  Thus, the planar
circuit model is equivalent to the general circuit model.

The reduction from general circuits to reversible circuits is more
complicated.

\begin{lemma} Let $A$ be an alphabet for reversible circuits.
\begin{enumerate}
\item[1.] If $|A| \ge 3$, then $\Gamma = \Alt(A^2)$ is a universal set of
binary gates.
\item[2.] If $|A| = 2$, then $\Gamma = \Alt(A^3)$ is a universal set
of ternary gates.
\item[3.] If $|A|$ is even, then $\Sym(A^n) \subseteq \Alt(A^{n+1})$.
\end{enumerate}
\label{l:rev} \end{lemma}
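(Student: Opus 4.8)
The plan is to prove the three parts in order, using each earlier part (and facts about alternating groups assembled in Chapter~\ref{ch:group}, especially \Lem{l:alt}) as a tool for the next. Throughout, the key mechanism is the same: a gate acting in a fixed position on $A^n$, when $n$ is large, is a permutation of $A^n$ supported on the coordinates it touches; by letting a single gate sit in all possible positions we obtain a large family of such permutations, and we must show they generate $\Alt(A^n)$ (or $\Sym(A^n)$). \Lem{l:alt} is tailor-made for this: the sets $T_i \subseteq A^n$ on which the various copies of the gate act each have size $\ge 3$ (once $|A|\ge 2$ and the gate is at least binary, resp.\ ternary), their union is all of $A^n$, and consecutive copies overlap in large subsets, so their alternating groups generate $\Alt(A^n)$.

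For part~1, assume $|A|\ge 3$ and $\Gamma=\Alt(A^2)$. First I would observe that placing a binary gate on coordinates $\{i,i+1\}$ and letting the other $n-2$ coordinates be arbitrary realizes, for each fixed value of the frozen coordinates, a copy of $\Alt(A^2)$ acting on the corresponding $|A|^2$-element ``slice'' of $A^n$; more usefully, restricting to a fixed value on all but three consecutive coordinates, the gates on $\{i,i+1\}$ and $\{i+1,i+2\}$ act on overlapping $3$-coordinate blocks. The cleanest route is: (i) show the two overlapping copies of $\Alt(A^2)$ inside $\Sym(A^3)$ (acting on the first two and last two letters of a triple) generate $\Alt(A^3)$ --- this is a small, finite check, and since $|A|\ge 3$ one has $|A^3|\ge 27$ and the two copies are $3$-transitive enough on a connected system of $3$-subsets to invoke \Lem{l:alt} internally, or one simply cites that a group generated by these two subgroups of $\Sym(A^3)$ is $2$-transitive with an element of small support, hence contains $\Alt(A^3)$; (ii) having $\Alt(A^3)$ on every block of three consecutive coordinates, apply \Lem{l:alt} with $S=A^n$ and $T_i$ the block-of-three slices to conclude the generated group contains $\Alt(A^n)$; (iii) note $\Alt(A^2)$ contains odd permutations of $A^2$ when $|A^2|$ permits --- actually $\Alt(A^2)$ is by definition even, so the generated group is contained in $\Alt(A^n)$, giving equality. (One must be slightly careful about the parity bookkeeping in the statement ``generate either $\Alt$ or $\Sym$''; since the gate set is literally $\Alt(A^2)$, all generators are even and we land in $\Alt(A^n)$, which is what reversible universality asks for.)

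Part~2 is the genuinely delicate case and I expect it to be the main obstacle. With $|A|=2$ a binary gate can only induce an even permutation of a $4$-element set once $n\ge 3$ (parity obstruction), which is exactly why one must pass to \emph{ternary} gates: $\Gamma=\Alt(A^3)$, $|A^3|=8$. The strategy mirrors part~1 but the base case is heavier: I would show that two copies of $\Alt(A^3)$ inside $\Sym(A^4)$, acting respectively on coordinates $\{1,2,3\}$ and $\{2,3,4\}$, generate $\Alt(A^4)=\Alt(16)$ --- this is a finite but non-obvious group-theoretic fact (the two $A_8$'s overlap in a large subgroup and their union is highly transitive on $16$ points, forcing $\Alt(16)$), and then bootstrap via \Lem{l:alt} across all blocks of four consecutive coordinates to get $\Alt(A^n)$ for all large $n$. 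The subtlety is ensuring the hypotheses of \Lem{l:alt} hold with $|A|=2$: the relevant $T_i$ have size $8$ (fine, $\ge 3$), their pairwise-intersection graph is connected (consecutive blocks share $A^3$-many points), and their union is $A^n$. For part~3, assume $|A|$ even; I would exhibit an explicit element of $\Sym(A^{n+1})$ that conjugates/realizes an arbitrary transposition of $A^n$ (sitting on the first $n$ coordinates) as an \emph{even} permutation of $A^{n+1}$: a transposition $\tau$ on $A^n$, viewed on $A^{n+1}$, moves $|A|$ points when $|A|$ is even --- wait, more precisely, a transposition of two elements of $A^n$ lifts to the product of $|A|$ disjoint transpositions on $A^{n+1}$ (one for each value of the extra coordinate), and $|A|$ even means this product is even, hence lies in $\Alt(A^{n+1})$; since such transpositions generate $\Sym(A^n)$, we get $\Sym(A^n)\subseteq\Alt(A^{n+1})$, which is the claim. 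The hard part overall is the two finite base-case verifications that overlapping copies of $\Alt(A^2)$ (resp.\ $\Alt(A^3)$) in $\Sym(A^3)$ (resp.\ $\Sym(A^4)$) generate the full alternating group; everything else is routine once \Lem{l:alt} is invoked.
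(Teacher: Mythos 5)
Your part 3 is correct (a transposition of $A^n$, viewed on $A^{n+1}$, becomes a product of $|A|$ disjoint transpositions, hence even when $|A|$ is even), and your instinct to reduce everything to \Lem{l:alt} is the right one; but parts 1 and 2 have a genuine gap exactly at the step where you pass from blocks to $\Alt(A^n)$. A gate, or a circuit built from gates, acting on a block of coordinates acts \emph{diagonally}: it permutes every slice obtained by freezing the remaining coordinates simultaneously and in the same way. So your opening claim that a gate on coordinates $\{i,i+1\}$ ``realizes, for each fixed value of the frozen coordinates, a copy of $\Alt(A^2)$ acting on the corresponding slice'' is false: it realizes one copy of $\Alt(A^2)$ acting at once on all slices, not independent copies on each. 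Consequently, what your step (i) yields on a block of three consecutive coordinates is a single diagonal copy of $\Alt(A^3)$ inside $\Sym(A^n)$, which is \emph{not} a subgroup of the form $\Alt(T)$ for any slice $T\subseteq A^n$. Since \Lem{l:alt} requires honest subgroups $\Alt(T_i)$ (full alternating groups on subsets, fixing the complement pointwise), your step (ii) does not follow from what you have established. Producing permutations supported on a single slice is the crux of the lemma, and the paper supplies it inside an induction on the width $n$: by induction one has the two diagonal copies $\Alt(A^{n-1})_L$ and $\Alt(A^{n-1})_R$ on the left and right $n-1$ coordinates; inside them sit the subgroups of permutations of the middle $n-2$ symbols whose choice depends on the leftmost (respectively rightmost) symbol, and commutators between these two subgroups are supported on single slices $\{a\}\times A^{n-2}\times\{b\}$, giving alternating groups on slices to which \Lem{l:alt} is then applied. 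Your proposal has no analogue of this localization step.

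A secondary problem is your base claim (i) for part 1. The statement that the two diagonal copies of $\Alt(A^2)$ on the first two and last two coordinates of $A^3$ generate a group containing $\Alt(A^3)$ is not ``a small, finite check,'' because $|A|\ge 3$ is unbounded; and your fallback (2-transitivity plus an element of small support) is not substantiated as stated: you would need to prove primitivity or 2-transitivity of the generated group and exhibit an element of genuinely small support, whereas the visible elements of those two subgroups have support of size at least $3|A|$; manufacturing a slice-supported element (for example a 3-cycle on $\{a\}\times A\times\{b\}$) is again exactly the missing commutator step. The claim itself is true --- it is the $n=3$ instance of the paper's inductive step --- and for part 2 the analogous assertion about two copies of $\Alt(8)$ in $\Sym(16)$ is at least a finite statement, but the same bootstrapping gap recurs when you try to go from blocks of four coordinates to $\Alt(A^n)$ via \Lem{l:alt}.
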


Different versions of \Lem{l:rev} are standard in the reversible
circuit literature.  For instance, when $A = \Z/2$, the foundational
paper \cite{FT:logic} defines the Fredkin gate and the Toffoli gate,
each of which is universal together with the $\NOT$ gate.  Nonetheless,
we did not find a proof for all values of $|A|$, so we give one.

\begin{proof} Case 3 of the lemma is elementary, so we
concentrate on cases 1 and 2.  We will show by induction on $n$ that $\Gamma$
generates $\Alt(A^n)$.  The hypothesis hands us the base of induction $n=3$
when $|A| = 2$ and $n=2$ when $|A| \ge 3$.  So, we assume a larger value
of $n$ and we assume by induction that the case $n-1$ is already proven.

We consider the two subgroups in $\Alt(A^n)$ that are given by
$\Gamma$-circuits that act respectively on the left $n-1$ symbols or
the right $n-1$ symbols.  By induction, both subgroups are isomorphic to
$\Alt(A^{n-1})$, and we call them $\Alt(A^{n-1})_L$ $\Alt(A^{n-1})_R$.  They
in turn have subgroups $\Alt(A^{n-2})^{|A|}_L$ and $\Alt(A^{n-2})^{|A|}_R$
which are each isomorphic to $\Alt(A^{n-2})^{|A|}$ and each act on the
middle $n-2$ symbols; but in one case the choice of permutation $\alpha
\in \Alt(A^{n-2})$ depends on the leftmost symbol, while in the other case
it depends on the rightmost symbol.  By taking commutators between these
two subgroups, we obtain all permutations in $\Alt(\{a\} \times A^{n-2}
\times \{b\})$ for every pair of symbols $(a,b)$.  Moreover, we can repeat
this construction for every subset of $n-2$ symbols.  Since $n \ge 3$,
and since $n \ge 4$ when $|A| = 2$, we know that $|A^{n-2}| \ge 3$.
We can thus apply \Lem{l:alt} in the next section to the alternating
subgroups that we have obtained.
\end{proof}

\Lem{l:rev} motivates the definition of a canonical reversible gate set
$\Gamma$ for each alphabet $A$.  (Canonical in the sense that it is both
universal and constructed intrinsically from the finite set $A$.)  If $|A|$
is odd, then we let $\Gamma = \Alt(A^2)$.  If $|A| \ge 4$ is even, then
we let $\Gamma = \Sym(A^2)$.  Finally, if $|A| = 2$, then we let $\Gamma
= \Sym(A^3)$.  By \Lem{l:rev}, each of these gate sets is universal.
Moreover, each of these gate sets can be generated by any universal gate
set, possibly with the aid of an ancilla in the even case.

\section{Constraint satisfaction problems}
\label{s:csp}
In one version of reversible circuit satisfiability, we choose two subsets
$I,F \subseteq A$, interpreted as initialization and finalization constraint alphabets.
We define the problem $\RSAT_{A,I,F}$ as follows:  The input $x$ represents
a reversible circuit $Z_x$ of some width $n$ over the alphabet $A$,
with gates taken from some universal gate set $\Gamma$.   Then $Z_x$
is said to be satisfied if there is a circuit input $y \in I^n \subseteq
A^n$ such that $Z_x(y) \in F^n \subseteq A^n$.  The satisfiability problem
$\RSAT_{A,I,F}(x)$ asks whether such a witness $y$ exists, while as usual the
counting problem $\shRSAT_{A,I,F}(x)$ asks for the number of witnesses $y$.
Note that if either $F = A$ or $|I| = 1$, then $\RSAT_{A,I,F}$ is trivial.
Since $Z_x$ is a reversible circuit, it is just as easy to construct its
inverse $Z^{-1}_x$, so likewise $\RSAT_{A,I,F}$ is also trivial if either
$I = A$ or $|F| = 1$.

\begin{theorem} Consider $A$, $I$, $F$, and $\Gamma$ with $\Gamma$ universal
and $2 \le |I|, |F| < |A|$.  Then $\RSAT_{A,I,F}$ is Karp $\NP$-hard
and $\shRSAT_{A,I,F}$ is parsimoniously $\shP$-hard.
\label{th:rsat} \end{theorem}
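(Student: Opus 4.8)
The plan is to reduce from $\shCSAT$ (\Thm{th:csat}), taking the input circuit $C$ to be over the alphabet $I$ itself, so that its $m$ input assignments range over exactly $I^m$; write $S$ for the number of satisfying assignments. (Note $|A|\ge 3$ since $2\le|I|<|A|$.) The first step is the standard Bennett compute/uncompute construction: fix a group structure on $I$, make each gate reversible in the usual way, and obtain a reversible circuit over $A$ whose gates act on $I$-valued tuples by the intended reversible operations and are the identity on tuples having a coordinate outside $I$ (up to transpositions among such tuples, inserted to keep each gate an even permutation). After the forward phase one designated scratch wire holds $C(x)$, \emph{provided} the scratch block was initialized to a fixed neutral pattern; and, because the uncompute phase is the literal inverse of the forward phase, every scratch wire is returned to its initial value, so an incorrect scratch initialization is echoed faithfully at the output.

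The reduction is then completed with a bounded number of $O(1)$-size ``finalization gadgets'', each of whose existence uses $2\le|I|,|F|\le|A|-1$: since $|I|^k+|F|^k\le|A|^k$ for a constant $k$ large enough in terms of $|A|$, there are permutations of $A^k$ with prescribed intersection numbers for the image of $I^k$ with $F^k$. Concretely: (i) a \emph{forcing} gadget applying, to each $k$-block of scratch wires (and to auxiliary padding wires used below), a permutation $\pi$ of $A^k$ with $|\pi(I^k)\cap F^k|=1$, so finalization rejects every initialization except the neutral one; (ii) a \emph{verdict} gadget inserted between the two phases, namely a gate controlled by the answer wire that applies to a block of fresh verdict wires a permutation $\mu$ with $|\mu(I^\ell)\cap F^\ell|=1$ when $C(x)$ is the accepting value and a permutation $\nu$ with $\nu(I^\ell)\cap F^\ell=\emptyset$ otherwise, so that exactly one verdict initialization survives in the satisfying case and none in the unsatisfying case; and (iii) a \emph{recoding} gadget mapping each input wire of $C$, together with a constant number of forced padding wires, injectively into a block of $F$-valued symbols, so the untouched input does not itself spoil finalization. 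All gadgets are chosen to be even permutations (there is ample freedom off the constrained sets), so every gate is realizable by $O(1)$ gates of the given universal $\Gamma$ by \Lem{l:rev} and the remarks after it; the resulting circuit has polynomial width and is produced from $C$ in polynomial time.

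A short counting argument finishes it: among the $I$-valued inputs to the constructed circuit, the forcing gadgets pin the scratch and padding initializations (one way each), the recoding gadget puts the surviving input configurations in bijection with $I^m$, and the verdict gadget multiplies by $1$ or $0$ according to whether the input satisfies $C$. Hence the number of accepting witnesses is exactly $S$, a parsimonious reduction; and since an accepting witness exists iff $S\ge 1$, the same map is a Karp reduction of $\CSAT$ to $\RSAT_{A,I,F}$.

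I expect the main obstacle to be precisely the demand for \emph{exact} parsimony in the presence of constraints that are only membership in the products $I^n$ and $F^n$, never equality to a fixed string. This is what forces the block gadgets with controlled intersection numbers and the insistence on a literal uncompute phase (so that a wrong scratch initialization survives to be caught at finalization), and it is also the place where one must argue carefully that nothing is over- or under-counted. A secondary, purely technical nuisance is keeping every gate an even permutation so that the circuit lies in the subgroup generated by $\Gamma$, for which the freedom in the gadgets and \Lem{l:rev}(3) suffice.
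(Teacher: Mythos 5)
Your proposal is correct and follows essentially the same route as the paper's proof: Toffoli-style gate dilation, Bennett uncomputation to reset the scratch wires, and constant-size block-grouping gadgets built from the inequality $|A|^k \ge |I|^k + |F|^k$ to pin the ancilla initializations and recode the surviving data into $F$. The paper merely factors the identical construction through a chain of intermediate problems $\RSAT_1 \to \RSAT_2 \to \RSAT_3 \to \RSAT_4$ instead of presenting it as a single reduction with named gadgets.
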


\Thm{th:rsat} is also a standard result in reversible circuit theory,
but we again give a proof because we did not find one.

\begin{proof} We consider a sequence $\RSAT_i$ of versions of the reversible
circuit problem.  We describe the satisfiability version for each one, and
implicitly define the counting version $\shRSAT_i$ using the same predicate.

\begin{itemize}
\item $\RSAT_1$ uses the binary alphabet $A = \Z/2$ and does not have 
$I$ or $F$.  Instead, some of the input bits are set to $0$ while others
are variable, and the decision output of a circuit is simply the value
of the first bit.

\item $\RSAT_2$ also has $A = \Z/2$ with an even number of input and
output bits.  Half of the input bits and output bits are set to $0$,
while the others are variable.  A circuit $Z$ is satisfied by finding an
input/output pair $x$ and $Z(x)$ that satisfy the constraints.
 
\item $\RSAT_3$ is $\RSAT_{A,I,F}$ with $I$ and $F$ disjoint and $|A
\setminus (I \cup F)| \ge 2$.

\item $\RSAT_4$ is $\RSAT_{A,I,F}$ with the stated hypotheses of the theorem.
\end{itemize}
We claim parsimonious reductions from $\shCSAT$ to $\shRSAT_1$,
and from $\shRSAT_i$ to $\shRSAT_{i+1}$ for each $i$.

Step 1: We can reduce $\CSAT$ to $\RSAT_1$ through the method of \emph{gate
dilation} and ancillas.  Here an \emph{ancilla} is any fixed input to
the circuit that is used for scratch space; the definition of $\RSAT_1$
includes ancillas.  To define gate dilation, we can let $A$ be any alphabet
with the structure of an abelian group.  If $\alpha:A^k \to A$ is a gate,
then we can replace it with the reversible gate
\[ \beta:A^{k+1} \to A^{k+1} \qquad \beta(x,a) = (x,\alpha(x)+a), \]
where $x \in A^k$ is the input to $\alpha$ and $a \in A$ is an ancilla
which is set to $a=0$ when $\beta$ replaces $\alpha$.  The gate $\beta$
is called a \emph{reversible dilation} of $\alpha$.  We can similarly
replace every irreversible $\COPY$ gate with the reversible gate
\[ \COPY:A^2 \to A^2 \qquad \COPY(x,a) = (x,x+a), \]
where again $a$ is an ancilla set to $a=0$.  Dilations also leave extra
output symbols, but under the satisfiability rule of $\RSAT_1$, we
can ignore them.

In the Boolean case $A = \Z/2$, the reversible $\COPY$ gate is denoted
$\CNOT$ (controlled $\NOT$), while the dilation of $\AND$ is denoted $\CCNOT$
(doubly controlled $\NOT$) and is called the Toffoli gate.  We can add to
this the uncontrolled $\NOT$ gate
\[ \NOT(x) = x+1. \]
These three gates are clearly enough to dilate irreversible Boolean circuits.
(They are also a universal gate set for reversible computation.)

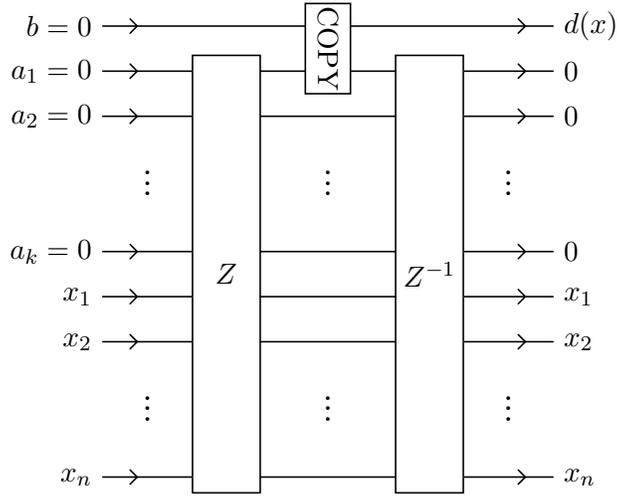
\begin{figure}[htb] \begin{center}
\begin{tikzpicture}[scale=.6,semithick,decoration={markings,
    mark=at position 0.08 with {\arrow{angle 90}},
    mark=at position 0.94 with {\arrow{angle 90}}}]
\draw (0,10) node[anchor=east] {$b=0$};
\draw[postaction={decorate}] (0,10) -- (10,10);
\draw (10,10) node[anchor=west] {$d(x)$};
\draw (0,9) node[anchor=east] {$a_1=0$};
\draw[postaction={decorate}] (0,9) -- (10,9);
\draw (10,9) node[anchor=west] {$0$};
\draw (0,8) node[anchor=east] {$a_2=0$};
\draw[postaction={decorate}] (0,8) -- (10,8);
\draw (10,8) node[anchor=west] {$0$};
\draw (0,5) node[anchor=east] {$a_k=0$};
\draw[postaction={decorate}] (0,5) -- (10,5);
\draw (10,5) node[anchor=west] {$0$};
\draw (0,4) node[anchor=east] {$x_1$};
\draw[postaction={decorate}] (0,4) -- (10,4);
\draw (10,4) node[anchor=west] {$x_1$};
\draw (0,3) node[anchor=east] {$x_2$};
\draw[postaction={decorate}] (0,3) -- (10,3);
\draw (10,3) node[anchor=west] {$x_2$};
\draw (0,0) node[anchor=east] {$x_n$};
\draw[postaction={decorate}] (0,0) -- (10,0);
\draw (10,0) node[anchor=west] {$x_n$};
\draw (1,6.7) node {\Large $\vdots$};
\draw (1,1.7) node {\Large $\vdots$};
\draw[fill=white] (2,-.35) rectangle (3.5,9.35);
\draw (2.75,4.5) node {$Z$};
\draw[fill=white] (4.5,8.5) rectangle (5.5,10.5);
\draw (5,9.5) node[rotate=270] {$\COPY$};
\draw (5,6.7) node {\Large $\vdots$};
\draw (5,1.7) node {\Large $\vdots$};
\draw[fill=white] (6.5,-.35) rectangle (8,9.35);
\draw (7.25,4.5) node {$Z^{-1}$};
\draw (9,6.7) node {\Large $\vdots$};
\draw (9,1.7) node {\Large $\vdots$};
\end{tikzpicture}
\end{center}
\caption{Using uncomputation to reset ancilla values.}
\label{f:uncomp} \end{figure}

Step 2: We can reduce $\RSAT_1$ to $\RSAT_2$ using the method of
uncomputation.  Suppose that a circuit $Z$ in the $\RSAT_1$ problem has
an $n$-bit variable input register $x = (x_1,x_2,\dots,x_n)$ and a $k$-bit
ancilla register $a = (a_1,a_2,\dots,a_k)$.  Suppose that $Z$ calculates
decision output $d(x)$ in the $a_1$ position (when $a_1 = 0$ since it is
an ancilla).   Then we can make a new circuit $Z_1$ with the same $x$
and $a$ and one additional ancilla bit $b$, defined by applying $Z$,
then copying the output to $b$ and negating $b$, then applying $Z^{-1}$,
as in \Fig{f:uncomp}.  If $n = k+1$, then $Z_1$ is a reduction of $Z$ from
$\RSAT_1$ to $\RSAT_2$.  If $n > k+1$, then we can pad $Z_1$ with $n-k-1$
more ancillas and do nothing with them to produce a padded circuit $Z_2$.
If $n < k+1$, then we can pad $Z_1$ with $k+1-n$ junk input bits, and at
the end of $Z_1$, copy of these junk inputs to $k+1-n$ of the first $k$
ancillas; again to produce $Z_2$.  (Note that $k+1-n \le k$ since we can
assume that $Z$ has at least one variable input bit.)  In either of these
cases, $Z_2$ is a reduction of $Z$ from $\RSAT_1$ to $\RSAT_2$.

Step 3: We can reduce $\RSAT_2$ to $\RSAT_3$ by grouping symbols and
permuting alphabets.   As a first step, let $A_1 = \Z/2 \times \Z/2$
with $I_1 = F_1 = \{(0,0),(1,0)\}$.  Then we can reduce $\RSAT_2$ to
$\RSAT_{A_1,I_1,F_1}$ by pairing each of input or output bit with an
ancilla; we can express each ternary gate over $\Z/2$ in terms of binary
gates over $A_1$.  Now let $A_2$ be any alphabet with disjoint $I_2$ and
$F_2$, and with at least two symbols not in $I_2$ or $F_2$.  Then we can
embed $(A_1,I_1,F_1)$ into $(A_2,I_2,F_2)$ arbitrarily, and extend any gate
$\alpha:A_1^k \to A_1^k$ (with $k \in \{1,2\}$, say) arbitrarily to a gate
$\beta:A_2^k \to A_2^k$ which is specifically an even permutation.  This
reduces $\RSAT_2 = \RSAT_{A_1,I_1,F_1}$ to $\RSAT_3 = \RSAT_{A_2,I_2,F_2}$.

Step 4: Finally, $(A_3,I_3,F_3)$ is an alphabet that is not of our choosing,
and we wish to reduce $\RSAT_3 = \RSAT_{A_2,I_2,F_2}$ to $\RSAT_4 =
\RSAT_{A_3,I_3,F_3}$.  We choose $k$ such that
\[ |A_3|^k \ge |I_3|^k + |F_3|^k + 2. \]
We then let $A_2 = A_3^k$ and $I_2 = I_3^k$, and we choose $F_2 \subseteq A_2
\setminus I_2$ with $|F_2| = |F_3|^k$.  A circuit in $\RSAT_{A_2,I_2,F_2}$
can now be reduced to a circuit in $\RSAT_{A_3,I_3,F_3}$ by grouping
together $k$ symbols in $A_3$ to make a symbol in $A_2$.   Since $I_2 =
I_3^k$, the initialization is the same.  At the end of the circuit, we
convert finalization in $F_2$ to finalization in $F_3^k$ with some unary
permutation of the symbols in $A_2$.
\end{proof}

\section{Circuits with a $J$-set alphabet}
\label{s:zombies}
% K=G should be changed to J because K is a knot, not a group

Let $J$ be a non-trivial finite group and let $A$ be an alphabet which
is a $J$-set with a single fixed point $z$, the \emph{zombie symbol},
and otherwise with free orbits.  We choose two $J$-invariant alphabets
$I,F \subsetneq A \setminus \{z\}$, and we assume that
\begin{equation}
|I|,|F| \ge 2|J| \qquad I \ne F \qquad |A| \ge 2|I \cup F| + 3|J| + 1.
\label{e:zineq}
\end{equation}
(The second and third conditions are for convenience rather than necessity.)
With these parameters, we define a planar circuit model that we denote
$\ZSAT_{J,A,I,F}$ that is the same as $\RSAT_{A,I \cup \{z\},F \cup
\{z\}}$ as defined in \Sec{s:circuits}, \emph{except} that the gate set
is $\Rub_J(A^2)$.  This gate set is not universal in the sense of $\RSAT$
because every gate and thus every circuit is $J$-equivariant.  (One can
show that it is universal for $J$-equivariant circuits by establishing an
analogue of \Lem{l:rev} with the aid of \Thm{th:rubik}, but we will not need
this.)  More explicitly, in the $\ZSAT$ model we consider $J$-equivariant
planar circuits $Z$ that are composed of binary gates in $\Rub_J(A^2)$,
and satisfiability is defined by the equation $Z(x) = y$ with $x \in
(I \cup \{z\})^n$ and $y \in (F \cup \{z\})^n$.

\begin{lemma} $\shZSAT_{J,A,I,F}$ is almost parsimoniously $\shP$-complete.
More precisely, if $c \in \shP$, then there is an $f \in \FP$ such that
\begin{equation}\shZSAT_{J,A,I,F}(f(x)) = |J|c(x)+1. \label{e:zk}\end{equation}
\label{l:zsat} \end{lemma}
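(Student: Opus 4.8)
The plan is to reduce $\shRSAT_{A',I',F'}$ to $\shZSAT_{J,A,I,F}$, where $\shRSAT$ is the reversible circuit satisfiability problem shown to be parsimoniously $\shP$-complete in \Thm{th:rsat}. The key idea is that the $\ZSAT$ model differs from $\RSAT$ in two respects: (i) it carries an extra symbol $z$ fixed by the $J$-action, which contributes exactly one extra solution — the all-$z$ string — and is responsible for the ``$+1$'' in \eqref{e:zk}; and (ii) its gate set is the Rubik group $\Rub_J(A^2)$ rather than a universal gate set, and since every gate is $J$-equivariant, any solution can be acted on by $J^n$ componentwise, which is responsible for the factor $|J|^n$ — but we must engineer things so that the final count is $|J|$, not $|J|^n$.

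\medskip

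\textbf{Step 1: Encode an $\RSAT$ instance into the $J$-orbit structure.} Fix an $\RSAT$ alphabet $A'$ with $2 \le |I'|,|F'| < |A'|$; by \Thm{th:rsat}, $\shRSAT_{A',I',F'}$ is parsimoniously $\shP$-hard. I want to embed $A'$ into the orbit space $(A \setminus \{z\})/J$. Since the free $J$-orbits in $A$ each have size $|J|$, choosing representatives gives an embedding $A' \hookrightarrow (A \setminus \{z\})/J$, with $I'$ mapped into $I/J$ and $F'$ into $F/J$ (the inequalities \eqref{e:zineq} guarantee enough room: $|I|,|F| \ge 2|J|$ gives at least two orbits each, and $|A| \ge 2|I \cup F| + 3|J| + 1$ leaves spare orbits and the zombie symbol). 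A reversible gate $\alpha : (A')^k \to (A')^k$ then lifts to a $J$-equivariant permutation $\tilde\alpha$ of $A^k$: on the orbits corresponding to $A'$ it acts as $\alpha$ does on orbit-representatives (extended equivariantly across each orbit by the ``diagonal'' $J$-action on $A^k$, i.e.\ $\tilde\alpha(j\cdot x) = j \cdot \tilde\alpha(x)$), and on the remaining orbits and on any string involving $z$ it acts as the identity (or is adjusted by an even permutation so it lands in $\Rub_J(A^2)$ — using \Lem{l:rubikquo}, a $J$-equivariant permutation lies in $\Rub_J(A^2) = \Sym_J(A^2)'$ iff it maps to $\Alt$ in the $\Alt(A^2/J)$ quotient, so minor parity corrections suffice, and by \Lem{l:rev}-style arguments / \Thm{th:rubik} the canonical gate set over the $J$-equivariant model is what we need).

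\medskip

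\textbf{Step 2: Collapse $|J|^n$ solutions down to $|J|$ via a finalization gadget.} The naive construction above makes the number of $\ZSAT$-solutions equal to $|J|^n \cdot \shRSAT(x) + 1$ (each $\RSAT$-witness $y \in (I')^n$ lifts to a whole $J^n$-coset of $\ZSAT$-inputs, all of which finalize, plus the all-$z$ string). To fix this, I append a ``$J$-alignment'' circuit after $Z_x$: a $J$-equivariant reversible circuit $W$ (built, as in the uncomputation/gate-dilation steps of the proof of \Thm{th:rsat}, out of binary Rubik gates) that checks whether all $n$ symbols lie in the \emph{same} translate of the base copy of $A'$ — i.e.\ whether the input string is $j \cdot y_0$ for a fixed representative lift $y_0$ and some global $j \in J$ — and if not, sends a symbol outside $F$ (a ``warning symbol'', using the spare orbits guaranteed by \eqref{e:zineq}) so that finalization fails. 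When all symbols \emph{are} globally $J$-aligned, $W$ acts trivially and the string finalizes iff $y_0 \in (F')^n$, i.e.\ iff the underlying $\RSAT$-witness is a genuine witness. Crucially, $W$ must act trivially on the all-$z$ string (since $z$ is $J$-fixed, it is ``aligned'' for every $j$, so this is automatic). The effect is that each $\RSAT$-witness now contributes exactly $|J|$ $\ZSAT$-solutions (one per global $j \in J$) and the zombie string contributes $1$, giving $\shZSAT(f(x)) = |J| \cdot \shRSAT(x) + 1$. Composing with the parsimonious reduction $c \mapsto \shRSAT$ from \Thm{th:rsat} gives \eqref{e:zk}. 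Membership $\shZSAT \in \shP$ is routine (a witness is a satisfying input string; verification is polynomial-time circuit evaluation), so $\shZSAT$ is almost parsimoniously $\shP$-complete.

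\medskip

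\textbf{The main obstacle} will be Step 2: constructing the alignment gadget $W$ using \emph{only} binary gates from $\Rub_J(A^2)$, rather than arbitrary $J$-equivariant gates. Detecting global $J$-alignment is a non-local property, so $W$ must propagate ``partial alignment'' information across the circuit — this is exactly the warning-symbol bookkeeping alluded to in the proof sketch (\Sec{ss:sketch1}): one sweeps left-to-right carrying, in an ancilla orbit, the candidate global element $j \in J$ determined by the first non-zombie symbol, and at each subsequent symbol either confirms consistency or permanently flips the ancilla into a warning state. Verifying that every elementary step of this sweep is realizable in $\Rub_J(A^2)$ (and has the right parity to lie in the commutator subgroup, invoking \Lem{l:rubikquo} and padding with ancillas as in case 3 of \Lem{l:rev} when $|A|$ forces parity constraints) is the technical heart of the argument. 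The zombie symbol needs special care throughout — it must behave as a ``transparent'' symbol that neither creates nor resolves warnings — and one checks that a string is accepted iff it is either all-$z$ or a global $J$-translate of a lifted $\RSAT$-witness, which is precisely what yields the count $|J|c(x)+1$.
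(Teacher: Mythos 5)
Your reduction follows essentially the same architecture as the paper's proof: run the $\RSAT$ instance on $J$-orbit representatives via $J$-equivariant lifts of its gates (keeping the zombie symbol fixed and preserving both alignment and misalignment), then append a postcomputation that converts misaligned states into warning symbols outside $F$, so that exactly the all-zombie string plus the $|J|$ global translates of each genuine witness finalize, giving $|J|c(x)+1$.

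The one place your design as stated would go wrong is the prescription that the zombie be ``transparent'' in the alignment sweep, ``neither creating nor resolving warnings.'' If zombies never trigger warnings, then a mixed input such as $(z,y_2,\dots,y_n)$ with $(y_2,\dots,y_n)$ globally aligned and satisfying the circuit on those coordinates would pass the alignment check and finalize (since $z$ is always permitted in both the initialization and finalization alphabets), producing spurious solutions and breaking the exact count $|J|c(x)+1$. The main computation preserves such mixed states (each converted gate fixes $(z,a)$ and $(a,z)$), so they must be eliminated in the postcomputation: the paper's postcomputation gate $\alpha$ explicitly sends $(z,a)\mapsto(z_1,a)$ and $(a,z)\mapsto(z_2,a)$ for non-zombie $a$, creating a warning symbol precisely when a zombie sits adjacent to a non-zombie, so that among zombie-containing inputs only the all-zombie string survives. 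With that correction your argument coincides with the paper's; the remaining technical points you flag (realizing each step by gates genuinely in $\Rub_J(A^2)$, and the room guaranteed by \eqref{e:zineq} for the warning alphabet) are handled the same way in both.
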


Equation \ref{e:zk} has the same form as equation \ref{e:hq}, and for
an equivalent reason: The input $(z,z,\dots,z)$ trivially satisfies any
$\ZSAT$ circuit (necessarily at both ends), while $J$ acts freely on the
set of other circuit solutions.

\begin{proof} We take the convention that $A$ is a left $J$-set.  We choose
a subset $A_0 \subsetneq A$ that has one representative from each free
$J$-orbit of $A$.  (In other words, $A_0$ is a section of the free orbits.)

We say that a data state $(a_1,a_2,\dots,a_n)$ of a $\ZSAT$ circuit of
width $n$ is \emph{aligned} if it has no zombie symbols and if there is a
single element $j \in J$ such that $ja_i \in A_0$ for all $i$.   The idea
of the proof is to keep zombie symbols unchanged (which is why they are
called zombies) and preserve alignment in the main reduction, and then add
a postcomputation that converts zombies and misaligned symbols into warning
symbols in a separate warning alphabet.  The postcomputation cannot work
if all symbols are zombies, but it can work in all other cases.

More precisely, we let $W \subseteq A \setminus (I \cup F \cup \{z\})$
be a $J$-invariant subalphabet of size $|I \cup F|+2|J|$ which we call the
\emph{warning alphabet}, and we distinguish two symbols $z_1,z_2 \in W$ in
distinct orbits.  Using \Thm{th:rsat} as a lemma, we will reduce a circuit
$\overline{Z}$ in the planar, reversible circuit model $\RSAT_{(I \cup F)/J,I/J,F/J}$
with binary gates to a circuit $Z$ in $\ZSAT_{J,A,I,F}$.  To describe the
reduction, we identify each element of $(I \cup F)/J$ with its lift in $A_0$.

We let $Z$ have the same width $n$ as $\overline{Z}$.  To make $Z$, we convert each
binary gate $\gamma$ of the circuit $\overline{Z}$ in $\RSAT_{(I \cup F)/J,I/J,F/J}$
to a gate $\delta$ in $\ZSAT_{J,A,I,F}$ in sequence.  After all of these
gates, $Z$ will also have a postcomputation stage.  Given $\gamma$,
we define $\delta$ as follows:
\begin{enumerate}
\item Of necessity,
\[ \delta(z,z) = (z,z). \]
\item If $a \in I \cup F$, then
\[ \delta(z,a) = (z,a) \qquad \delta(a,z) = (a,z). \]
\item If $a_1,a_2 \in (I \cup F) \cap A_0$, $g_1,g_2 \in J$, and 
\[ \gamma(a_1,a_2) = (b_1,b_2), \]
then
\[ \delta(g_1a_1,g_2a_2) = (g_1b_1,g_2b_2). \]
\item We extend $\delta$ to the rest of $A^2$ so that $\delta \in
\Rub_J(A^2)$.
\end{enumerate}
By cases 1 and 2, zombie symbols stay unchanged.  Cases 1, 2, and 3 together
keep the computation within the subalphabet $I \cup F \cup \{z\}$, while
case 3 preserves alignments, as well as misalignments.

The postcomputation uses a gate $\alpha:A^2 \to A^2$ such that:
\begin{enumerate}
\item Of necessity,
\[ \alpha(z,z) = (z,z). \]
\item If $a \in I \cap A_0$, then
\[ \alpha(z,a) = (z_1,a) \qquad \alpha(a,z) = (z_2,a) \]
\item If $a_1,a_2 \in I \cup F$ are misaligned, then
\[ \alpha(a_1,a_2) = (\beta(a_1),a_2) \]
for some $J$-equivariant bijection 
\[ \beta: I \cup F \to W \setminus (Jz_1 \cup Jz_2). \]
\item If $a_1,a_2 \in I \cup F$ are aligned, then
\[ \alpha(a_1,a_2) = (a_1,a_2). \]
\item We extend $\alpha$ to the rest of $A^2$ so that $\alpha \in
\Rub_J(A^2)$.
\end{enumerate}
We apply this gate $\alpha$ to each adjacent pair of symbols $(a_i,a_{i+1})$
for $i$ ranging from $1$ to $n-1$ in order.  The final effect is that,
if some (but not all) of the input symbols are zombies, or if any two
symbols are misaligned, then the postcomputation in $Z$ creates symbols
in the warning alphabet $W$.

Any input to $Z$ with either zombies or misaligned symbols cannot finalize,
since the main computation preserves these syndromes and the postcomputation
then produces warning symbols that do not finalize.  The only spurious input
that finalizes is the all-zombies state $(z,z,\dots,z)$, and otherwise
each input that $\overline{Z}$ accepts yields a single aligned $J$-orbit.  Thus we
obtain the relation
\[ \#Z = |J|\#\overline{Z} + 1 \]
between the number of inputs that satisfy $\overline{Z}$ and the number that satisfy
$Z$, as desired.
\end{proof}

\chapter{Reductions}
\label{ch:reductions}
\section{Reduction to homology spheres}
\label{s:homologyreduction}

\subsection{Mapping class gadgets}
\label{ss:mcg}

\begin{figure*}[t] \begin{center}
\begin{tikzpicture}[scale=.4,semithick]
\foreach \x in {0,6,12,24} {
\begin{scope}[shift={(\x,0)}]
\draw (1,0) .. controls (1,1) and (0,2) .. (0,3)
    .. controls (0,4) and (1,5) .. (1,7)
    .. controls (1,9) and (0,10) .. (0,11)
    .. controls (0,12) and (1,13) .. (2,13)
    .. controls (3,13) and (4,12) .. (4,11)
    .. controls (4,10) and (3,9) .. (3,7)
    .. controls (3,5) and (4,4) .. (4,3)
    .. controls (4,2) and (3,1) .. (3,0);
\draw (2,7.75) node {\scalebox{3}{$\vdots$}};
\draw (1.2,3) .. controls (1.6,3.4) and (2.4,3.4) .. (2.8,3);
\draw (1.1,3.1) -- (1.2,3) .. controls (1.6,2.6) and (2.4,2.6) .. (2.8,3)
    -- (2.9,3.1);
\draw[darkblue,thick] (2,3) ellipse (1.5 and .7);
\draw[darkred,thick,dashed] (1.2,3) arc (0:180:.6 and .3);
\draw[darkred,thick] (0,3) arc (-180:0:.6 and .3);
\draw (1.2,11) .. controls (1.6,11.4) and (2.4,11.4) .. (2.8,11);
\draw (1.1,11.1) -- (1.2,11) .. controls (1.6,10.6) and (2.4,10.6) .. (2.8,11)
    -- (2.9,11.1);
\draw[darkblue,thick] (2,11) ellipse (1.5 and .7);
\draw[darkred,thick,dashed] (1.2,11) arc (0:180:.6 and .3);
\draw[darkred,thick] (0,11) arc (-180:0:.6 and .3);
\draw[thick,dashed] (3,0) arc (0:180:1 and .5);
\draw[thick] (1,0) arc (-180:0:1 and .5);
\fill[medgreen] (2,-.5) circle (.175);
\end{scope} }
\draw (1,0) .. controls (1,-2) and (-1,-2) .. (-1,-4)
    .. controls (-1,-7) and (5,-8) .. (14,-8)
    .. controls (23,-8) and (29,-7) .. (29,-4)
    .. controls (29,-2) and (27,-1) .. (27,0);
\draw (3,0) .. controls (3,-2) and (7,-2) .. (7,0);
\draw (9,0) .. controls (9,-2) and (13,-2) .. (13,0);
\draw (15,0) .. controls (15,-1) and (16,-1.5) .. (17,-1.5) -- (18,-1.5);
\draw (25,0) .. controls (25,-1) and (24,-1.5) .. (23,-1.5) -- (22,-1.5);
\draw (20.1,-1.5) node {\scalebox{3}{$\cdots$}};
\draw[thick] (11,-1.5) .. controls (13,-1.5) and (9,-4) .. (5,-4)
    .. controls (1,-4) and (-1,-3) .. (0,-2);
\draw[thick,dashed] (0,-2) .. controls (1,-1) and (2,-2.5) .. (5,-2.5)
    .. controls (8,-2.5) and (9,-1.5) .. (11,-1.5);
\draw[thick] (5,-1.5) .. controls (3,-1.5) and (7,-4) .. (11,-4)
    .. controls (15,-4) and (19,-1.5) .. (17,-1.5);
\draw[thick,dashed] (5,-1.5) .. controls (6,-1.5) and (8,-2.5) .. (11,-2.5)
    .. controls (14,-2.5) and (16,-1.5) .. (17,-1.5);
\fill[medgreen] (14,-6) circle (.175);
\fill[medgreen] (11,-4) circle (.175);
\fill[medgreen] (5,-4) circle (.175);
\draw[medgreen] (14,-6) .. controls (10,-6) and (5,-6) .. (5,-4);
\draw[medgreen] (5,-4) .. controls (5,-2.5) and (2,-2) .. (2,-.5);
\draw[medgreen] (5,-4) .. controls (5,-2.5) and (8,-2) .. (8,-.5);
\draw[medgreen] (14,-6) .. controls (12,-6) and (11,-5) .. (11,-4);
\draw[medgreen] (11,-4) .. controls (11,-2.5) and (8,-2) .. (8,-.5);
\draw[medgreen] (11,-4) .. controls (11,-2.5) and (14,-2) .. (14,-.5);
\draw[medgreen] (14,-6) -- (18,-6);
\draw[medgreen] (20.1,-6) node {\scalebox{3}{$\cdots$}};
\draw[medgreen] (22,-6) .. controls (26,-6) and (26,-2) .. (26,-.5);
\draw[medgreen] (14,-6.2) node[anchor=north] {$p_0$};
\draw (2,14) node {$(\Sigma_{g}^1)_1$};
\draw (8,14) node {$(\Sigma_{g}^1)_2$};
\draw (14,14) node {$(\Sigma_{g}^1)_3$};
\draw (26,14) node {$(\Sigma_{g}^1)_n$};
\draw[darkblue] (29.5,11) node {$(H_g)_{F,n}$};
\draw[darkred] (22.5,11) node {$(H_g)_{I,n}$};
\draw (-3,3) node {\scalebox{1.5}{$\Sigma_{ng}$}};
\draw (3.5,-4) node[anchor=north east] {$(\Sigma_{2g}^1)_{1,2}$};
\draw (14,-3.5) node[anchor=north west] {$(\Sigma_{2g}^1)_{2,3}$};
\end{tikzpicture} \end{center}
\caption{The Heegaard surface $\Sigma_{ng}$ with disjoint subsurfaces
$(\Sigma_{g}^1)_i$.  Circles that contract in $(H_g)_{I,i} \subset (H_{ng})_I$
are in red, while circles that contract in $(H_g)_{F,i} \subset (H_{ng})_F$
are in blue.  The subsurfaces $(\Sigma_{2g}^1)_{i,i+1}$ are also indicated.
The system of basepoints and connecting paths is in green.}
\label{f:heegaard} \end{figure*}
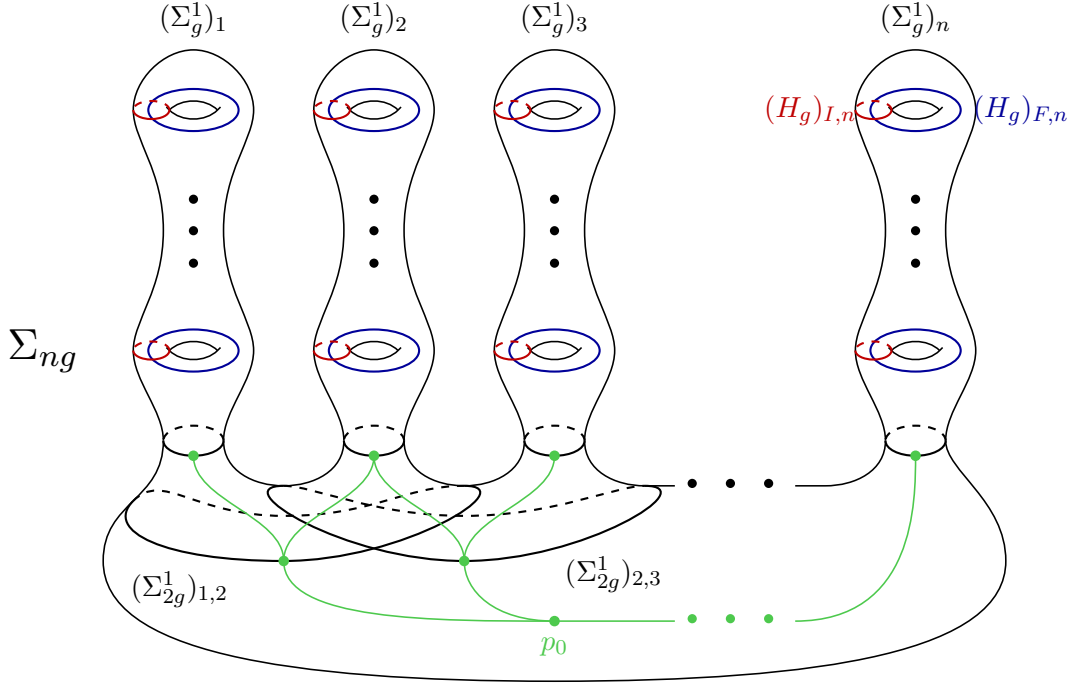

In this subsection and the next one, we will finish the proof of
\Thm{th:main1}.  We want to convert a suitable $\ZSAT$ circuit $Z$ of width
$n$ to a homology 3-sphere $M$.  To this end, we choose some sufficiently
large $g$ that depends only on the group $G$, and we let $\Sigma_{ng}$
be a Heegaard surface of $M$.  This Heegaard surface will be decorated in
various ways that we summarize in \Fig{f:heegaard}.  We use the additional
notation that $\Sigma_{g}^b$ is a surface of genus $g$ with $b$ boundary
circles, with a basepoint on one of its circles.  We give $\Sigma_{g}^b$
the representation set
\[ \hR_{g,b} \defeq \{f:\pi_1(\Sigma_{g}^b) \to G\}. \]
We let $\MCG_*(\Sigma_{g}^b)$ be the relative mapping class group (that
fixes $\partial \Sigma_{g}^b$); it naturally acts on $\hR_{g,b}$.

We attach two handlebodies $(H_{ng})_I$ and $(H_{ng})_F$ to $\Sigma_{ng}$
so that
\[ (H_{ng})_I \cup (H_{ng})_F \cong S^3.\]
Although an actual sphere $S^3$ is not an interesting homology sphere
for our purposes, our goal is to construct a homeomorphism $\phi \in
\MCG_*(\Sigma_{ng})$ so that
\[ M \defeq (H_{ng})_I \sqcup_\phi (H_{ng})_F \]
is the 3-manifold that we will produce to prove \Thm{th:main1}.  (We could
let $\phi$ be an element of the unpointed mapping class group here, but
it is convenient to keep the basepoint.)

\begin{figure}[t]\begin{center} \begin{tabular}{l|l}
$\ZSAT_{J,A,I,F}$ & $H(M,G)$ \\ \hline
$n$-symbol memory & Heegaard surface $\Sigma_{ng}$ \\
1-symbol memory & computational subsurface $\Sigma_{g}^1$ \\
binary gate & element of $\MCG_*(\Sigma_{2g}^1)$ \\
circuit: $Z$ & mapping class $\phi \in \MCG_*(\Sigma_{ng})$ \\
alphabet: $A$ & homomorphisms $\pi_1(\Sigma_g^1) \to G$\\
alphabet symmetry: $J$ & automorphisms $\Aut(G)$ \\
zombie symbol: $z \in A$ & trivial map $z:\pi_1(\Sigma_g^1) \to G$ \\
memory state: $x\in A^n$ & homomorphism $f: \pi_1(\Sigma_{ng}) \to G$ \\
initialization: $x \in (I \cup \{z\})^n$ & $f$ extends to $\pi_1((H_{ng})_I)$ \\
finalization: $y \in (F \cup \{z\})^n$ & $f$ extends to $\pi_1((H_{ng})_F)$ \\
solution: $Z(x) = y$ & homomorphism $f:\pi_1(M) \to G$
\end{tabular} \end{center}
\caption{A correspondence between $\ZSAT$ and $H(M,G)$.}
\label{f:corresp} \end{figure}

We identify $n$ disjoint subsurfaces
\[ (\Sigma_{g}^1)_1,(\Sigma_{g}^1)_2,\dots,
    (\Sigma_{g}^1)_n \subseteq \Sigma_{ng} \]
which are each homeomorphic to $\Sigma_{g}^b$.  The handlebodies
$(H_{ng})_I$ and $(H_{ng})_F$ likewise have sub-handlebodies $(H_g)_{I,i}$
and $(H_g)_{F,i}$ of genus $g$ associated with $(\Sigma_{g}^1)_i$ and
positioned so that
\[ (H_g)_{I,i} \cup (H_g)_{F,i} \cong B^3. \]
We also choose another set of subsurfaces
\[ (\Sigma_{2g}^1)_{1,2},(\Sigma_{2g}^1)_{2,3},\dots,
    (\Sigma_{2g}^1)_{n-1,n} \subseteq \Sigma_{ng} \]
such that
\[ (\Sigma_{g}^1)_i,(\Sigma_{g}^1)_{i+1} \subseteq (\Sigma_{2g}^1)_{i,i+1}. \]
Finally we mark basepoints for each subsurface $(\Sigma_{g}^1)_i$ and
$(\Sigma_{2g}^1)_{i,i+1}$, and one more basepoint $p_0 \in \Sigma_{ng}$,
and we mark a set of connecting paths as indicated in \Fig{f:heegaard}.

The circuit conversion is summarized in \Fig{f:corresp}.  We will use the
computational alphabet
\[ A \defeq R^0_g \cup \{z\} \subseteq \hR_g \subseteq \hR_{g,1}, \]
where $z:\pi_1(\Sigma_g) \to G$ is (as first mentioned in \Sec{ss:sketch1})
the trivial homomorphism and the zombie symbol, and the inclusion $\hR_g
\subseteq \hR_{g,1}$ comes from the inclusion of surfaces $\Sigma_{g}^1
\subseteq \Sigma_g$.  We let $J=\Aut(G)$ be the finite group acting on $A$.
Note that $\hR^0_g$ is precisely the subset of
$\hR_{g,1}$ consisting of homomorphisms
\[ f:\pi_1(\Sigma_{g}^1) \to G \]
that are trivial on the peripheral subgroup $\pi_1(\partial \Sigma_{g}^1)$.

Each subsurface $(\Sigma_{g}^1)_i$ is interpreted as the ``memory unit" of
a single symbol $x_i \in A$.  Using the connecting paths in $\Sigma_{ng}$
between the basepoints of its subsurfaces, and since each $x_i$ is trivial
on $\pi_1(\partial \Sigma_{g}^1)$, a data register
\[ x = (x_1,x_2,\dots,x_n) \in A^n \]
combines to form a homomorphism
\[ f:\pi_1(\Sigma_{ng}) \to G. \]
In particular, if $x \ne (z,z,\dots,z)$, then $f \in R_{ng}$.  In other
words, $f$ is surjective in this circumstance because one of its components
$x_i$ is already surjective.  (Note that the converse is not true: we can
easily make a surjective $f$ whose restriction to each $(\Sigma_{g}^1)_i$
is far from surjective.)

For every subgroup $H \le G$, we define $I(H)$ to be the set of surjections
\[ x:\pi_1(\Sigma_{g}^1) \onto H \]
that come from a homomorphism 
\[ x:\pi_1((H_g)_I) \onto H. \]
We define $F(H)$ in the same way using $(H_g)_F$. A priori we know that
$I(H), F(H) \subseteq R_{g,1}(H)$.  This inclusion can be sharpened in
two significant respects.

\begin{lemma} The sets $I(H)$ and $F(H)$ are subsets of $R^0_g(H)$.
If $H$ is non-trivial, then they are disjoint.
\label{l:ifr0} \end{lemma}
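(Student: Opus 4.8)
The plan is to prove both assertions by elementary three-dimensional topology, working locally near a single computational subsurface. Fix the index $i$ and abbreviate $(\Sigma_{g}^1)_i$, $(H_g)_{I,i}$, $(H_g)_{F,i}$ by $\Sigma$, $H_I$, $H_F$. Two geometric facts about the construction (see \Fig{f:heegaard}) will be used: first, $\Sigma$ sits inside $\partial H_I$ (and inside $\partial H_F$) as the complement of an open disk, so the boundary curve $\partial\Sigma$ is null-homotopic in $H_I$ and in $H_F$, and capping off that disk identifies $\partial H_I$ (and $\partial H_F$) with the closed surface $\Sigma_g$; second, $H_I \cup_\Sigma H_F \cong B^3$, where the two complementary disks make up $\partial B^3 = S^2$. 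Both are immediate from the way the subsurfaces and sub-handlebodies were set up.

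For the containment $I(H) \subseteq R^0_g(H)$: let $x \in I(H)$, so $x\colon \pi_1(\Sigma) \onto H$ factors as $x = \tilde x \circ j$, where $j\colon \pi_1(\Sigma) \to \pi_1(H_I)$ is induced by inclusion and $\tilde x\colon \pi_1(H_I) \onto H$. Since $\partial\Sigma$ is null-homotopic in $H_I$, the class $[\partial\Sigma]$ lies in $\ker j \subseteq \ker x$; because the kernel of $\pi_1(\Sigma)\to\pi_1(\Sigma_g)$ is the normal closure of $[\partial\Sigma]$, the map $x$ descends to a (still surjective) homomorphism $\bar x\colon \pi_1(\Sigma_g)\to H$, so $x \in R_g(H)$. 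Moreover $\bar x$ still factors through $\pi_1(H_I)$, hence a classifying map $\Sigma_g \to BH$ for $\bar x$ extends over $H_I$; since the inclusion $\Sigma_g = \partial H_I \hookrightarrow H_I$ sends $[\Sigma_g]$ to $0 \in H_2(H_I) = 0$ (a handlebody is homotopy equivalent to a wedge of circles), we get $\sch(\bar x) = \bar x_*[\Sigma_g] = 0$, i.e. $x \in R^0_g(H)$. The identical argument with $H_F$ in place of $H_I$ gives $F(H) \subseteq R^0_g(H)$.

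For disjointness when $H$ is non-trivial: suppose $x \in I(H)\cap F(H)$, with extensions $\tilde x_I\colon \pi_1(H_I)\to H$ and $\tilde x_F\colon \pi_1(H_F)\to H$ each restricting to $x$ on $\pi_1(\Sigma)$. By van Kampen's theorem applied to $B^3 = H_I \cup_\Sigma H_F$, the group $\pi_1(B^3)$ is the pushout of $\pi_1(H_I) \leftarrow \pi_1(\Sigma) \rightarrow \pi_1(H_F)$; as $\tilde x_I$ and $\tilde x_F$ agree on $\pi_1(\Sigma)$, they induce a homomorphism $\pi_1(B^3) \to H$ through which $x$ factors. But $\pi_1(B^3) = 1$, so $x$ is trivial, contradicting the surjectivity of $x$ onto the non-trivial group $H$.

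I do not expect a real obstacle here: the only care needed is matching the abstract picture to \Fig{f:heegaard}, in particular that $\partial\Sigma$ bounds a disk inside each of $H_I$ and $H_F$ and that the two handlebodies are glued along the whole of $\Sigma$ (not merely along meridian systems) to form $B^3$. The substance of the lemma is the disjointness, which is exactly the observation that a surjection cannot extend over both halves of a ball.
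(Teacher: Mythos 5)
Your proof is correct and follows the same route as the paper's: the boundary circle bounds a disk in each sub-handlebody (so the map descends to the closed surface), the extension over a handlebody kills the fundamental class in homology (so $\sch = 0$), and van Kampen applied to $(H_g)_{I,i} \cup (H_g)_{F,i} \cong B^3$ forces any map extending over both to be trivial. The paper's version is just a terser statement of exactly these three steps.
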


\begin{proof} First, since $\partial \Sigma_{g}^1$ bounds a disk in
$(H_g)_I$, we obtain that $I(H), F(H) \subseteq R_g(H)$.  Second,
since any $x$ in $I(H)$ or $F(H)$ extends to a handlebody, the cycle
$x_*([\Sigma_g])$ is null-homologous in $BG$ and therefore $\sch(x) =
0.$  Third, since $(H_g)_I \cup (H_g)_F \cong B^3$ is simply connected, a
surjective homomorphism $x \in R_g(H)$ cannot extend to both handlebodies if
$H$ is non-trivial.  Therefore $I(H)$ and $F(H)$ are disjoint in this case.
\end{proof}

The gadgets that serve as binary gates are mapping class elements $\alpha
\in \MCG_*((\Sigma_{2g}^1)_{i,i+1})$ that act on two adjacent memory units
$(\Sigma_{g}^1)_i$ and $(\Sigma_{g}^1)_{i+1}$.  We summarize the effect
of the local subsurface inclusions on representation sets.  In order to
state it conveniently, if $X$ and $Y$ are two pointed spaces, we define a
modified wedge $X \vee_\lambda Y$, where $\lambda$ is a connecting path
between the basepoint of $X$ and the basepoint of $Y$.  \Fig{f:pinched}
shows a surjection from $\Sigma_{2g}$ to $\Sigma_g \vee_\lambda \Sigma_g$,
while \Fig{f:heegaard} has copies of $\Sigma_{g}^1 \vee_\lambda \Sigma_{g}^1$
(which has a similar surjection from $\Sigma_{2g}^1$).

\begin{figure*}[htb] \begin{center}
\begin{tikzpicture}[semithick,scale=.4]
\draw (0,-1) .. controls (1,-1) and (2,-2) .. (3,-2)
    .. controls (4,-2) and (5,-1) .. (7,-1)
    .. controls (9,-1) and (10,-2) .. (11,-2)
    .. controls (12,-2) and (13,-1) .. (13,0)
    .. controls (13,1) and (12,2) .. (11,2)
    .. controls (10,2) and (9,1) .. (7,1)
    .. controls (5,1) and (4,2) .. (3,2)
    .. controls (2,2) and (1,1) .. (0,1)
    .. controls (-1,1) and (-2,2) .. (-3,2)
    .. controls (-4,2) and (-5,1) .. (-7,1)
    .. controls (-9,1) and (-10,2) .. (-11,2)
    .. controls (-12,2) and (-13,1) .. (-13,0)
    .. controls (-13,-1) and (-12,-2) .. (-11,-2)
    .. controls (-10,-2) and (-9,-1) .. (-7,-1)
    .. controls (-5,-1) and (-4,-2) .. (-3,-2)
    .. controls (-2,-2) and (-1,-1) .. (0,-1);
\draw[thick] (0,1) arc (90:270:.5 and 1);
\draw[thick,dashed] (0,-1) arc (-90:90:.5 and 1);
\foreach \x in {-11,-3,3,11} {
\begin{scope}[shift={(\x,0)}]
\draw (-.8,0) .. controls (-.4,.4) and (.4,.4) .. (.8,0);
\draw (-.9,.1) -- (-.8,0) .. controls (-.4,-.4) and (.4,-.4) .. (.8,0)
    -- (.9,.1);
\end{scope} }
\draw (7.1,-.05) node {\scalebox{3}{$\cdots$}};
\draw (-6.9,-.05) node {\scalebox{3}{$\cdots$}};
\foreach \x in {-8,8} {
\begin{scope}[shift={(\x,-9)}]
\draw (-6,0) .. controls (-6,-1) and (-5,-2) .. (-4,-2)
    .. controls (-3,-2) and (-2,-1) .. (0,-1)
    .. controls (2,-1) and (3,-2) .. (4,-2)
    .. controls (5,-2) and (6,-1) .. (6,0)
    .. controls (6,1) and (5,2) .. (4,2)
    .. controls (3,2) and (2,1) .. (0,1)
    .. controls (-2,1) and (-3,2) .. (-4,2)
    .. controls (-5,2) and (-6,1) .. (-6,0);
\draw (.1,-.05) node {\scalebox{3}{$\cdots$}};
\end{scope} }
\foreach \x in {-12,-4,4,12} {
\begin{scope}[shift={(\x,-9)}]
\draw (-.8,0) .. controls (-.4,.4) and (.4,.4) .. (.8,0);
\draw (-.9,.1) -- (-.8,0) .. controls (-.4,-.4) and (.4,-.4) .. (.8,0)
    -- (.9,.1);
\end{scope} }
\draw (-2,-9) -- (2,-9); \draw (0,-9) node[anchor=north] {$\lambda$};
\fill (-2,-9) circle (.175); \fill (2,-9) circle (.175);
\draw (-15,0) node[anchor=east] {\scalebox{1.25}{$\Sigma_{2g}$}};
\draw (-15,-9) node[anchor=east]
    {\scalebox{1.25}{$\Sigma_g \vee_\lambda \Sigma_g$}};
\draw (0,-4.5) node {\scalebox{2}{\rotatebox{270}{
    $\relbar\joinrel\twoheadrightarrow$}}};
\end{tikzpicture}
\end{center}
\caption{From $\Sigma_{2g}$ to $\Sigma_g \vee_\lambda \Sigma_g$}
\label{f:pinched} \end{figure*}
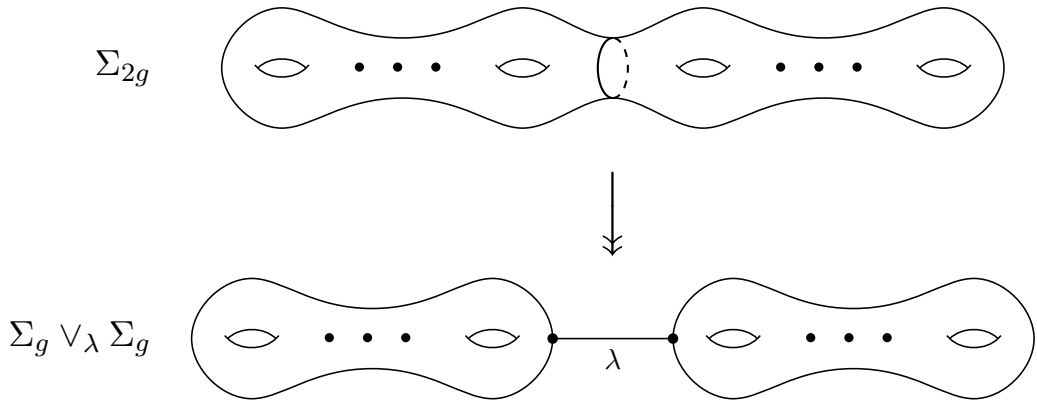

\begin{lemma} The inclusions and surjections
\[ \begin{array}{ccc}
\Sigma_{2g}^1 & \subseteq & \Sigma_{2g} \\[-1.5ex]
\vonto  & & \vonto \\[2ex]
\Sigma_{g}^1 \vee_\lambda \Sigma_{g}^1 & \subseteq
    & \Sigma_g \vee_\lambda \Sigma_g 
\end{array} \]
yield the inclusions
\begin{equation} \begin{array}{ccccccc}
\hR_{2g,1} & \supseteq & \hR_{2g} & \supseteq & R_{2g}
    & \supseteq & R^0_{2g} \\[.5ex]
\veq & & \vsubseteq & & \vsubseteq & & \vsubseteq \\
\hR_{g,1} \times \hR_{g,1} & \supseteq & \hR_g \times \hR_g & \supseteq &
    R_g \times R_g & \supseteq & R^0_g \times R^0_g
\end{array}.\label{e:include1} \end{equation}
For every pair of subgroups $H_1, H_2 \le G$ that generate $H \le G$,
they also yield
\begin{equation} R^0_g(H_1) \times R^0_g(H_2)
    \subseteq R^0_{2g}(H). \label{e:include2}\end{equation}
Finally, they yield
\begin{equation}A \times A \subseteq R^0_{2g}
    \cup \{z_{2g}\}, \label{e:include3} \end{equation}
where $z_g \in R_g$ is the trivial map in genus $g$ and $z_{2g} = (z_g,z_g)$.
 \label{l:include} \end{lemma}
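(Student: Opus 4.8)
The plan is to unwind the definitions of all the representation sets appearing in the diagram and verify each inclusion directly, working from the surjections and inclusions of surfaces displayed at the top of the lemma. The key geometric input is that $\Sigma_{2g}$ surjects onto $\Sigma_g \vee_\lambda \Sigma_g$ (\Fig{f:pinched}) by a map that collapses a separating curve, so that on $\pi_1$ it induces an injection $\pi_1(\Sigma_g) \ast \pi_1(\Sigma_g) \into \pi_1(\Sigma_{2g})$ splitting off a free factor on each side; dually, a homomorphism $f:\pi_1(\Sigma_{2g}) \to G$ restricts to a pair $(f_1,f_2)$ on the two genus-$g$ subsurfaces, and the inclusion $\Sigma_{2g}^1 \subseteq \Sigma_{2g}$ (filling in the boundary circle with a disk) identifies $\hR_{2g,1}$ with the subset of $\hR_{2g}$ trivial on the peripheral curve—but since that peripheral curve is null-homotopic after capping, in fact $\hR_{2g,1} = \hR_{2g}$, which explains the leftmost equality in \eqref{e:include1}.

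First I would establish \eqref{e:include1}. The top row is just the chain of definitions $\hR_{2g} \supseteq R_{2g} \supseteq R^0_{2g}$, and similarly for the bottom. For the vertical maps: the leftmost is the equality just discussed. For the three remaining vertical containments, note that the surjection $\Sigma_{2g} \onto \Sigma_g \vee_\lambda \Sigma_g$ lets us view any pair $(f_1,f_2) \in \hR_g \times \hR_g$ as a single homomorphism $f \in \hR_{2g}$; this gives $\hR_g \times \hR_g \subseteq \hR_{2g}$. If both $f_1$ and $f_2$ are surjective then so is $f$ (its image contains both images), giving $R_g \times R_g \subseteq R_{2g}$. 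Finally, since the Schur invariant is additive under the connect-sum/wedge decomposition—$\sch(f) = \sch(f_1) + \sch(f_2)$, as $[\Sigma_{2g}]$ maps to $[\Sigma_g] + [\Sigma_g]$ under the pinch—we get that $\sch(f_1) = \sch(f_2) = 0$ implies $\sch(f) = 0$, hence $R^0_g \times R^0_g \subseteq R^0_{2g}$.

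Next, \eqref{e:include2} is the same argument run with targets $H_1, H_2 \le G$: a pair of surjections $f_1:\pi_1(\Sigma_g^1) \onto H_1$, $f_2:\pi_1(\Sigma_g^1) \onto H_2$, each trivial on the peripheral subgroup and each with vanishing Schur invariant, assembles via the pinch surjection into a homomorphism $f:\pi_1(\Sigma_{2g}^1) \to G$ whose image is the subgroup generated by $H_1$ and $H_2$, namely $H$; it is trivial on $\pi_1(\partial\Sigma_{2g}^1)$ because each factor is, and $\sch(f) = \sch(f_1) + \sch(f_2) = 0$. Hence $f \in R^0_{2g}(H)$. Then \eqref{e:include3} follows by applying \eqref{e:include2} with $A = R^0_g \cup \{z\}$: given $(x_1,x_2) \in A \times A$, if either $x_i = z$ the assembled map $f$ has image equal to the image of the other coordinate (which lies in $R^0_g \cup \{z\}$, hence is fine as an element of $R^0_{2g} \cup \{z_{2g}\}$, with $f = z_{2g}$ exactly when both are $z$), and if both $x_i \in R^0_g$ then $f \in R^0_{2g}$ by \eqref{e:include2} with $H_1 = H_2 = G$ (so $H = G$). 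The main obstacle is really just bookkeeping: being careful that ``trivial on the peripheral subgroup'' is preserved and that the Schur-invariant additivity under the pinch map is invoked correctly (this uses that the pinch map sends the fundamental class of $\Sigma_{2g}$ to the sum of the two fundamental classes, which is where \Fig{f:pinched} does the work); none of the individual steps is deep, but the diagram must be checked box by box.
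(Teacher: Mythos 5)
Your overall route---assemble a pair $(f_1,f_2)$ into a single homomorphism by precomposing with the pinch surjection, use joint surjectivity of images for the middle columns, and use additivity of $\sch$ under the pinch for the last column and for \eqref{e:include2} and \eqref{e:include3}---is the same as the paper's. But your treatment of the left end of diagram \eqref{e:include1} contains a genuine error. You assert that $\hR_{2g,1} = \hR_{2g}$ because ``the peripheral curve is null-homotopic after capping,'' and you describe $\hR_{2g,1}$ as a subset of $\hR_{2g}$. Both claims are backwards: capping shows that every $f \in \hR_{2g}$ restricts to an element of $\hR_{2g,1}$ killing the peripheral curve, so $\hR_{2g}$ is the subset of $\hR_{2g,1} \cong G^{4g}$ (homomorphisms out of the free group $\pi_1(\Sigma_{2g}^1)$) that kill the product of commutators, and this inclusion is proper for nonabelian $G$. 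Indeed the diagram records it as a horizontal $\supseteq$, not an equality. The relation you actually need to establish is the leftmost \emph{vertical} equality $\hR_{2g,1} = \hR_{g,1} \times \hR_{g,1}$, which you never address; it holds because the pinch $\sigma_1 : \Sigma_{2g}^1 \onto \Sigma_g^1 \vee_\lambda \Sigma_g^1$ induces an isomorphism on $\pi_1$ (both groups are free of rank $4g$). Relatedly, your claim that the pinch induces an injection $\pi_1(\Sigma_g) * \pi_1(\Sigma_g) \into \pi_1(\Sigma_{2g})$ ``splitting off a free factor on each side'' is false---closed surface groups are freely indecomposable---and is not what your argument uses anyway; the map you need is the surjection $\pi_1(\Sigma_{2g}) \onto \pi_1(\Sigma_g \vee_\lambda \Sigma_g)$, through which you precompose.

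A second, smaller gap: in \eqref{e:include2} you write $\sch(f) = \sch(f_1) + \sch(f_2) = 0$ without noticing that the hypothesis gives $\sch(f_i) = 0$ in $H_2(H_i)$, whereas the additivity formula and the desired conclusion live in $H_2(H)$. These are different groups, so one must observe that the composite $\Sigma_g \to BH_i \to BH$ shows $\sch_H(f_i)$ is the pushforward of $\sch_{H_i}(f_i)$ under $H_2(H_i) \to H_2(H)$, and the pushforward of $0$ is $0$. This is the easy direction of naturality, but it is precisely the point that \eqref{e:include2} is stated to isolate. Your case analysis for \eqref{e:include3} is fine once you also record explicitly that $\sch_G(z_g) = 0$, so that the mixed case $(z_g, x)$ with $x \in R^0_g$ lands in $R^0_{2g}$.
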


\begin{proof} The horizontal inclusions are all addressed above; the real
issue is the vertical inclusions and equalities.   We consider the vertical
inclusions from left to right in diagram \eqref{e:include1}.  The surjection
\[ \sigma_1:\Sigma_{2g}^1 \onto \Sigma_{g}^1 \vee_\lambda \Sigma_{g}^1 \]
is an isomorphism of $\pi_1$, while the surjection
\[ \sigma_0:\Sigma_{2g} \onto \Sigma_g \vee_\lambda \Sigma_g \]
is a surjection in $\pi_1$.  This implies the first two vertical
relations.  Then, if two homomorphisms
\[ f_1,f_2:\pi_1(\Sigma_g) \onto G \]
are each surjective, then they are certainly jointly surjective; this
implies the third relation.  Finally, the surjection $\sigma_0$
yields the formula 
\begin{equation} \sch((f_1,f_2)) = \sch(f_1) + \sch(f_2). \label{e:sch}\end{equation}
The reason is that the image $\sigma_0([\Sigma_{2g}])$ of the fundamental
class of $\Sigma_{2g}$ is the sum of the fundamental classes of the 
two $\Sigma_g$ components.  This yields the fourth, leftmost inclusion
because equation \eqref{e:sch} then reduces to $0 = 0+0$.

To treat \eqref{e:include2}, we claim that if $\sch_{K_i}(f_i) = 0$,
then $\sch_K(f_i) = 0$.  This follows from the fact that each map from
$\Sigma_g$ to the classifying space $BH_i$ and $BH$ forms a commutative
triangle with the map $BH_i \to BH$.  With this remark, inclusion
\eqref{e:include2} can be argued in the same way as the inclusion $R^0_g
\times R^0_g \subseteq R^0_{2g}$.

Finally for inclusion \eqref{e:include3}, recall that $A = R^0_g \cup
\{z_g\}$, and that $z_{2g} = (z_g,z_g)$ since in each case $z$ is the
trivial homomorphism.  The inclusions
\[ R^0_g \times \{z_g\}, \{z_g\} \times R^0_g \subseteq R^0_{2g} \]
can be argued the same way as before: Given the two homomorphisms $f_1,f_2$,
even if one of them is the trivial homomorphism $z_g$, the surjectivity
of the other one gives us joint surjectivity.  Moreover, the trivial
homomorphism $z_g$ has a vanishing Schur invariant $\sch_G(z_g) = 0$
relative to the target group $G$.
\end{proof}

\subsection{End of the proof}
\label{ss:end}
We combine \Thm{th:dtrefine} with Lemmas \ref{l:include} and \ref{l:zsat}
to convert a circuit $Z$ in $\ZSAT_{J,A,I,F}$ to a mapping class $\phi \in
\MCG_*(\Sigma_{ng})$ using mapping class gadgets, where $J,A,I,F$ are as specified in the previous 
subsection. To apply \Lem{l:zsat}, we need to verify the conditions in \eqref{e:zineq}.  These follow
easily from asymptotic estimates on the cardinality of $A$ and $I$
\cite[Lems.~6.10~\&~6.11]{DT:random}.

For each $\tau \in \Rub_J(A \times A)$, we choose an $\alpha \in
\Tor_*(\Sigma_{2g}^1)$ such that:
\begin{enumerate}
\item $\alpha$ acts by $\tau$ on $A \times A$.
\item $\alpha$ acts by an element of $\Rub_J(R^0_{2g})$
that fixes $R^0_{2g} \setminus (A \times A)$.
\item $\alpha$ fixes $\hR_{2g} \setminus R_{2g}$.
\end{enumerate}
Given a circuit $Z$ in $\ZSAT_{J,A,I,F}$, we can replace each
gate $\tau \in \Rub_J(A \times A)$ that acts on symbols $i$
and $i+1$ by the corresponding local mapping class $\alpha \in
\Tor_*((\Sigma_{2g,1})_{(i,i+1)})$.  Then we let $\phi$ be the composition
of the gadgets $\alpha$.

\begin{lemma} Let
\[ M \defeq (H_{ng})_I \sqcup_\phi (H_{ng})_F. \]
Then
\begin{enumerate}
\item $M$ is a homology 3-sphere.
\item If $1 \lneq H \lneq G$ is a non-trivial, proper subgroup of $G$,
then $Q(M,H) = \emptyset$.
\item $\#H(M,G) = \#Z.$
\end{enumerate}
\label{l:glue} \end{lemma}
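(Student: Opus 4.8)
The plan is to verify the three conclusions in order, using the correspondence in Figure~\ref{f:corresp} together with the structural results already established.

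\textit{Part (1): $M$ is a homology 3-sphere.} I would argue that each gadget $\alpha$ lies in the pointed Torelli group $\Tor_*(\Sigma_{2g}^1)$, hence acts trivially on $H_1$ of the ambient Heegaard surface; therefore the composition $\phi$ also lies in $\Tor_*(\Sigma_{ng})$. Since $(H_{ng})_I \cup (H_{ng})_F \cong S^3$ already, and gluing along a Torelli element does not change $H_1$ of the resulting manifold (the Mayer--Vietoris computation of $H_1(M)$ only sees the induced map $\phi_*$ on $H_1(\Sigma_{ng})$, which is the identity), we conclude $H_1(M) = H_1(S^3) = 0$. Orientability and the fact that $M$ is a closed $3$-manifold are automatic from the Heegaard construction, so $M$ is a homology $3$-sphere. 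The key external input here is \Thm{th:dtrefine}, which guarantees that the gadgets $\alpha$ realizing arbitrary $\tau \in \Rub_J(A \times A)$ can indeed be taken inside the Torelli group while simultaneously fixing $\hR_{2g}\setminus R_{2g}$ and $R^0_{2g}\setminus(A\times A)$; this is exactly why that theorem was proved in the strengthened form.

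\textit{Part (2): $Q(M,H) = \emptyset$ for $1 \lneq H \lneq G$.} Suppose $f: \pi_1(M) \onto H$ is a surjection onto a non-trivial proper subgroup. Restricting $f$ to $\pi_1(\Sigma_{ng})$ and then to each computational subsurface $(\Sigma_g^1)_i$ gives homomorphisms $f_i : \pi_1(\Sigma_g^1) \to G$; since $f$ extends over both handlebodies, each $f_i$ factors through $\pi_1((H_g)_{I,i})$ and through $\pi_1((H_g)_{F,i})$. Let $H_i = f_i(\pi_1(\Sigma_g^1)) \le H$. By \Lem{l:ifr0}, each $f_i \in I(H_i) \cap F(H_i)$ forces $H_i$ trivial, i.e.\ $f_i = z_g$. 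But the $f_i$ together with the connecting paths generate the image of $f|_{\pi_1(\Sigma_{ng})}$, and $\pi_1(\Sigma_{ng}) \onto \pi_1(M)$ is surjective, so $f$ is trivial, contradicting surjectivity onto $H \ne 1$. The one subtlety is that $\phi$ reshuffles the $f_i$ via the gadgets, so I must check that the property ``restricts to something in $I(H_i)$ on each memory unit before $\phi$, and in $F(H_i)$ after $\phi$'' is preserved; this follows because each gadget is supported on $(\Sigma_{2g}^1)_{i,i+1}$ and, by the second and third bullets in the construction of $\alpha$, preserves the subalphabet structure $A \times A \subseteq R^0_{2g} \cup \{z_{2g}\}$ of \eqref{e:include3} and fixes non-surjective symbols.

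\textit{Part (3): $\#H(M,G) = \#Z$.} This is the heart of the matter and the step I expect to be the main obstacle. The plan is to set up a bijection between homomorphisms $f : \pi_1(M) \to G$ and satisfying inputs $y \in (I \cup \{z\})^n$ of the circuit $Z$. A homomorphism $f:\pi_1(M)\to G$ is the same as a homomorphism $f:\pi_1(\Sigma_{ng})\to G$ that extends over both handlebodies, i.e.\ one whose restriction to each memory unit lies in $I(\cdot)$ as seen from $(H_{ng})_I$ and, after applying $\phi$, in $F(\cdot)$ as seen from $(H_{ng})_F$. Using \Lem{l:include}, any such $f$ either is globally trivial (corresponding to the all-zombies state) or restricts on each $(\Sigma_g^1)_i$ to an element of $A = R_g^0 \cup \{z\}$, because a homomorphism that extends over a handlebody has vanishing Schur invariant (so lies in $R^0_g$ if surjective on that unit) and the only non-surjective contribution allowed is $z_g$ --- here I must also invoke the homology-sphere conclusion of Part~(1) to kill homomorphisms whose image has non-trivial abelianization, and Part~(2) to kill those with non-trivial perfect proper image, exactly as flagged in the proof sketch of Subsection~\ref{ss:sketch1}. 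Thus $f$ determines a memory state $x \in A^n$; the constraint that $f$ extends over $(H_{ng})_I$ translates, via the identification of $I(H)$-type conditions with the initialization alphabet, to $x \in (I \cup \{z\})^n$; and the action of $\phi$ implements precisely the circuit $Z$ on $A^n$ by construction (each gadget $\alpha$ acts by the prescribed $\tau \in \Rub_J(A\times A)$, and the non-$A\times A$ states are fixed, so the composite $\phi$ restricted to the initialized states equals $Z$), so that $f \circ \phi$ extending over $(H_{ng})_F$ is equivalent to $Z(x) \in (F \cup \{z\})^n$. This gives a bijection $\{f:\pi_1(M)\to G\} \leftrightarrow \{y \in (I\cup\{z\})^n : Z(y)\in (F\cup\{z\})^n\}$, i.e.\ $\#H(M,G) = \#Z$. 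The delicate points to nail down are: (a) that the memory state is genuinely well-defined and independent of the connecting-path bookkeeping, which uses that each symbol is trivial on $\pi_1(\partial \Sigma_g^1)$; (b) that $I(\cdot)$ and $F(\cdot)$ as subsets of $R^0_g$ match the abstract initialization/finalization alphabets $I,F$ used when invoking \Lem{l:zsat} --- this is where the asymptotic size estimates \cite[Lems.~6.10~\&~6.11]{DT:random} verifying \eqref{e:zineq} are needed; and (c) that the gadgets compose without interference, which holds because consecutive gadgets overlap only in a single memory unit and each acts within the controlled subalphabet.
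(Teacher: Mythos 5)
Your overall strategy coincides with the paper's: decompose $\phi$ into local gadgets acting on adjacent memory units, track the restrictions of a homomorphism $f:\pi_1(M)\to G$ through the circuit, and use the disjointness $I(H)\cap F(H)=\emptyset$ from \Lem{l:ifr0}. Part (1) is the paper's argument. In part (2) you initially conflate the initial restriction $f_{0,i}$ (which lies in some $I(H_i)$) with the final restriction after $\phi$ (which lies in some $F(H_i')$); you flag this yourself, and the repair is exactly the freezing argument, so this is a presentational wobble rather than a gap.

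The genuine gap is in part (3), at the step where you argue that every $f$ restricts on each memory unit to a symbol of $A=R^0_g\cup\{z\}$. You justify this by invoking the homology-sphere property of $M$ to ``kill homomorphisms whose image has non-trivial abelianization'' and part (2) to kill those with perfect proper image. Both of those facts constrain only the \emph{global} image $f(\pi_1(M))$; they say nothing about the image $H_i$ of the restriction to a single unit, which can be an intermediate (even non-perfect) subgroup while the global image is all of $G$. The passage in Subsection \ref{ss:sketch1} you are leaning on is a heuristic sketch; the mechanism that actually works is dynamic. If $H_i\notin\{1,G\}$, then the pair formed by $f_{0,i}$ and either adjacent symbol lies in $\hR_{2g}\setminus R_{2g}$ (if the joint image is proper) or in $R^0_{2g}\setminus(A\times A)$ (if it is all of $G$), and every gadget fixes both of these sets pointwise by construction. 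Hence $f_{m,i}=f_{0,i}\in I(H_i)$, and this symbol cannot finalize because $I(H_i)\cap F(H_i)=\emptyset$. So symbols outside $A$ are not excluded a priori by homological considerations; they are frozen by the circuit and then fail the finalization constraint. Once this substitution is made, your bijection between circuit solutions and elements of $H(M,G)$ goes through as in the paper.
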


\begin{proof} Point 1 holds because by construction,
$\phi \in \Tor(\Sigma_{2g})$.

To address points 2 and 3, we decompose $\phi$ as a composition of 
local gadgets,
\begin{equation} \phi = \alpha_m \circ \alpha_{m-1} \circ \dots \circ
    \alpha_2 \circ \alpha_1, \label{e:phi}\end{equation}
and we insert parallel copies $(\Sigma_{ng})_j$ of the Heegaard surface
with $0 \le j \le m$, so the $i$th gadget $\alpha_j$ yields a map
\[ \alpha_j:(\Sigma_{ng})_{j-1} \to (\Sigma_{ng})_j \]
from the $(j-1)$-st to the $j$-th surface.  Each $\alpha_j$ is a non-trivial
homeomorphism
\[ \alpha_j:(\Sigma_g)_{j-1,(i,i+1)} \to (\Sigma_{ng})_{j,(i,i+1)} \]
for some $i$, and is the identity elsewhere.  We use this decomposition
to analyze the possibilities for a group homomorphism
\[ f:\pi_1(M) \to G. \]
The map $f$ restricts to a homomorphism
\[ f_j:\pi_1((\Sigma_{ng})_j) \to G, \]
and then further restricts to a homomorphism
\[ f_{j,i}:\pi_1((\Sigma_{g,1})_{j,i}) \to G \]
for the $i$th memory unit for each $i$.   It is convenient to interpret
$\hR_{g,1} \supseteq A$ as the superalphabet of all possible symbols that
could in principle arise as the state of a memory unit.

By construction, each initial symbol $f_{0,i}$ extends to the handlebody
$(H_g)_{I,i}$.   Thus $f_{0,i} \in I(H)$ for some subgroup $1 \le H \le
G$, and all cases are disjoint from $A$ other than $H = 1$ and $H=G$.
Likewise at the end, each $f_{m,i} \in F(H)$ for some $H$.  By construction,
each $\alpha_j$ fixes both $R^0_{2g} \setminus (A \times A)$ and $\hR_{2g}
\setminus R_{2g}$.  This fixed set includes all cases $R^0(H_1) \times
R^0(H_2)$, and therefore all cases $I(H_1) \times I(H_2)$, other than
$H_1,H_2 \in \{1,G\}$.  Thus every initial symbol $f_{0,i} \in I(H)
\not\subseteq A$ is preserved by every gadget $\alpha_j$, and then can't
finalize because $I(H) \cap F(H) = \emptyset$.  Among other things, this
establishes point 2 of the lemma.

This derivation also restricts the initial state $f_0$ to $A^n$.  In this
case, each $\alpha_j$ acts in the same way on $A^n$ as the corresponding gate
$\tau_j$.  Consequently, $\alpha_j$ leaves the set $A^n$ invariant.  Considering both
the circuit action and initialization and finalization, these states exactly
match the behavior of the circuit $Z$ under the rules of $\ZSAT_{J,A,I,F}$.
\end{proof}

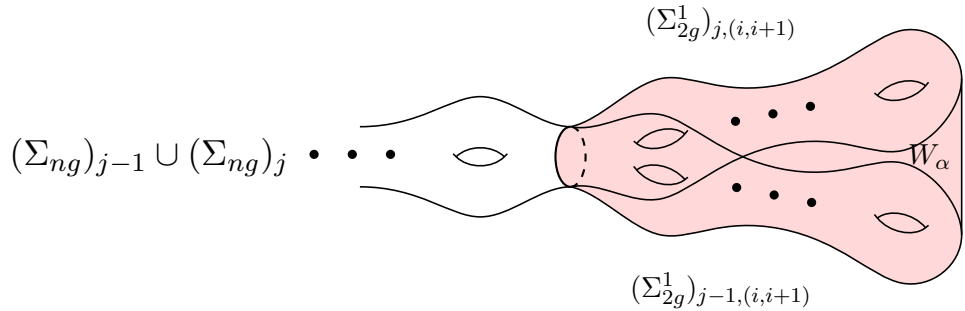
\begin{figure*}[htb] \begin{center}
\begin{tikzpicture}[semithick,scale=.4,even odd rule]
\fill[white!85!red] (0,-1) .. controls (1,-1.2) and (2,-2.4) .. (3,-2.6)
    .. controls (4,-2.8) and (5,-2) .. (7,-2.4)
    .. controls (9,-2.8) and (10,-4) .. (11,-4.2)
    .. controls (12,-4.4) and (13,-3.6) .. (13,-2.6) -- (13,2.6)
    .. controls (13,3.6) and (12,4.4) .. (11,4.2)
    .. controls (10,4) and (9,2.8) .. (7,2.4)
    .. controls (5,2) and (4,2.8) .. (3,2.6)
    .. controls (2,2.4) and (1,1.2) .. (0,1)
    arc  (90:270:.5 and 1);
\draw (13,-2.6) -- (13,2.6);
\draw (0,-1) .. controls (1,-.8) and (2,-1.6) .. (3,-1.4)
    .. controls (4,-1.2) and (5,0) .. (7,.4)
    .. controls (9,.8) and (10,0) .. (11,.2)
    .. controls (12,.4) and (13,1.6) .. (13,2.6)
    .. controls (13,3.6) and (12,4.4) .. (11,4.2)
    .. controls (10,4) and (9,2.8) .. (7,2.4)
    .. controls (5,2) and (4,2.8) .. (3,2.6)
     .. controls (2,2.4) and (1,1.2) .. (0,1);
\draw (2.2,.44) .. controls (2.6,.92) and (3.4,1.08) .. (3.8,.76);
\draw (2.1,.52) -- (2.2,.44) .. controls (2.6,.12) and (3.4,.28)
    .. (3.8,.76) -- (3.9,.88);
\draw (10.2,2.04) .. controls (10.6,2.52) and (11.4,2.68) .. (11.8,2.36);
\draw (10.1,2.12) -- (10.2,2.04) .. controls (10.6,1.72) and (11.4,1.88)
    .. (11.8,2.36) -- (11.9,2.48);
\draw (7,1.4) node {\rotatebox{11.3}{\scalebox{3}{$\cdots$}}};
\draw (0,-1) .. controls (1,-1.2) and (2,-2.4) .. (3,-2.6)
    .. controls (4,-2.8) and (5,-2) .. (7,-2.4)
    .. controls (9,-2.8) and (10,-4) .. (11,-4.2)
    .. controls (12,-4.4) and (13,-3.6) .. (13,-2.6)
    .. controls (13,-1.6) and (12,-.4) .. (11,-.2)
    .. controls (10,0) and (9,-.8) .. (7,-.4)
    .. controls (5,0) and (4,1.2) .. (3,1.4)
     .. controls (2,1.6) and (1,.8) .. (0,1);
\draw (2.2,-.44) .. controls (2.6,-.12) and (3.4,-.28) .. (3.8,-.76);
\draw (2.1,-.32) -- (2.2,-.44) .. controls (2.6,-.92) and (3.4,-1.08)
    .. (3.8,-.76) -- (3.9,-.68);
\draw (10.2,-2.04) .. controls (10.6,-1.72) and (11.4,-1.88) .. (11.8,-2.36);
\draw (10.1,-1.92) -- (10.2,-2.04) .. controls (10.6,-2.52) and (11.4,-2.68)
    .. (11.8,-2.36) -- (11.9,-2.28);
\draw (7,-1.4) node {\rotatebox{-11.3}{\scalebox{3}{$\cdots$}}};
\draw (0,1) .. controls (-1,1) and (-2,2) .. (-3,2)
    .. controls (-4,2) and (-5,1) .. (-7,1);
\draw (-7,-1) .. controls (-5,-1) and (-4,-2) .. (-3,-2)
    .. controls (-2,-2) and (-1,-1) .. (0,-1);
\draw (-2.2,0) .. controls (-2.6,.4) and (-3.4,.4) .. (-3.8,0);
\draw (-2.1,.1) -- (-2.2,0) .. controls (-2.6,-.4) and (-3.4,-.4) .. (-3.8,0)
    -- (-3.9,.1);
\draw (-7,0) node {\scalebox{3}{$\cdots$}};
\draw[thick] (0,1) arc (90:270:.5 and 1);
\draw[thick,dashed] (0,-1) arc (-90:90:.5 and 1);
\draw (13,0) node[anchor=east] {$W_\alpha$};
\draw (5,-3.5) node[anchor=north] {$(\Sigma_{2g}^1)_{j-1,(i,i+1)}$};
\draw (5,3.5) node[anchor=south] {$(\Sigma_{2g}^1)_{j,(i,i+1)}$};
\draw (-9,0) node[anchor=east] {\scalebox{1.25}{
    $(\Sigma_{ng})_{j-1} \cup (\Sigma_{ng})_j$}};
\end{tikzpicture}
\end{center}
\caption{The blister $W_\alpha$ between $(\Sigma_{2g,1})_{j-1,(i,i+1)}$ and
    $(\Sigma_{2g,1})_{j,(i,i+1)}$.}
\label{f:blister} \end{figure*}

To complete the proof of \Thm{th:main1}, we only need to efficiently
triangulate the 3-manifold $M \defeq (H_{ng})_I \sqcup_\phi (H_{ng})_F$.
The first step is to refine the decoration of $\Sigma_{ng}$ shown in
\Fig{f:heegaard} to a triangulation.   It is easy to do this with polynomial
complexity in $n$ (or in $ng$, but recall that $g$ is fixed).  We can also
give each subsurface $(\Sigma_{g}^1)_i$ the same triangulation for all $i$,
as well as each subsurface $(\Sigma_{2g}^1)_{i,i+1}$.  It is also routine
to extend any such triangulation to either $(H_{ng})_I$ or $(H_{ng})_F$ with
polynomial (indeed linear) overhead:  Since by construction the triangulation
of each $(\Sigma_{g}^1)_i$ is the same, we pick some extension to $(H_g)_I$
and $(H_g)_F$ and use it for each $(H_g)_{I,i}$ and each $(H_g)_{F,i}$.
The remainder of $(H_{ng})_I$ and $(H_{ng})_F$ is a 3-ball whose boundary
has now been triangulated; any triangulation of the boundary of a 3-ball can
be extended to the interior algorithmically and with polynomial complexity.

We insert more triangulated structure in between $(H_{ng})_I$ and
$(H_{ng})_F$ to realize the homeomorphism $\phi$.  Recalling equation
\eqref{e:phi} in the proof of \Lem{l:glue}, $\phi$ decomposes into
local mapping class gadgets $\alpha_j$.  Only finitely many $\alpha \in
\MCG_*(\Sigma_{g,1})$ are needed, since we only need one representative
for each $\tau \in \Rub_J(A \times A)$.  At this point it is convenient
to use a blister construction.  We make a 3-manifold $W_\alpha$ whose
boundary is two copies of $\Sigma_{2g,1}$ (with its standard triangulation)
that meet at their boundary circle, and so that $W_\alpha$ is a relative
mapping cylinder for the homeomorphism $\alpha$.   If $\alpha_j$ acts
on $(\Sigma_{2g}^1)_{i,i+1}$, then we can have $(\Sigma_{ng})_{j-1}$ and
$(\Sigma_{ng})_j$ coincide outside of $(\Sigma_{2g}^1)_{j-1,(i,i+1)}$ and
$(\Sigma_{2g}^1)_{j,(i,i+1)}$, so that their union $(\Sigma_{ng})_{j-1}
\cup (\Sigma_{ng})_j$ is a branched surface.  We insert $W_\alpha$
and its triangulation in the blister within $(\Sigma_{ng})_{j-1} \cup
(\Sigma_{ng})_j$; see \Fig{f:blister}. \qed

\section{Reduction to knot complements}
\label{s:knotreduction}
In this section, we complete the proof of Theorem \ref{th:main2}.

\subsection{A convenient equivariant alphabet}
\label{ss:alphabet}
We choose specific $J$, $A$, $z$, $I$ and $F$ that are both topologically inspired and satisfy the conditions of \Thm{l:zsat}.  Our choices provide a convenient $\shP$-complete problem which we will parsimoniously reduce to $\#H(-,G,c)$ in \Sec{ss:reduction}.

Let $c \in G$ be nontrivial and let $C$ be its conjugacy class.  Fix $k$ large enough for the conclusion of \Thm{th:rvrefine} to hold.  Let $J = \Aut(G,c)$, the group of automorphisms of $G$ fixing $c$.  We let the zombie symbol be
\[ z = (c,c^{-1},c,c^{-1},\dots,c,c^{-1}) \in (C\times C^{-1})^k \subset \hat{R}_{2k}. \]
The total alphabet is
\[ A = \{z\} \cup \{ (x_1,\dots,x_{2k}) \in R_v^0 \mid x_1 = c, x_{2k} = c^{-1} \}. \]
That is, the non-zombie symbols in $A$ are surjections with trivial Conway-Parker invariant such that the leftmost (resp. rightmost) puncture maps to $c$ (resp. $c^{-1}$), and not some arbitrary element of $C$ (resp. $C^{-1}$).  The initialization and finalization conditions are specified by restricting to homomorphisms that factor through the two trivial tangles in Figure \ref{f:plats}, respectively.  Precisely, the initialization sub-alphabet is
\[ I =\{ (x_1,\dots,x_{2k}) \in R_v^0 \mid  x_1 = c, x_{2k} = c^{-1}, x_{2i} = x_{2i-1}^{-1} \forall i=1,\dots,k \}\]
and the finalization sub-alphabet is
\[ F =\{ (x_1,\dots,x_{2k}) \in R_v^0 \mid  x_1 = c, x_{2k} = c^{-1}, x_{2i} = x_{2i+1}^{-1} \forall i=1,\dots,k-1 \} \]
For the rest of the section, $J, A, z, I$ and $F$ denote these specific sets.

\begin{figure*}
\begin{tikzpicture}[scale = 0.10, semithick, decoration={markings,
    mark=at position 0.25 with {\arrow{angle 90}}}]
\foreach \x in {-5,15,45}
	{
	\draw[darkred,thick] (\x,25-2) arc [start angle = 180, end angle = 360, radius = 5];
	\draw[darkred,thick] (\x,25) -- (\x,25-2);
	\draw[darkred,thick] (\x+10,25) -- (\x+10,25-2);
	}
\foreach \i in {1,2,3,4}
	{
	\coordinate (p) at (-15+10*\i,25);	% the coordinate of the puncture
	\draw[fill] (p) circle [radius=.5];	% drawing the puncture

	\coordinate (w) at ($ (0,0)!50pt!210-30*\i:(1,0) $);
	\coordinate (v) at ($ (25,0)+8*(w) $);	% Bezier control at basepoint
	
	% specify the ``turnaround point" b of the curves and bezier controls near them:
	\coordinate (b) at ($ (p)+(0,5) $);
	\coordinate (bE) at ($ (b) + (1,0)$);	
	\coordinate (bW) at ($ (b) - (1,0)$);	

	% intermediate points and their Bezier controls
	\coordinate (a) at ($ (p) + (3,0) $);	
	\coordinate (aN) at ($ (a) + (0,5) $);
	\coordinate (aS) at ($ (a) - (0,12) $);	
	\coordinate (c) at ($ (p) - (3,0) $);	
	\coordinate (cN) at ($ (c) + (0,5) $); 
	\coordinate (cS) at ($ (c) - (0,12) $); 	
	
	\draw[white, line width = 4] (25,0) .. controls (v) and (aS) .. (a) .. controls (aN) and (bE) .. (b) .. controls (bW) and (cN) .. (c) .. controls (cS) and (v) .. (25,0);
	}
\foreach \i in {6}
	{
	\coordinate (p) at (-15+10*\i,25);	% the coordinate of the puncture
	\draw[fill] (p) circle [radius=.5];	% drawing the puncture	

	\coordinate (w) at ($ (0,0)!50pt!210-30*\i:(1,0) $);
	\coordinate (v) at ($ (25,0)+6*(w) $);	% Bezier control at basepoint
	
	% specify the ``turnaround point" b of the curves and bezier controls near them:
	\coordinate (b) at ($ (p)+(0,5.5) $);
	\coordinate (bE) at ($ (b) + (1,0)$);	
	\coordinate (bW) at ($ (b) - (1,0)$);	

	% intermediate points and their Bezier controls
	\coordinate (a) at ($ (p) + (4.25,0) $);	
	\coordinate (aN) at ($ (a) + (0,6) $);
	\coordinate (aS) at ($ (a) - (0,12) $);	
	\coordinate (c) at ($ (p) - (4.3,0) $);	
	\coordinate (cN) at ($ (c) + (0,6) $); 
	\coordinate (cS) at ($ (c) - (0,12) $); 	
	
	\draw[white, line width = 4] (25,0) .. controls (v) and (aS) .. (a) .. controls (aN) and (bE) .. (b) .. controls (bW) and (cN) .. (c) .. controls (cS) and (v) .. (25,0);
	}
\foreach \i in {7}
	{
	\coordinate (p) at (-15+10*\i,25);	% the coordinate of the puncture
	\draw[fill] (p) circle [radius=.5];	% drawing the puncture			

	\coordinate (w) at ($ (0,0)!50pt!210-30*\i:(1,0) $);
	\coordinate (v) at ($ (25,0)+10*(w) $);	% Bezier control at basepoint
	
	% specify the ``turnaround point" b of the curves and bezier controls near them:
	\coordinate (b) at ($ (p)+(0,5) $);
	\coordinate (bE) at ($ (b) + (1,0)$);	
	\coordinate (bW) at ($ (b) - (1,0)$);	

	% intermediate points and their Bezier controls
	\coordinate (a) at ($ (p) + (3,0) $);	
	\coordinate (aN) at ($ (a) + (0,5) $);
	\coordinate (aS) at ($ (a) - (0,12) $);	
	\coordinate (c) at ($ (p) - (3,0) $);	
	\coordinate (cN) at ($ (c) + (0,5) $); 
	\coordinate (cS) at ($ (c) - (0,12) $); 	
	
	\draw[white, line width = 4] (25,0) .. controls (v) and (aS) .. (a) .. controls (aN) and (bE) .. (b) .. controls (bW) and (cN) .. (c) .. controls (cS) and (v) .. (25,0);
	}
\foreach \i in {1}
	{
	\coordinate (p) at (-15+10*\i,25);	% the coordinate of the puncture
	\draw[fill] (p) circle [radius=.5];	% drawing the puncture
	\node[above] at (p) {$p_{\i}$}; % the puncture labels
	\node[above, medgreen] at ($ (p)+(0,4.5) $) {\large{$c$}}; % the curve labels			

	\coordinate (w) at ($ (0,0)!50pt!210-30*\i:(1,0) $);
	\coordinate (v) at ($ (25,0)+8*(w) $);	% Bezier control at basepoint
	
	% specify the ``turnaround point" b of the curves and bezier controls near them:
	\coordinate (b) at ($ (p)+(0,5) $);
	\coordinate (bE) at ($ (b) + (1,0)$);	
	\coordinate (bW) at ($ (b) - (1,0)$);	

	% intermediate points and their Bezier controls
	\coordinate (a) at ($ (p) + (3,0) $);	
	\coordinate (aN) at ($ (a) + (0,5) $);
	\coordinate (aS) at ($ (a) - (0,12) $);	
	\coordinate (c) at ($ (p) - (3,0) $);	
	\coordinate (cN) at ($ (c) + (0,5) $); 
	\coordinate (cS) at ($ (c) - (0,12) $); 	
	
	\draw[medgreen,postaction={decorate}] (25,0) .. controls (v) and (aS) .. (a) .. controls (aN) and (bE) .. (b) .. controls (bW) and (cN) .. (c) .. controls (cS) and (v) .. (25,0);
	}		
\foreach \i in {2,3,4}
	{
	\coordinate (p) at (-15+10*\i,25);	% the coordinate of the puncture
	\draw[fill] (p) circle [radius=.5];	% drawing the puncture
	\node[above] at (p) {$p_{\i}$}; % the puncture labels
	\node[above, medgreen] at ($ (p)+(0,4.5) $) {\large{$\gamma_{\i}$}}; % the curve labels			

	\coordinate (w) at ($ (0,0)!50pt!210-30*\i:(1,0) $);
	\coordinate (v) at ($ (25,0)+8*(w) $);	% Bezier control at basepoint
	
	% specify the ``turnaround point" b of the curves and bezier controls near them:
	\coordinate (b) at ($ (p)+(0,5) $);
	\coordinate (bE) at ($ (b) + (1,0)$);	
	\coordinate (bW) at ($ (b) - (1,0)$);	

	% intermediate points and their Bezier controls
	\coordinate (a) at ($ (p) + (3,0) $);	
	\coordinate (aN) at ($ (a) + (0,5) $);
	\coordinate (aS) at ($ (a) - (0,12) $);	
	\coordinate (c) at ($ (p) - (3,0) $);	
	\coordinate (cN) at ($ (c) + (0,5) $); 
	\coordinate (cS) at ($ (c) - (0,12) $); 	
	
	\draw[medgreen,postaction={decorate}] (25,0) .. controls (v) and (aS) .. (a) .. controls (aN) and (bE) .. (b) .. controls (bW) and (cN) .. (c) .. controls (cS) and (v) .. (25,0);
	}	
\foreach \i in {6}
	{
	\coordinate (p) at (-15+10*\i,25);	% the coordinate of the puncture
	\draw[fill] (p) circle [radius=.5];	% drawing the puncture	
	\node[above, medgreen] at ($ (p)+(0,4.5) $) {\large{$\gamma_{2n-1}$}}; % the puncture labels			

	\coordinate (w) at ($ (0,0)!50pt!210-30*\i:(1,0) $);
	\coordinate (v) at ($ (25,0)+6*(w) $);	% Bezier control at basepoint
	
	% specify the ``turnaround point" b of the curves and bezier controls near them:
	\coordinate (b) at ($ (p)+(0,5.5) $);
	\coordinate (bE) at ($ (b) + (1,0)$);	
	\coordinate (bW) at ($ (b) - (1,0)$);	

	% intermediate points and their Bezier controls
	\coordinate (a) at ($ (p) + (4.25,0) $);	
	\coordinate (aN) at ($ (a) + (0,6) $);
	\coordinate (aS) at ($ (a) - (0,12) $);	
	\coordinate (c) at ($ (p) - (4.3,0) $);	
	\coordinate (cN) at ($ (c) + (0,6) $); 
	\coordinate (cS) at ($ (c) - (0,12) $); 
	
	\draw[medgreen,postaction={decorate}] (25,0) .. controls (v) and (aS) .. (a) .. controls (aN) and (bE) .. (b) .. controls (bW) and (cN) .. (c) .. controls (cS) and (v) .. (25,0);
	}
\foreach \i in {7}
	{
	\coordinate (p) at (-15+10*\i,25);	% the coordinate of the puncture
	\draw[fill] (p) circle [radius=.5];	% drawing the puncture	
	\node[above, medgreen] at ($ (p)+(1.5,4.5) $) {\large{$c^{-1}$}}; % the puncture labels			

	\coordinate (w) at ($ (0,0)!50pt!210-30*\i:(1,0) $);
	\coordinate (v) at ($ (25,0)+10*(w) $);	% Bezier control at basepoint
	
	% specify the ``turnaround point" b of the curves and bezier controls near them:
	\coordinate (b) at ($ (p)+(0,5) $);
	\coordinate (bE) at ($ (b) + (1,0)$);	
	\coordinate (bW) at ($ (b) - (1,0)$);	

	% intermediate points and their Bezier controls
	\coordinate (a) at ($ (p) + (3,0) $);	
	\coordinate (aN) at ($ (a) + (0,5) $);
	\coordinate (aS) at ($ (a) - (0,12) $);	
	\coordinate (c) at ($ (p) - (3,0) $);	
	\coordinate (cN) at ($ (c) + (0,5) $); 
	\coordinate (cS) at ($ (c) - (0,12) $); 	
	
	\draw[medgreen,postaction={decorate}] (25,0) .. controls (v) and (aS) .. (a) .. controls (aN) and (bE) .. (b) .. controls (bW) and (cN) .. (c) .. controls (cS) and (v) .. (25,0);
	}
	
\node[above] at (45,25) {$p_{2n-1}$};
\node[above] at (55,25) {$p_{2n}$};
\draw (25,25) circle [x radius=40, y radius = 25];	
\node at (34.5,25) {\huge \dots};
\draw[fill] (25,0) circle [radius=.5];
\node[below] at (25,0) {\huge $*$};
%\node at (60,45) {\large{$D_{2k}$}};
\end{tikzpicture}
\hfill
\begin{tikzpicture}[scale = 0.10, semithick, decoration={markings,
    mark=at position 0.35 with {\arrow{angle 90}}}]
\foreach \i in {1,2,3}
	{
	\coordinate (p) at (-15+10*\i,25);	% the coordinate of the puncture
	\node[below] at (p) {$p_{\i}$}; % the puncture labels
			
	\coordinate (w) at ($ (0,0)!50pt!210-30*\i:(1,0) $);
	\coordinate (v) at ($ (25,0)+8*(w) $);	% Bezier control at basepoint
	
	% specify the ``turnaround point" b of the curves and bezier controls near them:
	\coordinate (b) at ($ (p)+(0,2.5) $);
	\coordinate (bE) at ($ (b) + (.1,0)$);	
	\coordinate (bW) at ($ (b) - (.1,0)$);	

	% intermediate points and their Bezier controls
	\coordinate (a) at ($ (p) + (3,0) $);	
	\coordinate (aN) at ($ (a) + (0,2.5) $);
	\coordinate (aS) at ($ (a) - (0,12) $);	
	\coordinate (c) at ($ (p) - (3,0) $);	
	\coordinate (cN) at ($ (c) + (0,2.5) $); 
	\coordinate (cS) at ($ (c) - (0,12) $); 	
	
	\draw[medgreen,postaction={decorate}] (25,0) .. controls (v) and (aS) .. (a) .. controls (aN) and (bE) .. (b) .. controls (bW) and (cN) .. (c) .. controls (cS) and (v) .. (25,0);
	}	
\foreach \i in {5,6}
	{
	\coordinate (p) at (-15+10*\i,25);	% the coordinate of the puncture

	\coordinate (w) at ($ (0,0)!50pt!210-30*\i:(1,0) $);
	\coordinate (v) at ($ (25,0)+8*(w) $);	% Bezier control at basepoint
	
	% specify the ``turnaround point" b of the curves and bezier controls near them:
	\coordinate (b) at ($ (p)+(0,2.5) $);
	\coordinate (bE) at ($ (b) + (1,0)$);	
	\coordinate (bW) at ($ (b) - (1,0)$);	

	% intermediate points and their Bezier controls
	\coordinate (a) at ($ (p) + (4.5,0) $);	
	\coordinate (aN) at ($ (a) + (0,2.5) $);
	\coordinate (aS) at ($ (a) - (0,12) $);	
	\coordinate (c) at ($ (p) - (4.3,0) $);	
	\coordinate (cN) at ($ (c) + (0,2.5) $); 
	\coordinate (cS) at ($ (c) - (0,12) $); 	
	
	\draw[medgreen,postaction={decorate}] (25,0) .. controls (v) and (aS) .. (a) .. controls (aN) and (bE) .. (b) .. controls (bW) and (cN) .. (c) .. controls (cS) and (v) .. (25,0);
	}
\foreach \i in {7}
	{
	\coordinate (p) at (-15+10*\i,25);	% the coordinate of the puncture

	\coordinate (w) at ($ (0,0)!50pt!210-30*\i:(1,0) $);
	\coordinate (v) at ($ (25,0)+8*(w) $);	% Bezier control at basepoint
	
	% specify the ``turnaround point" b of the curves and bezier controls near them:
	\coordinate (b) at ($ (p)+(0,2.5) $);
	\coordinate (bE) at ($ (b) + (1,0)$);	
	\coordinate (bW) at ($ (b) - (1,0)$);	

	% intermediate points and their Bezier controls
	\coordinate (a) at ($ (p) + (3,0) $);	
	\coordinate (aN) at ($ (a) + (0,2.5) $);
	\coordinate (aS) at ($ (a) - (0,12) $);	
	\coordinate (c) at ($ (p) - (3,0) $);	
	\coordinate (cN) at ($ (c) + (0,2.5) $); 
	\coordinate (cS) at ($ (c) - (0,12) $); 	
	
	\draw[medgreen,postaction={decorate}] (25,0) .. controls (v) and (aS) .. (a) .. controls (aN) and (bE) .. (b) .. controls (bW) and (cN) .. (c) .. controls (cS) and (v) .. (25,0);
	}(25,25) circle [x radius=40, y radius = 25];		
\draw[white, line width = 4] (5,25) to (5,25+2) arc [start angle = 180, end angle = 0, radius = 5] to (5+10,25);
\draw[darkblue,thick] (5,25) to (5,25+2) arc [start angle = 180, end angle = 0, radius = 5] to (5+10,25);
\draw[white, line width = 4] (45,25) to (45,25+2) arc [start angle = 0, end angle = 180, radius = 5] to (35,25);
\draw[darkblue,thick] (45,25) to (45,25+2) arc [start angle = 0, end angle = 180, radius = 5] to (35,25);
\draw[white, line width = 4] (-5,25) arc [start angle = 180, end angle = 0, x radius = 30, y radius = 20];
\draw[darkblue,thick] (-5,25) arc [start angle = 180, end angle = 0, x radius = 30, y radius = 20];
\foreach \i in {1}
	{
	\coordinate (p) at (-15+10*\i,25);	% the coordinate of the puncture
	\draw[fill] (p) circle [radius=.5];	% drawing the puncture
	\node[above, medgreen] at ($ (p)+(-3,1.5) $) {\large $c$}; % the puncture labels			
	}
\foreach \i in {2}
	{
	\coordinate (p) at (-15+10*\i,25);	% the coordinate of the puncture
	\draw[fill] (p) circle [radius=.5];	% drawing the puncture
	\node[above, medgreen] at ($ (p)+(-3,1.5) $) {\large $\gamma_{\i}$}; % the puncture labels			
	}
\foreach \i in {3}
	{
	\coordinate (p) at (-15+10*\i,25);	% the coordinate of the puncture
	\draw[fill] (p) circle [radius=.5];	% drawing the puncture
	\node[above, medgreen] at ($ (p)+(3,1.5) $) {\large $\gamma_{\i}$}; % the puncture labels			
	}
\draw[fill] (35,25) circle [radius=.5];	% drawing the puncture	
\node[below] at (35,25) {$p_{2n-2}$};
\node[above, medgreen] at ($ (-15+10*5,25) + (-5,1.5) $) {\large $\gamma_{2n-2}$};
\draw[fill] (45,25) circle [radius=.5];	% drawing the puncture	
\node[below] at (45,25) {$p_{2n-1}$};
\node[above, medgreen] at ($ (-15+10*6,25)+(5,1.5) $) {\large $\gamma_{2n-1}$};
\draw[fill] (55,25) circle [radius=.5];	% drawing the puncture	
\node[below] at (55,25) {$p_{2n}$};
\node[above, medgreen] at ($ (-15+10*7,25)+(5,1.5) $) {\large $c^{-1}$};
\node at (24.5,25) {\huge \dots};
\draw[fill] (25,0) circle [radius=.5];
\node[below] at (25,0) {\huge $*$};
%\node at (60,45) {\large{$D_{2k}$}};
\draw (25,25) circle [x radius=40, y radius = 25];	
\end{tikzpicture}
\caption{The initialization and finalization constraints.}
\label{f:plats}
\end{figure*}
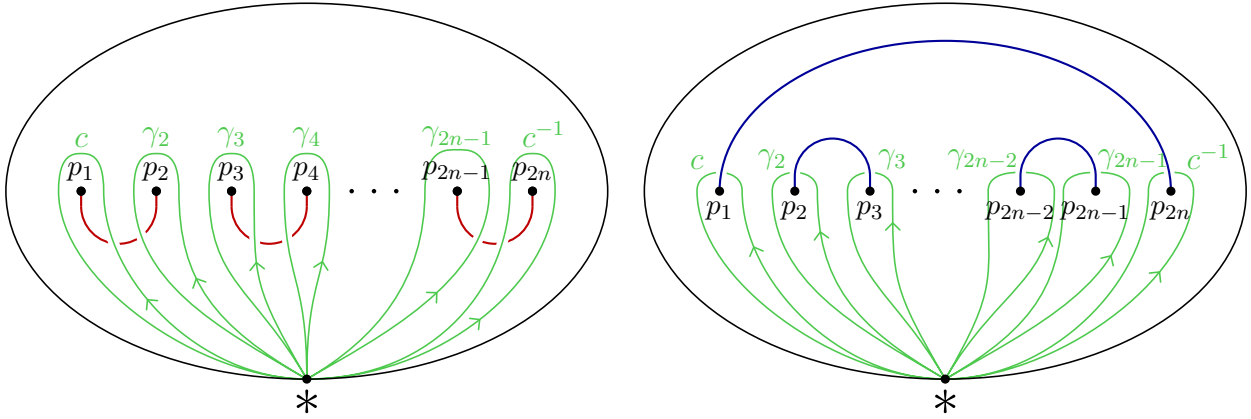

It is straightforward to verify that these choices satisfy the conditions of Lemma \ref{l:zsat}.  In particular, $I$ and $F$ are $J$-invariant.  We note $C$ generates $G$ because $G$ is simple and $c$ is nontrivial.  This implies that $A\setminus \{z\}$ has a large, nonzero cardinality, and is a free $J$-set.  Hence we have

\begin{lemma}
With these choices of $J, A, z, I$ and $F$, $\ZSAT_{J,A,I,F}$ is $\shP$-complete via almost parsimonious reduction. \qed
\end{lemma}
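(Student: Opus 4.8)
The plan is to check that the specific data $J=\Aut(G,c)$, $A$, $z$, $I$, $F$ fixed above satisfy every hypothesis built into the $\ZSAT$ model of \Sec{s:zombies}, so that \Lem{l:zsat} applies verbatim: it then produces, for each $c\in\shP$, a map $f\in\FP$ with $\shZSAT_{J,A,I,F}(f(x))=|J|c(x)+1$, which is an almost parsimonious reduction. Membership of $\shZSAT_{J,A,I,F}$ in $\shP$ is also part of \Lem{l:zsat}, and is in any case immediate, since $A$, $I$, $F$ are \emph{fixed} finite sets and a $\ZSAT$ instance is just a reversible circuit over $A$ whose witnesses are its inputs.

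First I would verify the structural hypotheses. The group $J=\Aut(G,c)$ is finite and nontrivial: since $G$ is non-abelian simple its center is trivial, so conjugation by $c$ is a nontrivial automorphism of $G$ that fixes $c$, hence lies in $J$. The set $A$ carries the postcomposition action of $J$; the symbol $z=(c,c^{-1},\dots,c,c^{-1})$ is a fixed point because every $\alpha\in J$ fixes $c$ and therefore $c^{-1}$, while the action on $A\setminus\{z\}\subseteq R_v^0$ is free, since these homomorphisms are surjective (see \Sec{ss:rvrefine}). Thus $z$ is the unique fixed point and all other orbits are free, exactly as the model requires. Next, $I$ and $F$ are $J$-invariant: any $\alpha\in J\le\Aut(G,C)$ fixes $c$ and $c^{-1}$, commutes with inversion, and preserves $R_v^0$ (using the $\Aut(G,C)$-action on $R_v^0$ set up in \Sec{ss:rvrefine}), hence preserves both the endpoint conditions and the even/odd pairing conditions that define $I$ and $F$. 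Finally $z\notin I\cup F$ because $z$ is not surjective, so $I,F\subseteq A\setminus\{z\}$; the strictness of these inclusions will follow from the size bounds below.

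The arithmetic conditions \eqref{e:zineq} are the only genuine content. For $I\ne F$ I would establish the stronger fact $I\cap F=\emptyset$: if $f=(x_1,\dots,x_{2k})$ satisfies both $x_{2i}=x_{2i-1}^{-1}$ for all $i$ and $x_{2i}=x_{2i+1}^{-1}$ for $i<k$, then inductively $x_1=x_3=\dots=x_{2k-1}=c$ and every even-indexed entry equals $c^{-1}$, that is $f=z$, which lies in neither $I$ nor $F$. For $|I|,|F|\ge 2|J|$ and $|A|\ge 2|I\cup F|+3|J|+1$ I would compare growth in $k$. After fixing the first and last Wirtinger images to $c$ and $c^{-1}$ and imposing $\prod_i f(\gamma_i)=1$, the set $A\setminus\{z\}$ is cut from on the order of $|C|^{2k}$ tuples by surjectivity together with the vanishing of $\inv_v$, each of which costs only a $k$-independent multiplicative factor by the Conway--Parker equidistribution already used in the proof of \Thm{th:rvrefine} (namely $|R_v|/|\hat{R}_v|\to 1$ and $|R_v^0|/|\hat{R}_v|\to 1/|H_2(G,C)|$); so $|A|$ is of exponential order $|C|^{2k}$. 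By contrast the pairing conditions leave only $k-2$ free entries in an element of $I$ and $k-1$ in an element of $F$, so $|I|$ and $|F|$ have exponential order at most $|C|^{k}$. Since $|C|\ge 2$ (as $c$ is noncentral) and $|J|$ is a fixed constant, for all sufficiently large $k$ we obtain $|I|,|F|\to\infty$ and $|A|/(|I|+|F|)\to\infty$, so all three conditions hold; we simply enlarge $k$ past the threshold already demanded by \Thm{th:rvrefine} if necessary. With \eqref{e:zineq} verified, \Lem{l:zsat} gives the conclusion.

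The main obstacle is the last paragraph: one must pin down the count of $|R_v^0|$, and of its subsets with prescribed endpoints, precisely enough to see that it dominates $|I|$ and $|F|$. I would do this purely by invoking the equidistribution estimates behind \Thm{th:CP} and \Thm{th:rvrefine} rather than reproving anything; everything else is bookkeeping about which Wirtinger images remain free once the endpoint and pairing constraints are imposed.
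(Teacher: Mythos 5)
Your proof is correct and follows essentially the same route as the paper, which simply asserts that ``it is straightforward to verify that these choices satisfy the conditions of Lemma~\ref{l:zsat}'' (noting $J$-invariance of $I$ and $F$ and that $A\setminus\{z\}$ is a large free $J$-set); you have supplied the details the paper leaves implicit, including the correct disjointness argument for $I$ and $F$ and the $|C|^{2k}$-versus-$|C|^{k}$ growth comparison that yields the inequalities \eqref{e:zineq}.
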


Our readers may have the impression that our choices of $A, I$ and $F$ are somewhat contrived.  They would not be wrong, since restricting $A$ to consist only of homomorphisms where the first and last punctures map to $c$ and $c^{-1}$ is not natural from a topological perspective.  One could argue that the choices
\[ J' = \Aut(G,C), \]
\[ A' = \{z\} \cup R_v^0, \]
\[ I' =\{ (x_1,\dots,x_{2k}) \in R_v^0 \mid  x_{2i} = x_{2i-1}^{-1} \forall i=1,\dots,k \}, \]
and
\[ F' =\{ (x_1,\dots,x_{2k}) \in R_v^0 \mid  x_{2i} = x_{2i+1}^{-1} \forall i=1,\dots,k-1 \} \]
are more natural.  While it is possible to define a $\shP$-complete version of $\ZSAT$ with these choices, it would not be possible to reduce every instance of this model to a \emph{knot}.  Instead, if $Z$ is a reversible circuit of width $n$, one could only hope to construct a link complement with $n$ components.  

To build a reduction to knot diagrams, we need some way of ``coupling" the input and output strands of $Z$ to each other.  There are various ways to achieve this.  One could generalize the definition of $\ZSAT$ so the initialization and finalization conditions are \emph{2-local} instead of \emph{1-local}, meaning they are subsets of $A^2$ instead of $A$.  The downside to this approach is that we would have to generalize Theorem \ref{l:zsat} to the 2-local setting.  This is possible, but to keep a proliferation of circuit models from taking over this dissertation, we proceed by an alternate route that exploits some topological tricks.  Roughly, our choice of $A, I$ and $F$ made above builds a ``trivial" coupling into $\ZSAT_{J,A,I,F}$ itself.  We make this precise in Section \ref{ss:reduction}, although the reader might look ahead at Figure \ref{f:surgery} now in order to get a sense of what is to come.

\subsection{Pure braid gadgets}
\label{ss:gadgets}
We now construct braid gadgets that simulate gates in $\Rub_{J}(A\times A)$.  Consider a pointed disk $D_{4k}$ with $4k$ punctures.  Choose two smaller disks with $2k$ punctures, each of which contains the basepoint and half of the $4k$ punctures of $D_{4k}$, and whose intersection is contractible.  This allows us to identify $\hat{R}_{2k} \times \hat{R}_{2k} = G^{2k} \times G^{2k}$  with $\hat{R}_{4k} = G^{4k}$.  It is straightforward to verify that this identification takes $R_v^0 \times R_v^0$ to a subset of $R_{v\#v}^0$, where $v\#v$ denotes the concatenation of two copies of $v$.  In particular, we identify $A \times A$ with a subset of $R_{v\#v}^0 \cup \{(z,z)\}$.

For every gate $\tau \in \Rub_J(A\times A)$, fix a braid $b_\tau \in B_{v \# v} \leq B_{4k}$ with the following properties:
\begin{enumerate}
\item $b_\tau$ acts on $A\times A$ as $\tau$,
\item $b_\tau$ acts trivially on $R_{v\#v}^0 \setminus \Aut(G,C) \cdot (A \times A)$,
\item $b_\tau$ acts trivially on $\hat{R}_{v\#v} \setminus R_{v\#v}$, and
\item $b_\tau$ is a pure braid.
\end{enumerate}
We elaborate on properties 1 and 2.  On one hand, property 1 specifies how $b_\tau$ should act on $A\times A$.  On the other hand, $B_{v \#v}$ acts on $R_{v\#v}^0$ by $\Aut(G,C)$-set automorphisms, and $\Aut(G,C) \setminus \Aut(G,c)$ is nonempty, hence $A\times A \leq R_{v\#v}^0$ is not closed under the $\Aut(G,C)$ action on $P_{2k}^C$.  However, there is a natural embedding
\[ \Rub_{\Aut(G,c)}(A\times A) \into \Rub_{\Aut(G,C)}(\Aut(G,C) \cdot(A\times A)) \]
where
\[ \Rub_{\Aut(G,C)}(\Aut(G,C) \cdot(A\times A)) \leq \Rub_{\Aut(G,C)}(R_{v\#v}^0) \]
is an honest subgroup.  The embedding extends an element of $\Rub_{\Aut(G,c)}(A\times A)$ to an element of $\Rub_{\Aut(G,C)}(\Aut(G,C) \cdot(A\times A))$ by acting on $A\times A \subset \Aut(G,C) \cdot(A\times A)$ as before, and acting on each orbit in 
\[ [\Aut(G,C) \cdot(A\times A)]/ \Aut(G,c) \]
in an isomorphic fashion.  Conflating $\Rub_{\Aut(G,c)}(A\times A)$ with its image under this embedding, we see that the support of any $\tau \in \Rub_{\Aut(G,c)}(A\times A)$ is restricted to \emph{aligned} states.  In particular, $\tau$ acts trivially on
\[ \Aut(G,C) \cdot A \times \Aut(G,C) \cdot A \setminus \Aut(G,C)\cdot(A\times A). \]
Moreover, $\tau$ \emph{preserves alignment} in $\Aut(G,C) \cdot (A \times A)$, meaning that for every $\alpha \in \Aut(G,C)$ and $(f_1,f_2) \in A\times A$, there exists $(g_1,g_2) \in A \times A$ such that $\tau \cdot (\alpha\cdot f_1, \alpha \cdot f_2) = (\alpha \cdot g_1,\alpha \cdot g_2)$.

Theorem \ref{th:rvrefine} implies a choice of $b_\tau$ satisfying all four properties exists.  For every $\tau$, we fix an expression of $b_\tau$ as a product of elementary braid generators and their inverses.  This is equivalent to picking a diagram of $b_\tau$ in general position.  These choices of diagrams, which we also call $b_\tau$, are our pure braid gadgets.

We record here a useful property of the $b_\tau$ that follows immediately from their definition:

\begin{lemma}
For each $\tau \in \Rub_J(A^2)$, $b_\tau \in PB_{4k}$ preserves the subset $\hat{R}_v^0 \times \hat{R}_v^0 \subset \hat{R}_{v\# v}$. \qed
\label{l:preserve}
\end{lemma}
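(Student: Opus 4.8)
\section*{Proof proposal for \Lem{l:preserve}}

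The plan is to show $b_\tau(\hat{R}_v^0 \times \hat{R}_v^0) \subseteq \hat{R}_v^0 \times \hat{R}_v^0$ inside $\hat{R}_{v\#v}$; since $\hat{R}_{v\#v}$ is finite and $b_\tau$ acts on it by a bijection, this inclusion is equivalent to the asserted preservation. The argument is a three-way case split in which each of the behavioral properties (1)--(3) fixed in the construction of $b_\tau$ takes charge of one piece. Two preliminary observations, both essentially restatements of material from \Sec{s:disks}, set this up. First, the identification $\hat{R}_{4k} \cong \hat{R}_{2k} \times \hat{R}_{2k}$ of \Sec{ss:gadgets} carries $\hat{R}_v^0 \times \hat{R}_v^0$ into $\hat{R}_{v\#v}^0$: the product-of-meridians relation for $\pi_1(D_{4k})$ factors as the product of the two relations for the sub-disks, so $(f_1,f_2)$ genuinely lands in $\hat{R}_{v\#v}$, and because $\inv$ is by definition a product over punctures of the classes $[\tilde{f}_i]$, it is multiplicative under concatenation, giving $\inv_{v\#v}(f_1,f_2) = \inv_v(f_1)\cdot\inv_v(f_2)$, which is the natural basepoint $0$ of \Lem{l:props}(1) when both factors are $0$. (This is the same computation already used in \Sec{ss:gadgets} for $R_v^0 \times R_v^0 \subseteq R_{v\#v}^0$.) Second, $\Aut(G,C)\cdot(A\times A) \subseteq \hat{R}_v^0 \times \hat{R}_v^0$: for $\alpha \in \Aut(G,C)$ and a non-zombie $a \in A$, one has $\alpha \circ a \in R_v^0$ since $\Aut(G,C)$ acts on $R_v^0$ (\Sec{ss:rvrefine}), while $\alpha \cdot z = (\alpha(c),\alpha(c)^{-1},\dots,\alpha(c),\alpha(c)^{-1})$ bounds a plat, so $\inv_v(\alpha\cdot z) = 0$ by \Lem{l:props}(2); in both cases $\alpha\cdot a$ lies in $\hat{R}_v$ because $\alpha$ preserves $C$ and $C^{-1}$ and the total product relation.

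Now take $(f_1,f_2) \in \hat{R}_v^0 \times \hat{R}_v^0$, viewed inside $\hat{R}_{v\#v}$. If the combined homomorphism $\pi_1(D_{4k}) \to G$ is not onto, then $(f_1,f_2)$ lies in $\hat{R}_{v\#v} \setminus R_{v\#v}$, and $b_\tau$ fixes it by property~(3). Otherwise $(f_1,f_2) \in R_{v\#v}^0$ by the first observation. If moreover $(f_1,f_2) \notin \Aut(G,C)\cdot(A\times A)$, then $b_\tau$ fixes it by property~(2). There remains the case $(f_1,f_2) = \alpha\cdot(a_1,a_2)$ with $\alpha \in \Aut(G,C)$ and $(a_1,a_2) \in A\times A$. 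The braid action commutes with post-composition by automorphisms of $G$, and by property~(1) $b_\tau$ acts on $A\times A$ as the permutation $\tau \in \Rub_J(A^2)$; hence $b_\tau\cdot(f_1,f_2) = \alpha\cdot\bigl(b_\tau\cdot(a_1,a_2)\bigr) = \alpha\cdot\bigl(\tau\cdot(a_1,a_2)\bigr)$, and $\tau\cdot(a_1,a_2) \in A\times A$ because $\tau$ permutes $A^2$. So $b_\tau\cdot(f_1,f_2) \in \Aut(G,C)\cdot(A\times A) \subseteq \hat{R}_v^0 \times \hat{R}_v^0$ by the second observation. In every case the image remains in $\hat{R}_v^0 \times \hat{R}_v^0$, which is the claim.

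I expect the only delicate points to be the two preliminary observations---in particular that the reduced Conway--Parker invariant is genuinely additive under concatenation of colorings (so that $\hat{R}_v^0 \times \hat{R}_v^0$ really lies in $\hat{R}_{v\#v}^0$ and not merely in $\hat{R}_{v\#v}$), and that the $B_{v\#v}$-action commutes with all of $\Aut(G,C)$ and not just $J = \Aut(G,c)$. Both are already built into the setup of \Sec{s:disks}: the torsor $H_2(G, C\cup C^{-1}, v)$ was defined as a product over the punctures, on which concatenation is coordinatewise, and $B_{v\#v}$ was arranged to act by $\Aut(G,C)$-set automorphisms. Consequently no genuinely new argument is required; the content of the lemma is exactly the bookkeeping that combines these structural facts with the three defining properties of the gadget $b_\tau$, and the routine verification is the plat-form computation $\inv_v(\alpha\cdot z)=0$.
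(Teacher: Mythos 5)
Your proposal is correct and matches the paper's intent: the paper states \Lem{l:preserve} as following ``immediately'' from the defining properties of $b_\tau$ and gives no further argument, and your three-way case split (non-surjective pairs fixed by property (3), surjective pairs outside $\Aut(G,C)\cdot(A\times A)$ fixed by property (2), and $\Aut(G,C)$-translates of $A\times A$ handled by equivariance plus property (1)) together with the multiplicativity of $\inv$ under concatenation is exactly the verification being left to the reader. The two preliminary observations you flag are indeed the only substantive points, and both are already used implicitly in \Sec{ss:gadgets}.
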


\subsection{The reduction}
\label{ss:reduction}
Let $Z$ be an instance of $\ZSAT_{J,A,I,F}$, with $J, A, I, F$ as in \Sec{ss:alphabet}. Recall this means $Z$ is a planar $\Aut(G,c)$-equivariant reversible circuit over the alphabet $A$.  Suppose the width of $Z$ is $n$.

Consider the disk $D_{2kn}$ with $2kn$ punctures and basepoint $* \in D_{2kn}$.  For $i=1,\dots,n$, let $D_{2k,i} \subset D_{nk}$ denote the $n$ different $2k$-punctured disks indicated in Figure \ref{f:disks}.  Note each $D_{2k,i}$ contains the basepoint $*$.  We pick generators $\gamma_{1,i},\dots,\gamma_{2k,i}$ for $\pi_1(D_{2k,i},*)$ as indicated in the figure.

\begin{figure*}[t]
\begin{center}
\begin{tikzpicture}[scale = 0.07, semithick, decoration={markings,
    mark=at position 0.45 with {\arrow{angle 90}}}]
\coordinate (*) at (100,0); 
\coordinate (a) at ($ (*)!.9!344:(0,0) $);
%\draw[fill] (a) circle [radius=.5];
\coordinate (b) at ($ (*)!.65!309.7:(0,0) $);
%\draw[fill,red] (b) circle [radius=.5];
\coordinate (c) at ($ (*)!.65!270:(0,0) $);
\coordinate (c') at ($ (*)!.4875!270:(0,0) $);
%\draw[fill,blue] (c) circle [radius=.5];
\coordinate (d) at ($ (*)!.65!230.3:(0,0) $);
\coordinate (d') at ($ (*)!.52!230.3:(0,0) $);
%\draw[fill,green] (d) circle [radius=.5];
\coordinate (e) at ($ (*)!.9!196:(0,0) $);
%\draw[fill] (e) circle [radius=.5];

\foreach \i in {1,2,4}
	{
	\coordinate (ap) at ($ (a)!0.2*\i!(b) $);
	\coordinate (aE) at ($ (a)!0.2*\i + 0.06!(b) $);
	\coordinate (aW) at ($ (a)!0.2*\i - 0.06!(b) $);	
	\draw[fill] (ap) circle [radius=.5];
%	\draw[fill] (aE) circle [radius=.5];	
%	\draw[fill] (aW) circle [radius=.5];	
	\coordinate (F) at ($ 4*(aE)-\i*(aE) - 4*(ap)+\i*(ap) $);
	\coordinate (aN) at ($ (*)!1.05!0:(ap) + 0.2*(F) $);
%	\draw[fill] (aN) circle [radius=.5];
	\coordinate (t) at 	($ (*)!.6! -10*\i:(0,0) $);
	%($ (*)!.7!(ap) $);
	\coordinate (u1) at ($ (aE) - .85*(aN) + .85*(ap) $);
	\coordinate (u2) at ($ (aE) + .85*(aN) - .85*(ap) $);	
	\coordinate (v1) at ($ (aN) + .3*(aE)-.3*(ap) $);
	\coordinate (v2) at ($ (aN) - .3*(aE)+.3*(ap)  $);	
	\coordinate (w1) at ($ (aW) + .85*(aN) - .85*(ap) $);
	\coordinate (w2) at ($ (aW) - .85*(aN) + .85*(ap) $);	
	\draw[medgreen,postaction={decorate}] (*) .. controls (t) and	(u1) .. (aE) .. controls (u2) and (v1) .. (aN) .. controls (v2) and (w1) .. (aW) .. controls (w2) and (t) .. (*);	
	}

\node[above left, medgreen] at ($ (a)!0.2*1!(b) + (-1,1)$) {$\gamma_{1,1}$};
\node[above left, medgreen] at ($ (a)!0.2*2!(b) + (-1,1)$) {$\gamma_{2,1}$};
\node[above left, medgreen] at ($ (a)!0.2*4!(b) + (-1,1)$) {$\gamma_{2k,1}$};
	
\draw[fill] ($ (a)!0.2*3-0.03!(b) $) circle [radius=.2];
\draw[fill] ($ (a)!0.2*3!(b) $) circle [radius=.2];
\draw[fill] ($ (a)!0.2*3+0.03!(b) $) circle [radius=.2];

\foreach \i in {1,2,4}
	{
	\coordinate (bp) at ($ (b)!0.2*\i!(c) $);
	\coordinate (bE) at ($ (b)!0.2*\i + 0.07!(c) $);
	\coordinate (bW) at ($ (b)!0.2*\i - 0.07!(c) $);	
	\draw[fill] (bp) circle [radius=.5];
%	\draw[fill] (bE) circle [radius=.5];	
%	\draw[fill] (bW) circle [radius=.5];	
	\coordinate (bN) at ($ (bp)!100pt!90:(c) $);
%	\draw[fill] (bN) circle [radius=.5];
	\coordinate (bt) at ($ (*)!0.8!(bp) $);
	\coordinate (bu1) at ($ (bE) - 0.65*(bN) + 0.65*(bp) $);
	\coordinate (bu2) at ($ (bE) + 0.65*(bN) - 0.65*(bp) $);	
	\coordinate (bv1) at ($ (bN) - 0.45*(bW) + 0.45*(bp)  $);
	\coordinate (bv2) at ($ (bN) + 0.45*(bW) - 0.45*(bp)  $);	
	\coordinate (bw1) at ($ (bW) + 0.65*(bN) - 0.65*(bp) $);
	\coordinate (bw2) at ($ (bW) - 0.65*(bN) + 0.65*(bp) $);	
	\draw[medgreen,postaction={decorate}] (*) .. controls (bt) and (bu1) .. (bE) .. controls (bu2) and (bv1) .. (bN) .. controls (bv2) and (bw1) .. (bW) .. controls (bw2) and (bt) .. (*);
	}

\node[above left, medgreen] at ($ (b)!0.2*1!(c) + (-1,1)$) {$\gamma_{1,2}$};
\node[above left, medgreen] at ($ (b)!0.2*2!(c) + (-1,1)$) {$\gamma_{2,2}$};
\node[above left, medgreen] at ($ (b)!0.2*4!(c) + (-1,1)$) {$\gamma_{2k,2}$};

\draw[fill] ($ (b)!0.2*3-0.03!(c) $) circle [radius=.2];
\draw[fill] ($ (b)!0.2*3!(c) $) circle [radius=.2];
\draw[fill] ($ (b)!0.2*3+0.03!(c) $) circle [radius=.2];

\draw[fill] ($ (c')!0.5-0.1!(d') $) circle [radius=.5];
\draw[fill] ($ (c')!0.5!(d') $) circle [radius=.5];
\draw[fill] ($ (c')!0.5+0.1!(d') $) circle [radius=.5];

\foreach \i in {1,3,4}
	{
	\coordinate (p) at ($ (e)!0.2*\i!(d) $);
	\coordinate (E) at ($ (e)!0.2*\i - 0.06!(d) $);
	\coordinate (W) at ($ (e)!0.2*\i + 0.06!(d) $);	
	\draw[fill] (p) circle [radius=.5];
%	\draw[fill] (aE) circle [radius=.5];	
%	\draw[fill] (aW) circle [radius=.5];	
%	\coordinate (F) at ($ \i*(E)+\i*(p) $);
	\coordinate (F) at ($ 0.2*4*(W)-0.2*\i*(W) - 0.2*4*(p)+0.2*\i*(p) $);
	\coordinate (N) at ($ (*)!1.05!0:(p) + (F) $);
%	\draw[fill] (aN) circle [radius=.5];
	\coordinate (t) at ($ (*)!.6! 180+10*\i:(0,0)$);
	\coordinate (u1) at ($ (E) - .85*(N) + .85*(p) $);
	\coordinate (u2) at ($ (E) + .85*(N) - .85*(p) $);	
	\coordinate (v1) at ($ (N) + .3*(E) - .3*(p) $);
	\coordinate (v2) at ($ (N) - .3*(E) + .3*(p) $);	
	\coordinate (w1) at ($ (W) + .85*(N) - .85*(p) $);
	\coordinate (w2) at ($ (W) - .85*(N) + .85*(p) $);	
	\draw[medgreen,postaction={decorate}] (*) .. controls (t) and	(u1) .. (E) .. controls (u2) and (v1) .. (N) .. controls (v2) and (w1) .. (W) .. controls (w2) and (t) .. (*);	
	}
	
\node[above right, medgreen] at ($ (d)!0.2*1!(e) + (1,1)$) {$\gamma_{1,n}$};
\node[above right, medgreen] at ($ (d)!0.2*2!(e) + (1,1)$) {$\gamma_{2,n}$};
\node[above right, medgreen] at ($ (d)!0.2*4!(e) + (1,1)$) {$\gamma_{2k,n}$};	

\draw[fill] ($ (d)!0.2*3-0.03!(e) $) circle [radius=.2];
\draw[fill] ($ (d)!0.2*3!(e) $) circle [radius=.2];
\draw[fill] ($ (d)!0.2*3+0.03!(e) $) circle [radius=.2];

\draw (100,50) circle [x radius=100, y radius = 50];    
\draw[fill] (*) circle [radius=.5];
\draw (*) arc [x radius=100, y radius = 50, start angle = 270, end angle = 135 ] to (*);
\draw (*) arc [x radius=100, y radius = 50, start angle = 270, end angle = 90 ] to (*);
\draw (*) arc [x radius=100, y radius = 50, start angle = 270, end angle = 45 ] to (*);
\node[below] at (*) {\huge $*$};
\node at (20,55) {\LARGE $D_{2k,1}$};
\node at (70,78) {\LARGE $D_{2k,2}$};
\node at (180,55) {\LARGE $D_{2k,n}$};
\node at (200,80) {\huge $D_{2kn}$};
\end{tikzpicture}

\caption{The punctured disks encoding the data register of a width $n$ reversible circuit.}
\label{f:disks}
\end{center}
\end{figure*}
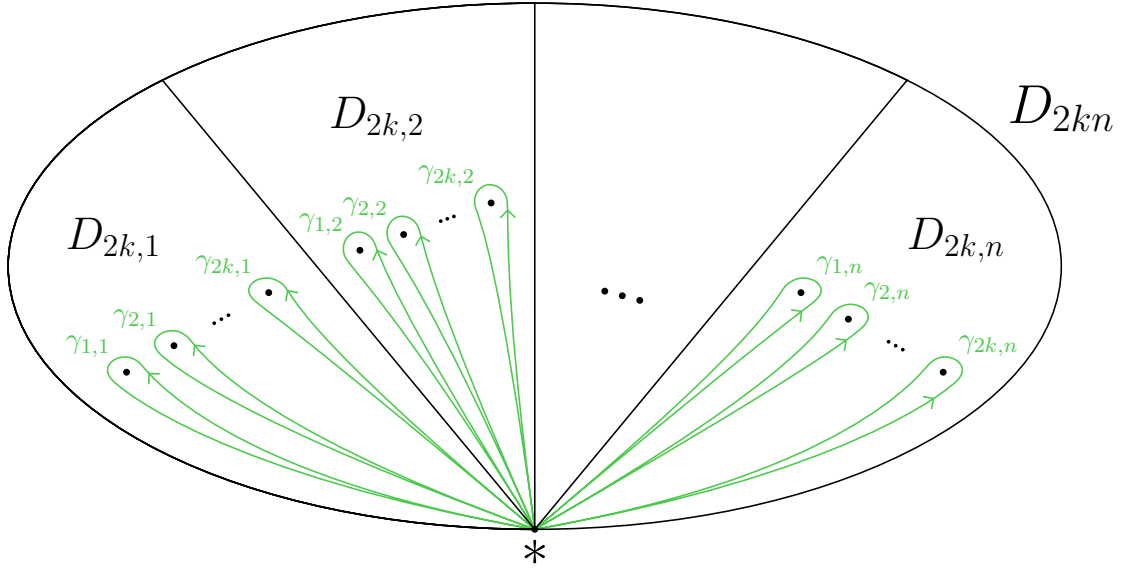

Convert $Z$ into a braid diagram $b_Z$ by replacing each strand in $Z$ with $2k$ parallel strands and each gate $\tau_i$ in $Z$ with the diagram of the braid gadget $b_{\tau_i}$ as in Figure \ref{f:reduction}.  Let $K_Z$ be the oriented link diagram formed by the plat closure of $b_Z$ indicated in the figure, and let $\gamma_Z \in \pi_1(S^3\setminus K_Z)$ be the indicated meridian.

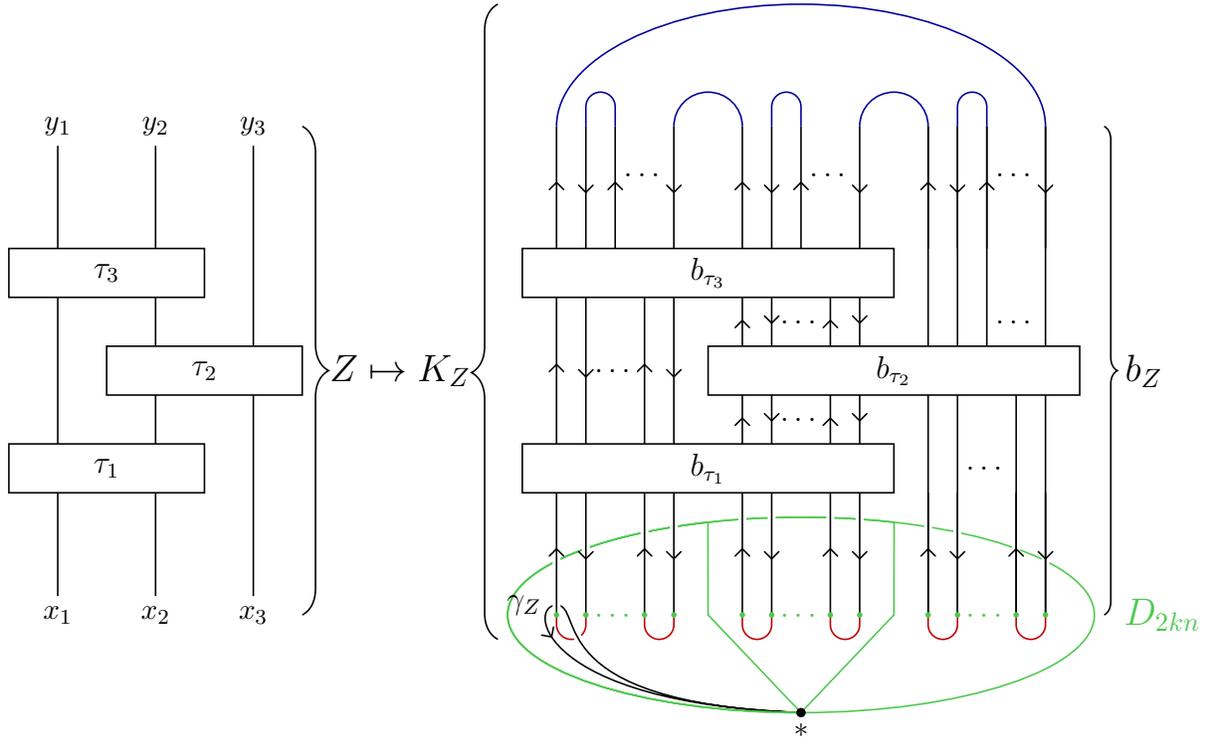
\begin{figure*}[t]
\begin{center}
\begin{tikzpicture}[scale = 0.13,semithick,decoration={markings,
    mark=at position 0.55 with {\arrow{angle 90}}}]
\coordinate (*) at (0,0);

\draw[darkred] (3,10) -- (3,9) arc [radius = 1.5, start angle = 180, end angle = 360] -- (6,10);
\draw[darkred] (-3,10) -- (-3,9) arc [radius = 1.5, start angle = 0, end angle = -180] -- (-6,10);
\draw[darkred] (3*7+1,10) -- (3*7+1,9) arc [radius = 1.5, start angle = 180, end angle = 360] -- (3*8+1,10);
\draw[darkred] (3*4+1,10) -- (3*4+1,9) arc [radius = 1.5, start angle = 180, end angle = 360] -- (3*5+1,10);
\draw[darkred] (-3*7-1,10) -- (-3*7-1,9) arc [radius = 1.5, start angle = 0, end angle = -180] -- (-3*8-1,10);
\draw[darkred] (-3*4-1,10) -- (-3*4-1,9) arc [radius = 1.5, start angle = 0, end angle = -180] -- (-3*5-1,10);

\draw[white, line width = 3] (*) .. controls (-27,1) and (-3*8+2,10+1) .. (-3*8-1,10+1) .. controls (-3*8-4,10+1) and (-27,1) .. (*);
\draw[postaction={decorate}] (*) .. controls (-27,1) and (-3*8+2,10+1) .. (-3*8-1,10+1) .. controls (-3*8-4,10+1) and (-27,1) .. (*);

\draw[medgreen] (*) arc [x radius = 30, y radius = 10, start angle = 270, end angle = -90];
\draw[medgreen] (*) arc [x radius = 30, y radius = 10, start angle = 270, end angle = 108.5] -- (-9.5,10) -- (*);
\draw[medgreen] (*) arc [x radius = 30, y radius = 10, start angle = 270, end angle = 71.5] -- (9.5,10) -- (*);
\node[right,medgreen] at (32,10) {\Large $D_{2kn}$};
\draw[fill] (*) circle [radius = 0.4];
\node[below] at (*) {*};

\foreach \i in {-2,1}
	{
	\coordinate (p) at (3*\i,10);
	\draw[white, line width = 3] (p) -- ($(p) + (0,12.5)$);
	\draw[postaction={decorate}] (p) -- ($(p) + (0,12.5)$);
	\draw[postaction={decorate}] ($(p) + (0,17.5)$) -- ($(p) + (0,22.5)$);
	\draw[postaction={decorate}] ($(p) + (0,27.5)$) -- ($(p) + (0,32.5)$);		
	\draw[fill,medgreen] (p) circle [radius = 0.2];
	}

\foreach \i in {-1,2}
	{
	\coordinate (p) at (3*\i,10);
	\draw[white, line width = 3] (p) -- ($(p) + (0,12.5)$);
	\draw[postaction={decorate}] ($(p) + (0,12.5)$) -- (p);
	\draw[postaction={decorate}] ($(p) + (0,22.5)$) -- ($(p)+(0,17.5)$);
	\draw[postaction={decorate}] ($(p) + (0,32.5)$) -- ($(p)+(0,27.5)$);	
	\draw[fill,medgreen] (p) circle [radius = 0.2];
	}
	
\foreach \i in {5,8}
	{
	\coordinate (q) at (-3*\i-1,10);
	\draw[white,line width = 3] (q) -- ($(q) + (0,12.5)$);
	\draw[postaction={decorate}] (q) -- ($(q) + (0,12.5)$);
	\draw[postaction={decorate}] ($(q)+(0,17.5)$) -- ($(q) + (0,32.5)$);	
	\draw[fill,medgreen] (q) circle [radius = 0.2];	
	}
\foreach \i in {4,7}
	{
	\coordinate (q) at (-3*\i-1,10);
	\draw[white,line width = 3] (q) -- ($(q) + (0,12.5)$);
	\draw[postaction={decorate}] ($(q) + (0,12.5)$)--(q);
	\draw[postaction={decorate}] ($(q) + (0,32.5)$) -- ($(q)+(0,17.5)$);
	\draw[fill,medgreen] (q) circle [radius = 0.2];	
	}

\foreach \i in {4,7}
	{
	\coordinate (p) at (3*\i+1,10);
	\draw[white,line width = 3] (p) -- ($(p) + (0,35)$);	
	\draw[postaction={decorate}] (p) -- ($(p) + (0,12.5)$);	
	\draw (p) -- ($(p) + (0,22.5)$);		
	\draw[fill,medgreen] (p) circle [radius = 0.2];
	}
\foreach \i in {5,8}
	{
	\coordinate (q) at (3*\i+1,10);
	\draw[white,line width = 3] (q) -- ($(q) + (0,12.5)$);
	\draw[postaction={decorate}] ($(q) + (0,12.5)$) --(q);
	\draw (q) -- ($(q) + (0,22.5)$);			
	\draw[fill,medgreen] (q) circle [radius = 0.2];	
	}
	
\node[medgreen] at (0,10) {\large  \dots};
\node[medgreen] at (3*6+1,10) {\large \dots};
\node[medgreen] at (-3*6-1,10) {\large \dots};
\node[left] at (-3*8-1.5,10+1) {\large $\gamma_Z$};
\node at (-3*6-1,10+12.5+5+5+2.5) {\large \dots};
\node at (3*6+1,12.5+5+5+2.5) {\large \dots};
\node at (0,12.5+5+5+2.5+5) {\large \dots};
\node at (0,12.5+5+5+2.5+5+10) {\large \dots};

\foreach \i in {0,-2}
	{
	\coordinate (q) at (3*\i,60);
	\draw[postaction={decorate}] ($(q) - (0,12.5)$) -- (q);	
	}
\foreach \i in {-1,2}
	{
	\coordinate (q) at (3*\i,60);
	\draw[postaction={decorate}] (q) -- ($(q) - (0,12.5)$);	
	}

\node at (3,55) {\large \dots};

\foreach \i in {4,5,7,8}
	{
	\coordinate (q) at (3*\i+1,60);
	\draw[white, line width = 2] (q) -- ($(q) - (0,12.5+10)$);	
	}

\foreach \i in {4,5,6,8}
	{
	\coordinate (q) at (3*\i+1,60);
	\draw (q) -- ($(q) - (0,12.5+10)$);	
	}
\foreach \i in {5,8}
	{
	\coordinate (q) at (3*\i+1,60);
	\draw[postaction={decorate}] (q) -- ($(q) - (0,12.5)$);	
	}	
\foreach \i in {4,6}
	{
	\coordinate (q) at (3*\i+1,60);
	\draw[postaction={decorate}] ($(q) - (0,12.5)$) -- (q);	
	}	
	
\foreach \i in {4,7}
	{
	\coordinate (p) at (-3*\i-1,60);	
	\draw[postaction={decorate}] (p) -- ($(p) - (0,12.5)$);
	}
\foreach \i in {6,8}
	{
	\coordinate (p) at (-3*\i-1,60);	
	\draw[postaction={decorate}] ($(p) - (0,12.5)$) -- (p);
	}

\node at (3*7+1,40) {\large  \dots};	
\node at (3*7+1,55) {\large \dots};
\node at (-3*5-1,55) {\large \dots};

\draw[fill=white] (-28.5,22.5) rectangle (9.5,27.5);
\node at ($ (-28.5,22.5)!0.5!(9.5,27.5) $) {\large $b_{\tau_1}$};
\draw[fill=white] (28.5,32.5) rectangle (-9.5,37.5);
\node at ($ (28.5,32.5)!0.5!(-9.5,37.5) $) {\large $b_{\tau_2}$};
\draw[fill=white] (-28.5,42.5) rectangle (9.5,47.5);
\node at ($ (-28.5,42.5)!0.5!(9.5,47.5) $) {\large $b_{\tau_3}$};

\draw[darkblue] (-25,60) arc [x radius=25, y radius = 12.5, start angle = 180, end angle = 0];
\draw[darkblue] (-22,60) -- (-22,62) arc [radius = 1.5, start angle = 180, end angle = 0] -- (-19,60);
\draw[darkblue] (-13,60) arc [radius = 3.5, start angle = 180, end angle = 0];
\draw[darkblue] (-3,60) -- (-3,62) arc [radius = 1.5, start angle = 180, end angle = 0] -- (0,60);
\draw[darkblue] (13,60) arc [radius = 3.5, start angle = 0, end angle = 180];
\draw[darkblue] (16,60) -- (16,62) arc [radius = 1.5, start angle = 180, end angle = 0] -- (19,60);

\draw [decorate,decoration={brace,amplitude=5pt}] (31,60) -- (31,10) node [midway,xshift=15] {\Large $b_Z$};
\draw[mybrace=0.42] (-31,7.5) -- (-31,72.5) ;
\draw [decorate,decoration={brace,amplitude=10pt}] (-51,60) -- (-51,10) node [midway,xshift=37] {\Large  $Z \mapsto  K_Z$};

\node (x3) at (-56,10) {$x_3$};
\node (x2) at (-66,10) {$x_2$};
\node (x1) at (-76,10) {$x_1$};
\node (y3) at (-56,60) {$y_3$};
\node (y2) at (-66,60) {$y_2$};
\node (y1) at (-76,60) {$y_1$};
\draw (x1) -- (y1);
\draw (x2) -- (y2);
\draw (x3) -- (y3);
\draw[fill=white] ($(x1)+(-5,12.5)$) rectangle ($(x2)+(5,17.5)$);
\node at ($(-81,22.5)!0.5!(-61,27.5)$) {\large $\tau_1$};
\draw[fill=white] ($(x2)+(-5,22.5)$) rectangle ($(x3)+(5,27.5)$);
\node at ($ (-71,32.5)!0.5!(-51,37.5) $) {\large $\tau_2$};
\draw[fill=white] ($(x1)+(-5,32.5)$) rectangle ($(x2)+(5,37.5)$);
\node at ($(-81,42.5)!0.5!(-61,47.5)$) {\large $\tau_3$};
\end{tikzpicture}
\caption{The reduction takes the circuit $Z$ to the knot $K_Z$ and meridian $\gamma_Z \in \pi_1(S^3\setminus K)$.  The labels $\tau_1,\tau_2$ and $\tau_3$ denote gates in $\Rub_J(A^2)$, and $b_{\tau_1}, b_{\tau_2}$ and $b_{\tau_3}$ denote the pure braid gadgets in $PB_{4k}$ simulating the respective gates.  The green punctured disk $D_{2kn}$ is included to indicate how $K_Z$ and $\gamma_Z$ are constructed, but it is not part of the reduction's output.}
\label{f:reduction}
\end{center}
\end{figure*}

The inclusion of the disk $(D_{2kn},*)$ into the knot complement $(S^3 \setminus K_Z,*)$ induces a surjection on fundamental groups, so we specify homomorphisms $f: \pi_1(S^3 \setminus K) \to G$ by listing the image in $G$ of each of the generators $\gamma_{j,i} \in \pi_1(D_{2kn})$, $j=1,\dots,2k$, $i=1,\dots,n$.  Let
\[ f_i = (f(\gamma_{1,i}),f(\gamma_{2,i}),\dots,f(\gamma_{2k,i})) \in G^{\times 2k}, \]
which we in turn identify with the homomorphism
\[ f_i: \pi_1(D_{2k,i}) \to \pi_1(S^3 \setminus K) \xrightarrow{f} G. \]
In particular, $\gamma_Z$ is just the loop $\gamma_{1,1}$ around the leftmost puncture of $D_{2kn}$ in $\pi_1(S^3\setminus K_Z)$, which is a Wirtinger generator of the knot group. 

Of course, not every homomorphism $\pi_1(D_{2kn}) \to G$ yields an element of $H(K_Z,\gamma_Z,G,c)$.  We show that those that do necessarily come from solutions to $Z$.

\begin{lemma}
Let $Z$ be an instance of $\ZSAT_{J,A,I,F}$ and let $\#Z$ denote the number of solutions to $Z$.  Then the diagram $K_Z$ and meridian $\gamma_Z$ have the following properties:
\begin{enumerate}
\item $K_Z$ is a knot.
\item If $H \lneq G$ and $H \neq \langle c \rangle$, then $\#Q(K_Z,\gamma_Z,H,c) =0$.
\item $\#H(K_Z,\gamma_Z,G,c) = \#Z$.
\end{enumerate}
\end{lemma}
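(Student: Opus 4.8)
The plan is to produce an explicit bijection between $H(K_Z,\gamma_Z,G,c)$ and the set of solutions of the $\ZSAT_{J,A,I,F}$ instance $Z$, namely $f\mapsto(f_1,\dots,f_n)$ with $f_i$ the restriction of $f$ to $\pi_1(D_{2k,i})$ as above; this gives part (3) at once, and part (2) will fall out because the only member of $H(K_Z,\gamma_Z,G,c)$ that is not surjective onto $G$ will turn out to be the homomorphism with every $f_i=z$, whose image is exactly $\langle c\rangle$. Part (1) is separate and short. Each $b_{\tau_j}$ is a pure braid (property 4 in \Sec{ss:gadgets}), so $b_Z$ is pure and $K_Z$ has the same number of components as the plat closure of the trivial $2kn$-strand braid with the same caps; tracing the caps of \Fig{f:reduction} shows that inside each block the initialization caps together with the within-block finalization caps splice its $2k$ strands into a single arc, and the remaining finalization caps then chain block $1$ to block $2$ to $\cdots$ to block $n$ and back, so there is exactly one component.

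For parts (2)--(3) I would first write down the plat presentation: since $\pi_1(D_{2kn})\onto\pi_1(S^3\setminus K_Z)$, a homomorphism $f$ with $f(\gamma_Z)=c$ is exactly the data of the tuples $(f_1,\dots,f_n)$, each $f_i\in\hat{R}_v$, such that $f_i$ extends over the $i$-th initialization tangle---equivalently $f_i$ satisfies the cup relations $x_{2\ell}=x_{2\ell-1}^{-1}$, and, being a plat, has $\inv_v(f_i)=0$ by \Lem{l:props}(2)---and such that the braid-transported tuple $b_Z\cdot(f_1,\dots,f_n)$ extends over the finalization tangles together with the between-block and outermost coupling caps; and $(f_1)_1=f(\gamma_Z)=c$. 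By \Lem{l:preserve} the gadgets preserve $\hat{R}_v^0\times\hat{R}_v^0$, so every wire value at every horizontal level of $b_Z$ lies in $\hat{R}_v^0$. Next I would invoke the dichotomy supplied by the four properties of the $b_\tau$ (from \Thm{th:rvrefine}): a wire whose value does not lie in $\Aut(G,C)\cdot A$ is fixed by every gadget it meets, since the relevant pair lies in $\hat{R}_{v\#v}\setminus R_{v\#v}$ or in $R_{v\#v}^0\setminus\Aut(G,C)\cdot(A\times A)$; a wire carrying an $\Aut(G,C)$-translate of the zombie is likewise fixed throughout, since the $\ZSAT$ gates of $Z$ fix $z$; and a wire carrying a value in a fixed ``sector'' $\alpha\cdot A$ stays in that sector, with the gadgets acting on same-sector surjective pairs exactly as the $\ZSAT$ gates act on $A\times A$, conjugated by $\alpha$.

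Now the argument closes. A wire that is not surjective onto $G$ is either an $\Aut(G,C)$-translate of $z$, or it is fixed by every gadget, so its unchanged value satisfies both the initialization and the finalization cup relations and is therefore cyclic---hence again a zombie-translate; the same reasoning shows no surjective wire can have $x_{2k}\neq x_1^{-1}$, so every wire sits in some sector $\alpha\cdot A$. The coupling caps link every pair of consecutive wires (and wire $n$ back to wire $1$), and a sector-$\alpha$ wire has boundary coordinates $\alpha(c)^{\pm1}$ at every level; matching these across a coupling cup forces adjacent wires into the same sector, and since $(f_1)_1=c$ pins wire $1$ to the canonical sector, \emph{every} wire lies in $A$ throughout, with the surjective wires lying in $I$ at the bottom and the non-surjective wires equal to $z$. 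Thus each $f$ yields $(f_1,\dots,f_n)\in(I\cup\{z\})^n$; because the gates of $Z$ act within $A$ and fix $z$, the braid $b_Z$ transports this tuple exactly as the circuit $Z$ computes, so the finalization condition says precisely that $(f_1,\dots,f_n)$ is a solution of $Z$. Conversely, a solution $(x_1,\dots,x_n)$ of $Z$ defines $f$ by $f(\gamma_{j,i})=(x_i)_j$; the initialization relations hold because $x_i\in I\cup\{z\}$ extends over its tangle, the finalization and coupling relations hold because $Z(x)\in(F\cup\{z\})^n$ and all such symbols have boundary coordinates $c,c^{-1}$, and $f(\gamma_Z)=(x_1)_1=c$. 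These two maps are mutually inverse, giving $\#H(K_Z,\gamma_Z,G,c)=\#Z$. Finally a non-surjective $f$ has every $f_i=z$, hence image $\langle c\rangle$, so no proper subgroup other than $\langle c\rangle$ occurs as an image, proving part (2).

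The step I expect to be the main obstacle is the combination of the plat bookkeeping with the sector-consistency argument: one must track orientations in the plat presentation carefully enough to know that ``extends over the initialization (resp.\ finalization) tangle and the couplings'' is exactly the cup condition defining $I$ (resp.\ $F$) together with $\inv_v=0$, and then exploit the fact that the coupling caps join \emph{every} pair of consecutive wires in order to propagate the single normalization $f(\gamma_Z)=c$ to all wires---this is precisely where the otherwise ``contrived'' restriction that symbols of $A$ have first and last coordinates $c$ and $c^{-1}$ earns its keep and makes the reduction land on a knot rather than a multi-component link. Everything else is routine bookkeeping on top of \Thm{th:rvrefine}, \Lem{l:props}, \Lem{l:preserve}, and the definition of $\ZSAT_{J,A,I,F}$.
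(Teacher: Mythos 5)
Your proposal is correct and takes essentially the same route as the paper's proof: part (1) from purity of the gadgets plus tracing the plat caps, and parts (2)--(3) by confining wire values to $\hat{R}_v^0$ via \Lem{l:props} and \Lem{l:preserve}, showing any wire outside $\Aut(G,C)\cdot A$ is fixed by every gadget and hence forced by the two cup systems to be a zombie-translate, using the coupling caps plus $f(\gamma_Z)=c$ to pin every wire into the canonical sector $A$, and then matching the braid action on $A^n$ with the circuit action. One inessential misstatement: the gates of a $\ZSAT$ instance are arbitrary elements of $\Rub_J(A^2)$ and need only fix $(z,z)$, not every pair $(z,a)$, so a zombie-translate wire is not literally ``fixed throughout''---but your argument never actually needs that, since sector-preservation of the gadgets already gives the boundary-coordinate bookkeeping and the exact correspondence between $b_Z$ acting on $A^n$ and $Z$.
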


\begin{proof}
Our gadgets $b_\tau$, $\tau \in \Rub_J(A^2)$, are pure braids, so our choice of plats in Figure \ref{f:reduction} guarantees that $K_Z$ is a knot, and not a link with multiple components.  This proves the first property.

Regarding the second property, let $H$ be a proper subgroup of $G$, and let $f=(f_1,\dots,f_n)$ be a surjective homomorphism $\pi_1(S^3\setminus K_Z) \to H$ taking $\gamma_Z$ to $c$.  Since $K_Z$ is a knot and $f(\gamma_Z) = f(\gamma_{1,1}) = c$, we know that for all $i=1,\dots,n$,
\[ f_i \in \hat{R}_v \]
and moreover, the image of $f$ is $H \lneq G$, so
\[ f_i \notin R_v. \]
Better yet,
\[ (f_i,f_{i+1}) \in \hat{R}_{v\#v} \setminus R_{v\#v} \]
for all $i=1,\dots,n-1$.  By construction, every braid gadget $b_\tau \in PB_{4k}$ acts trivially on $(f_i,f_{i+1})$.  We conclude that the braid $b_Z$ encoding the circuit $Z$ acts trivially on $f$:
\[ b_Z \cdot f = f. \]
Combining this identify with our choice of plats, it is easy to check that $f$ must have cyclic image generated by $c$, which shows $\#Q(K_Z,\gamma_Z,H,c) = 0$ if $H \neq \langle c \rangle$.

We prove the third property by exhibiting a bijection between solutions to $Z$ and elements of $H(K_Z,\gamma_Z,G,c)$.  Suppose $f = (f_1,\dots,f_n)$ is a solution to $Z$.  Then, by definition,
\[ \begin{cases} (f_1,\dots,f_n) \in (I \cup \{z\})^n \\
Z(f) = (g_1,\dots,g_n) \in (F \cup \{z\})^n. \end{cases} \]
By our construction of the braid gadgets $b_{\tau}$, $b_Z$ acts on $A^n$ exactly as $Z$ does.  Of course, besides representing an abstract description of some input to $Z$, $f$ is also a homomorphism $f: \pi_1(D^2_{nk},*) \to G$.  The definition of the initialization subalphabet $I\cup\{z\}$ immediately implies that this homomorphism factors through the plat attached to the bottom of $b_Z$.  

Similarly, the definition of the finalization subalphabet $F\cup\{z\}$ implies $b_Z\cdot f: \pi_1(D^2_{nk}) \to G$ factors through the plat attached to the top of $b_Z$, although this requires us to use the fact that every element of $A$ sends the leftmost puncture of $D_{2k}$ to $c$ and the rightmost puncture to $c^{-1}$.  Indeed, \emph{a priori}, the top plats used in Figure \ref{f:reduction} constrain $f$ so that
\[ b_Z \cdot f(\gamma_{j,i}) = b_Z \cdot f(\gamma_{j+1,i})^{-1} \]
for all $i$ and all $1<j<2k$,
\begin{equation}
b_Z \cdot f(\gamma_{2k,i}) = b_Z \cdot f(\gamma_{1,i+1})^{-1}
\label{l:neighbors}
\end{equation}
for all $1 \leq i< n$, and
\begin{equation}
b_Z \cdot f(\gamma_{2k,n}) = b_Z \cdot f(\gamma_{1,1})^{-1}.
\label{l:ends}
\end{equation}
However, $f \in A^n$, so our choice of braid gadgets guarantees
\[ b_Z \cdot f(\gamma_{1,i}) = b_Z \cdot f(\gamma_{2n,i})^{-1} = c \]
for all $i$.  In particular, equations \ref{l:neighbors} and \ref{l:ends} are trivially satisfied.  See Figure \ref{f:surgery}.

\begin{figure*}[t]
\begin{center}
\begin{tikzpicture}[scale = 0.045, semithick, decoration={markings,
    mark=at position 0.43 with {\arrow{angle 90}}}]
\coordinate (*) at (38*2,-5);

\foreach \i in {1}
	{
	\coordinate (l) at ($ (8*\i,20) - (2.5,0) $);
	\coordinate (r) at ($ (8*\i,20) + (2.5,0) $);
	\coordinate (u) at ($ (8*\i,20) + (0,2.5) $);	
	\coordinate (ls) at ($ (l) - 2.5*(0,1) $);
	\coordinate (ln) at ($ (l) + 2.5*(0,1) $);	
	\coordinate (ue) at ($ (u) + 1*(1,0) $);		
	\coordinate (uw) at ($ (u) - 1*(1,0) $);
	\coordinate (rn) at ($ (r) + 2.5*(0,1) $);
	\coordinate (rs) at ($ (r) - 2.5*(0,1) $);
	\coordinate (v) at ($ (*)!.8!-5*\i:(0,-5) $);
	\draw[medgreen,postaction={decorate}] (*) .. controls (v) and (rs) .. (r) .. controls (rn) and (ue) .. (u) .. controls (uw) and (ln) .. (l) .. controls (ls) and (v) .. (*);
	}

\foreach \i in {6,7}
	{
	\coordinate (l) at ($ (8*\i,20) - (2.5,0) $);
	\coordinate (r) at ($ (8*\i,20) + (2.5,0) $);
	\coordinate (u) at ($ (8*\i,20) + (0,2.5) $);	
	\coordinate (ls) at ($ (l) - 2.5*(0,1) $);
	\coordinate (ln) at ($ (l) + 2.5*(0,1) $);	
	\coordinate (ue) at ($ (u) + 1*(1,0) $);		
	\coordinate (uw) at ($ (u) - 1*(1,0) $);
	\coordinate (rn) at ($ (r) + 2.5*(0,1) $);
	\coordinate (rs) at ($ (r) - 2.5*(0,1) $);
	\coordinate (v) at ($ (*)!.3!-5*\i:(0,-5) $);
	\draw[medgreen,postaction={decorate}] (*) .. controls (v) and (rs) .. (r) .. controls (rn) and (ue) .. (u) .. controls (uw) and (ln) .. (l) .. controls (ls) and (v) .. (*);
	}
	
\foreach \i in {12,13}
	{
	\coordinate (l) at ($ (8*\i,20) - (2.5,0) $);
	\coordinate (r) at ($ (8*\i,20) + (2.5,0) $);
	\coordinate (u) at ($ (8*\i,20) + (0,2.5) $);	
	\coordinate (ls) at ($ (l) - 2.5*(0,1) $);
	\coordinate (ln) at ($ (l) + 2.5*(0,1) $);	
	\coordinate (ue) at ($ (u) + 1*(1,0) $);		
	\coordinate (uw) at ($ (u) - 1*(1,0) $);
	\coordinate (rn) at ($ (r) + 2.5*(0,1) $);
	\coordinate (rs) at ($ (r) - 2.5*(0,1) $);
	\coordinate (v) at ($ (*)!.3!5*(19-\i):(38*4,-5) $);
	\draw[medgreen,postaction={decorate}] (*) .. controls (v) and (rs) .. (r) .. controls (rn) and (ue) .. (u) .. controls (uw) and (ln) .. (l) .. controls (ls) and (v) .. (*);
	}

\foreach \i in {18}
	{
	\coordinate (l) at ($ (8*\i,20) - (2.5,0) $);
	\coordinate (r) at ($ (8*\i,20) + (2.5,0) $);
	\coordinate (u) at ($ (8*\i,20) + (0,2.5) $);	
	\coordinate (ls) at ($ (l) - 2.5*(0,1) $);
	\coordinate (ln) at ($ (l) + 2.5*(0,1) $);	
	\coordinate (ue) at ($ (u) + 1*(1,0) $);		
	\coordinate (uw) at ($ (u) - 1*(1,0) $);
	\coordinate (rn) at ($ (r) + 2.5*(0,1) $);
	\coordinate (rs) at ($ (r) - 2.5*(0,1) $);
	\coordinate (v) at ($ (*)!.8!5*(19-\i):(38*4,-5) $);
	\draw[medgreen,postaction={decorate}] (*) .. controls (v) and (rs) .. (r) .. controls (rn) and (ue) .. (u) .. controls (uw) and (ln) .. (l) .. controls (ls) and (v) .. (*);
	}
	
\draw (*) arc [x radius = 40*2, y radius = 25, start angle = 270, end angle = -90];	

\foreach \i in {1} {\node[left,medgreen] at (8*\i-2,20) {$c$};}	
\foreach \i in {7,13} {\node[above right,medgreen] at (8*\i,20+1) {$c$};}
\foreach \i in {6} {\node[below left,medgreen] at (8*\i+1,20) {$c^{-1}$};}
\foreach \i in {12} {\node[below left,medgreen] at (8*\i-1,20) {$c^{-1}$};}
\foreach \i in {18} {\node[below left,medgreen] at (8*\i-4,20) {$c^{-1}$};}	

\foreach \i in {2,4,8,10,14,16}
	{
	\draw[white, line width = 4] (8*\i,20) -- + (0,2);
	\draw[white, line width = 4] (8*\i+8,20) -- + (0,2);	
	\draw[white, line width = 4] (8*\i,22) arc [radius=4, start angle = 180, end angle = 0];
	}
	
\foreach \i in {1,7,13}
	{
	\draw[white, line width = 4] (8*\i,20) -- + (0,2);
	\draw[white, line width = 4] (8*\i+40,20) -- + (0,2);	
	\draw[white, line width = 4] (8*\i,22) arc [radius=20, start angle = 180, end angle = 0];
	}

\foreach \i in {1,2,...,18}
	{
	\draw[fill] (8*\i,20) circle [radius=0.4];
	}

\foreach \i in {2,4,8,10,14,16}
	{
	\draw[darkblue] (8*\i,20) -- + (0,2);
	\draw[darkblue] (8*\i+8,20) -- + (0,2);	
	\draw[darkblue] (8*\i,22) arc [radius=4, start angle = 180, end angle = 0];
	}
	
\foreach \i in {1,7,13}
	{
	\draw[darkblue] (8*\i,20) -- + (0,2);
	\draw[darkblue] (8*\i+40,20) -- + (0,2);	
	\draw[darkblue] (8*\i,22) arc [radius=20, start angle = 180, end angle = 0];
	}

\foreach \i in {1,2,...,18}
	{
	\draw[fill] (8*\i,20) circle [radius=0.4];
	}	
	
\draw[fill] (*) circle [radius=0.4];
\node[below] at (*) {$*$};
\end{tikzpicture}
\ \ \ \ \
\begin{tikzpicture}[scale = 0.045, semithick, decoration={markings,
    mark=at position 0.43 with {\arrow{angle 90}}}]
\coordinate (*) at (38*2,-5);

\foreach \i in {1}
	{
	\coordinate (l) at ($ (8*\i,20) - (2.5,0) $);
	\coordinate (r) at ($ (8*\i,20) + (2.5,0) $);
	\coordinate (u) at ($ (8*\i,20) + (0,2.5) $);	
	\coordinate (ls) at ($ (l) - 2.5*(0,1) $);
	\coordinate (ln) at ($ (l) + 2.5*(0,1) $);	
	\coordinate (ue) at ($ (u) + 1*(1,0) $);		
	\coordinate (uw) at ($ (u) - 1*(1,0) $);
	\coordinate (rn) at ($ (r) + 2.5*(0,1) $);
	\coordinate (rs) at ($ (r) - 2.5*(0,1) $);
	\coordinate (v) at ($ (*)!.8!-5*\i:(0,-5) $);
	\draw[medgreen,postaction={decorate}] (*) .. controls (v) and (rs) .. (r) .. controls (rn) and (ue) .. (u) .. controls (uw) and (ln) .. (l) .. controls (ls) and (v) .. (*);
	}

\foreach \i in {6,7}
	{
	\coordinate (l) at ($ (8*\i,20) - (2.5,0) $);
	\coordinate (r) at ($ (8*\i,20) + (2.5,0) $);
	\coordinate (u) at ($ (8*\i,20) + (0,2.5) $);	
	\coordinate (ls) at ($ (l) - 2.5*(0,1) $);
	\coordinate (ln) at ($ (l) + 2.5*(0,1) $);	
	\coordinate (ue) at ($ (u) + 1*(1,0) $);		
	\coordinate (uw) at ($ (u) - 1*(1,0) $);
	\coordinate (rn) at ($ (r) + 2.5*(0,1) $);
	\coordinate (rs) at ($ (r) - 2.5*(0,1) $);
	\coordinate (v) at ($ (*)!.3!-5*\i:(0,-5) $);
	\draw[medgreen,postaction={decorate}] (*) .. controls (v) and (rs) .. (r) .. controls (rn) and (ue) .. (u) .. controls (uw) and (ln) .. (l) .. controls (ls) and (v) .. (*);
	}
	
\foreach \i in {12,13}
	{
	\coordinate (l) at ($ (8*\i,20) - (2.5,0) $);
	\coordinate (r) at ($ (8*\i,20) + (2.5,0) $);
	\coordinate (u) at ($ (8*\i,20) + (0,2.5) $);	
	\coordinate (ls) at ($ (l) - 2.5*(0,1) $);
	\coordinate (ln) at ($ (l) + 2.5*(0,1) $);	
	\coordinate (ue) at ($ (u) + 1*(1,0) $);		
	\coordinate (uw) at ($ (u) - 1*(1,0) $);
	\coordinate (rn) at ($ (r) + 2.5*(0,1) $);
	\coordinate (rs) at ($ (r) - 2.5*(0,1) $);
	\coordinate (v) at ($ (*)!.3!5*(19-\i):(38*4,-5) $);
	\draw[medgreen,postaction={decorate}] (*) .. controls (v) and (rs) .. (r) .. controls (rn) and (ue) .. (u) .. controls (uw) and (ln) .. (l) .. controls (ls) and (v) .. (*);
	}

\foreach \i in {18}
	{
	\coordinate (l) at ($ (8*\i,20) - (2.5,0) $);
	\coordinate (r) at ($ (8*\i,20) + (2.5,0) $);
	\coordinate (u) at ($ (8*\i,20) + (0,2.5) $);	
	\coordinate (ls) at ($ (l) - 2.5*(0,1) $);
	\coordinate (ln) at ($ (l) + 2.5*(0,1) $);	
	\coordinate (ue) at ($ (u) + 1*(1,0) $);		
	\coordinate (uw) at ($ (u) - 1*(1,0) $);
	\coordinate (rn) at ($ (r) + 2.5*(0,1) $);
	\coordinate (rs) at ($ (r) - 2.5*(0,1) $);
	\coordinate (v) at ($ (*)!.8!5*(19-\i):(38*4,-5) $);
	\draw[medgreen,postaction={decorate}] (*) .. controls (v) and (rs) .. (r) .. controls (rn) and (ue) .. (u) .. controls (uw) and (ln) .. (l) .. controls (ls) and (v) .. (*);
	}

\draw (*) arc [x radius = 40*2, y radius = 25, start angle = 270, end angle = -90];

\draw[white, line width = 4] (8,20) -- ++(0,2);
\draw[white, line width = 4] (8+8*17,20) -- ++(0,2);
\draw[white, line width = 4] (8,22) arc [x radius=34*2, y radius = 40,  start angle = 180, end angle = 0];

\foreach \i in {2,4,...,16}
	{
	\draw[white,line width = 4] (8*\i,20) -- + (0,2);
	\draw[white,line width = 4] (8*\i+8,20) -- + (0,2);	
	\draw[white,line width = 4] (8*\i,22) arc [radius=4, start angle = 180, end angle = 0];
	}

\foreach \i in {1,2,...,18}
	{
	\draw[fill] (8*\i,20) circle [radius=0.4];
	}

\draw[darkblue] (8,20) -- +(0,2);
\draw[darkblue] (8+8*17,20) -- +(0,2);
\draw[darkblue] (8,22) arc [x radius=34*2, y radius = 40,  start angle = 180, end angle = 0];

\foreach \i in {2,4,...,16}
	{
	\draw[darkblue] (8*\i,20) -- + (0,2);
	\draw[darkblue] (8*\i+8,20) -- + (0,2);	
	\draw[darkblue] (8*\i,22) arc [radius=4, start angle = 180, end angle = 0];
	}

\foreach \i in {1,2,...,18}
	{
	\draw[fill] (8*\i,20) circle [radius=0.4];
	}	
	
\draw[fill] (*) circle [radius=0.4];
\node[below] at (*) {$*$};
\foreach \i in {1} {\node[left,medgreen] at (8*\i-2,20) {$c$};}
\foreach \i in {7,13} {\node[above right,medgreen] at (8*\i,20+2) {$c$};}
\foreach \i in {6,12} {\node[above left,medgreen] at (8*\i+7,20+2) {$c^{-1}$};}
\foreach \i in {18} {\node[below left,medgreen] at (8*\i-4,20) {$c^{-1}$};}
\end{tikzpicture}
\caption{If $f_i(\gamma_{1,i}) = f_i(\gamma_{2n,i})^{-1} = c$ for all $i$, then $f = (f_1,\dots,f_n)$ either factors through both sets of plats, or it factors through neither set of plats.}
\label{f:surgery}
\end{center}
\end{figure*}
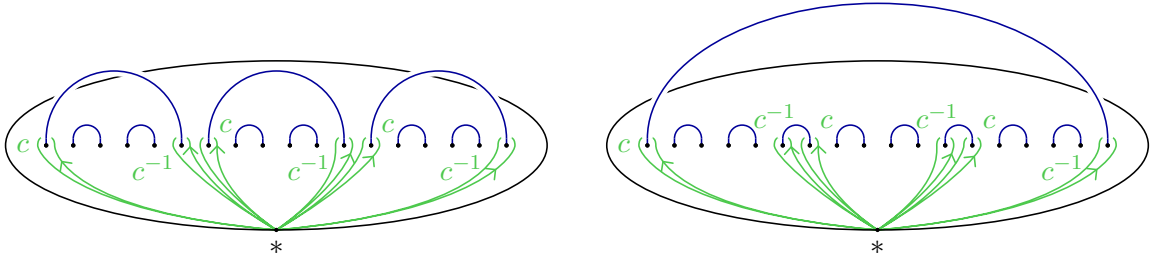

We conclude that because $f$ is a solution to $Z$, the homomorphism $f: \pi_1(D_{2nk},*) \to G$ factors through a homomorphism $\pi_1(S^3 \setminus K_Z,*) \to G$.  Moreover, $f(\gamma_K)=c$, hence every solution to $Z$ yields a unique element of $H(K_Z,\gamma_Z,G,c)$.

Conversely, suppose $f \in H(K_Z,\gamma_Z,G,c)$.  We must show that $f$ is not spurious, \ie, we must show that $f \in A^n$.  We begin with some useful observations.  Each $f_i$ factors through the initialization plat attached to $D_{2k,i}$, so Lemma \ref{l:props} shows $\inv_v(f_i) = 0$ for all $i=1,\dots,n$.  Thus, $f_i \in \hat{R}_v^0$.  Moreover, Lemma \ref{l:preserve} says the action of the braid gadgets preserves this condition.

Suppose $f_i$ is not in $\Aut(G,C) \cdot A$ and let $f_{i-1}' \in \hat{R}_v^0$.  Then it is straightforward to check that for every braid gadget $b_\tau$,
\[ b_\tau \cdot (f_{i-1}',f_i) = (f_{i-1}',f_i).\]
Similarly,
\[ b_\tau \cdot (f_i,f_{i+1}') = (f_i,f_{i+1}')\]
for all $f_{i+1}' \in \hat{R}_v^0$.  Write
\[ b_Z \cdot (f_1,\dots,f_n) = (g_1,\dots,g_n). \]
Then the above shows $g_i = f_i$.  But then there must be a $d \in C$ such that
\[ f_i = (d,d^{-1},d,d^{-1},\dots,d,d^{-1}), \]
hence $f_i \in \Inn(G) \cdot \{z\} \subset \Aut(G,C) \cdot A$, a contradiction.  Thus, $f_i \in \Aut(G,C) \cdot A$ for all $i$ and $f \in (\Aut(G,C) \cdot A)^n$.

We now show $f \in \Aut(G,C) \cdot A^n$.  If not, then there is an $1\leq i < n$ such that $f_i$ and $f_{i+1}$ are \emph{unaligned}, meaning $(f_i,f_{i+1})$ is in
\[ \Aut(G,C)\cdot A \times \Aut(G,C) \cdot A \setminus \Aut(G,C) \cdot (A \times A). \]
Note that $f_i, f_{i+1} \in \Aut(G,C)\cdot A$ are unaligned precisely when
\[ b_Z\cdot f_i(\gamma_{2k,i}) \neq b_Z \cdot f_{i+1}(\gamma_{1,i+1})^{-1}. \]
Thus, our choice of finalization plats guarantees that unaligned pairs can not be a part of any homomorphism in $H(K_Z,\gamma_Z,G,c)$.

Finally, because $f \in \Aut(G,C) \cdot A^n$ and $f(\gamma_Z) = c$, we conclude that $f \in A^n$.  
\end{proof}

The construction of $(K_Z,\gamma_Z)$ from $Z$ is linear in both time and space as a function of the size of $Z$.  Thus $Z \mapsto (K_Z,\gamma_Z)$ is a strictly parsimonious Levin reduction from $\ZSAT_{J,A,I,F}$ to $\#H(-,G,c)$.  Since $\ZSAT_{J,A,I,F}$ is almost parsimoniously $\shP$-hard, $\#H(-,G,c)$ is too. 

To see that $\#Q(-,G,c)$ is strictly parsimoniously $\shP$-hard via Levin reduction, note that when we postcompose the reduction from $\CSAT$ to $\ZSAT$ that serves as the proof of \Thm{l:zsat} with the reduction from $\ZSAT$ to $\#H(-,G,c)$ just constructed, the only non-surjective element of $H(K_Z,\gamma_Z,G,c)$ is the unique homomorphism $\pi_1(S^3\setminus K_Z,*) \to G$ with cyclic image $\langle c \rangle$ such that $\gamma_K \mapsto c$. \qed

\chapter[Discussion]{Discussion and further directions}
\label{ch:discussion}
\section{Sharper hardness}
\label{ss:sharper}
\subsection{Controlling stabilizations}
Even though the proofs of Theorems \ref{th:main1} and \ref{th:main2} are polynomially efficient reductions,
for any fixed, suitable target group $G$ and conjugacy class $C \subset G$, they are not otherwise particularly
efficient.   Various steps of the proofs require either the genus $g$ (which is used
to define the symbol alphabet $R_g^0$) or the number of punctures $k$ (used to define $R_v^0$) to be sufficiently large.  In fact, the crucial Theorem \ref{th:dt} and \ref{th:rv} do not even provide constructive lower bounds
on $g$.  Dunfield and Thurston \cite{DT:random} discuss possibilities
to improve the bound on $g$, and they conjecture that $g \ge 3$ suffices in
\Thm{th:dt} for many or possibly all choices of $G$.  We likewise believe
that there is some universal genus $g_0$ such that \Thm{th:dtrefine} holds
for all $g \ge g_0$.

In any case, the chains of reductions summarized in \Fig{f:reductions}
is not very efficient either.  What we really believe is that the random
3-manifold model of Dunfield and Thurston also yields computational hardness.
More precisely, Johnson showed that the Torelli group $\Tor(\Sigma_g)$
is finitely generated for $g \ge 3$ \cite{Johnson:finite}.  This yields
a model for generating a random homology 3-sphere:  We choose $\phi \in
\Tor(\Sigma_g)$ by evaluating a word of length $\ell$ in the Johnson
generators, and then we let
\[ M \defeq (H_g)_I \sqcup_\phi (H_g)_F. \]
Our \Thm{th:dtrefine} implies that \cite[Thm.~7.1]{DT:random} holds in this
model, \ie, that the distribution of $\#Q(M,G)$ converges to Poisson with
mean $|H_2(G)|/|\Out(G)|$ if we first send $\ell \to \infty$ and then send
$g \to \infty$.  We also conjecture that $\#Q(M,G)$ is hard on average in
the sense of average-case computational complexity \cite[Ch.~18]{AB:modern}
if $\ell$ grows faster than $g$.

Speaking non-rigorously, we conjecture that it is practical to randomly
generate triangulated homology 3-spheres $M$ in such a way that no one
will ever know the value of $\#Q(M,G)$, say for $G = A_5$.  Hence, no one
will ever know whether such an $M$ has a connected 5-sheeted cover.

\subsection{Varying conjugacy classes}
In the case of knots, we can consider how the invariants $Q(K,\gamma,G,c)$ and $Q(K,\gamma,G,c')$ are related when $c$ and $c'$ are elements of $G$ in distinct (outer) automorphism classes.  We expect a kind of decoupling is possible: given any pair of non-negative integers $m$ and $n$, it should be possible to construct knots $K$ so that $Q(K,\gamma,G,c)=m$ and $Q(K,\gamma,G,c')=n$.  We expect a version of Goursat's Lemma \ref{l:goursat} should hold where we replace the groups $G_1$ and $G_2$ with the braid group actions corresponding to the conjugation quandles induced by $G$ acting on $c$ and $c'$.

\subsection{Avoiding the classification of finite simple groups}
An important point we feel obliged to reiterate is that our proof of \Thm{th:rvrefine} (and, hence, Theorem \ref{th:main2}) depends on the classification of finite simple groups via the 6-transitivity trick.  Dunfield and Thurston's proof of Theorem \ref{th:dt} exploits the same 6-transitivity trick.  However, they briefly sketch a workaround to avoid the classification \cite{DT:random}.  Instead, they use a result of Gilman \cite{Gilman:quotients} and a theorem about permutation groups.  (They describe their workaround in the case of homologically trivial surjections, but the argument works more generally.)  We suppose there could be an analogous workaround for the full monodromy theorem \cite[Thm.~5.1]{RV:hurwitz}.

\section{Other spaces}
\label{ss:other}

Maher \cite{Maher:heegaard} showed that the probability that a randomly
chosen $M$ in the Dunfield-Thurston model is hyperbolic converges to 1
as $\ell \to \infty$, for any fixed $g \ge 2$.  Maher notes that the same
result holds if $M$ is a homology 3-sphere made using the Torelli group,
for any $g \ge 3$.  Thus our conjectures in \Sec{ss:sharper} would imply
that $\#Q(M,G)$ is computationally intractable when $M$ is a hyperbolic
homology 3-sphere.

We conjecture that a version of \Thm{th:main1} holds when $M$ fibers over
a circle.  In this case $M$ cannot be a homology 3-sphere, but it can be a
homology $S^2 \times S^1$.  If $M$ fibers over a circle, then the invariant
$H(M,G)$ is obviously analogous (indeed a special case of) counting solutions
to $Z(x) = x$ when $Z$ is a reversible circuit.  However, the reduction from
$Z$ to $M$ would require new techniques to avoid spurious solutions.

\section{Non-simple groups}
\label{ss:nonsimple}
We consider the invariant $\#H(M,G)$ for a general finite group $G$.

Recall that the \emph{perfect core} $G_\per$ of a group $G$ is its unique
largest perfect subgroup; if $G$ is finite, then it is also the limit of its
derived series.   If $M$ is a homology sphere, then its fundamental group
is perfect and $H(M,G) = H(M,G_\per)$.  We conjecture then that a version
of \Thm{th:main1} holds for any finite, perfect group $G$.  More precisely,
we conjecture that \Thm{th:main1} holds for $Q(M,G)$ when $G$ is finite
and perfect, and that the rest of $H(M,G)$ is explained by non-surjective
homomorphisms $f:G \to G$. Mochon's analysis \cite{Mochon:finite} in the
case when $G$ is non-solvable can be viewed as a partial result towards
this conjecture.

If $G$ is finite and $G_\per$ is trivial, then this is exactly the case that
$G$ is solvable.  In the case when $M$ is a link complement, Ogburn and
Preskill \cite{OP:topological} non-rigorously conjecture that $H(M,G)$
is not ``universal" for classical computation.  It is very believable
that the relevant actions of braid groups and mapping class groups are
too rigid for any analogue of the second half of \Thm{th:dt} to hold.
Rowell \cite{Rowell:paradigms} more precisely conjectured that $\#H(M,G)$
can be computed in polynomial time for any link complement $M$ and any
finite, solvable $G$.  We are much less confident that this more precise
conjecture is true.

\backmatter
\bibliographystyle{amsplain}
%\bibliography{co,gt,nt,qa,qp,me,web}
\providecommand{\bysame}{\leavevmode\hbox to3em{\hrulefill}\thinspace}
\providecommand{\MR}{\relax\ifhmode\unskip\space\fi MR }
% \MRhref is called by the amsart/book/proc definition of \MR.
\providecommand{\MRhref}[2]{%
  \href{http://www.ams.org/mathscinet-getitem?mr=#1}{#2}
}
\providecommand{\href}[2]{#2}

\end{document}